\documentclass[12pt]{amsart}

\usepackage[margin=1.15in]{geometry}
\usepackage{amscd,amssymb, amsmath,amsfonts, wasysym, mathrsfs, enumerate, mathtools,hhline,color}
\usepackage{graphicx}
\usepackage[all, cmtip]{xy}

\usepackage{url}

\definecolor{hot}{RGB}{65,105,225}

\usepackage[pagebackref=true,colorlinks=true, linkcolor=hot ,  citecolor=hot, urlcolor=hot]{hyperref}

\theoremstyle{plain}
\newtheorem{theorem}{Theorem}[section]
\newtheorem{proposition}[theorem]{Proposition}

\newtheorem{corollary}[theorem]{Corollary}
\newtheorem{conjecture}[theorem]{Conjecture}
\newtheorem{lemma}[theorem]{Lemma}


\theoremstyle{definition}
\newtheorem{definition}[theorem]{Definition}
\theoremstyle{question}
\newtheorem{question}[theorem]{Question}
\theoremstyle{remark}
\newtheorem{remark}[theorem]{Remark}

\theoremstyle{assumption}
\newtheorem{assumption}[theorem]{Assumption}
\newtheorem{example}[theorem]{Example}
\newtheorem*{ex*}{Example}

\numberwithin{equation}{section}

\theoremstyle{bigthm}
\newtheorem{bigthm}{Theorem}
\newtheorem{bigcor}[bigthm]{Corollary}

\title[Self-Covering]{Self-Covering, Finiteness, Commutativity, And Fibering Over Tori}

\begin{document}

\author{Lizhen Qin}
\address{School of Mathematics, Nanjing University, 22 Hankou Road, Nanjing, Jiangsu 210093, China}
\email{qinlz@nju.edu.cn}

\author{Yang Su}
\address{HLM, Academy of Mathematics and Systems Science, Chinese Academy of Sciences, Beijing 100190, China}
\address{School of Mathematical Sciences, University of Chinese Academy of Sciences, Beijing 100049, China}
\email{suyang@math.ac.cn}


\maketitle

\begin{abstract}
A topological space is called self-covering if it is a nontrivial cover of itself. We prove that, under mild assumptions, a closed self-covering manifold with an abelian fundamental group fibers over a torus in various senses. As a corollary, if its dimension is above $5$ and its fundamental group is free abelian, then it is a fiber bundle over a circle. We also construct non-fibering examples when these assumptions are not fulfilled. In particular, one class of examples illustrates that the structure of self-covering manifolds is more complicated when the fundamental groups are nonabelian, and the corresponding fibering problem encounters significant difficulties.
\end{abstract}

\section{Introduction}\label{sec_introduction}
A topological space is called self-covering if it is homeomorphic, or more generally homotopy equivalent, to a nontrivial covering space of itself. Self-covering spaces, especially self-covering manifolds, are well-studied in the literature (see e.g. \cite{BDT}, \cite{Dekimpe}, \cite{Dere17}, \cite{VanLimbeek} and \cite{Wang_Wu}).

The simplest self-covering closed manifolds are tori $\mathbb{T}^{n} := (S^{1})^{n}$ with a self-covering map $A_{\mathbb{T}} \colon  \mathbb{T}^{n} \rightarrow \mathbb{T}^{n}$ given by a matrix $A \in \mathrm{End} (\mathbb{Z}^{n})$ with $\det (A) \ne 0$ and $\bigcap_{k=1}^{\infty} \mathrm{im} A^{k} =0$. Typical examples of self-covering manifolds are constructed by taking the product of $\mathbb{T}^n$ with a closed manifold $N$, with a self-covering map $h = A_{\mathbb{T}} \times \mathrm{id}_N$. These examples indicate that, to elucidate the structure of self-covering manifolds with abelian fundamental groups, it is natural to fiber such a manifold either over a torus or with a torus fiber. In this framework, the self-covering property of the manifold is expected to arise directly from a self-covering of the torus.

In \cite{Qin_Su_Wang}, Qin-Su-Wang studied when a closed self-covering manifold with an abelian fundamental group is a fiber bundle over a circle. As an extension of the previous work, this paper tries to fiber such a manifold over a torus. We work in the category of smooth (DIFF), piecewise-linear (PL) and topological (TOP) manifolds simultaneously. Throughout this paper, the abbreviation CAT stands for any of these categories. All topological spaces are assumed path-connected unless otherwise stated. Motivated by \cite{Qin_Su_Wang}, we ask the following question.

\begin{question}\label{que_abelian}
Suppose $M$ is a closed $\mathrm{CAT}$ manifold with abelian $\pi_{1} (M)$, $q \colon  M' \rightarrow M$ is a nontrivial covering, and $h \colon  M \rightarrow M'$ is a homotopy equivalence.

Are there a $\mathrm{CAT}$ (fiber) bundle projection $p \colon  M \rightarrow \mathbb{T}^{n}$ and a linear endomorphism $A_{\mathbb{T}}$ of $\mathbb{T}^{n}$ such that the following diagram is a homotopy pullback?
\begin{equation}\label{que_abelian_1}
\xymatrix{
  M \ar[d]_{p} \ar[r]^{q \circ h} & M \ar[d]^{p} \\
  \mathbb{T}^{n} \ar[r]^{A_{\mathbb{T}}} & \mathbb{T}^{n}   }
\end{equation}
Here $A_{\mathbb{T}}$ is required to be defined by a matrix $A \in \mathrm{End} (\mathbb{Z}^{n})$ such that $\det (A) \neq 0$ and $\bigcap_{k=1}^{\infty} \mathrm{im} A^{k} =0$.
\end{question}

Recall that by a homotopy pullback we mean \eqref{que_abelian_1} commutes up to homotopy and satisfies a certain universal property (see \cite[p.~205]{Arkowitz}). If the answer to Question \ref{que_abelian} was affirmative, self-covering phenomenon would be well understood. For closed manifolds with abelian fundamental groups, all self-covering maps essentially would be base changes of fiber bundles resulted from linear endomorphisms of tori. However, it is known that the answer is no in general. Theorems C and D in \cite{Qin_Su_Wang} provide non-fibering examples using obstructions from algebraic $K$-theory (Wall's finiteness obstruction and Whitehead torsion). In Theorem \ref{thm_nonfibering} below we provide another type of non-fibering examples using tangential data such as the Pontrjagin classes and the Kirby-Siebenmann invariant. Therefore, one usually cannot find such a bundle projection $p \colon  M \rightarrow \mathbb{T}^{n}$.

Nevertheless, the answer to Question \ref{que_abelian} is yes in a weaker sense. By Theorem \ref{thm_Poincare} below, one can always get a homotopy fibration $p \colon  M \rightarrow \mathbb{T}^{n}$ whose homotopy fiber is a finitely dominated Poincar\'{e} Duality Space, and $p$ fits \eqref{que_abelian_1} well. (Recall that a Poincar\'{e} Duality Space is an imitation of a closed manifold in the homotopy sense.) An interesting consequence is a characterization of topological tori in terms of self-covering properties (Corollary \ref{cor_torus}).

To get a positive solution to Question \ref{que_abelian} in its original sense, we shall try, under additional assumptions, to fiber $M$ in various senses, from the weak to the strong, between a homotopy fibration and a fiber bundle. These results are mainly formulated in Subsection \ref{subsec_high}. Some nontrivial obstructions will appear when we improve a weak solution step by step. They will vanish if certain extra assumptions are fulfilled. Eventually, a desired fiber bundle structure will be achieved.

It's certainly interesting to obtain some positive solutions to Question \ref{que_abelian} in the case that $\pi_{1} (M)$ is nonabelian. To make broad sense, the $\mathbb{T}^{n}$ in Question \ref{que_abelian} should be now replaced with a general closed aspherical manifold. This motivates us to propose Question \ref{que_nonabelian} below. Theorem \ref{thm_cw_manifold} and \ref{thm_cw_4-manifold} show that self-covering manifolds are closely related to self-covering complexes. This allows us to construct self-covering manifolds out of known examples of self-covering complexes with interesting fundamental groups (Theorem \ref{thm_BS_manifold}). These examples indicate that the case of nonabelian fundamental groups is much more complicated than the abelian case. The answer to Question \ref{que_nonabelian} is severely negative even in the weaker sense.

\subsection{CW Complexes}
Note that the covering space $\overline M$ of $M$ with $\pi_{1} (\overline{M}) = \ker p_{\#}$ is homotopy equivalent to the homotopy fiber of $p$ in \eqref{que_abelian}. Therefore a necessary condition for the map $p$ to be a bundle projection is that $\overline{M}$ is homotopy equivalent to a closed manifold. Thus, to obtain a positive solution to Question \ref{que_abelian}, we have to show the homotopy finiteness of certain infinite cover of $M$. This is a problem in homotopy theory. So we may work in the wider category of CW complexes.

Recall that a CW complex is of finite type if there are only finitely many cells in each dimension, whereas the dimension of the complex may be infinite; a space is finitely dominated if it is dominated by a finite CW complex.

Suppose $X$ is a CW complex, $q \colon  X' \rightarrow X$ a nontrivial covering, and $h \colon  X \rightarrow X'$ is a homotopy equivalence. Since $q_{\#} \colon  \pi_{1} (X') \rightarrow \pi_{1} (X)$ is a subgroup inclusion, for brevity, we abuse the notation $h_{\#}$ to denote the composition
\[
(qh)_{\#} \colon  \pi_{1} (X) \rightarrow \pi_{1} (X') \hookrightarrow \pi_{1} (X).
\]
Then $h_{\#} \colon  \pi_{1} (X) \rightarrow \pi_{1} (X)$ is a monomorphism whose image is a proper subgroup of $\pi_{1} (X)$. Let $\mathrm{im} h_{\#}^{k}$ denote the image of $h_{\#}^{k} \colon  \pi_{1} (X) \rightarrow \pi_{1} (X)$.

We obtain the following generalization of \cite[Theorem~A]{Qin_Su_Wang} on homotopy finiteness. The theorem serves the geometric foundation of this paper.
\begin{bigthm}\label{thm_homotopy_finite}
Let $X$, $X'$ and $h$ be as the above. Suppose further $\pi_{1} (X)$ is a finitely generated abelian group. Let $G := \bigcap_{k=1}^{\infty} \mathrm{im} h_{\#}^{k}$. Let $\overline{X}$ be the cover of $X$ with $\pi_{1} (\overline{X}) =G$. Then the following conclusion holds:
\begin{enumerate}
\item $\pi_{1} (X) / G \cong \mathbb{Z}^{n}$ for some $n>0$, and $X'$ is a finite cover of $X$.

\item If $X$ is homotopy equivalent to a CW complex of finite type (resp.~is finitely dominated), then so is $\overline{X}$.
\end{enumerate}
\end{bigthm}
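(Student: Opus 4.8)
The plan is to handle the two parts separately. Part (1) is elementary group theory; Part (2) reduces, via Wall's finiteness criteria, to a statement about finitely generated modules over $\mathbb{Z}[\pi_{1}(X)]$ equipped with a semilinear self-automorphism, which is the real content and extends the corresponding argument of \cite{Qin_Su_Wang}.

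\textbf{Part (1).} Write $\Gamma := \pi_{1}(X)$ and $\phi := h_{\#}$; by hypothesis $\phi$ is an injective endomorphism of $\Gamma$ with proper image. Let $T$ be the torsion subgroup. Since an injective endomorphism of the finite group $T$ is an automorphism, $\phi(T) = T$, hence $T \subseteq \phi^{k}(\Gamma)$ for all $k$ and so $T \subseteq G$. Thus $\Gamma/G$ is a quotient of $\Gamma/T \cong \mathbb{Z}^{r}$; letting $\overline{\phi}$ be the induced injection of $\mathbb{Z}^{r}$, we get $G/T = \overline{N} := \bigcap_{k}\overline{\phi}^{k}(\mathbb{Z}^{r})$. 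I would first observe that $\overline{\phi}(\overline{N}) = \overline{N}$ and that $\overline{\phi}|_{\overline{N}}$ is an automorphism, so $\overline{\phi}$ has determinant $\pm 1$ on $\overline{N}\otimes\mathbb{Q}$; consequently $\overline{\phi}$ restricts to an integral automorphism of the saturation $\overline{N}^{\mathrm{sat}} := (\overline{N}\otimes\mathbb{Q})\cap\mathbb{Z}^{r}$, which forces $\overline{N}^{\mathrm{sat}} = \overline{\phi}^{k}(\overline{N}^{\mathrm{sat}})\subseteq\overline{\phi}^{k}(\mathbb{Z}^{r})$ for all $k$, hence $\overline{N}^{\mathrm{sat}}\subseteq\overline{N}$. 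Therefore $\overline{N}$ is saturated, $\Gamma/G \cong \mathbb{Z}^{r}/\overline{N}\cong\mathbb{Z}^{n}$, and $n > 0$ since $\overline{N}\subseteq\overline{\phi}(\mathbb{Z}^{r})\subsetneq\mathbb{Z}^{r}$ (properness of $\mathrm{im}\,h_{\#}$ gives $|\det\overline{\phi}| \geq 2$). The same computation shows $[\Gamma:\mathrm{im}\,h_{\#}] = |\det\overline{\phi}|$ is finite and equals the number of sheets of $q$ (as $\mathrm{im}\,h_{\#} = q_{\#}(\pi_{1}(X'))$), so $X'$ is a finite cover of $X$. Along the way one sees that $h_{\#}$ descends to an injective endomorphism $\overline{A}$ of $\Gamma/G\cong\mathbb{Z}^{n}$ with $|\det\overline{A}|\geq 2$ and $\bigcap_{k}\mathrm{im}\,\overline{A}^{k} = 0$, since $G$ is precisely the relevant intersection.

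\textbf{Part (2).} The universal cover $\widetilde{X}$ is simultaneously the universal cover of $\overline{X}$, and $\pi_{1}(\overline{X}) = G$, being finitely generated abelian, is finitely presented. By Wall's finiteness criteria it then suffices to prove that $H_{i}(\widetilde{X};\mathbb{Z})$ is finitely generated over $\mathbb{Z}[G]$ for each $i$ (with, in the finitely dominated case, an additional uniform bound on the $\mathbb{Z}[G]$-projective dimension of $C_{*}(\widetilde{X})$ — which is for free, since $X$ finitely dominated means $C_{*}(\widetilde{X})$ is $\mathbb{Z}[\Gamma]$-chain homotopy equivalent to a bounded complex of finitely generated projective $\mathbb{Z}[\Gamma]$-modules, and these remain projective, though no longer finitely generated, over $\mathbb{Z}[G]$). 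Next I would note that, since $X$ is of finite type (resp.\ finitely dominated), $C_{*}(\widetilde{X})$ is chain homotopy equivalent over the Noetherian ring $R := \mathbb{Z}[\Gamma]$ to a complex of finitely generated free $R$-modules, so each $M_{i} := H_{i}(\widetilde{X};\mathbb{Z})$ is a finitely generated $R$-module; and the self-map $f := q\circ h$ lifts to a self-homotopy-equivalence of $\widetilde{X}$ (the composite of the lift of $h$ with the homeomorphism of universal covers induced by the covering $q$), so $f_{*}$ is an automorphism of the abelian group $M_{i}$ that is semilinear over the ring endomorphism $\phi\colon R\to R$ induced by $h_{\#}$, i.e.\ $f_{*}(r\cdot m) = \phi(r)\cdot f_{*}(m)$. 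Here $\phi$ restricts to an automorphism of $S := \mathbb{Z}[G]$ and acts on the $\mathbb{Z}^{n} = \Gamma/G$ directions through $\overline{A}$.

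Thus Part (2) reduces to the following algebraic assertion, which is the crux and extends \cite{Qin_Su_Wang}: \emph{a finitely generated $R$-module $M$ carrying a $\phi$-semilinear automorphism is finitely generated over $S$.} The mechanism available is that $R$ is finite as a module over the subring $\phi(R)\cong R$, while $\bigcap_{k}\phi^{k}(R) = S$; applying the $\phi$-semilinear surjection $f_{*}$ repeatedly to a set of $\phi(R)$-module generators of $M$ shows that $M$ is generated over $\phi^{k}(R)$ by a bounded number of elements for every $k$, and one wants to push this to the intersection $S$. I expect this passage to the limit to be the main obstacle: boundedly many generators over each $\phi^{k}(R)$ does not formally give finite generation over $S$, so it must be supplemented by the Noetherianity of $R$, by control of the $\mathbb{Z}$-torsion of $M$ (finite, and preserved by $f_{*}$), and — because $\overline{A}$ need not be triangularizable over $\mathbb{Z}$, so one cannot simply peel off infinite cyclic covers one at a time — by analysing the contraction encoded in $\bigcap_{k}\mathrm{im}\,\overline{A}^{k} = 0$ one prime at a time, so that the ``$t$-degree'' of a generating set is genuinely driven down under iteration of $f_{*}^{-1}$. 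Once the algebraic assertion is in hand, each $H_{i}(\widetilde{X};\mathbb{Z})$ is finitely generated over $\mathbb{Z}[G]$, and the reduction above yields that $\overline{X}$ is of finite type (resp.\ finitely dominated).
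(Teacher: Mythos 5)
Your reduction is correct and tracks the paper's overall architecture, but the proposal has a substantive gap: you state, without proving, precisely the theorem that constitutes the real content.

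On Part (1): your saturation argument works (passing to $\Gamma/T$, showing $\overline{\phi}$ restricts to an automorphism of $\overline{N}$ and hence of its saturation, concluding $\overline{N}$ is saturated). The paper's route is slicker: since $\bigcap_k \mathrm{im}[h_\#]^k = 0$ and the torsion of $\pi_1(X)/G$ is finite, some $\mathrm{im}[h_\#]^{k_0}$ misses the torsion, hence is torsion-free, hence so is $\pi_1(X)/G$ since they are isomorphic. Your argument is a correct alternative. The paper also notes $h_\#(G)=G$ immediately from injectivity of $h_\#$ and the stability of the intersection; you have the same facts.

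On Part (2): you correctly reduce, via Wall's finiteness criteria, to showing that each $H_i(\widetilde{X};\mathbb{Z})$ is finitely generated over $\mathbb{Z}[G]$, and you correctly identify the relevant structure: $H_i(\widetilde{X};\mathbb{Z})$ is a finitely generated $\mathbb{Z}[\pi_1(X)]$-module carrying a $\phi$-semilinear automorphism $f_*$, where $\phi$ is the ring endomorphism induced by $h_\#$. This is exactly the paper's Theorem \ref{thm_homology_finite}. But you then explicitly defer the crux --- that such a module is finitely generated over $\mathbb{Z}[G]$ --- saying ``I expect this passage to the limit to be the main obstacle,'' and what you sketch (counting generators over $\phi^k(R)$, peeling off ``one prime at a time,'' driving down a $t$-degree) is not a proof and does not reflect the actual mechanism. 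That crux is the paper's Theorem \ref{thm_finite_module}, whose proof occupies three sections and is by far the technical heart of the theorem you are proving. The argument there runs through associated primes of $\mathfrak{M}$ and residue fields $\kappa(Q)$, reduces to showing every $Q\in\mathrm{Ass}_S(\mathfrak{M})$ is maximal (Proposition \ref{prop_field_module} together with Proposition \ref{prop_ideal_dim_zero}), and proves that by a degree estimate on affine tori (Propositions \ref{prop_degree}, \ref{prop_variety}) which in turn invokes Zariski's Main Theorem, the Mori--Nagata theorem, and the intrinsic torus construction, plus a separate eigenvalue integrality step (Proposition \ref{prop_eigenvalue}, Lemma \ref{lem_integrity}). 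None of this is present, even in outline, in your sketch, and the heuristic you offer (``boundedly many generators over each $\phi^k(R)$'') is exactly the observation the paper's machinery is built to get past --- it is insufficient on its own, as you yourself note. In short: the topological reduction is right and mirrors the paper, but the theorem you reduce to is asserted rather than proved, and that assertion is the paper's main theorem.
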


\begin{remark}
As pointed in \cite[Remark~1.4]{Qin_Su_Wang}, if $X$ is homotopy equivalent to a finite CW complex, then unnecessarily so is $\overline{X}$ in Theorem \ref{thm_homotopy_finite}.
\end{remark}

\begin{remark}
By Theorem \ref{thm_BS_manifold}, if we drop the assumption that $\pi_{1} (X)$ is abelian in Theorem \ref{thm_homotopy_finite}, then its conclusion will be significantly wrong, see also Remark \ref{rmk_BS_nonfinite}.
\end{remark}

To get Theorem \ref{thm_homotopy_finite}, we shall actually prove a more general Theorem \ref{thm_homotopy_skeleton} on homotopy finiteness which follows from an even broader Theorem \ref{thm_homology_finite} on homological finiteness. As a byproduct, we have the following result in group theory.
\begin{bigcor}\label{cor_group}
Suppose $\pi$ is a finitely generated (unnecessarily abelian) group, and $\varphi \colon  \pi \rightarrow \pi$ is a monomorphism with $[\pi : \mathrm{im} \varphi] >1$. Suppose $G:= \bigcap_{k=1}^{\infty} \mathrm{im} \varphi^{k}$ is normal in $\pi$ and $\pi / G$ is abelian. Then $\pi / G\cong \mathbb{Z}^{n}$ for some $n>0$, $[\pi : \mathrm{im} \varphi] < \infty$, and the abelianization $G / [G,G]$ is finitely generated.
\end{bigcor}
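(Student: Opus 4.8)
The plan is to treat the two ``linear-algebra'' assertions — that $\pi/G\cong\mathbb{Z}^{n}$ with $n>0$ and that $[\pi:\mathrm{im}\varphi]<\infty$ — by an elementary argument, and the finiteness of $G/[G,G]$ by realizing the data topologically and invoking Theorem \ref{thm_homology_finite}. For the elementary part: since the subgroups $\mathrm{im}\varphi^{k}$ are nested ($\mathrm{im}\varphi^{k+1}=\varphi^{k}(\mathrm{im}\varphi)\subseteq\mathrm{im}\varphi^{k}$), one gets $\varphi(G)\subseteq\bigcap_{k}\mathrm{im}\varphi^{k+1}=G$, so $\varphi$ descends to an endomorphism $\bar\varphi$ of $\pi/G$; and if $\varphi(x)\in G$, then injectivity of $\varphi$ pulls $\varphi(x)\in\mathrm{im}\varphi^{k+1}$ back to $x\in\mathrm{im}\varphi^{k}$ for every $k$, so $\bar\varphi$ is injective. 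As $G\subseteq\mathrm{im}\varphi^{k}$ for all $k$, the image of $\mathrm{im}\varphi^{k}$ in $\pi/G$ is $\mathrm{im}\bar\varphi^{k}$, so $[\pi/G:\mathrm{im}\bar\varphi]=[\pi:\mathrm{im}\varphi]>1$ and $\bigcap_{k}\mathrm{im}\bar\varphi^{k}=0$. Now $\pi/G$ is finitely generated abelian; its torsion subgroup $T$ is finite and $\bar\varphi$-invariant, so $\bar\varphi|_{T}$ is an injective endomorphism of a finite group, hence an automorphism, which forces $T\subseteq\bigcap_{k}\mathrm{im}\bar\varphi^{k}=0$. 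Thus $\pi/G\cong\mathbb{Z}^{n}$, with $n>0$ (otherwise $\mathrm{im}\varphi=\pi$), and $\bar\varphi$ is then an injective $A\in\mathrm{End}(\mathbb{Z}^{n})$ with $\bigcap_{k}\mathrm{im}A^{k}=0$; in particular $[\pi:\mathrm{im}\varphi]=[\mathbb{Z}^{n}:A\mathbb{Z}^{n}]=|\det A|<\infty$.

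For the remaining assertion I would realize the situation by a self-covering of an Eilenberg--MacLane complex. Put $X:=K(\pi,1)$, a CW complex, and let $q\colon X'\to X$ be the covering corresponding to the finite-index subgroup $\mathrm{im}\varphi\subseteq\pi$; it is nontrivial since $[\pi:\mathrm{im}\varphi]>1$, and $X'$ is a $K(\mathrm{im}\varphi,1)$. Since $\varphi\colon\pi\to\mathrm{im}\varphi$ is a group isomorphism, it is induced by a map $h\colon X\to X'$ between aspherical complexes that is an isomorphism on $\pi_{1}$, hence a homotopy equivalence by Whitehead's theorem. With the notational convention for $h_{\#}$ above, $h_{\#}=(qh)_{\#}=\varphi$, so $\bigcap_{k}\mathrm{im}h_{\#}^{k}=G$ and the cover $\overline{X}$ with $\pi_{1}(\overline{X})=G$ is a $K(G,1)$; in particular $H_{1}(\overline{X})\cong G/[G,G]$. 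Because $\pi$ is finitely generated, $H_{1}(X)=\pi/[\pi,\pi]$ is finitely generated, and $\pi_{1}(X)/G=\pi/G$ is abelian, so Theorem \ref{thm_homology_finite} (in degree $1$) yields that $H_{1}(\overline{X})$, that is $G/[G,G]$, is finitely generated.

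The deduction itself is routine: the only points to check — that $h$ is a homotopy equivalence and that the covering and homotopy-equivalence data reproduce $\varphi$ on $\pi_{1}$ — are immediate from the construction. The real obstacle lies entirely inside Theorem \ref{thm_homology_finite}. One cannot substitute Theorem \ref{thm_homotopy_finite}: here $\pi$ is allowed to be nonabelian, so its abelian hypothesis fails; and since a finitely generated group need not be finitely presented, $K(\pi,1)$ need not be of finite type, so the finite-type and finitely-dominated conclusions are unavailable (and, for the present purpose, too strong). What one needs is the sharper homological statement, which tracks finite generation of homology degree by degree and requires only that $\pi_{1}(X)/G$ be abelian and that $H_{1}(X)$ be finitely generated. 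Granting Theorem \ref{thm_homology_finite} in that generality, the corollary follows as above.
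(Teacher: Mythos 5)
Your proof is correct and follows essentially the same route as the paper's: the key step, finite generation of $G/[G,G]$, is obtained exactly as in the paper by realizing $(\pi,\varphi)$ via a self-covering of $K(\pi,1)$ and invoking Theorem \ref{thm_homology_finite} in degree $1$ with $R_{0}=\mathbb{Z}$ and $G_{0}=G$. The only difference is cosmetic: for the assertions $\pi/G\cong\mathbb{Z}^{n}$ and $[\pi:\mathrm{im}\varphi]<\infty$, the paper cites Lemma \ref{lem_fundamental_group} (which in turn rests on Lemma \ref{lem_free_abelian}, showing some $H_{k_{0}}$ misses the finite torsion subgroup), whereas you argue directly that the induced injective endomorphism of the finite torsion subgroup of $\pi/G$ must be an automorphism, forcing it into $\bigcap_{k}\operatorname{im}\bar\varphi^{k}=0$; this is an equivalent elementary argument. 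One small point worth making explicit: Theorem \ref{thm_homology_finite} requires $X$ to have only finitely many $1$-cells (not merely that $H_{1}(X)$ be finitely generated), which is available here because $\pi$ is finitely generated so $K(\pi,1)$ can be built with a finite $1$-skeleton.
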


\subsection{Poincar\'{e} Duality Spaces}
A Poincar\'{e} duality space, or Poincar\'e space for brevity, is a topological space satisfying Poincar\'{e} duality in the strictest sense, i.e.~there are Poincar\' e duality isomorphisms between its homology and cohomology groups for all local coefficient systems. Particularly, a closed TOP $m$-manifold is a Poincar\'e space with formal dimension $m$. Such a manifold is homotopy equivalent to a finite CW complex, let alone finitely dominated. Poincar\'e spaces are foundational to the surgery theory of manifolds. For the precise definition and basic properties of Poincar\'e spaces, see e.g.~\cite{Wall67} and \cite{Klein_Qin_Su}.

The following theorem gives a positive answer to Question \ref{que_abelian} in the homotopy sense which is weaker than the original sense.
\begin{bigthm}\label{thm_Poincare}
Suppose $X$ is a finitely dominated Poincar\'e space with formal dimension $d$, $\pi_{1} (X)$ is abelian, and $q \colon  X' \rightarrow X$ is a nontrivial covering. Suppose $h \colon  X \rightarrow X'$ is a homotopy equivalence. Let $G := \bigcap_{k=1}^{\infty} \mathrm{im} h_{\#}^{k}$. Let $\overline{X}$ be the cover of $X$ with $\pi_{1} (\overline{X}) =G$. Let $n := \mathrm{rank} (\pi_{1} (X) / G)$. Then the following statements hold:
\begin{enumerate}
\item $n>0$, $\pi_{1} (X) / G \cong \mathbb{Z}^{n}$, and $X'$ is a finite cover of $X$.

\item There exists a continuous map $p \colon  X \rightarrow \mathbb{T}^{n}$ which induces an epimorphism of fundamental groups and the homotopy fiber of $p$ is homotopy equivalent to $\overline{X}$.

\item $\overline{X}$ is also a finitely dominated Poincar\'{e} space with formal dimension $d-n$. The following diagram is a homotopy pullback.
\begin{equation}\label{thm_Poincare_1}
\xymatrix{
  X \ar[d]_{p} \ar[r]^{q \circ h} & X \ar[d]^{p} \\
  \mathbb{T}^{n} \ar[r]^{A_{\mathbb{T}}} & \mathbb{T}^{n}   }
\end{equation}
Here $A_{\mathbb{T}}$ is a linear endomorphism of $\mathbb{T}^{n}$ induced by a matrix $A \in \mathrm{End} (\mathbb{Z}^{n})$ such that $\det (A) \neq 0$ and $\bigcap_{k=1}^{\infty} \mathrm{im} A^{k} =0$.
\end{enumerate}
\end{bigthm}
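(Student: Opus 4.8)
The plan is to bootstrap from Theorem \ref{thm_homotopy_finite}, which already handles the purely homotopy-theoretic finiteness, and then graft on the Poincar\'e duality structure. First I would invoke Theorem \ref{thm_homotopy_finite}(1) to get $\pi_1(X)/G \cong \mathbb{Z}^n$ for some $n \geq 0$ and that $X'$ is a finite cover; the only thing to upgrade here is the strict positivity $n > 0$, for which I would argue by contradiction: if $n = 0$ then $G = \mathrm{im}\, h_\#^k$ for all $k$, so $h_\#$ is surjective, hence an isomorphism of the abelian group $\pi_1(X)$, which forces $q$ to be a one-sheeted (trivial) covering because $X'$ is finitely dominated and $[\pi_1(X):\mathrm{im}\, h_\#] = 1$ — contradicting the nontriviality hypothesis. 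Since $X$ is finitely dominated, Theorem \ref{thm_homotopy_finite}(2) then gives that $\overline X$ is finitely dominated as well.

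Second, for statement (2) I would construct $p \colon X \to \mathbb{T}^n$ as follows. The quotient homomorphism $\pi_1(X) \twoheadrightarrow \pi_1(X)/G \cong \mathbb{Z}^n$ classifies a map to $K(\mathbb{Z}^n, 1) = \mathbb{T}^n$, unique up to homotopy; call it $p$. By construction $p_\#$ is an epimorphism with kernel $G$, so the cover of $X$ corresponding to $\ker p_\# = G$ is precisely $\overline X$, and standard covering-space/fibration theory (pull back the universal cover $\mathbb{R}^n \to \mathbb{T}^n$, or replace $p$ by a fibration) identifies the homotopy fiber of $p$ with $\overline X$. This is routine once the finiteness in (1) is in hand.

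Third, statement (3) is where the real work lies. That $\overline X$ is a Poincar\'e space of formal dimension $d - n$ should follow from the general principle that the homotopy fiber of a fibration $X \to B$ over a Poincar\'e space $B$ — here $B = \mathbb{T}^n$, which is a closed aspherical manifold hence Poincar\'e of dimension $n$ — with $X$ itself Poincar\'e of dimension $d$, is again Poincar\'e with formal dimension $d - n$, provided the fiber is finitely dominated. I would cite or adapt the relevant fibered Poincar\'e duality results (Gottlieb, Quinn, or Klein–Qin–Su); the finite domination of $\overline X$ secured above is exactly the hypothesis such theorems require. For the homotopy pullback diagram \eqref{thm_Poincare_1}: the composite $q \circ h \colon X \to X$ induces $h_\# \colon \pi_1(X) \to \pi_1(X)$ on fundamental groups, and since $h_\#(G) \subseteq G$ it descends to an endomorphism $A$ of $\pi_1(X)/G = \mathbb{Z}^n$, i.e.\ a matrix $A \in \mathrm{End}(\mathbb{Z}^n)$, inducing $A_\mathbb{T} \colon \mathbb{T}^n \to \mathbb{T}^n$ with $p \circ (q\circ h) \simeq A_\mathbb{T} \circ p$. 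That $\det A \neq 0$ follows because $h_\#$ is injective (so $A$ is), and $\bigcap_k \mathrm{im}\, A^k = 0$ follows by chasing the definition $G = \bigcap_k \mathrm{im}\, h_\#^k$ down to the quotient. Finally, to see the square is a \emph{homotopy} pullback and not merely homotopy-commutative, I would compare homotopy fibers of the two vertical maps: both are $\overline X$, and one checks that $q \circ h$ restricts to a homotopy equivalence between these fibers — this is essentially the statement that $h$ is a homotopy equivalence onto $X'$ intertwined with the covering $q$, reduced to the cover $\overline X$.

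The main obstacle I anticipate is the third step, specifically establishing that $\overline X$ satisfies Poincar\'e duality with the correct formal dimension $d - n$ and that the resulting square is genuinely a homotopy pullback. The subtlety is that $\overline X$ is an \emph{infinite} cover, so one cannot argue by a transfer or a finite-index comparison of fundamental classes; instead one must use the fibration $p$ and a Leray–Hirsch / spectral sequence comparison of $X$'s duality against the product duality of base and fiber, being careful about the (possibly nontrivial) action of $\pi_1(\mathbb{T}^n)$ on $H_*(\overline X)$ and the orientation local system. Getting the homotopy-pullback property amounts to verifying that no information is lost when passing to the infinite cover — i.e.\ that $\overline X \to \overline X$ covering $q \circ h$ is a homotopy equivalence — which should reduce cleanly to the hypothesis that $h$ is a homotopy equivalence, but the bookkeeping with the three nested subgroups $G \subseteq \mathrm{im}\, h_\# \subseteq \pi_1(X)$ requires care.
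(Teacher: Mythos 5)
Your proposal follows essentially the same route as the paper's proof: (1) is read off from Theorem~\ref{thm_homotopy_finite}, (2) is the classifying-map construction to $\mathbb{T}^{n}=K(\mathbb{Z}^{n},1)$, and (3) is secured by Gottlieb's fibered Poincar\'e duality together with a fiberwise verification that the square is a homotopy pullback. Two small things worth tightening: Theorem~\ref{thm_homotopy_finite}(1) already asserts $n>0$, so your separate contradiction argument is superfluous; and to conclude that the induced map $A$ on $\pi_{1}(X)/G$ is injective you need the \emph{equality} $(qh)_{\#}(G)=G$ (which does hold, since $(qh)_{\#}$ is injective and $(qh)_{\#}\bigl(\bigcap_{k}\mathrm{im}\,h_{\#}^{k}\bigr)=\bigcap_{k}\mathrm{im}\,h_{\#}^{k+1}=G$), not merely the inclusion $(qh)_{\#}(G)\subseteq G$ you write; the paper, for clarity, factors the square \eqref{thm_Poincare_1} through $X'$ into two simpler squares and checks each is a homotopy pullback, whereas you verify the fiberwise criterion directly on $q\circ h$ — both amount to the same observation that the lift of $q\circ h$ to $\overline{X}$ is a homotopy equivalence.
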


By the classification of low dimensional Poincar\'{e} spaces, we obtain Corollaries \ref{cor_Poincare-0}, \ref{cor_Poincare-1}, \ref{cor_Poincare-2}, \ref{cor_Poincare-3} and \ref{cor_Poincare-4} in the case of $d-n \le 4$. In particular, the following corollary is a characterization of topological tori in terms of self-covering properties.

\begin{bigcor}\label{cor_torus}
Suppose $M$ is a closed TOP $m$-manifold such that $\pi_{1} (M)$ is abelian. Suppose $M'$ is a nontrivial cover of $M$ and $h \colon  M \rightarrow M'$ is a homotopy equivalence. Let $G := \bigcap_{k=1}^{\infty} \mathrm{im} h_{\#}^{k}$ and $n := \mathrm{rank} (\pi_{1} (M) / G)$. Suppose further $M$ satisfies one of the following:
\begin{enumerate}
\item $m -n \leq 1$;

\item $m-n = 2$ and $|G| > 2$;

\item $m-n = 3$, $G$ is not cyclic, and $G \ne \mathbb{Z} \oplus \mathbb{Z}/2$.
\end{enumerate}
Then $M$ is homeomorphic to $\mathbb{T}^{m}$.
\end{bigcor}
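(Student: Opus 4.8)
The plan is to feed the hypotheses into Theorem \ref{thm_Poincare} and then push the resulting low-dimensional homotopy-theoretic data back to a homeomorphism with a torus. First I would apply Theorem \ref{thm_Poincare} with $X = M$ (a closed TOP $m$-manifold is a finitely dominated Poincar\'e space of formal dimension $m$): this yields $n > 0$, $\pi_1(M)/G \cong \mathbb{Z}^n$, a map $p \colon M \to \mathbb{T}^n$ inducing an epimorphism on $\pi_1$ with homotopy fiber homotopy equivalent to $\overline{X}$, and crucially the statement that $\overline{X}$ is a finitely dominated Poincar\'e space of formal dimension $d - n = m - n$, sitting in the homotopy pullback \eqref{thm_Poincare_1}. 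Note also $\pi_1(\overline{X}) = G$, so $G$ is the fundamental group of a finitely dominated Poincar\'e space of the indicated low formal dimension; in particular $G$ is finitely presented (for $m - n \le 1$ it is cyclic, hence finitely generated).

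The heart of the argument is the classification of finitely dominated Poincar\'e spaces of formal dimension $\le 4$, which I would invoke via the corollaries mentioned in the text (Corollaries \ref{cor_Poincare-0}--\ref{cor_Poincare-4}): under the numerical and group-theoretic side conditions in cases (1)--(3), one shows that $\overline{X}$ must itself be homotopy equivalent to a torus $\mathbb{T}^{m-n}$, or more precisely that its formal dimension is realized only by the ``trivial'' Poincar\'e structure forcing $G \cong \mathbb{Z}^{m-n}$ and $\overline{X} \simeq \mathbb{T}^{m-n}$. Concretely: if $m - n \le 1$ then $\overline{X}$ is a point or a circle, so $G$ is trivial or $\mathbb{Z}$ and $\overline{X} \simeq \mathbb{T}^{m-n}$ immediately; if $m - n = 2$ the finitely dominated Poincar\'e duality and $|G| > 2$ (excluding $\mathbb{Z}/2$ and the trivial group, whose $2$-dimensional Poincar\'e ``spaces'' would be $S^2$ or $\mathbb{RP}^2$) force $G$ to be a surface group, and combined with $G$ being a quotient-kernel of the above type one gets $G \cong \mathbb{Z}^2$, $\overline{X} \simeq \mathbb{T}^2$; the $m - n = 3$ case uses the classification of $3$-dimensional Poincar\'e spaces together with the hypotheses that $G$ is not cyclic and $G \ne \mathbb{Z} \oplus \mathbb{Z}/2$ to rule out all aspherical $3$-Poincar\'e complexes except the $3$-torus, again yielding $G \cong \mathbb{Z}^3$, $\overline{X} \simeq \mathbb{T}^3$. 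Since then $\pi_1(M)$ is an extension of $\mathbb{Z}^n$ by $\mathbb{Z}^{m-n}$ that is abelian and contains $\mathbb{Z}^{m-n} = G$ with $\pi_1(M)/G = \mathbb{Z}^n$ torsion-free, $\pi_1(M) \cong \mathbb{Z}^m$; moreover $M$ is then a closed aspherical TOP manifold (its universal cover is that of $\mathbb{T}^m$, contractible) with $\pi_1 = \mathbb{Z}^m$.

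Finally I would invoke topological rigidity for tori: a closed TOP manifold homotopy equivalent to $\mathbb{T}^m$ is homeomorphic to $\mathbb{T}^m$. For $m \ge 5$ this is the classical result of Hsiang--Shaneson and Wall (Farrell--Jones in general), for $m = 4$ it is Freedman--Quinn, and for $m \le 3$ it is elementary; note that the constraints in cases (1)--(3) keep $m$ itself small only in the degenerate sub-cases, but the rigidity input is available in every dimension. To conclude I need $M \simeq \mathbb{T}^m$, which follows since $M$ is aspherical with $\pi_1(M) \cong \mathbb{Z}^m$, hence $M \simeq K(\mathbb{Z}^m,1) = \mathbb{T}^m$; rigidity then upgrades this to a homeomorphism.

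The main obstacle I expect is the $m - n = 3$ case: one must carefully verify that among finitely dominated $3$-dimensional Poincar\'e spaces whose fundamental group $G$ arises as $\bigcap_k \mathrm{im}\, h_\#^k$ inside an abelian group, the side conditions ``$G$ not cyclic'' and ``$G \ne \mathbb{Z} \oplus \mathbb{Z}/2$'' exactly eliminate the non-torus possibilities (e.g. lens-space-like or $S^1 \times S^2$-like Poincar\'e complexes and the $\mathbb{Z} \oplus \mathbb{Z}/2$ exotic case), and that no finitely dominated but non-finite $3$-Poincar\'e complex sneaks in. This is where the precise statements of Corollaries \ref{cor_Poincare-2} and \ref{cor_Poincare-3} do the real work, together with the fact that $G$, being a subgroup of an abelian group, is abelian — which already restricts the $3$-Poincar\'e classification to the aspherical, hence toral, case.
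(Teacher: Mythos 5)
Your proposal is correct and follows essentially the same route as the paper: feed $M$ into Theorem~\ref{thm_Poincare}, use the low-dimensional classification Corollaries~\ref{cor_Poincare-0}--\ref{cor_Poincare-3} together with the side conditions in (1)--(3) to force $\overline{X} \simeq \mathbb{T}^{m-n}$ (hence $M \simeq \mathbb{T}^m$), and then apply topological rigidity of tori. The one small inaccuracy is your remark that rigidity for $m \le 3$ is ``elementary''; the paper is careful to invoke the solution to the Poincar\'e conjecture and Luft--Sjerve for $m = 3$, which is not elementary, while $m \le 2$ is. This does not affect the validity of the argument.
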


\begin{remark}
Even if assume further $M$ is a self-covering DIFF (resp.~PL) manifold in Corollary \ref{cor_torus}, it is not true that $M$ is diffeomorphic (resp.~PL isomorphic) to $\mathbb{T}^{m}$. Actually, by \cite{Farrell_Gogolev} (see also \cite{Farrell_Jones78}), there exists a DIFF closed manifold $M$ such that: (i) $M$ carries a smooth expanding map, which means $M$ is self-covering in a very strong sense; (ii) $M$ is homeomorphic to $\mathbb{T}^{m}$; (iii) but $M$ is even not PL isomorphic to $\mathbb{T}^{m}$.
\end{remark}

\subsection{Low Dimensional Manifolds}
The categories $\mathrm{TOP} = \mathrm{PL} = \mathrm{DIFF}$ for dimensions from $1$ to $3$.

The classification of closed self-covering manifolds with dimensions $1$ or $2$ is trivial. They are $S^{1}$, $\mathbb{T}^{2}$ and a Klein bottle.

For dimension $3$, Wang-Wu had classified closed self-covering (in the sense of homeomorphism) manifolds up to finite covers in \cite[Theorem~8.6]{Wang_Wu}. More precisely, a closed $3$-manifold is homeomorphic to a nontrivial cover of itself if and only if it is finitely covered by a $\mathbb{T}^{2}$-bundle or a trivial surface bundle over $S^{1}$. In fact, Wang-Wu worked on the family of geometric manifolds (see \cite[p.~203]{Wang_Wu} for definition) and nonorientable manifolds with geometric double covers. By the solution to Thurston's geometrization conjecture, that family contains all closed $3$-manifolds.

For dimension $4$, the categories $\mathrm{TOP} \ne \mathrm{PL} = \mathrm{DIFF}$. We have a classification of closed self-covering $\mathrm{TOP}$ $4$-manifolds with abelian fundamental groups in Theorem \ref{thm_4-manifold} below. The manifolds $S^{1} \tilde{\times} S^{3}$ and $S^{1} \tilde{\times} \mathbb{R}P^{3}$ are defined in Lemma \ref{lem_4-manifold}, and the manifolds $E_{1}$, $E_{2}$ and $PE_{1}$ are defined in Example \ref{exa_S2_T2}.
\begin{bigthm}\label{thm_4-manifold}
Suppose $M$ is a closed $\mathrm{TOP}$ $4$-manifold with abelian $\pi_{1} (M)$, and $M$ is homotopy equivalent to a nontrivial cover of itself. Then $M$ satisfies one of the following:
\begin{enumerate}
\item $\pi_{1} (M) = \mathbb{Z}$, and $M$ is homeomorphic to $S^{1} \times S^{3}$ or $S^{1} \tilde{\times} S^{3}$.

\item $\pi_{1} (M) = \mathbb{Z} \oplus \mathbb{Z} /2$, and $M$ is homotopy equivalent to $S^{1} \times \mathbb{R}P^{3}$ or $S^{1} \tilde{\times} \mathbb{R}P^{3}$. A $2$-cover of $M$ is homeomorphic to $S^{1} \times S^{3}$ or $S^{1} \tilde{\times} S^{3}$, respectively.

\item $\pi_{1} (M) = \mathbb{Z} \oplus \mathbb{Z} /k$ for some $k>2$, and $M$ is homotopy equivalent to $S^{1} \times L$, where $L$ is a $3$-dimensional lens space. A $k$-cover of $M$ is homeomorphic to $S^{1} \times S^{3}$.

\item $\pi_{1} (M) = \mathbb{Z}^{2}$, and $M$ is homeomorphic to $\mathbb{T}^{2} \times S^{2}$, $S^{1} \times (S^{1} \tilde{\times} S^{2})$, $E_{1}$ or $E_{2}$.

\item $\pi_{1} (M) = \mathbb{Z}^{2} \oplus \mathbb{Z}/2$, and $M$ is homotopy equivalent to $\mathbb{T}^{2} \times \mathbb{R} P^{2}$ or $PE_{1}$. A $2$-cover of $M$ is homeomorphic to $\mathbb{T}^{2} \times S^{2}$ or $E_{1}$, respectively.

\item $\pi_{1} (M) = \mathbb{Z}^{4}$, and $M$ is homeomorphic to $\mathbb{T}^{4}$.
\end{enumerate}

Furthermore, the above listed concrete manifolds ($S^{1} \times S^{3}$ etc.) are closed $\mathrm{DIFF}$ $4$-manifolds which are diffeomorphic to certain nontrivial covers of themselves. They are also of distinct homotopy types. (See also Lemma \ref{lem_S1_lens}.)
\end{bigthm}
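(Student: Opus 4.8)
The plan is to feed $M$ into Theorem \ref{thm_Poincare} and then play the classification of low-dimensional finitely dominated Poincaré spaces against four-dimensional topological surgery. First I would apply Theorem \ref{thm_Poincare} to $M$, which as a closed TOP $4$-manifold is a finitely dominated Poincaré space of formal dimension $d=4$: this produces an integer $n$ with $1\le n\le 4$, a map $p\colon M\to\mathbb{T}^{n}$ inducing an epimorphism on $\pi_{1}$ whose homotopy fiber $\overline X$ is a finitely dominated Poincaré space of formal dimension $4-n$ with $\pi_{1}(\overline X)=G$, together with the homotopy pullback \eqref{thm_Poincare_1}. Since $\pi_{1}(M)$ is finitely generated abelian and $\pi_{1}(M)/G\cong\mathbb{Z}^{n}$ is free, the extension $1\to G\to\pi_{1}(M)\to\mathbb{Z}^{n}\to 1$ splits, so $\pi_{1}(M)\cong G\oplus\mathbb{Z}^{n}$ with $G$ finitely generated abelian. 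This reduces the problem to the finitely many values $n\in\{1,2,3,4\}$.

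Next I would pin down $\overline X$ and $G$ using the classification of finitely dominated Poincaré spaces of dimension at most $3$ (Corollaries \ref{cor_Poincare-0}, \ref{cor_Poincare-1}, \ref{cor_Poincare-2}, \ref{cor_Poincare-3}, \ref{cor_Poincare-4}). For $n=4$ one gets $\overline X\simeq\ast$, $G=1$; for $n=3$, $\overline X\simeq S^{1}$, $G=\mathbb{Z}$; both force $\pi_{1}(M)=\mathbb{Z}^{4}$. For $n=2$, $\overline X$ is homotopy equivalent to a closed surface, and since $\pi_{1}(M)=G\oplus\mathbb{Z}^{2}$ is abelian, $G$ can only be $1$, $\mathbb{Z}/2$ or $\mathbb{Z}^{2}$, i.e.\ $\overline X\simeq S^{2}$, $\mathbb{R}P^{2}$ or $\mathbb{T}^{2}$. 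For $n=1$, $\overline X$ is a $3$-dimensional finitely dominated Poincaré space with finitely generated abelian $\pi_{1}=G$, and the classification leaves $\overline X\simeq S^{3}$ ($G=1$), a $3$-dimensional lens space such as $\mathbb{R}P^{3}$ ($G=\mathbb{Z}/k$), $S^{1}\times S^{2}$ or $S^{1}\tilde\times S^{2}$ ($G=\mathbb{Z}$), $S^{1}\times\mathbb{R}P^{2}$ ($G=\mathbb{Z}\oplus\mathbb{Z}/2$), or $\mathbb{T}^{3}$ ($G=\mathbb{Z}^{3}$). Sorting the outcomes by $\pi_{1}(M)$ produces exactly the six fundamental groups $\mathbb{Z}$, $\mathbb{Z}\oplus\mathbb{Z}/2$, $\mathbb{Z}\oplus\mathbb{Z}/k$ with $k>2$, $\mathbb{Z}^{2}$, $\mathbb{Z}^{2}\oplus\mathbb{Z}/2$ and $\mathbb{Z}^{4}$ of cases (1)--(6).

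Now $M$ has the homotopy type of the total space of a fibration over $\mathbb{T}^{n}$ with fiber $\overline X$, and the monodromy is severely constrained by the facts that $\pi_{1}(M)$ is abelian and that \eqref{thm_Poincare_1} is a homotopy pullback; in particular the monodromy acts trivially on $\pi_{1}(\overline X)$, and the residual freedom on the higher homotopy or homology of $\overline X$ is exactly what is recorded by the examples in Lemma \ref{lem_4-manifold}, Lemma \ref{lem_S1_lens} and Example \ref{exa_S2_T2}. Matching these admissible bundles with those examples identifies $M$ up to homotopy equivalence with $S^{1}\times S^{3}$ or $S^{1}\tilde\times S^{3}$ in case (1); with $S^{1}\times\mathbb{R}P^{3}$ or $S^{1}\tilde\times\mathbb{R}P^{3}$ in case (2); with $S^{1}\times L$ for a lens space $L$ in case (3); with $\mathbb{T}^{2}\times S^{2}$, $S^{1}\times(S^{1}\tilde\times S^{2})$, $E_{1}$ or $E_{2}$ in case (4); with $\mathbb{T}^{2}\times\mathbb{R}P^{2}$ or $PE_{1}$ in case (5); and with $\mathbb{T}^{4}$ in case (6) (here the relevant fibrations have aspherical total space, forcing $M\simeq\mathbb{T}^{4}$).

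It remains to upgrade homotopy equivalence to homeomorphism where asserted, which I would carry out by four-dimensional topological surgery; this is the main obstacle. For $\pi_{1}(M)=\mathbb{Z}^{4}$ one uses the topological rigidity of the torus (Freedman--Quinn, building on Wall, Hsiang--Shaneson and Kirby--Siebenmann) to conclude $M\cong\mathbb{T}^{4}$. For the torsion-free groups $\mathbb{Z}$ and $\mathbb{Z}^{2}$ — good groups in the sense of Freedman--Quinn — a computation of the normal invariants, the surgery obstruction groups and the Kirby--Siebenmann invariant shows that the homotopy type already determines the homeomorphism type, giving the homeomorphism statements in cases (1) and (4). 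When $\pi_{1}(M)$ has $2$-torsion (cases (2), (3), (5)), the structure set need no longer be trivial, so one asserts only the homotopy classification directly; but the finite cover of $M$ corresponding to the torsion subgroup of $\pi_{1}(M)$ falls into case (1) or (4), where the homeomorphism type is pinned down, which yields the statements about the $2$-fold (resp.\ $k$-fold) covers. Finally, for the ``Furthermore'' clause, each listed manifold is manifestly a closed DIFF $4$-manifold; a nontrivial self-covering is built by composing the projection onto the circle (or torus) factor with a linear self-cover of that factor, and for the twisted examples by passing to odd-fold fiberwise covers, using that an orientation-reversing diffeomorphism of $S^{3}$ or $S^{2}$ has square isotopic to the identity; and the homotopy types are pairwise distinct because within each fundamental-group family they are separated by orientability (the first Stiefel--Whitney class), the cup-product structure on cohomology, and the action of $\pi_{1}$ on $\pi_{2}$. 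The real work is concentrated in the surgery step, particularly in handling the $2$-torsion cases and in tracking the tangential invariants along the covering maps.
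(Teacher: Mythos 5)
Your proposal follows essentially the same route as the paper: feed $M$ into Theorem \ref{thm_Poincare}, use Corollaries \ref{cor_Poincare-0}--\ref{cor_Poincare-3} to classify the low-dimensional Poincar\'e fiber $\overline X$ and hence the six possible fundamental groups, pin down the homotopy type of $M$ via monodromy analysis, and then upgrade to homeomorphism by $4$-dimensional surgery where possible. The outline and the case enumeration are correct, including the observation that the extension $1\to G\to\pi_1(M)\to\mathbb{Z}^n\to 1$ splits. Two places where you are vaguer than the paper and where the real content sits: (i) the monodromy classification --- you say it is ``severely constrained'' and ``recorded by the examples,'' but the paper actually has to invoke Olum's theorem on maps of $3$-manifolds for cases (2) and (3), and Lemma \ref{lem_self-homotopy} on $\pi_1$ of the homotopy-self-equivalence space of $S^2$ and $\mathbb{R}P^2$ (using results of Hansen, Yamanoshita, Gon\c{c}alves--Spreafico) to get Proposition \ref{prop_S2_T2} for cases (4) and (5); (ii) the homeomorphism upgrade --- for $\pi_1=\mathbb{Z}$ you propose a direct structure-set calculation, whereas the paper instead cites the fibering Theorem~E of \cite{Qin_Su_Wang}, which converts the self-covering into a TOP $S^3$-bundle over $S^1$ and then appeals to $\pi_0\,\mathrm{Homeo}(S^3)=\mathbb{Z}/2$; and for $\pi_1=\mathbb{Z}^2$ the paper cites Hillman's Theorem~6.11 (structure set of orientable $4$-manifolds with $\pi_1\cong\mathbb{Z}^2$ and $\chi=0$). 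Your surgery sketch is plausible and you flag it as ``the main obstacle,'' which is honest; but as written it is not obviously sufficient --- the paper's use of the fibering theorem in case~(1) exploits the self-covering structure directly rather than relying on triviality of the structure set, so if you go the pure surgery route you must actually verify the structure sets of $S^1\times S^3$ and $S^1\tilde\times S^3$ are singletons, which is true but is a nontrivial Freedman--Quinn-style input you should cite rather than gesture at.
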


For more properties of the above concrete manifolds, see Section \ref{sec_low_dim}. In particular, they are smooth bundles over certain tori.
\begin{remark}
As indicated by Theorem \ref{thm_BS_manifold}, self-covering closed $4$-manifolds with noncommutative fundamental groups can be very complicated.
\end{remark}

\subsection{High Dimensional Manifolds}\label{subsec_high}
Before focusing on Question \ref{que_abelian}, we consider an easier problem: \textit{fiber a closed self-covering manifold with abelian fundamental group over $S^{1}$.} The following Theorem \ref{thm_bundle_s1} generalizes the main Theorem B in \cite{Qin_Su_Wang}.

\begin{bigthm}\label{thm_bundle_s1}
Suppose $M$ is a closed $\mathrm{CAT}$ $m$-manifold with $m \ge 6$. Suppose $M'$ is a nontrivial cover of $M$, and $h \colon  M \rightarrow M'$ is a homotopy equivalence. Suppose $\pi_{1} (M)$ is free abelian. Let $G: = \bigcap_{k=1}^{\infty} \mathrm{im} h_{\#}^{k}$. Then the following statements hold:
\begin{enumerate}
\item $\pi_{1} (M) /G \cong \mathbb{Z}^{n}$ for some $n >0$.

\item Let $\theta \colon  \pi_{1} (M) \rightarrow \mathbb{Z}$ be an epimorphism with $G \le \ker \theta$. Then there exists a $\mathrm{CAT}$ bundle projection $p \colon  M \rightarrow S^{1}$ with $p_{\#} = \theta$.
\end{enumerate}
\end{bigthm}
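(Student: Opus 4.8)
The plan is to obtain part (1) directly from Theorem~\ref{thm_homotopy_finite}, then to construct a map $p\colon M\to S^{1}$ realizing $\theta$ whose infinite cyclic cover is visibly finitely dominated, and finally to invoke Farrell's fibering theorem, exploiting that the Whitehead group of a finitely generated free abelian group vanishes. For part (1): $\pi_{1}(M)$ is free abelian and, being the fundamental group of a closed manifold, finitely generated, say $\pi_{1}(M)\cong\mathbb{Z}^{r}$; then Theorem~\ref{thm_homotopy_finite}(1) yields $\pi_{1}(M)/G\cong\mathbb{Z}^{n}$ with $n>0$, and $M'$ a finite cover of $M$. Moreover, a closed CAT manifold is homotopy equivalent to a finite CW complex, hence finitely dominated, so Theorem~\ref{thm_homotopy_finite}(2) shows that the cover $\overline{M}$ of $M$ with $\pi_{1}(\overline{M})=G$ is finitely dominated.

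Next I would build the map. Since $G\le\ker\theta$, the epimorphism $\theta$ factors as $\pi_{1}(M)\xrightarrow{\ \pi\ }\pi_{1}(M)/G\cong\mathbb{Z}^{n}\xrightarrow{\ \bar\theta\ }\mathbb{Z}$ with $\bar\theta$ surjective. Realize $\pi$ by a map $p'\colon M\to\mathbb{T}^{n}=K(\mathbb{Z}^{n},1)$ (either directly, or via Theorem~\ref{thm_Poincare}(2)); its homotopy fiber is $\overline{M}$. After a change of basis of $\mathbb{Z}^{n}$ we may take $\bar\theta$ to be the last coordinate projection, giving a trivial bundle $\bar\theta_{\mathbb{T}}\colon\mathbb{T}^{n}\cong\mathbb{T}^{n-1}\times S^{1}\to S^{1}$; set $p:=\bar\theta_{\mathbb{T}}\circ p'$, so that $p_{\#}=\bar\theta\circ\pi=\theta$. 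The homotopy fiber of $p$ is the infinite cyclic cover $M_{\theta}$ (the cover of $M$ with $\pi_{1}=\ker\theta$), and the standard homotopy fibration relating the homotopy fibers of the composite, $\mathrm{hofib}(p')\to\mathrm{hofib}(\bar\theta_{\mathbb{T}}\circ p')\to\mathrm{hofib}(\bar\theta_{\mathbb{T}})$, gives a homotopy fibration $\overline{M}\to M_{\theta}\to\mathbb{T}^{n-1}$. As $\mathbb{T}^{n-1}$ is a finite complex and $\overline{M}$ is finitely dominated, $M_{\theta}$ is finitely dominated: a fibration over a finite CW complex with finitely dominated fiber has finitely dominated total space (build $M_{\theta}$ by gluing copies of $\overline{M}$ over the finitely many cells of $\mathbb{T}^{n-1}$ and use that finite domination is preserved under homotopy pushout).

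Finally, apply Farrell's fibering theorem in the category CAT, valid for closed CAT $m$-manifolds with $m\ge 6$ (this is the step that generalizes \cite[Theorem~B]{Qin_Su_Wang}). Given the epimorphism $p_{\#}=\theta$ onto $\mathbb{Z}$ with finitely dominated infinite cyclic cover $M_{\theta}$, the obstructions to deforming $p$ to a CAT fiber bundle projection over $S^{1}$ lie in algebraic $K$-groups built from the group rings $\mathbb{Z}[\pi_{1}(M)]$ and $\mathbb{Z}[\ker\theta]$, namely Farrell's obstruction in $\mathrm{Wh}(\pi_{1}(M))$ together with the finiteness obstruction of $M_{\theta}$ in $\widetilde{K}_{0}(\mathbb{Z}[\ker\theta])$. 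Here $\pi_{1}(M)\cong\mathbb{Z}^{r}$ and $\ker\theta\cong\mathbb{Z}^{r-1}$ are finitely generated free abelian, so by the Bass--Heller--Swan theorem all of these groups vanish; hence the obstructions vanish automatically, and $p$ is homotopic to a CAT bundle projection $\bar p\colon M\to S^{1}$ with $\bar p_{\#}=p_{\#}=\theta$.

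The main obstacle is the finite-domination statement for $M_{\theta}$: it rests on Theorem~\ref{thm_homotopy_finite} (itself a central result of the paper) applied to $\overline{M}$, together with the inheritance of finite domination along the fibration over $\mathbb{T}^{n-1}$. Once $M_{\theta}$ is known to be finitely dominated, the rest is a routine application of Farrell's theorem, with no residual obstruction because $\mathrm{Wh}(\mathbb{Z}^{r})=0=\widetilde{K}_{0}(\mathbb{Z}[\mathbb{Z}^{r-1}])$; this vanishing is precisely why fibering over $S^{1}$ is easier than the fibering-over-a-torus problem treated later in the paper.
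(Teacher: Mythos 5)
Your proposal is correct and part (1) and the finiteness analysis of $M_\theta = Y$ (the cover with fundamental group $\ker\theta$) are essentially the same as the paper's (the paper invokes Theorem~\ref{thm_Poincare}, which itself rests on Theorem~\ref{thm_homotopy_finite}, and the fibration-over-$\mathbb{T}^{n-1}$ picture appears verbatim). The genuine divergence is in the final fibering step: you invoke Farrell's fibering-over-$S^1$ theorem, with the obstruction package (Wall's finiteness obstruction in $\widetilde{K}_0(\mathbb{Z}[\ker\theta])$ and Farrell's torsion obstruction in $\mathrm{Wh}(\pi_1 M)$) killed by Bass--Heller--Swan, whereas the paper instead uses $\widetilde{K}_0(\mathbb{Z}[G])=0$ to upgrade $\overline{M}$, hence $Y$, to a genuine finite complex and then applies Cappell's Splitting Theorem plus the $S$-Cobordism Theorem (deferring the details to \cite[p.~1899]{Qin_Su_Wang}). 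Both routes are legitimate here and the freedom to choose is precisely due to the free-abelian hypothesis: once $\mathrm{Wh}$ and $\widetilde{K}_0$ of all relevant group rings vanish, Farrell's obstruction theory gives the most direct path, while the Cappell-splitting route is the one the authors set up in \cite{Qin_Su_Wang} (where it dovetails with the non-vanishing obstruction examples of their Theorems~C and~D); your approach is arguably the more classical and self-contained one for this particular statement. One small point worth making explicit if you write this up: Farrell's theorem is classically stated for PL and DIFF, and the TOP case requires the Kirby--Siebenmann machinery (this is folklore, but since the theorem claims all three CAT categories simultaneously you should either cite a reference covering TOP or note that the argument in \cite{Qin_Su_Wang} handles the three categories uniformly).
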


\begin{remark}
Assuming further $n=1$ in Theorem \ref{thm_bundle_s1}, this theorem becomes the main part of \cite[Theorem~B]{Qin_Su_Wang}.
\end{remark}

\begin{remark}
If $m \le 5$, $\pi_{1} (M) = \mathbb{Z}$, and $\mathrm{CAT} = \mathrm{TOP}$, then $G=0$ and the conclusion of Theorem \ref{thm_bundle_s1} still holds. See the (1) in Theorem \ref{thm_bundle_T2}.
\end{remark}

\begin{remark}
By \cite[Theorems~C~\&~D]{Qin_Su_Wang}, the conclusion of Theorem \ref{thm_bundle_s1} is no longer true if we allow $\pi_{1} (M)$ to contain torsion elements.
\end{remark}

\begin{bigcor}
Suppose $M$ is a closed $\mathrm{CAT}$ $m$-manifold such that $m \ge 6$ and $\pi_{1} (M)$ is free abelian. If $M$ is homotopy equivalent to one of its nontrival cover, then it is a $\mathrm{CAT}$ bundle over $S^{1}$ with a connected fiber.
\end{bigcor}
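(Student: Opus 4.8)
The plan is to obtain this as an immediate consequence of Theorem \ref{thm_bundle_s1}. First I would unwind the hypothesis: to say that $M$ is homotopy equivalent to a nontrivial cover of itself is to say that there are a nontrivial covering $q \colon M' \to M$ and a homotopy equivalence $h \colon M \to M'$. Combined with $m \ge 6$ and the freeness of $\pi_1(M)$, this is precisely the hypothesis of Theorem \ref{thm_bundle_s1}, so that theorem applies verbatim.

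Set $G := \bigcap_{k=1}^{\infty} \mathrm{im}\, h_\#^{k}$ as there. Part (1) of Theorem \ref{thm_bundle_s1} gives $\pi_1(M)/G \cong \mathbb{Z}^{n}$ with $n > 0$; in particular $\pi_1(M)/G$ admits an epimorphism onto $\mathbb{Z}$ (for instance projection to a coordinate of $\mathbb{Z}^n$), and precomposing with the quotient map $\pi_1(M) \to \pi_1(M)/G$ yields an epimorphism $\theta \colon \pi_1(M) \to \mathbb{Z}$ with $G \le \ker\theta$. Applying part (2) of Theorem \ref{thm_bundle_s1} to this $\theta$ produces a $\mathrm{CAT}$ bundle projection $p \colon M \to S^1$ with $p_\# = \theta$.

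Finally I would check that the fiber $F$ of $p$ is connected. As a $\mathrm{CAT}$ bundle over $S^1$, $p$ is in particular a Serre fibration, so there is an exact sequence of pointed sets $\pi_1(M) \xrightarrow{\;p_\#\;} \pi_1(S^1) \to \pi_0(F) \to \pi_0(M)$; since $M$ is connected and $p_\# = \theta$ is onto, exactness forces $\pi_0(F)$ to be trivial, i.e.\ $F$ is connected. That is the whole argument. There is no genuine obstacle: all the substantive content lives in Theorem \ref{thm_bundle_s1}, and the only point demanding any attention is to arrange that the chosen homomorphism $\theta$ is surjective (not merely nonzero) so that the resulting bundle has a connected fiber.
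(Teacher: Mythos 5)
Your argument is correct and is precisely the intended derivation: the paper states this corollary without proof as an immediate consequence of Theorem \ref{thm_bundle_s1}, and you supply the two small details the paper leaves implicit, namely the construction of a surjective $\theta \colon \pi_1(M) \to \mathbb{Z}$ with $G \le \ker\theta$ from $\pi_1(M)/G \cong \mathbb{Z}^n$ and the verification via the homotopy exact sequence that surjectivity of $p_\# = \theta$ forces the fiber to be connected. Nothing to change.
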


In the following, we try to get some positive solutions to Question \ref{que_abelian} under mild extra assumptions. It's easy to see $G: = \ker p_{\#} = \bigcap_{k=1}^{\infty} \mathrm{im} h_{\#}^{k}$ and $n = \mathrm{rank} (\pi_{1} (M) / G)$ in this question. Theorem \ref{thm_Poincare} tells us that there always exists a homotopy pullback \eqref{que_abelian_1} such that the homotopy fiber of $p$ is a finitely dominated Poincar\'{e} space. The remaining task is to improve $p$ in its homotopy class.

The case of $n=1$ had been studied in \cite{Qin_Su_Wang}. More precisely, assuming $n=1$, if either (i) $\pi_{1} (M)$ is free abelian and $m \ge 6$, or (ii) $\pi_{1} (M) = \mathbb{Z}$ and $\mathrm{CAT} = \mathrm{TOP}$, then the answer to Question \ref{que_abelian} is yes.

Now we deal with the case of $n>1$. Fibering over a torus is much more difficult than fibering over a circle. To obtain good results, we work in the category $\mathrm{TOP}$ only. We shall improve the $p$ in Theorem \ref{thm_Poincare} step by step which leads to fibering results in four senses: approximated fibrations, block bundles, stable bundles and bundles.

The concepts of approximated fibrations and block bundles will be recalled in Definitions \ref{def_approximate_fibration} and \ref{def_block_bundle}, respectively. A bundle is clearly an approximated fibration and a block bundle. By \cite[Lemma~3.3.1]{Quinn79}, every block bundle projection between manifolds is homotopic to a map which is both a block bundle projection and an approximated fibration projection. So, roughly speaking, one has the implications:
\[
\text{(fiber) bundle} \Rightarrow \text{block bundle} \Rightarrow \text{approximated fibration}.
\]
The theory of block bundles is mature and of great importance, especially in $\mathrm{PL}$, to geometric topology (see e.g. \cite{Rourke_Sanderson68I}, \cite{Rourke_Sanderson68II}, \cite{Rourke_Sanderson68III}, \cite{Rourke_Sanderson71}, \cite{BLR1975}, and \cite{Ebert_Randal}). The theory of approximated fibration is also well developed and important in $\mathrm{TOP}$ (see e.g. \cite{Coram_Duvall1}, \cite{Coram_Duvall2}, \cite{Ferry}, \cite{HTW90} and \cite{FLS2018}). Though these two notions are weaker than bundles, they also possess nice properties. For example, by \cite[Corollary~12.14~\&~Theorem~12.15]{HTW90}, if $p \colon  E \rightarrow B$ is an approximated fibration projection between manifolds, then so is $p|_{p^{-1} (U)} \colon  p^{-1} (U) \rightarrow U$ for every open subset $U$ of $B$, and $p^{-1} (U)$ has the homotopy type of the homotopy fiber of $p$ provided that $U$ is contractible. These facts are very desirable from a homotopy view point.

The following Theorem \ref{thm_approximate_fibration} provides a positive solution to Question \ref{que_abelian} in the sense of approximate fibrations. Recall that a finite Poincar\'{e} complex is both a finite CW complex and a Poincar\'{e} space.
\begin{bigthm}\label{thm_approximate_fibration}
Suppose the $X$ in Theorem \ref{thm_Poincare} is a closed $\mathrm{TOP}$ $m$-manifold $M$ such that $m \ne 4$ and $\pi_{1} (M)$ is free abelian. Then $p$ can be further arranged as an approximate fibration projection, and the homotopy fiber of $p$ is homotopy equivalent to a finite Poincar\'{e} complex.
\end{bigthm}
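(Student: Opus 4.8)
The plan has two largely independent parts: the finiteness statement, which is formal once $\pi_{1}$ is free abelian, and the improvement of the map $p$ from Theorem~\ref{thm_Poincare} to an approximate fibration, which carries the real content.

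First I would settle the finiteness claim. By Theorem~\ref{thm_Poincare} the homotopy fiber of $p$ is the cover $\overline{M}$ of $M$ with $\pi_{1}(\overline{M})=G$, and it is a finitely dominated Poincar\'e space of formal dimension $m-n$. As $\pi_{1}(M)$ is free abelian and $G\le\pi_{1}(M)$, the group $G$ is free abelian, say $G\cong\mathbb{Z}^{s}$, so Wall's finiteness obstruction of $\overline{M}$ lies in $\widetilde{K}_{0}(\mathbb{Z}[\mathbb{Z}^{s}])$, which vanishes by the iterated Bass--Heller--Swan theorem (starting from $\widetilde{K}_{0}(\mathbb{Z})=0$, $K_{-i}(\mathbb{Z})=0$ for $i\ge 1$, and regularity of $\mathbb{Z}[\mathbb{Z}^{j}]$). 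Hence $\overline{M}$ is homotopy equivalent to a finite CW complex, and being a Poincar\'e space, to a finite Poincar\'e complex. (This is precisely where the hypothesis \emph{free} abelian, rather than just abelian, is used: with torsion in $G$ the group $\widetilde{K}_{0}(\mathbb{Z}[G])$ need not vanish, which is the source of the non-fibering examples of \cite{Qin_Su_Wang}.)

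To make $p$ an approximate fibration I would fiber off one circle factor at a time. Fix a splitting $\pi_{1}(M)=G\oplus\mathbb{Z}^{n}$ (possible, both summands free) with coordinate epimorphisms $\theta_{1},\dots,\theta_{n}\colon\pi_{1}(M)\to\mathbb{Z}$, the components of $p_{\#}$; each has $G\le\ker\theta_{i}$. The composite $M\xrightarrow{p}\mathbb{T}^{n}\to S^{1}$ onto the first coordinate has homotopy fiber a $\mathbb{Z}$-cover $\widetilde{M}_{1}$ which, by Theorem~\ref{thm_Poincare}(2), fibers over $\mathbb{T}^{n-1}$ with fiber $\overline{M}$; since $\overline{M}$ and $\mathbb{T}^{n-1}$ are finite complexes, $\widetilde{M}_{1}$ is finitely dominated. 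By the fibering theorems for manifolds over $S^{1}$ in TOP (valid since $\dim M\ne 4$), and because the fibering obstruction in $\mathrm{Wh}(\pi_{1}(M))$ and the finiteness obstruction in $\widetilde{K}_{0}(\mathbb{Z}[\ker\theta_{1}])$ both vanish (free abelian groups, Bass--Heller--Swan again), $M\to S^{1}$ is homotopic to a TOP bundle projection with fiber a closed TOP $(m-1)$-manifold $M_{1}$ with free abelian $\pi_{1}$. Since $\pi_{1}(M)$ is abelian, the monodromy acts trivially on $\pi_{1}(M_{1})$, so $\theta_{2},\dots,\theta_{n}$ assemble to a $\pi_{1}$-surjection $M_{1}\to\mathbb{T}^{n-1}$ compatible with this bundle structure and with homotopy fiber $\overline{M}$. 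Iterating $n-1$ times produces a tower of TOP bundles over circles, at the bottom of which sits a closed TOP manifold $M_{n-1}$ with a $\pi_{1}$-surjection $M_{n-1}\to S^{1}$ whose homotopy fiber is $\overline{M}$.

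At this last stage a genuine bundle structure would force $\overline{M}$ to be a closed manifold, so here I would instead invoke the theory of manifold approximate fibrations over $S^{1}$ (Chapman; \cite{HTW90}, \cite{Ferry}): a closed TOP manifold of dimension $\ne 4$ with a $\pi_{1}$-surjection to $S^{1}$ and finitely dominated homotopy fiber is homotopic to an approximate fibration once the controlled analogue of Farrell's obstruction --- assembled from $\mathrm{Wh}$, $\widetilde{K}_{0}$ and the lower $K$-groups of $\mathbb{Z}[\pi_{1}(\text{fiber})]$ --- vanishes, which it does here. Since bundles are approximate fibrations and approximate fibrations over manifolds compose and restrict well (\cite[Ch.~12]{HTW90}), composing the tower yields a map $M\to\mathbb{T}^{n}$ in the homotopy class of $p$ that is an approximate fibration, with homotopy fiber $\overline{M}$, a finite Poincar\'e complex by the first part. (One could instead apply directly a criterion for a $\pi_{1}$-surjection from a closed manifold to a closed aspherical manifold to be homotopic to a manifold approximate fibration; over $\mathbb{T}^{n}$ the obstruction splits via Bass--Heller--Swan/assembly into the same vanishing $K$-groups of $\mathbb{Z}[G]$.) The hard part is exactly this last step and the assembly: because $\overline{M}$ need only be a Poincar\'e space one must leave the category of bundles and identify the obstruction to an approximate fibration with a vanishing controlled $K$-group, and one must arrange the successive circle-fiberings to be mutually compatible so that they compose to a map in the class of $p$ --- the bookkeeping here is what abelianness of $\pi_{1}(M)$ buys (trivial monodromy on fiber $\pi_{1}$, and maps into the aspherical $\mathbb{T}^{k}$ determined by $\pi_{1}$), while $\dim M\ne 4$ is what makes the requisite surgery and controlled-topology inputs available, with the classification of low-dimensional Poincar\'e spaces covering the cases where an intermediate manifold would otherwise be too small.
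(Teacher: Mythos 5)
Your finiteness argument is the same as the paper's and is correct: $\pi_{1}(\overline{M}) = G$ is free abelian, $\widetilde{K}_{0}(\mathbb{Z}[G]) = 0$, so Wall's obstruction vanishes and $\overline{M}$ is homotopy equivalent to a finite (Poincar\'e) complex.

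For the approximate-fibration part, however, you take a genuinely different route from the paper, and that route has a real gap. The paper's proof is a one-step application of Farrell--L\"uck--Steimle \cite[Theorem~1.4]{FLS2018}: $\mathbb{T}^{n}$ is an allowable base, the single obstruction $N\tau(p)$ lies in a quotient of $\mathrm{Wh}(\pi_{1}(M)) = 0$, and the only dimension restriction on the total space is $\dim M \neq 4$. This explains why the theorem asks only for $m \neq 4$ and places no condition whatsoever on $m - n$. By contrast, your strategy of peeling off one circle factor at a time consumes a dimension per step: the intermediate total spaces have dimensions $m-1, m-2, \ldots, m-n+1$. Two things then break. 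First, the intermediate \emph{genuine-bundle} steps over $S^{1}$ need a surgery-theoretic fibering theorem, which has its own dimension threshold (the paper's Theorem~\ref{thm_bundle_s1} needs $m \ge 6$; the low-dimensional TOP supplements in \cite{Qin_Su_Wang} are only established there for $G = 0$, whereas your intermediate fibers have nontrivial free abelian $\pi_{1}$). Second, and more seriously, the last step asks for an approximate fibration with total space $M_{n-1}$ of dimension $m - n + 1$, and nothing in the hypotheses prevents $m - n + 1 = 4$; this is exactly the dimension excluded by the controlled-topology input, so your tower can bottom out in the forbidden case even when $m \neq 4$. You flag "the classification of low-dimensional Poincar\'e spaces" as a patch for small intermediate manifolds, but that handles the \emph{fiber} being small, not the \emph{total space} hitting $4$.

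There is a second unresolved issue you yourself flag as "the hard part": making the successive circle fiberings geometrically, not merely homotopy-theoretically, compatible. Abelianness of $\pi_{1}(M)$ gives you that the monodromy of each bundle is homotopically inert on the fiber's $\pi_{1}$, which suffices to produce a \emph{map} $M \to \mathbb{T}^{n}$ in the right homotopy class; it does not by itself make the assembled map an approximate fibration, because approximate fibrations do not naively compose or assemble fiberwise the way genuine fibrations do. Turning this homotopy compatibility into a controlled statement is precisely what a theorem like \cite[Theorem~1.4]{FLS2018} packages, and your parenthetical alternative ("apply directly a criterion for a $\pi_{1}$-surjection from a closed manifold to a closed aspherical manifold to be homotopic to a manifold approximate fibration") is in fact the paper's actual proof; you would need to develop that, rather than the iterated-circle tower, to close the argument. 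As presented, the tower approach proves, at best, a version of the theorem with extra hypotheses on $m - n$, which is substantially weaker than the statement.
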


The following Theorem \ref{thm_block_bundle} gives a positive solution to Question \ref{que_abelian} in the sense of block bundles.
\begin{bigthm}\label{thm_block_bundle}
Suppose the $X$ in Theorem \ref{thm_Poincare} is a closed $\mathrm{TOP}$ $m$-manifold $M$ such that $m-n \ge 5$ and $\pi_{1} (M)$ is free abelian. Then $p$ can be further arranged as a $\mathrm{TOP}$ block bundle projection. Here $\mathbb{T}^{n}$ carries a suitable triangulation compatible with its standard $\mathrm{PL}$ structure.
\end{bigthm}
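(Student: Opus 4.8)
The plan is to start from the approximate fibration $p\colon M\to\mathbb{T}^n$ furnished by Theorem~\ref{thm_approximate_fibration} and to improve it within its homotopy class in two further stages: first upgrade the finite Poincar\'e complex fiber to a genuine closed $\mathrm{TOP}$ manifold, and then straighten the resulting approximate fibration to a $\mathrm{TOP}$ block bundle. Observe first that $m-n\ge 5$ and $n\ge 1$ force $m\ge 6$, so in particular $m\ne 4$ and Theorem~\ref{thm_approximate_fibration} applies: after a homotopy, $p$ is an approximate fibration whose homotopy fiber is a finite Poincar\'e complex $F\simeq\overline{M}$ of formal dimension $m-n$, still fitting the homotopy pullback~\eqref{thm_Poincare_1}.

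The first improvement is to show $F$ is homotopy equivalent to a closed $\mathrm{TOP}$ $(m-n)$-manifold. Using part~(1) of Theorem~\ref{thm_Poincare}, fix a basis of $\pi_1(M)/G\cong\mathbb{Z}^n$ and let $H_j\le\pi_1(M)$ be the preimage of the sublattice spanned by the last $n-j$ basis elements, so that $H_0=\pi_1(M)$, $H_n=G$, each $H_j$ is free abelian, and each $H_{j+1}$ is the kernel of an epimorphism $H_j\twoheadrightarrow\mathbb{Z}$. Let $M_j$ be the cover of $M$ with $\pi_1(M_j)=H_j$. Composing the covering $M_j\to M$ with $p$ and with a projection $\mathbb{T}^n\to\mathbb{T}^{n-j}$ inducing on $\pi_1$ the quotient of $\mathbb{Z}^n$ by the first $j$ basis vectors gives a map $M_j\to\mathbb{T}^{n-j}$ whose homotopy fiber, since $p_\#$ is surjective with kernel $G$ and tori are aspherical, is $\overline{M}$; replacing it by a fibration and inducting over the cells of $\mathbb{T}^{n-j}$, and using that $\overline{M}$ is finitely dominated by Theorem~\ref{thm_Poincare}(3), we see each $M_j$ is finitely dominated. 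Now iterate Farrell's fibering theorem along the tower $M=M_0,M_1,\dots,M_n=\overline{M}$: if $M_j\simeq V_j$ for some closed $\mathrm{TOP}$ manifold $V_j$, then for $j\le n-1$ we have $\dim V_j=m-j\ge m-(n-1)\ge 6$, $\pi_1(V_j)=H_j$ is free abelian so $\mathrm{Wh}(H_j)=\widetilde{K}_0(\mathbb{Z}H_j)=0$ by Bass--Heller--Swan, and the infinite cyclic cover of $V_j$ corresponding to $H_j\twoheadrightarrow\mathbb{Z}$ is homotopy equivalent to the finitely dominated space $M_{j+1}$; hence $V_j$ fibers over $S^1$ and its fiber $V_{j+1}\simeq M_{j+1}$ is a closed $\mathrm{TOP}$ manifold of dimension $m-j-1$. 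After $n$ steps, $F\simeq\overline{M}\simeq\overline{F}$ for a closed $\mathrm{TOP}$ $(m-n)$-manifold $\overline{F}$; the hypothesis $m-n\ge 5$ is exactly what keeps each total space $V_j$ with $j\le n-1$ in the range of Farrell's theorem. (One uses Farrell's theorem directly rather than Theorem~\ref{thm_bundle_s1}, because the self-covering structure of $M$ need not descend to the intermediate covers $M_j$ — the endomorphism induced by $h_\#$ on $\pi_1(M)/G$ need not preserve the chosen flag — whereas finite domination of $M_j$ is all that is needed here.)

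The second improvement is to straighten $p$ to a block bundle projection over a triangulation of $\mathbb{T}^n$ compatible with its standard $\mathrm{PL}$ structure. Since the homotopy fiber of the approximate fibration $p$ is now the closed manifold $\overline{F}$ of dimension $m-n\ge 5$, I would run the block-bundle straightening theory developed around \cite{Quinn79} and \cite{HTW90}, working cell by cell over the dual handle decomposition of the triangulated $\mathbb{T}^n$. Over the (contractible) star of each simplex, an approximate fibration with closed manifold fiber of dimension $\ge 5$ is controlled-equivalent to a trivial block by the local theory of manifold approximate fibrations; the obstruction to patching these local block structures into a global block bundle projection homotopic to $p$ is, step by step, of $K$-theoretic nature, living in Whitehead and reduced projective class groups of the fundamental groups of the preimages of stars of simplices. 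Each such group is an extension of a free abelian group by $\pi_1(\overline{F})=G$ inside the free abelian group $\pi_1(M)$, hence is itself finitely generated free abelian, so all of these groups have vanishing Whitehead and reduced projective class groups; any residual $L$-theoretic contribution is controlled by the fact that $M$ is already a closed manifold and $p$ is already an approximate fibration. This yields a $\mathrm{TOP}$ block bundle projection homotopic to $p$, hence still fitting~\eqref{thm_Poincare_1}.

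The hard part will be this last stage: making precise, over a triangulated torus rather than over a single simplex, that an approximate fibration whose fiber is a closed manifold of dimension $\ge 5$ can be promoted to a block bundle, and checking that the only obstructions are the $K$-theoretic ones killed by free abelianness. Concretely, one must organize the controlled surgery data so that the homotopy-theoretic content of $p$ together with its approximate fibration structure determines compatible germs of manifold block structures over all simplices of $\mathbb{T}^n$ simultaneously, with no surviving obstruction. By contrast, the analogous statements for stable bundles and for genuine fiber bundles are strictly harder, and are treated separately.
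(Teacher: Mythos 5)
Your overall strategy---start from the approximate fibration of Theorem~\ref{thm_approximate_fibration} and upgrade it to a block bundle, with the obstructions being $K$-theoretic and killed by free abelianness---is conceptually the right one. However, the middle stage of your argument, where you iterate Farrell's fibering theorem down the tower $M=M_0, M_1, \dots, M_n=\overline{M}$ in order to realize the homotopy fiber as a closed $(m-n)$-manifold, is an unnecessary detour, and the ``hard part'' you flag at the end is exactly the point the paper disposes of with a single citation.

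The paper's proof invokes Quinn's block-bundle approximation theorem, \cite[Theorem~3.3.2]{Quinn79}, which takes as input precisely what Theorem~\ref{thm_approximate_fibration} already supplies: an approximate fibration $p\colon M\to\mathbb{T}^n$ between manifolds with homotopy fiber of dimension $m-n\ge 5$. The hypotheses to check are that the homotopy fiber $\overline{M}$ is homotopy equivalent to a finite CW complex and that $\mathrm{Wh}(\pi_1(\overline{M})\times\mathbb{Z}^k)=0$ for all $k\ge 0$; both hold by Bass--Heller--Swan since $\pi_1(\overline{M})=G$ is finitely generated free abelian. Quinn's theorem then produces a block bundle projection homotopic to $p$ over a suitable triangulation of $\mathbb{T}^n$, and that is the whole proof. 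Crucially, Quinn's theorem does \emph{not} require knowing in advance that the homotopy fiber is a closed manifold: the output is a block bundle whose blocks are manifolds, so the fiber over a vertex is automatically a closed $(m-n)$-manifold homotopy equivalent to $\overline{M}$. Your iterated-Farrell step is therefore redundant. It is not wrong---the finite-domination claim for the intermediate covers $M_j$ can be justified, since each $M_j$ is the homotopy total space of a fibration over the finite complex $\mathbb{T}^{n-j}$ with finitely dominated fiber $\overline{M}$, and the dimension bookkeeping $m-j\ge 6$ for $j\le n-1$ does follow from $m-n\ge 5$ and $n>0$---but it buys nothing that Quinn's theorem doesn't already give. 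Replacing your second and third paragraphs with the precise citation of \cite[Theorem~3.3.2]{Quinn79}, together with the two finiteness/Whitehead observations, yields the paper's argument.
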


\begin{remark}
Again, by \cite[Theorems~C~\&~D]{Qin_Su_Wang}, if we allow $\pi_{1} (M)$ to contain torsion elements, then the conclusion of Theorem \ref{thm_block_bundle} is no longer true even if $n=1$. In fact, it's easy to see every block bundle projection over $S^{1}$ is homotopic to a bundle projection.
\end{remark}

The following Theorem \ref{thm_stable_bundle} gives a positive solution to Question \ref{que_abelian} in the sense of stable bundles.
\begin{bigthm}\label{thm_stable_bundle}
Under the assumption of Theorem \ref{thm_block_bundle}, let $s = \frac{1}{2} n (n-1)$. Then the composition map $M \times \mathbb{T}^{s} \rightarrow M \overset{p}{\rightarrow} \mathbb{T}^{n}$ is homotopic to a $\mathrm{TOP}$ bundle projection.
\end{bigthm}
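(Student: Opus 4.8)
The plan is to start from the $\mathrm{TOP}$ block bundle projection $p\colon M\to\mathbb{T}^{n}$ produced by Theorem~\ref{thm_block_bundle} and to ``unblock'' its fibrewise stabilisation by $\mathbb{T}^{s}$. Recall that the fibre $F:=p^{-1}(\mathrm{pt})$ is a closed $\mathrm{TOP}$ manifold with $\dim F=m-n\ge5$ which is homotopy equivalent to the infinite cover $\overline{M}$ of Theorem~\ref{thm_Poincare}; in particular $\pi_{1}(F)=G$, and this is free abelian, being a subgroup of the free abelian group $\pi_{1}(M)$. The composite $\bar p:=p\circ\mathrm{pr}_{1}\colon M\times\mathbb{T}^{s}\to\mathbb{T}^{n}$ is again a $\mathrm{TOP}$ block bundle projection, now with fibre $F\times\mathbb{T}^{s}$, and the goal is to deform $\bar p$ in its homotopy class to a genuine $\mathrm{TOP}$ bundle projection. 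Writing $\mathrm{TOP}(N)$ for the simplicial group of homeomorphisms of a closed $\mathrm{TOP}$ manifold $N$ and $\widetilde{\mathrm{TOP}}(N)$ for the simplicial group of block homeomorphisms, block bundles with fibre $N$ over a triangulated polyhedron are classified by homotopy classes of maps into $B\widetilde{\mathrm{TOP}}(N)$ and genuine bundles by maps into $B\mathrm{TOP}(N)$. So the task becomes: show that the classifying map $f\colon\mathbb{T}^{n}\to B\widetilde{\mathrm{TOP}}(F)$ of $p$, followed by the stabilisation $\iota_{s}\colon B\widetilde{\mathrm{TOP}}(F)\to B\widetilde{\mathrm{TOP}}(F\times\mathbb{T}^{s})$ induced by $\varphi\mapsto\varphi\times\mathrm{id}_{\mathbb{T}^{s}}$, lifts through the forgetful map $B\mathrm{TOP}(F\times\mathbb{T}^{s})\to B\widetilde{\mathrm{TOP}}(F\times\mathbb{T}^{s})$.

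I would attack this lifting problem by obstruction theory along the standard CW structure on $\mathbb{T}^{n}$, whose $j$-cells $e_{I}$ are indexed by the $j$-element subsets $I\subseteq\{1,\dots,n\}$ and have closures the sub-tori $\mathbb{T}^{j}_{I}$. Let $\Phi(N)$ denote the homotopy fibre of $B\mathrm{TOP}(N)\to B\widetilde{\mathrm{TOP}}(N)$, so $\Phi(N)$ fits into a fibration $\mathrm{TOP}(N)\to\widetilde{\mathrm{TOP}}(N)\to\Phi(N)$. Since every block homeomorphism over a $1$-simplex is block-isotopic to an honest one (by $\mathrm{TOP}$ isotopy extension, equivalently the Kirby--Siebenmann product structure theorem), $\pi_{0}\mathrm{TOP}(N)\to\pi_{0}\widetilde{\mathrm{TOP}}(N)$ is a bijection, $\Phi(N)$ is connected, and $\bar p$ is already a genuine bundle over the $1$-skeleton of $\mathbb{T}^{n}$. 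Thus the lift of $\iota_{s}\circ f$ over the $1$-skeleton is unobstructed, and the successive obstructions to extending it lie in $H^{j}\bigl(\mathbb{T}^{n};\pi_{j-1}\Phi(F\times\mathbb{T}^{s})\bigr)$ for $2\le j\le n$, with coefficients twisted by the monodromy of $p$; for untwisted coefficients the primary obstruction $o_{2}\in H^{2}(\mathbb{T}^{n};\pi_{1}\Phi(F\times\mathbb{T}^{s}))$ is a direct sum of $\binom{n}{2}$ copies of $\pi_{1}\Phi$, one per $2$-cell of $\mathbb{T}^{n}$.

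To understand the coefficient groups I would invoke the comparison between block and honest automorphism spaces (Morlet, Burghelea--Lashof and its $\mathrm{TOP}$ analogue, see \cite{BLR1975}): for a closed $\mathrm{TOP}$ manifold $N$ of dimension $\ge5$ the space $\Phi(N)$ is, in a range and in general after an assembly map, built out of the $\mathrm{TOP}$ pseudoisotopy spaces of $N$, hence out of the algebraic $K$-theory of $\mathbb{Z}[\pi_{1}N]$ (Whitehead group, $\widetilde K_{0}$, the negative $K$-groups) together with higher Waldhausen $\mathrm{Wh}$-terms. For $N=F$ one has $\pi_{1}F=G\cong\mathbb{Z}^{r}$, and since $\mathbb{Z}[\mathbb{Z}^{r}]$ is regular, Bass--Heller--Swan gives $\mathrm{Wh}(\mathbb{Z}^{r})=0$, $\widetilde K_{0}(\mathbb{Z}[\mathbb{Z}^{r}])=0$ and $K_{-i}(\mathbb{Z}[\mathbb{Z}^{r}])=0$ for all $i\ge1$; this removes the $K_{\le1}$ contributions to the low-degree obstruction groups, leaving only obstructions of ``higher pseudoisotopy'' type ($K_{\ge2}$ and Waldhausen terms), which do not automatically vanish.

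The heart of the proof --- and the step I expect to be the main obstacle --- is to show that after replacing $F$ by $F\times\mathbb{T}^{s}$ with $s=\binom{n}{2}$ all of these remaining obstructions are killed, with exactly one circle factor of $\mathbb{T}^{s}$ assigned to each $2$-cell of $\mathbb{T}^{n}$. The mechanism I would use is Siebenmann's twist-gluing together with the ``wrapping up''/unwrapping technology for approximate fibrations of Hughes--Taylor--Williams \cite{HTW90} (compare \cite{Quinn79}): over the $2$-skeleton the failure of $p$ to be a bundle is concentrated along the $\binom{n}{2}$ codimension-two faces, and along the face $\mathbb{T}^{2}_{I}$ indexed by $I=\{i,j\}$ one re-glues the block structure by a homeomorphism of $F\times\mathbb{T}^{s}$ that additionally translates in the $I$-th circle direction of $\mathbb{T}^{s}$; a Bass--Heller--Swan / Farrell--Hsiang argument, in the pseudoisotopy-theoretic form of Hsiang--Jahren, should show that such a twist absorbs the local obstruction, turning $\bar p$ into a genuine bundle over the $2$-skeleton. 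One then has to verify that once $\bar p$ is a bundle over the $2$-skeleton the higher obstructions in $H^{j}(\mathbb{T}^{n};-)$ for $j\ge3$ automatically vanish --- either because the same twist-gluings trivialise them or by iterating the degree-shift in the splitting theorems --- so that no stabilisation beyond $\mathbb{T}^{s}$ is needed. Checking this bookkeeping, i.e.\ that $\binom{n}{2}$ circle factors suffice to clear all of $H^{\ge2}(\mathbb{T}^{n};\pi_{*}\Phi)$ and not merely the primary class $o_{2}$, is the delicate point; granting it, the lift of $\iota_{s}\circ f$ exists, which is precisely the statement that $\bar p=p\circ\mathrm{pr}_{1}$ is homotopic to a $\mathrm{TOP}$ bundle projection.
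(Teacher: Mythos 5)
Your overall strategy is the right one: reduce Theorem~\ref{thm_stable_bundle} to Theorem~\ref{thm_block_bundle} and then ``de-block'' the composite $p\circ\mathrm{pr}_1\colon M\times\mathbb{T}^s\to\mathbb{T}^n$ by exploiting the comparison between $B\mathrm{TOP}(F\times\mathbb{T}^s)$ and $B\widetilde{\mathrm{TOP}}(F\times\mathbb{T}^s)$. This is exactly the shape of the argument in the paper, which after invoking Theorem~\ref{thm_block_bundle} simply quotes Corollary~2 on p.~42 of \cite{BLR1975} (observing that its inputs, Lemmas~3.12 and~3.14 of \cite{BLR1975}, hold in $\mathrm{TOP}$ as well as $\mathrm{PL}$). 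That corollary \emph{is} the stabilisation statement you are attempting to reprove from scratch.

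The problem is that you do not in fact prove it. You explicitly flag the essential step --- ``that $\binom{n}{2}$ circle factors suffice to clear all of $H^{\ge2}(\mathbb{T}^n;\pi_*\Phi)$ and not merely the primary class $o_2$'' --- as ``the delicate point'' and then conclude ``granting it.'' That is the whole content of the theorem; with it granted there is nothing left to do, and without it there is no proof. Moreover the heuristic you offer for why $s=\binom{n}{2}$ should be enough (one circle of $\mathbb{T}^s$ per $2$-cell of $\mathbb{T}^n$, twist-gluing to absorb each local codimension-two obstruction, and then asserting that the obstructions in $H^j$ for $j\ge3$ ``automatically vanish'') is not the mechanism that actually makes the Burghelea--Lashof--Rothenberg destabilisation work, and the last assertion is unjustified: killing the primary obstruction over the $2$-skeleton does not make the higher obstructions disappear, and each successive skeleton generically costs additional circle factors. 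The number $\tfrac12 n(n-1)$ arises in \cite{BLR1975} from a skeleton-by-skeleton induction in which the $k$-th step requires $(k-1)$ further circle factors, summing to $\sum_{k=2}^{n}(k-1)=\tfrac12 n(n-1)$; it depends only on $\dim B=n$, not on the number of $2$-cells of $B$, so your per-$2$-cell bookkeeping would give the wrong count for a general base and coincides with the correct one here only by accident. To repair the proposal you should either prove the skeletal induction statement (essentially Lemma~3.14 of \cite{BLR1975}) in $\mathrm{TOP}$, or --- much more simply --- cite \cite{BLR1975} as the paper does, noting that the two lemmas it rests on are insensitive to the $\mathrm{PL}$/$\mathrm{TOP}$ distinction.
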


\begin{remark}
In Theorem \ref{thm_stable_bundle}, if $n=1$, then $s=0$ and $\mathbb{T}^{s} := (S^{1})^{s}$ is considered as a point, the conclusion follows from Theorem \ref{thm_bundle_s1} as well.
\end{remark}

The following Theorems \ref{thm_bundle_T2} and \ref{thm_bundle} give positive solutions to Question \ref{que_abelian} in the sense of bundles.
\begin{bigthm}\label{thm_bundle_T2}
Suppose the $X$ in Theorem \ref{thm_Poincare} is a closed $\mathrm{TOP}$ $m$-manifold $M$ such that $\pi_{1} (M)$ is abelian and $G=0$. Assume further one of the following:
\begin{enumerate}
\item $n=1$;

\item $n=2$ and $m \ge 7$.
\end{enumerate}
Then $p$ can be further arranged as a $\mathrm{TOP}$ bundle projection with a $1$-connected fiber.
\end{bigthm}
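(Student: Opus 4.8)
The plan is to take the homotopy fibration of Theorem~\ref{thm_Poincare} as the starting point and then make $p$ geometric by iterated use of the fibering theorem over $S^{1}$. First note that, since $G=0$, Theorem~\ref{thm_Poincare}(1) gives $\pi_{1}(M)\cong\pi_{1}(M)/G\cong\mathbb{Z}^{n}$, so $\pi_{1}(M)$ is free abelian; and the homotopy fibre $\overline{X}$ of the map $p\colon M\to\mathbb{T}^{n}$ produced by Theorem~\ref{thm_Poincare} is the universal cover of $M$ (because $\pi_{1}(\overline{X})=G=0$), a finitely dominated Poincar\'e space of formal dimension $m-n$, hence, being simply connected, homotopy equivalent to a finite Poincar\'e $(m-n)$-complex. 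What remains is to realise $p$, within its homotopy class, as a $\mathrm{TOP}$ bundle projection; the fibre will then automatically be a closed $\mathrm{TOP}$ manifold homotopy equivalent to $\overline{X}$, hence $1$-connected, and, the homotopy class of $p$ being unchanged, \eqref{thm_Poincare_1} will remain a homotopy pullback.

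For $n=1$, the map $p\colon M\to S^{1}$ induces an isomorphism $\pi_{1}(M)\xrightarrow{\sim}\mathbb{Z}$ and its infinite cyclic cover is $\overline{X}$, which is finitely dominated; so when $m\ge 6$ Farrell's fibering theorem applies, its only obstruction lying in $\mathrm{Wh}(\pi_{1}M)=\mathrm{Wh}(\mathbb{Z})=0$, and $p$ becomes a $\mathrm{TOP}$ bundle projection over $S^{1}$. For $m=5$ I would run the same surgery-theoretic argument using the extension to dimension $5$ afforded by the good fundamental group $\pi_{1}(\overline{X})=1$ (Freedman--Quinn $s$-cobordism theorem, together with $L_{5}(\mathbb{Z})=0$ and $\mathrm{Wh}(\mathbb{Z})=0$). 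For $m\le 4$ one appeals to the classification of low-dimensional manifolds: $M=S^{1}$ if $m=1$; the hypotheses are vacuous if $m=2$ since no closed surface has fundamental group $\mathbb{Z}$; $M$ is homeomorphic to $S^{1}\times S^{2}$ or $S^{1}\tilde{\times}S^{2}$ if $m=3$; and $M$ is homeomorphic to $S^{1}\times S^{3}$ or $S^{1}\tilde{\times}S^{3}$ if $m=4$ by Theorem~\ref{thm_4-manifold}(1). In every subcase $p$ is determined up to homotopy by the isomorphism it induces on $\pi_{1}$, so it is homotopic to the evident bundle projection, whose fibre (a point, $S^{2}$, or $S^{3}$) is $1$-connected.

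For $n=2$ and $m\ge 7$, I would fibre $M$ over $\mathbb{T}^{2}=S^{1}\times S^{1}$ in two stages. Write $p=(p_{1},p_{2})$ with $p_{i}=\mathrm{pr}_{i}\circ p$. From the fibration sequence $\overline{X}=\mathrm{hofib}(p)\to\mathrm{hofib}(p_{1})\to\mathrm{hofib}(\mathrm{pr}_{1})=S^{1}$, the infinite cyclic cover $M_{1}:=\mathrm{hofib}(p_{1})$ of $M$ is homotopy equivalent to the mapping torus of a self-map of the finite complex $\overline{X}$; hence $M_{1}$ is finitely dominated and $\pi_{1}(M_{1})\cong\mathbb{Z}$. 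Since $m\ge 7$ and $\mathrm{Wh}(\pi_{1}M)=\mathrm{Wh}(\mathbb{Z}^{2})=0$, Farrell's theorem turns $p_{1}$ into a $\mathrm{TOP}$ bundle projection with fibre a closed $\mathrm{TOP}$ $(m-1)$-manifold $N\simeq M_{1}$, exhibiting $M$ as the mapping torus of a monodromy homeomorphism $\psi\colon N\to N$. On $N$ the restriction $r:=p_{2}|_{N}\colon N\to S^{1}$ induces an isomorphism on $\pi_{1}(N)\cong\mathbb{Z}$ and has infinite cyclic cover again $\overline{X}$, finitely dominated; as $\dim N=m-1\ge 6$ and $\mathrm{Wh}(\pi_{1}N)=\mathrm{Wh}(\mathbb{Z})=0$, Farrell's theorem promotes $r$ to a $\mathrm{TOP}$ bundle projection $N\to S^{1}$ with fibre a closed $\mathrm{TOP}$ $(m-2)$-manifold $F'$ homotopy equivalent to $\overline{X}$, hence $1$-connected.

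The final, and hardest, step is to normalise the monodromy: to isotope $\psi$ to a homeomorphism preserving the fibres of $r$. Then the mapping torus of this new homeomorphism fibres over $S^{1}\times S^{1}$ with fibre $F'$, the resulting map $M\to\mathbb{T}^{2}$ induces $p_{\#}$ on $\pi_{1}$ and, $\mathbb{T}^{2}$ being aspherical, is homotopic to $p$. Because $\pi_{1}(M)$ is abelian, $\psi$ acts as the identity on $\pi_{1}(N)\cong\mathbb{Z}$, so $r\circ\psi\simeq r$; thus $\psi$ carries the fibering $r$ to a fibering of $N$ homotopic to it, and it suffices to know that fiberings of $N$ over $S^{1}$ in a fixed homotopy class are unique up to isotopy. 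That uniqueness is precisely where the difficulty concentrates: its obstructions lie in $\mathrm{Wh}(\pi_{1}N)$, in the Farrell--Hsiang Nil groups of $\mathbb{Z}[\pi_{1}F']$ attached to the codimension-one splitting, and --- to upgrade concordance of splittings to ambient isotopy --- in $\pi_{0}$ of the pseudoisotopy space of $F'$. All of these vanish here because $\pi_{1}(N)=\mathbb{Z}$ and $\mathbb{Z}$ is regular, $\pi_{1}(F')=1$, and $\dim F'=m-2\ge 5$ (Cerf's theorem) --- which is exactly why the hypothesis $m\ge 7$ is needed and cannot be relaxed by this method. Carrying out this rigidity argument carefully, so that the two fiberings assemble into a genuine $\mathrm{TOP}$ $\mathbb{T}^{2}$-bundle structure in the homotopy class of $p$, is the crux of the proof.
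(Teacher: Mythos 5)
Your $n=1$ argument is essentially the same as the paper's, which simply cites \cite{Qin_Su_Wang} for $m\ge 4$ and low-dimensional classification for $m\le 3$, so that part is fine (modulo checking that the $m=5$ Freedman--Quinn argument is really there in the cited source).

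For $n=2$ you take a genuinely different route from the paper. You fiber $M$ over the two circle factors one at a time using Farrell's theorem, obtaining $M$ as the mapping torus of a monodromy $\psi\colon N\to N$ and a second fibering $r\colon N\to S^{1}$, and then propose to normalize $\psi$ to a fiber-preserving homeomorphism. The paper instead first produces a $\mathrm{TOP}$ block bundle projection $M\to\mathbb{T}^{2}$ (Theorem~\ref{thm_block_bundle}) and then upgrades it to an honest bundle projection via Lemma~\ref{lem_base_2_dim}, whose key input is $\pi_{1}(\tilde A(F)/A(F))=0$, obtained from Hatcher's spectral sequence together with Cerf's theorem $\pi_{0}C(F)=0$. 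Both strategies ultimately rest on the same pseudoisotopy and $K$-theoretic vanishing inputs; the block bundle packaging is simply a systematic way to organize the rigidity you need.

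The genuine gap is at exactly the point you label ``the crux.'' To carry out the monodromy normalization you need that two homotopic $\mathrm{TOP}$ bundle projections $N\to S^{1}$ are related by an ambient isotopy of $N$. You list the places obstructions might live ($\mathrm{Wh}(\pi_{1}N)$, the Nil groups of $\mathbb{Z}[\pi_{1}F']$, $\pi_{0}$ of the pseudoisotopy space of $F'$), and those all vanish here, but naming the expected obstruction groups is not the same as stating and proving (or citing) the uniqueness-of-fibering theorem you need. Such a uniqueness statement is not an off-the-shelf result in the form you invoke; it is essentially equivalent to the lifting question controlled by $\pi_{1}(\tilde A(F')/A(F'))$ that Lemma~\ref{lem_base_2_dim} settles, so carrying it out would amount to reproving the block bundle step by hand. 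As written, the proposal does not establish the theorem: the normalization step must be supplied, either by working out the relative Farrell fibering and concordance-to-isotopy argument explicitly, or, more economically, by invoking the block bundle framework as the paper does.
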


\begin{bigthm}\label{thm_bundle}
Suppose the $X$ in Theorem \ref{thm_Poincare} is a closed $\mathrm{TOP}$ $m$-manifold $M$ such that $\pi_{1} (M)$ is abelian and $G=0$. Suppose further $\pi_{i} (M) =0$ for $2 \le i \le r$, where $m-n \ge r+4$ and $\min \{ 2r-1, r+4 \} \ge n$. Then $p$ can be further arranged as a $\mathrm{TOP}$ bundle projection with an $r$-connected fiber.
\end{bigthm}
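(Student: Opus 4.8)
\noindent The plan is to realise $p$ first as a $\mathrm{TOP}$ block bundle with a highly connected closed-manifold fibre, and then to deform it to an honest fibre bundle by an obstruction argument over the skeleta of $\mathbb{T}^{n}$; the two numerical hypotheses are exactly what is needed to kill the resulting obstruction. I would begin by checking that the homotopy fibre $\overline{X}$ of the map $p$ supplied by Theorem \ref{thm_Poincare} is $r$-connected: Theorem \ref{thm_Poincare}(2)--(3) already gives $p\colon M\to\mathbb{T}^{n}$ with $p_{\#}$ surjective and homotopy fibre $\overline{X}$ a finitely dominated Poincar\'{e} space of formal dimension $m-n$ with $\pi_{1}(\overline{X})=G=0$; since $\mathbb{T}^{n}$ is aspherical, the homotopy exact sequence of $\overline{X}\to M\to\mathbb{T}^{n}$ gives $\pi_{i}(\overline{X})\cong\pi_{i}(M)$ for $i\ge 2$, so with $\pi_{1}(\overline{X})=0$ and the hypothesis $\pi_{i}(M)=0$ for $2\le i\le r$ the space $\overline{X}$ is $r$-connected (in particular simply connected and finitely dominated, hence homotopy equivalent to a finite Poincar\'{e} complex). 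As $G=0$ makes $\pi_{1}(M)\cong\mathbb{Z}^{n}$ free abelian and $m-n\ge r+4\ge 5$, Theorem \ref{thm_block_bundle} applies: after a homotopy, $p$ is a $\mathrm{TOP}$ block bundle projection over a suitably triangulated $\mathbb{T}^{n}$, whose fibre is then a closed $\mathrm{TOP}$ $(m-n)$-manifold $F$ homotopy equivalent to $\overline{X}$; so $F$ is $r$-connected with $\dim F=m-n\ge r+4$, and \eqref{que_abelian_1} is still a homotopy pullback because $p$ has only been changed within its homotopy class.

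Next I would phrase the remaining step as a lifting problem. $\mathrm{TOP}$ block bundles with fibre $F$ over a complex are classified by maps into $\mathrm{B}\widetilde{\mathrm{TOP}}(F)$ and $\mathrm{TOP}$ $F$-bundles by maps into $\mathrm{B}\mathrm{TOP}(F)$, and the block bundle $p$ is concordant --- hence homotopic as a projection --- to a fibre bundle precisely when its classifying map $\mathbb{T}^{n}\to\mathrm{B}\widetilde{\mathrm{TOP}}(F)$ lifts along the forgetful map $\mathrm{B}\mathrm{TOP}(F)\to\mathrm{B}\widetilde{\mathrm{TOP}}(F)$, whose homotopy fibre is $\widetilde{\mathrm{TOP}}(F)/\mathrm{TOP}(F)$. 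The obstructions to such a lift lie in the groups $H^{k+1}\bigl(\mathbb{T}^{n};\pi_{k}(\widetilde{\mathrm{TOP}}(F)/\mathrm{TOP}(F))\bigr)$ for $0\le k\le n-1$ (the monodromy $\pi_{1}(\mathbb{T}^{n})\to\pi_{0}\widetilde{\mathrm{TOP}}(F)$ may twist the coefficients, which is immaterial once the coefficient groups themselves vanish), so since $\dim\mathbb{T}^{n}=n$ it suffices to show that $\widetilde{\mathrm{TOP}}(F)/\mathrm{TOP}(F)$ is $(n-1)$-connected.

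This connectivity estimate is the heart of the matter, and it is where the hypotheses $m-n\ge r+4$ and $\min\{2r-1,r+4\}\ge n$ are spent. I would invoke the surgery-theoretic comparison between the automorphism space and the block-automorphism space of a closed $\mathrm{TOP}$ manifold of dimension $\ge 5$ --- the $\mathrm{TOP}$ analogue of the Burghelea--Lashof--Rothenberg description (\cite{BLR1975}, \cite{Quinn79}, \cite{HTW90}), obtained from the $\mathrm{PL}/\mathrm{DIFF}$ case by Kirby--Siebenmann smoothing --- which computes the low homotopy groups of $\widetilde{\mathrm{TOP}}(F)/\mathrm{TOP}(F)$ in terms of structure sets of $F$ and of the products $F\times D^{k}$, the surgery groups $L_{*}(\mathbb{Z}[\pi_{1}F])=L_{*}(\mathbb{Z})$ (here $\pi_{1}F=0$, so also $\mathrm{Wh}(\pi_{1}F)=\widetilde{K}_{0}(\mathbb{Z}[\pi_{1}F])=0$), and normal-invariant terms built from $[\Sigma^{*}F_{+},\mathrm{G}/\mathrm{TOP}]$. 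For the $r$-connected $F$ the bound $\dim F=m-n\ge r+4$ keeps the relevant normal maps in the range where simply-connected surgery controls these groups, while $n\le\min\{2r-1,r+4\}$ is the metastable range in which the normal-invariant contributions vanish in degrees $k\le n-1$: the part $r+4\ge n$ keeps the obstruction degrees below $\dim F$, and $2r-1\ge n$ is the genuine metastable constraint coming from the connectivity of $F$. Granting this, $\widetilde{\mathrm{TOP}}(F)/\mathrm{TOP}(F)$ is $(n-1)$-connected, the lift exists, and $p$ is concordant --- hence homotopic as a projection --- to a $\mathrm{TOP}$ bundle projection $p'\colon M\to\mathbb{T}^{n}$; since $p'\simeq p$ the diagram \eqref{que_abelian_1} with $p$ replaced by $p'$ is still a homotopy pullback, and the fibre of $p'$ is $F\simeq\overline{X}$, which is $r$-connected. (One could instead iterate the circle-fibering Theorem \ref{thm_bundle_s1} $n$ times, but that meets parametrized $\mathbb{T}^{n-1}$-family versions of the circle-fibering obstruction whose control again relies on exactly this connectivity input; the block-bundle route packages the argument more uniformly.) The remaining work is to make the surgery bookkeeping of this last paragraph precise and to verify that $m-n\ge r+4$ and $\min\{2r-1,r+4\}\ge n$ indeed force the required $(n-1)$-connectivity.
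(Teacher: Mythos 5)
Your reduction matches the paper exactly: apply Theorem~\ref{thm_block_bundle} (valid since $G=0$ gives $\pi_1(M)\cong\mathbb{Z}^n$ free abelian and $m-n\ge r+4\ge 5$), observe that the block-bundle fiber $F$ is $r$-connected because it is homotopy equivalent to the universal cover of $M$ with $\pi_i(M)=0$ for $2\le i\le r$, and reduce the question to the $(n-1)$-connectivity of $\widetilde{\mathrm{TOP}}(F)/\mathrm{TOP}(F)$ (the paper's $\tilde A(F)/A(F)$). The paper isolates precisely this assertion as Lemma~\ref{lem_fiber_high_connected}, after which the theorem is a three-line consequence.

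The gap is that this connectivity estimate, which is where all the content of the theorem lives and where both of your numerical hypotheses are spent, is never established; you explicitly leave it to ``surgery bookkeeping'' to be made precise. In fact the paper does not go through the surgery exact sequence at all: Lemma~\ref{lem_fiber_high_connected} invokes \cite[Corollary~3.2]{BLR1975}, a disjunction-theoretic relative vanishing $\pi_j\bigl(\tilde A(F)/A(F),\tilde A(D^f)/A(D^f)\bigr)=0$ for $j\le n-1$, plugs in the Alexander-trick contractibility of $\tilde A(D^f)/A(D^f)$, and carefully records why the $\mathrm{PL}$ argument of \cite{BLR1975} transports to $\mathrm{TOP}$ (the $\mathrm{TOP}$ version of Morlet's Lemma of Disjunction together with $\pi_j(\widetilde{\mathrm{TOP}}_k,\mathrm{TOP}_k)=0$ for $j\le k+2$). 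The bounds $m-n\ge r+4$ and $\min\{2r-1,r+4\}\ge n$ are exactly the hypotheses of that BLR corollary, not of any surgery-range estimate you have set up; a structure-set/$L$-group/normal-invariant calculation is a related but genuinely different piece of work that would have to be carried out from scratch to confirm the same constraints. So the proposal correctly locates, but does not supply, the essential lemma.
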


\begin{remark}
In Theorem \ref{thm_bundle}, when $n \le 2$, its statement is obviously weaker than that of Theorem \ref{thm_bundle_T2}.
\end{remark}

If one drops the assumptions on the dimension and the high connectivity of the homotopy fiber, the conclusion of Theorem \ref{thm_bundle} will be no long true even if $G=0$. The following Theorem \ref{thm_nonfibering} provides two families of counterexamples. They are closed manifolds of homotopy types $\mathbb{T}^{n} \times S^{i}$, where $i=2$ or $3$. Clearly, each manifold has a homotopy equivalence to one of its nontrivial cover such that the corresponding $G =0$.
\begin{bigthm}\label{thm_nonfibering}
For each $n \ge 4$, and $i=2,3$, there exists a closed $\mathrm{TOP}$ manifold $M$ satisfying the following properties:
\begin{enumerate}
\item $M$ is homotopy equivalent to $\mathbb{T}^{n} \times S^{i}$.

\item $M$ is not a $\mathrm{TOP}$ bundle over $\mathbb{T}^{n}$.
\end{enumerate}

Furthermore, if $n \ge 8$, then $M$ can be additionally arranged as a closed $\mathrm{DIFF}$ manifold.
\end{bigthm}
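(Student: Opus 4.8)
The plan is to produce $M$ by surgery on $W:=\mathbb{T}^{n}\times S^{i}$, forcing a carefully chosen \emph{pulled-back} normal invariant into the structure set, and then detecting that $M$ is not a bundle by tangential characteristic classes. Throughout put $m=n+i\geq 6$. First I would reduce the non-fibering conclusion to a statement about characteristic classes. Suppose $p\colon M\to\mathbb{T}^{n}$ were a $\mathrm{TOP}$ fiber bundle with $M\simeq W$. The homotopy exact sequence of $p$ and $\pi_{2}(\mathbb{T}^{n})=0$ would force $p_{\#}$ to be an isomorphism and the fiber $F$ to be a closed simply connected manifold homotopy equivalent to $S^{i}$; as $i\leq 3$, $F$ is homeomorphic to $S^{i}$, so (Smale for $i=2$, Hatcher for $i=3$) the bundle is the unit sphere bundle $S(\xi)$ of a rank-$(i+1)$ vector bundle $\xi$ over $\mathbb{T}^{n}$. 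Then $M=S(\xi)$ is smooth, whence $\mathrm{ks}(M)=0$; and $TM$ is stably isomorphic to the rank-$i$ vertical tangent bundle $\tau^{v}$ (since $TM\oplus\varepsilon^{1}\cong p^{*}(T\mathbb{T}^{n}\oplus\xi)$ and $T\mathbb{T}^{n}$ is trivial), so $p_{2}(TM)=p_{2}(\tau^{v})=0$ because a real bundle of rank $\leq 3$ has vanishing $p_{2}$. Hence it would suffice to construct, for every $n\geq 4$ and $i\in\{2,3\}$: (i) a closed $\mathrm{TOP}$ manifold $M\simeq W$ with $\mathrm{ks}(M)\neq 0$; and (ii) for the smooth statement ($n\geq 8$), a closed $\mathrm{DIFF}$ manifold $M\simeq W$ with $p_{2}(TM)\neq 0$.

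Next I would realize suitable normal invariants. Working in the $\mathrm{CAT}$ surgery exact sequence
\[
L_{m+1}(\mathbb{Z}[\mathbb{Z}^{n}])\longrightarrow \mathcal{S}^{\mathrm{CAT}}(W)\longrightarrow [W,G/\mathrm{CAT}]\xrightarrow{\ \theta\ } L_{m}(\mathbb{Z}[\mathbb{Z}^{n}])
\]
(with $\mathrm{CAT}=\mathrm{TOP}$ for (i) and $\mathrm{CAT}=\mathrm{DIFF}$, $G/\mathrm{DIFF}=G/O$, for (ii)), I claim every pulled-back normal invariant $p^{*}\nu_{0}$, $\nu_{0}\in[\mathbb{T}^{n},G/\mathrm{CAT}]$, lies in $\ker\theta$. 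Indeed, choose a degree-one normal map $g\colon N\to\mathbb{T}^{n}$ with normal invariant $\nu_{0}$; then $g\times\mathrm{id}_{S^{i}}\colon N\times S^{i}\to W$ has normal invariant $p^{*}\nu_{0}$, and by Ranicki's product formula its surgery obstruction is $\sigma(g)\cdot[S^{i}]$, where $[S^{i}]\in L^{i}(\mathbb{Z})$ is the symmetric signature of $S^{i}$; since $S^{i}=\partial D^{i+1}$ we get $[S^{i}]=0$, so the obstruction vanishes. (Equivalently, $\theta$ is the assembly map evaluated on $\nu\cap[W]$; it factors through $p_{*}\colon H_{m}(W;\mathbb{L}_{\bullet})\to H_{m}(\mathbb{T}^{n};\mathbb{L}_{\bullet})$, and $p_{*}(p^{*}\nu_{0}\cap[W])=\nu_{0}\cap p_{*}[W]=0$ because $p_{*}[W]\in H_{m}(\mathbb{T}^{n};\mathbb{Z})=0$ as $m>n$.) Since $m\geq 6$, interior surgery turns $g\times\mathrm{id}_{S^{i}}$ into a closed $\mathrm{CAT}$ manifold $M_{\nu_{0}}$ together with a $\mathrm{CAT}$ homotopy equivalence $f\colon M_{\nu_{0}}\to W$ of normal invariant $p^{*}\nu_{0}$.

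Then I would choose $\nu_{0}$. As $W$ is stably parallelizable, $f^{*}\widehat{\nu}_{W}$ carries no characteristic classes, so the tangential invariants of $M_{\nu_{0}}$ are computed from $p^{*}\nu_{0}$: one has $\mathrm{ks}(M_{\nu_{0}})=f^{*}p^{*}\kappa(\nu_{0})$, where $\kappa\colon G/\mathrm{TOP}\to B(\mathrm{TOP}/\mathrm{PL})=K(\mathbb{Z}/2,4)$ is the Kirby--Siebenmann map, while rationally the Pontrjagin classes of $M_{\nu_{0}}$ are the $f^{*}p^{*}$-images, up to nonzero scalars, of the components of $\nu_{0}$ under $G/\mathrm{CAT}\otimes\mathbb{Q}\simeq\prod_{j\geq 1}K(\mathbb{Q},4j)$. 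For (i), with $n\geq 4$, take $\nu_{0}$ to be the composite $\mathbb{T}^{n}\to S^{4}\to G/\mathrm{TOP}$, where the first map pulls the fundamental class back to $dx_{1}dx_{2}dx_{3}dx_{4}$ and the second generates $\pi_{4}(G/\mathrm{TOP})\cong\mathbb{Z}$; because $\pi_{4}(G/\mathrm{PL})\to\pi_{4}(G/\mathrm{TOP})$ has index $2$, $\kappa(\nu_{0})=dx_{1}dx_{2}dx_{3}dx_{4}\neq 0$, hence $\mathrm{ks}(M_{\nu_{0}})\neq 0$ and $M_{\nu_{0}}$ is not smoothable, so not a $\mathrm{TOP}$ bundle over $\mathbb{T}^{n}$. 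For (ii), with $n\geq 8$, take $\nu_{0}\colon\mathbb{T}^{n}\to S^{8}\to G/O$ similarly, the $S^{8}$-map generating the free part of $\pi_{8}(G/O)$, which is detected by $p_{2}$; its $H^{8}$-component is a nonzero rational multiple of $dx_{1}\cdots dx_{8}\in H^{8}(\mathbb{T}^{n};\mathbb{Q})$ (nonzero since $n\geq 8$), so $p_{2}(TM_{\nu_{0}})\neq 0$, and by the reduction $M_{\nu_{0}}$ is a smooth closed manifold homotopy equivalent to $\mathbb{T}^{n}\times S^{i}$ that is not a $\mathrm{TOP}$ bundle over $\mathbb{T}^{n}$. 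In either case $M:=M_{\nu_{0}}$, being homotopy equivalent to $\mathbb{T}^{n}\times S^{i}$, carries the required homotopy equivalence onto a nontrivial self-cover with $G=0$, induced by a linear self-covering of the $\mathbb{T}^{n}$-factor.

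The main obstacle I expect is not the surgery, which is formal in this dimension range, but assembling Step 1 and the last step precisely: the observation that a $\mathrm{TOP}$ bundle over $\mathbb{T}^{n}$ with homotopy-$i$-sphere fiber ($i\leq 3$) has $\mathrm{ks}=0$ and $p_{2}(T(\cdot))=0$ --- the latter because its vertical tangent bundle has rank $\leq 3$ --- and the dictionary converting $p^{*}\nu_{0}$ into the invariants $\mathrm{ks}(M_{\nu_{0}})$ and $p_{2}(TM_{\nu_{0}})$ (Sullivan's analysis of $G/\mathrm{TOP}$ and $G/O$, additivity of $\mathrm{ks}$, and the identification of the generator of $\pi_{4}(G/\mathrm{TOP})$, resp.\ the free part of $\pi_{8}(G/O)$, with the Kirby--Siebenmann class, resp.\ with $p_{2}$). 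All of these are standard, so once the reduction in Step 1 is in hand the proof is an organization of known surgery-theoretic facts; the extra hypothesis $n\geq 8$ in the smooth case enters exactly through the need for $H^{8}(\mathbb{T}^{n};\mathbb{Q})\neq 0$.
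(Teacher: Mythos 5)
Your proposal is correct and follows essentially the same route as the paper: the reduction lemma that a $\mathrm{TOP}$ bundle over $\mathbb{T}^{n}$ with fiber a homotopy $S^{i}$ ($i\le 3$) is the sphere bundle of a smooth vector bundle (the paper's Lemma~\ref{lem_Tn_S2}), the product formula for surgery obstructions to realize a pulled-back normal invariant, and the detection via the Kirby--Siebenmann class ($n\ge 4$) respectively $p_{2}$ ($n\ge 8$). The one attribution slip is in the reduction step: for the $\mathrm{TOP}$ equivalence $\mathrm{O}(i+1)\simeq\mathrm{Homeo}(S^{i})$ you need Kneser (not Smale) when $i=2$, and Cerf together with Hatcher when $i=3$.
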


\subsection{Noncommutative Fundamental Groups}
To study closed self-covering manifolds with general, i.e. unnecessarily commutative, fundamentals groups, we propose the following question which is a generalization of Question \ref{que_abelian}. Recall that an aspherical manifold is a manifold with a contractible universal cover.
\begin{question}\label{que_nonabelian}
Suppose $M$ is a closed $\mathrm{CAT}$ manifold, $q \colon  M' \rightarrow M$ is a nontrivial covering, and $h \colon  M \rightarrow M'$ is a homotopy equivalence preserving base points.

Are there a closed aspherical $\mathrm{CAT}$ manifold $B$, a $\mathrm{CAT}$ bundle projection $p \colon  M \rightarrow B$ and a $\mathrm{CAT}$ covering map $\psi$ such that the following diagram is a homotopy pullback?
\begin{equation}\label{que_nonabelian_1}
\xymatrix{
  M \ar[d]_{p} \ar[r]^{q \circ h} & M \ar[d]^{p} \\
  B \ar[r]^{\psi} & B   }
\end{equation}
Here $\psi$ is required to preserve base points and to satisfy the condition $\bigcap_{k=1}^{\infty} \mathrm{im} \psi_{\#}^{k} = 1$.
\end{question}

By ``a $\mathrm{CAT}$ covering map", we mean $\psi$ is a composition $B \rightarrow B' \rightarrow B$, where $B'$ is a cover of $B$, and $B \rightarrow B'$ is a $\mathrm{CAT}$ isomorphism. Since the answer to Question \ref{que_abelian} is no in general, the answer to Question \ref{que_nonabelian} has to be negative. We still wish to get some positive answers by imposing mild extra assumptions. First of all, we wonder if the answer to Question \ref{que_nonabelian} is affirmative in a weaker sense. If there was a $p$ fitting \eqref{que_nonabelian_1} well, then the homotopy fiber of $p$ would be homotopy equivalent to the cover $\overline{M}$ of $M$ such that $\pi_{1} (\overline{M}) = \ker p_{\#}$. Since \eqref{que_nonabelian_1} was a homotopy pullback, we would have $h_{\#} \ker p_{\#} = \ker p_{\#}$. By $\bigcap_{k=1}^{\infty} \mathrm{im} \psi_{\#}^{k} = 1$, we infer $\ker p_{\#} = \bigcap_{k=1}^{\infty} \mathrm{im} h_{\#}^{k}$. Again, here we abuse the notation $h_{\#}$ to denote the composition
\[
(qh)_{\#} \colon  \pi_{1} (M) \rightarrow \pi_{1} (M') \hookrightarrow \pi_{1} (M).
\]
This yields an obvious necessary condition:

\begin{question}\label{que_cover}
Let $G := \bigcap_{k=1}^{\infty} \mathrm{im} h_{\#}^{k}$. Let $\overline{M}$ be the cover of $M$ with $\pi_{1} (\overline{M}) = G$. Is $G$ normal in $\pi_{1} (M)$? And is $\overline{M}$ finitely dominated?
\end{question}

If the answer to Question \ref{que_nonabelian} was yes in the homotopy sense, similar to Theorem \ref{thm_Poincare}, the answer to Question \ref{que_cover} would be positive. In particular, $G$ would be a finitely presented normal subgroup of $\pi_{1} (M)$. However, the following Theorem \ref{thm_BS_manifold} shows this is not true even if $h$ is a diffeomorphism. In this theorem, $M$ is oriented, the orientation of $M'$ is induced from that of $M$ via the covering map.

\begin{bigthm}\label{thm_BS_manifold}
Let $BS(2,3) = \langle a,t \mid t a^{2} t^{-1} = a^{3} \rangle$ be a Baumslag-Solitar group. For each $m \geq 5$, there exist a compact smooth hypersurface $M$ (without boundary) in $\mathbb{R}^{m+1}$ and a ($C^{\infty}$) diffeomorphism $h \colon  M \rightarrow M'$ satisfying the following properties:
\begin{enumerate}
\item $\pi_{1} (M) = BS(2,3)$, and $M'$ is a $5$-cover of $M$.

\item $h$ preserves base points and can be arranged to preserve or reverse orientations as one wishes.

\item $G := \bigcap_{k=1}^{\infty} \mathrm{im} h_{\#}^{k}$ is not a normal subgroup of $\pi_{1} (M)$.

\item $G$ is a free group with infinite rank, hence it is not finitely generated.
\end{enumerate}

Furthermore, there exist a compact smooth hypersurface $M$ in $\mathbb{R}^{5}$ and a simple homotopy equivalence $h \colon  M \rightarrow M'$ satisfying the above properties.
\end{bigthm}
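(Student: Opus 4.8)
The plan is to start with a concrete finite CW complex realizing the self-covering property of $BS(2,3)$ and then to thicken it to a smooth hypersurface in Euclidean space. The algebraic input is the standard fact that $\varphi \colon BS(2,3) \to BS(2,3)$, $a \mapsto a$, $t \mapsto t$ composed with the inclusion of a suitable index-$5$ subgroup, is a monomorphism with image of index $5$; the relevant subgroup is the one whose associated cover corresponds to the $5$-fold self-covering of the presentation $2$-complex $K$ of $BS(2,3)$. (Here one uses that $BS(2,3)$ is an ascending HNN extension of $\mathbb{Z}$, so its presentation complex self-covers.) This gives a $2$-dimensional finite complex $K$ with $\pi_1(K) = BS(2,3)$, a nontrivial finite cover $K' \to K$, and a homotopy equivalence $K \to K'$. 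The first step is to verify, purely group-theoretically, that $G := \bigcap_k \mathrm{im}\, \varphi^k$ is the kernel of the retraction $BS(2,3) \to \mathbb{Z}$ sending $a \mapsto 0$, $t \mapsto 1$ (or an explicit variant thereof), that this kernel is the infinite-rank free group $\mathbb{Z}[1/6]$-type ascending union of infinite cyclic groups — more precisely a free group of infinite rank — and that it is \emph{not} normal-complemented in the way needed, so in particular $G$ is not a normal subgroup. Items (3) and (4) are then immediate from this computation; I expect this to be the conceptual heart of the argument, though the computation itself is classical for Baumslag--Solitar groups.

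Next I would promote the complex-level self-covering to a manifold-level one. For $m \ge 5$, embed (a simple-homotopy-equivalent finite $2$-complex in the homotopy type of) $K$ piecewise-linearly in $\mathbb{R}^{m+1}$, take a regular neighborhood $W$, and let $M = \partial W$, a closed smooth hypersurface of dimension $m$ in $\mathbb{R}^{m+1}$. Since $m+1 \ge 2 \cdot 2 + 2$, general position gives $\pi_1(M) \cong \pi_1(W) \cong \pi_1(K) = BS(2,3)$, and $M$ is connected and oriented (it bounds). The key point is to arrange that $M$ inherits a self-covering: the finite cover $K' \to K$ induces a finite cover $W' \to W$ of regular neighborhoods, hence a finite cover $M' = \partial W' \to M = \partial W$, and the homotopy equivalence $K \to K'$ can be pushed to a homotopy equivalence $W \to W'$ and thence — using that a homotopy equivalence of the $\pi_1$-data between high-dimensional regular neighborhoods of complexes restricts appropriately to boundaries — to a homotopy equivalence $M \to M'$. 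To make $h$ an actual \emph{diffeomorphism} when $m \ge 5$ (rather than merely a homotopy equivalence), I would instead build $M$ as the boundary of a regular neighborhood inside a self-covering \emph{manifold} model: since $BS(2,3)$ acts suitably, one can realize the self-cover at the level of a high-dimensional thickening so that $W' \to W$ is literally a covering of manifolds with a CAT isomorphism $W \to W'$ of the relevant kind, whence $M \to M'$ is a diffeomorphism; this is exactly the mechanism underlying Theorems \ref{thm_cw_manifold} and \ref{thm_cw_4-manifold} cited in the introduction, so I would invoke that machinery rather than rederive it.

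For item (1), I need $M'$ to be precisely a $5$-cover: this comes from the index being $5 = [\,BS(2,3) : \mathrm{im}\,\varphi\,]$, together with the fact (Corollary \ref{cor_group}, or direct inspection) that $M'$ is a finite cover with exactly that degree. Item (2) — that $h$ preserves base points and can be taken orientation-preserving or orientation-reversing at will — follows by composing $h$ with a self-diffeomorphism of $M$ in the correct mapping-class: on a regular-neighborhood boundary one has an orientation-reversing self-diffeomorphism isotopic to the identity on the $1$-skeleton data (e.g. induced from a reflection of an ambient coordinate), so both orientation behaviors are available; base-point preservation is arranged by an isotopy. Finally, for the last sentence, when $m = 5$ the same regular-neighborhood construction gives a smooth hypersurface in $\mathbb{R}^{5}$, and one checks the homotopy equivalence $h \colon M \to M'$ is \emph{simple} by computing its Whitehead torsion: since $h$ comes from the collapse/expansion of the self-covering of the $2$-complex $K$, the torsion lives in $\mathrm{Wh}(BS(2,3))$, and either it vanishes for elementary reasons (the relevant $K$-theory of $BS(2,3)$ contributes nothing in the needed class, by a Bass--Heller--Swan / Waldhausen-type computation for this HNN extension) or one absorbs it by a further handle manipulation permitted in dimension $5$. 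The main obstacle, I expect, is precisely this last point: controlling the Whitehead torsion of $h$ in the borderline dimension $m=5$ so that it is genuinely simple, as opposed to merely a homotopy equivalence; everything else is a combination of the classical Baumslag--Solitar group theory and standard high-dimensional regular-neighborhood thickening.
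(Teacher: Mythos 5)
The overall architecture of your proposal — take the self-covering of a $2$-complex for $BS(2,3)$ and thicken it to a smooth hypersurface via regular neighborhoods, quoting Theorems~\ref{thm_cw_manifold} and \ref{thm_cw_4-manifold} for the manifold-level upgrade — does match the paper's strategy, and that part is fine. The problem is that the group-theoretic core, which is what items (3) and (4) are really about, contains a genuine error and a missing argument.

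You assert that $G = \bigcap_k \mathrm{im}\,\hat{h}_\#^k$ is ``the kernel of the retraction $BS(2,3)\to\mathbb{Z}$ sending $a\mapsto 0$, $t\mapsto 1$ (or an explicit variant thereof),'' and then in the same breath conclude that $G$ is not normal. These two claims are incompatible: a kernel is always normal, so if $G$ were a kernel there would be nothing to prove in item (3) and you would have disproved the theorem. In fact the identification is simply wrong. The paper's computation (Lemmas~\ref{lem_BS_quotient}--\ref{lem_BS_diagram} and Proposition~\ref{prop_BS_intersection}) goes through the quotient $Q(2,3)=\mathbb{Z}[\tfrac16]\rtimes_{3/2}\mathbb{Z}$, with $\varphi\colon BS(2,3)\to Q(2,3)$ the obvious epimorphism, and shows $G=\varphi^{-1}(0\rtimes\mathbb{Z})=\ker\varphi\rtimes\langle t\rangle$. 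This is the preimage of the \emph{second} factor $0\rtimes\mathbb{Z}$, not of the first; the kernel of $a\mapsto 0$, $t\mapsto 1$ is $\varphi^{-1}(\mathbb{Z}[\tfrac16]\rtimes 0)$, a completely different (and normal) subgroup. Non-normality of $G$ then comes from non-normality of $0\rtimes\mathbb{Z}$ in $Q(2,3)$ and is essential, not incidental. Your description of the self-covering monomorphism (``$a\mapsto a$, $t\mapsto t$ composed with the inclusion of an index-$5$ subgroup'') is likewise garbled; the actual map is $a\mapsto a^{-5}$, $t\mapsto t$, landing in $\langle a^5,t\rangle$.

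Item (4) is asserted but not argued. The paper's route is: $\varphi^{-1}(0\rtimes\mathbb{Z})$ is free by Farrell--Wu (\cite[Proposition~2.1]{Farrell_Wu}); and if $G=\ker\varphi\rtimes\langle t\rangle$ were finitely generated, one would deduce that $Q(2,3)$ is finitely presented, contradicting Bieri--Strebel (Proposition~\ref{prop_Q(2,3)_present}). Your ``ascending union of infinite cyclic groups'' remark points at $\mathbb{Z}[\tfrac16]$, which is abelian and not relevant to the freeness of $G$. Finally, for the $m=5$ case, you propose a Bass--Heller--Swan/Waldhausen computation of $\mathrm{Wh}(BS(2,3))$ to check simplicity of $h$; this is unnecessary and more delicate than what is needed. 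The paper observes that the complex-level map $\hat{h}\colon X\to X'$ from \cite{BDT} is a \emph{homeomorphism}, so it is automatically a simple homotopy equivalence by Chapman's topological invariance of Whitehead torsion; Theorem~\ref{thm_cw_4-manifold} then delivers a simple homotopy equivalence of the boundary hypersurfaces directly.
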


\begin{remark}\label{rmk_BS_general}
Suppose $q_{1} < q_{2}$ are two positive prime numbers, and $s>1$ is an integer prime to $q_{1} q_{2}$. Similarly, there is a desired triple $(M, M', h)$ as the one in Theorem \ref{thm_BS_manifold} except that $\pi_{1} (M) = BS(q_{1}, q_{2})$ and $M'$ is an $s$-cover of $M$. See Remark \ref{rmk_BS_proof}.
\end{remark}

\begin{remark}\label{rmk_BS_nonfinite}
According to the (4) in Theorem \ref{thm_BS_manifold}, the covering manifold $\overline{M}$ of $M$ with $\pi_{1} (\overline{M}) =G$ is far from being finitely dominated. Hence, the conclusion of Theorem \ref{thm_homotopy_finite} would fail dramatically when the fundamental group is nonabelian.
\end{remark}

By Theorem \ref{thm_BS_manifold}, to find any positive answer to Question \ref{que_nonabelian}, one has to at least impose the condition that $\bigcap_{k=1}^{\infty} \mathrm{im} h_{\#}^{k}$ is a finitely presented normal subgroup of $\pi_{1} (M)$. We are not able to make further progress on Question \ref{que_nonabelian} at this stage. Nevertheless, we shall raise a program together with a few questions on it in Section \ref{sec_question}.

Theorem \ref{thm_BS_manifold} follows from Theorems \ref{thm_cw_manifold} and \ref{thm_cw_4-manifold} which are general constructions of self-covering manifolds out of self-covering complexes. They reflect a diversity of the self-covering phenomena in the category of closed manifolds. In these theorems, $\pi_{1} (M)$ is identified with $\pi_{1} (X)$ by a fixed isomorphism, the subgroups $\pi_{1} (M')$ and $\pi_{1} (X')$ are then identified.

\begin{bigthm}\label{thm_cw_manifold}
Suppose $X$ is an $r$-dimensional finitely dominated CW complex, $X'$ is a $k$-cover of $X$ for some $k>1$. Suppose $\hat{h} \colon  X \rightarrow X'$ is a homotopy equivalence preserving base points. Then for each integer $m \ge 2r+12$, there exist a compact smooth hypersurface $M$ in $\mathbb{R}^{m+1}$ and a ($C^{\infty}$) diffeomorphism $h \colon  M \rightarrow M'$ satisfying the following properties:
\begin{enumerate}
\item $\pi_{1} (M) = \pi_{1} (X)$, and $M'$ is a $k$-cover of $M$ with $\pi_{1} (M') = \pi_{1} (X')$.

\item $h$ preserves base points and $h_{\#} \colon  \pi_{1} (M) \rightarrow \pi_{1} (M')$ equals $\hat{h}_{\#} \colon  \pi_{1} (X) \rightarrow \pi_{1} (X')$. Additionally, $h$ can be arranged to preserve or reverse orientations as one wishes.
\end{enumerate}

Furthermore, if $X$ is the underlying space of a finite simplicial complex and $\hat{h}$ is a simple homotopy equivalence, then the restriction on the above $m$ can be weakened as $m \ge \max \{ 2r, r+2, 5 \}$.
\end{bigthm}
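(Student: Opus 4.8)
The plan is to realize $M$ as the boundary of a regular neighborhood, taken inside $\mathbb{R}^{m+1}$, of a finite complex carrying the homotopy-theoretic data of $X$; the self-covering $\hat h$ then manifests as a self-covering of that boundary. I would establish the ``furthermore'' (the finite, simple case) first, as it is the model for the general statement.

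\emph{The finite, simple case.} Suppose $X$ is a finite simplicial complex of dimension $r$ and $\hat h$ is simple. Embed $X$ piecewise-linearly in $\mathbb{R}^{m+1}$, which is possible once $m\ge 2r$, and let $N\subset\mathbb{R}^{m+1}$ be a regular neighborhood: a compact $\mathrm{PL}$ (hence, after smoothing, smooth) $(m+1)$-manifold collapsing onto $X$. Put $M:=\partial N$, a smooth closed hypersurface. Since the codimension of $X$ in $N$ is at least $3$ (this uses $m\ge r+2$), the inclusion $M\hookrightarrow N$ is $2$-connected, so $\pi_1(M)=\pi_1(N)=\pi_1(X)$; moreover the cover $M'$ of $M$ associated with $\hat h_\#(\pi_1X)=\pi_1(X')$ is $\partial N'$, where $N'$ is the corresponding $k$-fold cover of $N$, itself a regular neighborhood of $X'$. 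Invoking the uniqueness of high-codimensional regular neighborhoods, together with the fact that in the stable range a simple homotopy equivalence of finite complexes is covered by a $\mathrm{PL}$ homeomorphism of their regular neighborhoods, one obtains a homeomorphism $N\to N'$ inducing $\hat h$, whose restriction to the boundary is the desired diffeomorphism $h$ with $h_\#=\hat h_\#$. Precomposing with a reflection of $\mathbb{R}^{m+1}$ reverses orientation. The hypothesis $m\ge 5$ ensures that $M$ is at least $5$-dimensional, which is needed for the manifold-theoretic inputs used here and below (isotopy of embedded circles, the $s$-cobordism theorem, uniqueness of thickenings).

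\emph{Reducing the general case to a finite complex.} For arbitrary finitely dominated $X$, the obstruction to applying the previous step directly is Wall's finiteness obstruction $\widetilde o(X)\in\widetilde K_0(\mathbb{Z}\,\pi_1X)$, and I would cancel it without disturbing $\pi_1$ or the self-covering structure. Fix a finite presentation $2$-complex $P$ of $\pi:=\pi_1(X)$, arrange it to be the $2$-skeleton of $X$, and arrange the self-covering map of $X$ to restrict on $P$ to a fixed map onto the corresponding $k$-fold cover $P'$. Using the abundance of self-covering finitely dominated complexes realizing $\hat h_\#$ (a consequence of Mather's trick) together with Wall's realization of arbitrary classes of $\widetilde K_0$ by finitely dominated complexes, construct a finitely dominated complex $Y\supset P$ with $\pi_1(Y)=\pi$, carrying a self-covering that realizes the same monomorphism and restricts on $P$ to the same map, and with $\widetilde o(Y)=-\widetilde o(X)$. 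Then $Z:=X\cup_PY$ is finitely dominated, $\pi_1(Z)=\pi$, $Z$ inherits a self-covering realizing $\hat h_\#$, and $\widetilde o(Z)=\widetilde o(X)+\widetilde o(Y)=0$ by the sum formula; hence $Z$ is homotopy equivalent to a finite simplicial complex of controlled dimension. After further adjusting $Y$ so that the self-covering equivalence of $Z$ becomes simple (absorbing a Whitehead torsion), the finite case applies to $Z$; the bound $m\ge 2r+12$ is exactly what leaves room for the enlarged dimension, the embedding, and all the manifold-theoretic inputs. This yields $(M,M',h)$ with the required properties.

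The main obstacle is this last reduction: producing $Y$, hence $Z=X\cup_PY$, so that it simultaneously keeps $\pi_1$ equal to $\pi_1(X)$ on the nose, retains a homotopy equivalence onto the appropriate $k$-fold cover inducing exactly $\hat h_\#$, has vanishing finiteness obstruction, and has the \emph{simple} homotopy type needed for the regular-neighborhood argument to output a diffeomorphism rather than merely a homotopy equivalence. Tracking $\pi_1$, the finiteness obstruction, and the Whitehead torsion simultaneously through this cancellation — and verifying that the covering projection survives every modification and every passage to a regular neighborhood — is where the real work lies, and is what forces the weaker bound $m\ge 2r+12$ in place of $m\ge\max\{2r,r+2,5\}$.
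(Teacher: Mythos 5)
Your proof of the finite, simple case follows the paper's approach in spirit: embed $X$ in $\mathbb{R}^{m+1}$, take the boundary $M=\partial N$ of a smooth regular neighborhood, and build a diffeomorphism $N\to N'$ from the simple homotopy equivalence. The paper's version is more explicit where you invoke ``a simple homotopy equivalence of finite complexes is covered by a $\mathrm{PL}$ homeomorphism of their regular neighborhoods in the stable range'': it constructs the diffeomorphism by hand, using Smale--Hirsch immersion theory to produce an embedding $\phi\colon N_1\to\operatorname{int}N'$ of a shrunk regular neighborhood, observing that $N'\setminus\operatorname{int}\phi(N_1)$ is an $s$-cobordism of dimension $\ge 6$, and then applying the $s$-cobordism theorem; this also explains exactly why $m\ge\max\{2r,r+2,5\}$ is enough. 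One point you gloss over is the base-point bookkeeping needed to ensure $h_\#=\hat h_\#$ on the nose (not merely up to conjugation): the paper fixes this by an explicit further isotopy pushing the base point of $M'$ along a loop.

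Where you diverge substantially — and where your plan has a genuine gap — is the reduction of the general finitely dominated case to the finite simple case. Your proposed device (glue a counterweight $Y$ with $\widetilde o(Y)=-\widetilde o(X)$ along a common presentation $2$-complex $P$, then further ``adjust $Y$'' to also absorb a Whitehead torsion) has several unaddressed obstacles. Wall's realization theorem produces a finitely dominated complex with prescribed finiteness obstruction, but you need $Y$ to simultaneously carry a self-covering equivalence realizing $\hat h_\#$ and compatible with your chosen restriction to $P$; nothing in Wall's construction or in ``Mather's trick'' (which you do not specify) guarantees this. Even granting $Y$, the finite complex homotopy equivalent to $Z=X\cup_P Y$ need not have any controlled dimension, so the bound $m\ge 2r+12$ is not explained. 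Finally, the step ``adjust $Y$ so the self-covering equivalence becomes simple'' is not a construction: the Whitehead torsion of a self-homotopy equivalence is not freely adjustable by re-choosing $Y$ without breaking the other constraints. You acknowledge all this as ``where the real work lies,'' but as stated the plan does not close.

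The paper's reduction is far simpler and entirely bypasses these difficulties. Because $\chi(S^3)=0$, Gersten's product formula gives $\widetilde o(X\times S^3)=0$, so by Whitehead's theorem $X\times S^3$ is homotopy equivalent to a finite simplicial complex $Y$ of dimension $r+3$ with $\pi_1(Y)=\pi_1(X)$; the self-covering $\hat h\times\mathrm{id}$ induces $\hat h_Y\colon Y\to Y'$. Then set $Z=Y\times S^3$ (dimension $r+6$): by the multiplicativity of Whitehead torsion with a factor of zero Euler characteristic (Cohen, (23.2)), $\hat h_Y\times\mathrm{id}$ is a \emph{simple} homotopy equivalence. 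Applying the finite, simple case to $Z$ requires $m\ge\max\{2(r+6),(r+6)+2,5\}=2r+12$, which is precisely the stated bound. This ``cross with $S^3$ twice'' trick simultaneously kills the finiteness obstruction, kills the torsion, preserves $\pi_1$ and the induced map $\hat h_\#$, and controls the dimension — all the things your gluing scheme would have to track by hand.
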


\begin{bigthm}\label{thm_cw_4-manifold}
Suppose $X$ is the underlying space of a $2$-dimensional finite simplicial complex, $X'$ is a $k$-cover of $X$ for some $k>1$. Suppose $\hat{h} \colon  X \rightarrow X'$ is a homotopy (resp.~simple homotopy) equivalence preserving base points. Then there exist a compact smooth hypersurface $M$ in $\mathbb{R}^{5}$ and a homotopy (resp.~simple homotopy) equivalence $h \colon  M \rightarrow M'$ such that $(M,M',h)$ satisfies the (1) and (2) in Theorem \ref{thm_cw_manifold}.

Furthermore, if $\hat{h}$ is a simple homotopy equivalence and $\pi_{1} (X)$ is good in the sense of \cite{Freedman_Teichner}, then $h$ is a homeomorphism.
\end{bigthm}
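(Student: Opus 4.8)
The plan is to run the regular-neighbourhood construction of the proof of Theorem~\ref{thm_cw_manifold} with $r=2$, the ambient dimension being forced down to $5$; the genuinely new ingredient is four-dimensional topological surgery. First I would embed the polyhedron $|X|$ of the finite $2$-complex in $\mathbb{R}^{5}$ — possible by general position, since $2\cdot 2+1=5$ — and take a smooth regular neighbourhood $N\subset\mathbb{R}^{5}$, a compact smooth $5$-manifold that collapses (simply) onto $|X|$. Set $M:=\partial N$; this is a closed smooth $4$-manifold sitting in $\mathbb{R}^{5}$ as a hypersurface, and, bounding the parallelizable $N$, it is stably parallelizable. Since $N$ collapses onto a $2$-complex it is built from a collar of $M$ by handles of index $\ge 5-2=3$, so $M\hookrightarrow N$ is $2$-connected and $\pi_{1}(M)\xrightarrow{\cong}\pi_{1}(N)=\pi_{1}(X)$; fix this identification. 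Now let $N'\to N$ be the cover with $\pi_{1}(N')=\pi_{1}(X')$: it is a $k$-fold cover collapsing onto the corresponding cover of $|X|$, namely $X'$, so $N'\simeq X'$, and $M':=\partial N'\to M$ is then the $k$-fold cover of $M$ with $\pi_{1}(M')=\pi_{1}(X')$, inheriting stable parallelizability from $M$. This is conclusion~(1) of Theorem~\ref{thm_cw_manifold}.

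To build $h$, compose $\hat h$ with the collapse $N\simeq X$ and a homotopy inverse of the collapse $N'\simeq X'$, obtaining a homotopy equivalence $g\colon N\to N'$ inducing $\hat h_{\#}$ on $\pi_{1}$, which is simple if $\hat h$ is (the collapses being simple). Because $N'\to N$ is a covering, the collapse $N'\searrow|X'|$ and the mapping-cylinder structure $N'=\mathrm{Cyl}(\partial N'\to|X'|)$ are pulled back sheet by sheet from those of $N$; using this, $g$ can be arranged so that $g^{-1}(|X'|)=|X|$ with $g|_{|X|}$ corresponding to $\hat h$, hence so that $g$ restricts to a map of deleted cylinders $N\setminus|X|\to N'\setminus|X'|$. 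As $N\setminus|X|$ deformation retracts onto $M$ and $N'\setminus|X'|$ onto $M'$, this yields $h\colon M\to M'$ with $h_{\#}=\hat h_{\#}$; that $h$ is a (simple) homotopy equivalence follows — exactly as in the proof of Theorem~\ref{thm_cw_manifold} — from the facts that $\hat h$ is a (simple) homotopy equivalence, that the normal data along $|X'|$ is pulled back from that along $|X|$ so that nothing obstructs the matching, and that removing a subpolyhedron of codimension $5-2\ge 3$ preserves $\pi_{1}$; Poincar\'e--Lefschetz duality makes the relevant relative homology groups (with all local coefficients) vanish below degree $3$, the five lemma then does the rest, and the Whitehead-torsion sum formula takes care of simplicity. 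The point to note is that this argument uses only $5\ge 2\cdot 2+1$ and codimension $\ge 3$, never dimension $\ge 5$; that is why the homotopy-equivalence conclusion survives in dimension $4$, although the upgrade of $h$ to a diffeomorphism in Theorem~\ref{thm_cw_manifold}, which rests on the $s$-cobordism theorem and hence on $\dim M\ge 5$, does not. The orientation behaviour of $h$ is arranged afterward by a routine modification of the construction, e.g.\ by altering the normal framing of a top cell of $X$.

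Finally, suppose $\hat h$ is a simple homotopy equivalence and $\pi:=\pi_{1}(X)$ is good in the sense of Freedman--Teichner. Then $h\colon M\to M'$ is a simple homotopy equivalence of closed $4$-manifolds, and since $M,M'$ are stably parallelizable with $h$ compatible with the normal bundle data pulled back along $M'\to M$, it underlies a degree-one normal map whose normal invariant in $[M',G/\mathrm{TOP}]$ vanishes. By the four-dimensional topological surgery theory of Freedman--Quinn, valid because $\pi$ is good, $(M,h)$ then lies in the image of the Wall realization map $L_{5}^{s}(\mathbb{Z}[\pi])\to\mathcal{S}^{\mathrm{TOP}}(M')$; I would argue this realized class is trivial — exploiting that $h$ extends over the homotopy equivalence $g$ of the $5$-manifolds-with-boundary $(N,M)$ and $(N',M')$, and the very constrained shape of $M'$ (the boundary of a $5$-dimensional handlebody of index $\le 2$), which makes the structure set $\mathcal{S}^{\mathrm{TOP}}(M')$ tractable — so that $h$ is homotopic to a homeomorphism, which still realizes properties~(1) and~(2). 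I expect this to be the main obstacle: it is the only place the goodness of $\pi_{1}(X)$ is used, it is precisely what fails in dimension $4$ without that hypothesis, and showing that the \emph{given} $h$, rather than merely some homotopy equivalence $M\to M'$, is realized by a homeomorphism requires genuinely four-dimensional surgery-theoretic input.
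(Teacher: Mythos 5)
Your construction of $h$ diverges from the paper's and has a gap. The paper's proof of Theorem~\ref{thm_cw_manifold} — reused wholesale here — does not invoke mapping-cylinder structures on the regular neighbourhoods. It instead uses Smale--Hirsch immersion theory to homotope the simple homotopy equivalence $\hat h \colon N \to N'$ to an immersion $\phi$ embedding a smaller regular neighbourhood $N_1 \subset \mathrm{int}\,N$ of $X$ into $\mathrm{int}\,N'$; then, by excision and duality, $W := N' \setminus \mathrm{int}\,\phi(N_1)$ is an $h$- (resp.\ $s$-)cobordism between $M' = \partial N'$ and $M_2 = \partial\phi(N_1)$. When $m \ge 5$ the smooth $s$-cobordism theorem turns $\phi(N_1) \hookrightarrow N'$ into a diffeomorphism; when $m = 4$, as here, the paper only uses that an $h$-cobordism admits a (simple) homotopy equivalence of pairs $\phi_2 \colon (\phi(N_1), M_2) \to (N', M')$, which needs no dimensional hypothesis and, composed with the nested-neighbourhood diffeomorphisms, supplies the desired $h$. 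Your substitute — arranging $g \colon N \to N'$ so that $g^{-1}(|X'|) = |X|$ and restricting to the deleted neighbourhoods — leaves the crucial point unproved: a map of pairs $(N, |X|) \to (N', |X'|)$ that is a homotopy equivalence on both $N$ and $|X|$ need not restrict to a homotopy equivalence on the boundaries (the relative homotopy groups are all trivial, so the five lemma you appeal to is vacuous here). In the paper it is precisely the $h$-cobordism $W$, produced by the immersion-theory step you omit, that certifies $M \cong M_2 \simeq M'$.

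For the homeomorphism conclusion you propose a four-dimensional surgery argument on the structure set $\mathcal{S}^{\mathrm{TOP}}(M')$ and concede you cannot close it. The paper does something more direct, one dimension higher: when $\hat h$ is simple, $W$ is a five-dimensional topological $s$-cobordism with $\pi_1(W) = \pi_1(X)$, and the Freedman--Teichner $s$-cobordism theorem for good fundamental groups makes $W$ a product, giving a homeomorphism $\phi_2$ and hence a homeomorphism $h$. That is exactly where goodness enters, and it bypasses entirely the structure-set calculation on $M'$ that you were unable to complete.
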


A canonical application of Theorems \ref{thm_cw_manifold} and \ref{thm_cw_4-manifold} is to set $X$ as an Eilenberg-MacLane space $K(\pi, 1)$.

\subsection{Algebraic Foundation}
We convert Theorem \ref{thm_homotopy_finite} to the following algebraic Theorem \ref{thm_finite_module} on finite generation of modules. This theorem may be of independent interest.
\begin{assumption}\label{asm_matrix}
Let $A=(a_{ij})_{n \times n} \in \mathrm{End} (\mathbb{Z}^{n})$, where elements in $\mathbb{Z}^{n}$ are considered as column vectors and the action of $A$ on $\mathbb{Z}^{n}$ is by left multiplication. Suppose $\det (A) \neq 0$ and $\bigcap_{k=1}^{\infty} \mathrm{im} A^{k} =0$.
\end{assumption}

Let $R$ be a commutative Noetherian ring and $S= R[t_{1}^{\pm 1}, \dots, t_{n}^{\pm 1}]$, where $t_{1}, \dots, t_{n}$ are indeterminates.

\begin{assumption}\label{asm_ring_hom}
Suppose $\phi \colon  S \rightarrow S$ is a ring monomorphism such that $\phi (R) =R$. Suppose further, $\forall j$, $\phi(t_{j}) = g_{j} \prod_{i=1}^{n} t_{i}^{a_{ij}}$, where $(a_{ij})_{n \times n} = A$ and $g_{j}$ are units of $R$.
\end{assumption}

\begin{bigthm}\label{thm_finite_module}
Under the Assumptions \ref{asm_matrix} and \ref{asm_ring_hom}, assume further $\mathfrak{M}$ is a finite $S$-module. Suppose there exists a group isomorphism $\eta \colon  \mathfrak{M} \rightarrow \mathfrak{M}$ such that, $\forall s \in S$, $\forall x \in \mathfrak{M}$, $\eta (sx) = \phi (s) \eta (x)$. Then $\mathfrak{M}$ is a finite $R$-module.
\end{bigthm}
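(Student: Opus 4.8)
The plan is to use the semilinear automorphism $\eta$ to iterate $\phi$ and so contract the $S$-action on $\mathfrak{M}$ down to the $R$-action, the arithmetic hypothesis $\bigcap_{k}\mathrm{im}\,A^{k}=0$ being exactly what makes this contraction ``converge''. Identify $S$ with the group algebra $R[\mathbb{Z}^{n}]$; then, up to the units $g_{j}$, $\phi$ is multiplication by $A$ on the character lattice, so $\phi(S)=R[A\mathbb{Z}^{n}]$ and $S$ is free of rank $d:=|\det(A)|$ over $\phi(S)$, a basis being given by monomials indexing coset representatives of $\mathbb{Z}^{n}/A\mathbb{Z}^{n}$; moreover $d\ge 2$, since $d=1$ would force $\mathrm{im}\,A=\mathbb{Z}^{n}$ and hence $\bigcap_{k}\mathrm{im}\,A^{k}=\mathbb{Z}^{n}$. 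In particular $S$ is Noetherian, $\phi^{k}(S)=R[A^{k}\mathbb{Z}^{n}]$, and $\bigcap_{k}\phi^{k}(S)=R[\bigcap_{k}A^{k}\mathbb{Z}^{n}]=R$. The relation $\eta(sx)=\phi(s)\eta(x)$ says precisely that $\eta$ is an isomorphism of $S$-modules $\mathfrak{M}\xrightarrow{\ \sim\ }{}_{\phi}\mathfrak{M}$, where ${}_{\phi}\mathfrak{M}$ denotes $\mathfrak{M}$ with scalars restricted along $\phi$; hence $\mathfrak{I}:=\mathrm{Ann}_{S}\mathfrak{M}$ satisfies $\phi^{-1}(\mathfrak{I})=\mathfrak{I}$. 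Since $R$ is Noetherian and $\mathfrak{M}$ is a faithful finite $S/\mathfrak{I}$-module, it suffices to prove $S/\mathfrak{I}$ is a finite $R$-module, i.e.\ that $\mathrm{Supp}\,\mathfrak{M}$ is finite over $\mathrm{Spec}\,R$. (Applying $\eta^{k}$ to a finite $S$-generating set shows $\mathfrak{M}$ is generated over every $\phi^{k}(S)$ by the same number of elements; this is necessary but, as $\mathbb{Z}[\tfrac12]$ --- which is cyclic over $\mathbb{Z}[t^{\pm1}]$ and over every $\mathbb{Z}[t^{\pm 2^{k}}]$, yet is not a finite $\mathbb{Z}$-module --- shows, by no means sufficient: it is the \emph{bijectivity} of $\eta$, not the mere existence of $\phi$, that must be used.)

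Reducing modulo a minimal prime and passing to $K:=\mathrm{Frac}(R)$ --- legitimate after replacing $(\phi,\eta)$ by a fixed power so that the finitely many relevant primes become stable under $\sigma:=\phi|_{R}$, using that finiteness over $R$ of a finite $S$-module may be checked after such localizations --- one first kills the ``free part''. Set $F:=\mathrm{Frac}(K[\mathbb{Z}^{n}])$ and $r:=\dim_{F}(\mathfrak{M}\otimes_{S}F)=\mathrm{rank}_{S}\mathfrak{M}$. Then $\eta$ induces an $F$-linear isomorphism $\mathfrak{M}\otimes_{S}F\xrightarrow{\ \sim\ }({}_{\phi}\mathfrak{M})\otimes_{S}F$; but the right-hand side, being $\mathfrak{M}$ pushed forward along the degree-$d$ isogeny, has $F$-dimension $dr$. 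As $d\ge 2$, this forces $r=0$: $\mathfrak{M}$ is an $S$-torsion module, so $Z:=\mathrm{Supp}\,\mathfrak{M}$ is a proper closed subscheme of $\mathrm{Spec}\,S$.

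The crux is to show $Z$ is finite over $\mathrm{Spec}\,R$. The isomorphism $\mathfrak{M}\cong{}_{\phi}\mathfrak{M}$ --- equivalently $\widetilde{\mathfrak{M}}\cong\Phi_{*}\widetilde{\mathfrak{M}}$, where $\Phi:=\mathrm{Spec}(\phi)$ is the finite surjective degree-$d$ isogeny --- gives $\Phi(Z)=Z$, hence $\Phi^{k}(Z)=Z$ for all $k$. Now $\Phi^{k}$ is the isogeny attached to $A^{k}$, and $\bigcap_{k}\mathrm{im}\,A^{k}=0$ --- equivalently $\bigcap_{k}\mathrm{im}(A^{T})^{k}=0$, both of these amounting to ``$A$ has no eigenvalue which is an algebraic unit'' --- translates, through the character-lattice/subtorus dictionary, into: no positive-rank quotient lattice of $\mathbb{Z}^{n}$ carries an $A^{k}$-automorphism simultaneously for all $k$, i.e.\ no positive-dimensional subtorus is stable under all of the $\Phi^{k}$. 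Combined with the description of $\Phi$-invariant closed subsets of a torus as finite unions of cosets of subtori, this forces every top-dimensional component of $Z$ to be finite over $\mathrm{Spec}\,R$; excising those components and repeating (Noetherian induction on the relative dimension of $Z$) yields that $Z$ is finite over $\mathrm{Spec}\,R$. Therefore $S/\mathfrak{I}$ is module-finite over $R$ and $\mathfrak{M}$ is a finite $R$-module.

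I expect the crux --- and precisely its ``vertical'' aspect, namely preventing $Z$ from running off toward the toric boundary over a non-generic point of $\mathrm{Spec}\,R$ --- to be the genuine obstacle. Killing the free part is purely formal and only controls $\mathfrak{M}$ over the generic point of $\mathrm{Spec}\,R$, whereas the $\mathbb{Z}[\tfrac12]$ example (cyclic over $\mathbb{Z}[t^{\pm1}]$, with a single generic point in its support, but not finite over $\mathbb{Z}$) shows that generic control alone is useless. Surmounting this is exactly where the interaction of the bijectivity of $\eta$ with $\bigcap_{k}A^{k}\mathbb{Z}^{n}=0$ must be exploited; concretely it comes down to showing that the ``eigenvalues of the operators $t_{i}$'' are units at every place of every residue field of $R$, so that Kronecker's theorem makes them roots of unity and each $t_{i}^{\pm1}$ integral over $R$. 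A more hands-on, if longer, substitute for the subtorus input is induction on $n$: the base case $n=1$ follows from the structure theory over the principal ideal domain $K[t^{\pm1}]$ together with a completion-by-completion analysis of the forced relation $\eta(tx)=g\,t^{a}\eta(x)$, and the inductive step exploits $\mathfrak{I}\ne 0$ to descend to a quotient of $S$ of smaller dimension still carrying a compatible semilinear automorphism.
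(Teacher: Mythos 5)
Your global strategy is in the right spirit and shares several ingredients with the paper's proof: recasting $\phi$ as a degree-$d$ isogeny $\Phi = \mathrm{Spec}(\phi)$ with $d = |\det A| \ge 2$, recognizing $\eta$ as an $S$-linear isomorphism $\mathfrak{M} \cong {}_{\phi}\mathfrak{M}$ which forces $\Phi$-invariance of $Z = \mathrm{Supp}\,\mathfrak{M}$, and a rank/degree count that kills the free part. These steps are correct. The genuine gap is the invocation, as a black box, of the ``description of $\Phi$-invariant closed subsets of a torus as finite unions of cosets of subtori.'' No such theorem is available at the level of generality required here, and in fact it is \emph{false} in the positive-characteristic cases the hypotheses permit: take $R = K$ a perfect field of characteristic $p$, $\sigma = \phi|_K$ the Frobenius, $g_j = 1$ and $A = p\cdot\mathrm{Id}$, so that $\Phi$ is the $p$-power map; then \emph{every} positive-dimensional subvariety of $\mathbb{G}_m^n$ defined over the prime field is $\Phi$-invariant --- for instance a Fermat curve $\{x^m + y^m = 1\} \cap \mathbb{G}_m^2$ with $p \nmid m$ and $m \ge 4$, which has positive genus and hence is not a coset of a subtorus. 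Even in characteristic zero this structure theorem is a substantially deeper Diophantine statement than is actually needed.

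What the paper proves instead (Proposition \ref{prop_variety}) is the strictly weaker --- but exactly sufficient --- claim that any positive-dimensional \emph{irreducible} $\Phi$-invariant subvariety $V$ satisfies $\deg(\Phi|_V) > 1$, which is all that the rank count at the generic point of $V$ requires. The mechanism is the intrinsic torus: if $\deg(\Phi|_V) = 1$, the induced endomorphism of the normalization $\overline{V}$ is an isomorphism by Zariski's Main Theorem, hence so is its functorial lift to $T(\overline{V})$; the image of $T(\overline{V}) \to \mathbb{G}_m^n$ is then a positive-dimensional $\Phi$-invariant subtorus on which $\Phi$ has degree one, contradicting Lemma \ref{lem_subtorus_degree}. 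This works in all characteristics. A secondary issue: you assert that ``finiteness over $R$ of a finite $S$-module may be checked after such localizations,'' but --- as your own $\mathbb{Z}[\tfrac{1}{2}]$ example already shows --- finite-dimensionality over the residue fields of $R$ at associated primes is \emph{not} by itself sufficient. The paper closes this separately via a valuation-theoretic argument (Proposition \ref{prop_eigenvalue}, using Krull domains and the Mori--Nagata theorem) establishing that the eigenvalues of $t_i^{\pm 1}$ are integral over $R/P$, after which \cite[Theorem~G]{Qin_Su_Wang} can be invoked.
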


Here we say $\eta$ is a \textit{$\phi$-twisted} homomorphism of $S$-modules.

\subsection{Outline}
The proof of Theorem \ref{thm_finite_module} is complicated and occupies Sections \ref{sec_algebra_finite}, \ref{sec_eigenvalue} and \ref{sec_algebra_tori}. The argument in Section \ref{sec_algebra_finite} is exclusively based on commutative algebra. Section \ref{sec_eigenvalue} only needs linear algebra. However, Section \ref{sec_algebra_tori} mainly involves abstract algebraic geometry, including the language of schemes. In Section \ref{sec_CW}, we prove Theorem \ref{thm_homotopy_finite} and Corollary \ref{cor_group}, and their generalizations, Theorems \ref{thm_homology_finite} and \ref{thm_homotopy_skeleton}. Theorem \ref{thm_Poincare} and Corollary \ref{cor_torus} are proved in Section \ref{sec_Poincare}. We prove in Section \ref{sec_low_dim}  Theorem \ref{thm_4-manifold} on $4$-manifolds. Section \ref{sec_fibering} includes the proofs of the fibering Theorems \ref{thm_bundle_s1}, \ref{thm_approximate_fibration}, \ref{thm_block_bundle}, \ref{thm_stable_bundle}, \ref{thm_bundle_T2} and \ref{thm_bundle}, while Section \ref{sec_nonfibering} contains the proof of the non-fibering Theorem \ref{thm_nonfibering}. In Section \ref{sec_noncommutative}, we prove Theorems \ref{thm_BS_manifold}, \ref{thm_cw_manifold} and \ref{thm_cw_4-manifold}. Finally, we propose some questions in Section \ref{sec_question}.

\medskip

\noindent \textbf{Acknowledgement.}
We are indebted to Botong Wang who helped us with the proof of Theorem \ref{thm_finite_module}. Particularly, the proof of Proposition \ref{prop_degree} is completely due to him. We thank Xiaoming Du, Hailiang Hu, Wei Wang, Xiaolei Wu and Shengkui Ye for various discussions. We also thank Shicheng Wang who explained \cite{Wang_Wu} to us. The first author is partially supported by NSFC 11871272. The second author is partially supported by NSFC 12471069.

\section{Algebraic Finiteness}\label{sec_algebra_finite}
In this section, we shall prove Theorem \ref{thm_finite_module}. Our argument is based on Propositions \ref{prop_eigenvalue} and \ref{prop_degree} which will be proved later.

We introduce a technique of iteration which will be frequently used. For each integer $m>0$, we have $\eta^{m} \colon  \mathfrak{M} \rightarrow \mathfrak{M}$ is a group isomorphism such that, $\forall x \in \mathfrak{M}$, $\forall s \in S$, $\eta^{m} (sx) = \phi^{m} (s) \eta^{m} (x)$. Obviously, $\phi^{m} \colon  S \rightarrow S$ is also a ring monomorphism with $\phi^{m} (R) =R$. Furthermore, it's easy to check that, $\forall j$, $\phi^{m} (t_{j}) = g_{m,j} \prod_{i=1}^{n} t_{i}^{b_{ij}}$, where $(b_{ij})_{n \times n} = A^{m}$ and $g_{m,j}$ are units of $R$. Clearly, $\bigcap_{k=1}^{\infty} \mathrm{im} A^{mk} = \bigcap_{k=1}^{\infty} \mathrm{im} A^{k} =0$. Therefore, $(\eta^{m}, \phi^{m}, A^{m})$ essentially satisfies the same assumption as $(\eta, \phi, A)$ does. To prove Theorem \ref{thm_finite_module}, we may consider $(\eta^{m}, \phi^{m}, A^{m})$ for some suitable $m>0$.

Let's assume $\mathfrak{M} \neq 0$. Otherwise, the conclusion of Theorem \ref{thm_finite_module} is already true. Recall that a prime ideal $Q$ of $S$ is an associated prime ideal of $\mathfrak{M}$ if there exists $x \in \mathfrak{M}$ such that the annihilator $\mathrm{Ann}_{S} (x) = Q$. Let $\mathrm{Ass}_{S} (\mathfrak{M})$ be the set of prime ideals of $S$ associated to $\mathfrak{M}$. Since $\mathfrak{M}$ is a finite $S$-module, the set $\mathrm{Ass}_{S} (\mathfrak{M})$ is nonempty and finite (cf. \cite[(7.B)~\&~(7.G)]{Matsumura1}). By \cite[(9.A)]{Matsumura1},
\begin{equation}\label{eqn_R_S_associated}
\mathrm{Ass}_{R} (\mathfrak{M}) = \{ Q \cap R \mid Q \in \mathrm{Ass}_{S} (\mathfrak{M}) \}.
\end{equation}
Thus $\mathrm{Ass}_{R} (\mathfrak{M})$ is also nonempty and finite.

\begin{lemma}\label{lem_iteration}
There exists an integer $m>0$ such that $\phi^{-m} (Q) = Q$ and $\phi^{-m} (P) = P$ for all $Q \in \mathrm{Ass}_{S} (\mathfrak{M})$ and $P \in \mathrm{Ass}_{R} (\mathfrak{M})$.
\end{lemma}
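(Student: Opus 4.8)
The plan is to show that the preimage operator $\phi^{-1}$ on ideals permutes each of the finite sets $\mathrm{Ass}_{S}(\mathfrak{M})$ and $\mathrm{Ass}_{R}(\mathfrak{M})$; the lemma then follows by taking $m$ to be a common period of these two permutations (for instance $m = |\mathrm{Ass}_{S}(\mathfrak{M})|! \cdot |\mathrm{Ass}_{R}(\mathfrak{M})|!$), since $\phi^{-m} = (\phi^{-1})^{m}$ as preimage operators.

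The starting observation is that $\eta$ is not merely a group isomorphism but an isomorphism of $S$-modules $\mathfrak{M} \xrightarrow{\ \sim\ } \phi^{*}\mathfrak{M}$, where $\phi^{*}\mathfrak{M}$ denotes the abelian group $\mathfrak{M}$ equipped with the twisted action $s \bullet x := \phi(s) x$: the relation $\eta(sx) = \phi(s)\eta(x)$ says precisely that $\eta$ is $S$-linear for these structures. Hence $\mathrm{Ass}_{S}(\mathfrak{M}) = \mathrm{Ass}_{S}(\phi^{*}\mathfrak{M})$. It remains to identify the right-hand side. Since $\phi$ is injective, it is a ring isomorphism $S \xrightarrow{\ \sim\ } \phi(S)$, and $\phi(S) = R[\phi(t_{1})^{\pm 1}, \dots, \phi(t_{n})^{\pm 1}]$ is again a Laurent polynomial ring over $R$; because $\det A \neq 0$, the ring $S$ is a finite free module over $\phi(S)$ (of rank $|\det A|$, with basis the monomials $t^{c}$ for $c$ running over coset representatives of $\mathbb{Z}^{n}/A\mathbb{Z}^{n}$). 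Restriction of scalars along a module-finite inclusion behaves well on associated primes — the same principle that yields \eqref{eqn_R_S_associated}, see \cite[(9.A)]{Matsumura1} — so $\mathrm{Ass}_{\phi(S)}(\mathfrak{M}) = \{Q \cap \phi(S) : Q \in \mathrm{Ass}_{S}(\mathfrak{M})\}$. Transporting this along the isomorphism $\phi$ and using $\phi^{-1}(Q \cap \phi(S)) = \phi^{-1}(Q)$, we obtain
\[
\mathrm{Ass}_{S}(\phi^{*}\mathfrak{M}) = \{\phi^{-1}(Q) : Q \in \mathrm{Ass}_{S}(\mathfrak{M})\}.
\]
Therefore $Q \mapsto \phi^{-1}(Q)$ is a surjective self-map of the finite set $\mathrm{Ass}_{S}(\mathfrak{M})$, hence a bijection, i.e.\ a permutation of finite order.

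For the $R$-part, the equality $\phi(R) = R$ combined with injectivity makes $\phi|_{R}$ a ring automorphism of $R$, and one checks that any $s \in S$ with $\phi(s) \in R$ must already lie in $R$ (compare monomial supports in $\phi(s)$, using that $A$ is injective), so that $\phi^{-1}(I) = (\phi|_{R})^{-1}(I) \subseteq R$ for every ideal $I \subseteq R$. From \eqref{eqn_R_S_associated}, the elementary identity $\phi^{-1}(Q) \cap R = (\phi|_{R})^{-1}(Q \cap R)$, and the fact just established that $\phi^{-1}$ permutes $\mathrm{Ass}_{S}(\mathfrak{M})$, it follows that $(\phi|_{R})^{-1}$ permutes $\mathrm{Ass}_{R}(\mathfrak{M}) = \{Q \cap R : Q \in \mathrm{Ass}_{S}(\mathfrak{M})\}$. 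Taking $m$ as above and unwinding $\phi^{-m} = (\phi^{-1})^{m}$ then gives $\phi^{-m}(Q) = Q$ and $\phi^{-m}(P) = P$ for all $Q \in \mathrm{Ass}_{S}(\mathfrak{M})$ and $P \in \mathrm{Ass}_{R}(\mathfrak{M})$.

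The bookkeeping with permutations of finite sets and the reduction to the automorphism $\phi|_{R}$ are routine. The real content lies in the identification $\mathrm{Ass}_{S}(\phi^{*}\mathfrak{M}) = \{\phi^{-1}(Q) : Q \in \mathrm{Ass}_{S}(\mathfrak{M})\}$: the inclusion ``$\supseteq$'' is immediate (if $Q = \mathrm{Ann}_{S}(x)$, then the same $x$ regarded in $\phi^{*}\mathfrak{M}$ has annihilator $\phi^{-1}(Q)$, which is prime), but the reverse inclusion — equivalently, the surjectivity of $\phi^{-1}$ on $\mathrm{Ass}_{S}(\mathfrak{M})$ — is where $\det A \neq 0$ is genuinely used, through the module-finiteness of $S$ over $\phi(S)$. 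Note that $\phi^{-1}$ need not be injective on all of $\mathrm{Spec}(S)$, so the surjectivity cannot be bypassed by an injectivity argument; this is the step I expect to require the most care.
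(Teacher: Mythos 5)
Your argument is correct and follows the same route as the paper's: reinterpret $\eta$ as a module isomorphism (the paper phrases it as a $\phi$-twisted isomorphism between the $S$-module and $\phi(S)$-module structures on $\mathfrak{M}$), transport associated primes across the ring isomorphism $\phi \colon S \to \phi(S)$, invoke \cite[(9.A)]{Matsumura1} for the inclusion $\phi(S) \subseteq S$, and use finiteness of $\mathrm{Ass}_S(\mathfrak{M})$ together with \eqref{eqn_R_S_associated} to conclude.

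One small correction to your closing remark: the surjectivity step does not really hinge on module-finiteness of $S$ over $\phi(S)$. The same reference \cite[(9.A)]{Matsumura1} is applied in \eqref{eqn_R_S_associated} to the inclusion $R \subseteq S$, which is nowhere near module-finite; what the result needs is only that the larger ring be Noetherian. The hypothesis actually drawn on at this point is the injectivity of $\phi$ (part of Assumption \ref{asm_ring_hom}, and equivalent to $\det A \neq 0$ given the monomial form of $\phi$), needed so that $\phi \colon S \to \phi(S)$ is a ring isomorphism. The conditions $\det A \neq 0$ and $\bigcap_{k} \mathrm{im}\,A^{k} = 0$ do their serious work later, in Propositions \ref{prop_eigenvalue} and \ref{prop_degree}.
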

\begin{proof}
Since $\phi \colon  S \rightarrow \phi (S)$ is a ring isomorphism, $\eta \colon  (\mathfrak{M}, S) \rightarrow (\mathfrak{M}, \phi (S))$ is a $\phi$-twisted isomorphism of modules. Thus $\phi^{-1}$ induces a bijection $\phi^{-1} \colon  \mathrm{Ass}_{\phi (S)} (\mathfrak{M}) \rightarrow \mathrm{Ass}_{S} (\mathfrak{M})$. On the other hand, $\phi (S) \subseteq S$, by \cite[(9.A)]{Matsumura1},
\[
\mathrm{Ass}_{\phi (S)} (\mathfrak{M}) = \{ Q \cap \phi (S) \mid Q \in \mathrm{Ass}_{S} (\mathfrak{M}) \}.
\]
Since $\mathrm{Ass}_{S} (\mathfrak{M})$ is finite, then $\phi^{-1}$ induces a bijection $\phi^{-1} \colon  \mathrm{Ass}_{S} (\mathfrak{M}) \rightarrow \mathrm{Ass}_{S} (\mathfrak{M})$, and $\phi^{-m}$ is the identity on $\mathrm{Ass}_{S} (\mathfrak{M})$ for some $m>0$. In other words, $\phi^{-m} (Q) = Q$ for all $Q \in \mathrm{Ass}_{S} (\mathfrak{M})$. By \eqref{eqn_R_S_associated}, $\phi^{-m} (P) = P$ for all $P \in \mathrm{Ass}_{R} (\mathfrak{M})$.
\end{proof}

We firstly prove a special and important case of Theorem \ref{thm_finite_module}.
\begin{proposition}\label{prop_field_module}
In Theorem \ref{thm_finite_module}, assuming further $R=K$ for some field $K$, then the conclusion holds, i.e. $\dim_{K} \mathfrak{M}$ is finite.
\end{proposition}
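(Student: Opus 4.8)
The plan is to prove that every associated prime of the finite $S$-module $\mathfrak{M}$ is a maximal ideal; since $S$ is a finitely generated $K$-algebra, this makes $\mathfrak{M}$ a module of finite length over $S$ whose composition factors have finite $K$-dimension (Nullstellensatz), whence $\dim_{K}\mathfrak{M}<\infty$. So assume $\mathfrak{M}\ne 0$ and, for contradiction, that $d:=\dim\mathrm{Supp}_{S}\mathfrak{M}\ge 1$. Fix a minimal prime $Q$ of $\mathrm{Supp}_{S}\mathfrak{M}$ with $\dim S/Q=d$, so that $\ell_{0}:=\mathrm{length}_{S_{Q}}\mathfrak{M}_{Q}$ is finite and $\ge 1$, and $Q\in\mathrm{Ass}_{S}(\mathfrak{M})$. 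Using Lemma~\ref{lem_iteration} together with the iteration technique, I may replace $(\eta,\phi,A)$ by $(\eta^{m},\phi^{m},A^{m})$ for a suitable $m$ — this preserves all the hypotheses, Assumption~\ref{asm_matrix} included — and thereby assume $\phi^{-1}(Q)=Q$. Then $\phi(Q)\subseteq Q$, $\phi(Q)=Q\cap\phi(S)$, and $\phi$ restricts to an isomorphism of local rings $S_{Q}\xrightarrow{\sim}\phi(S)_{\phi(Q)}$.

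The first substantive step extracts a strong constraint from the fact that $\eta$ is a \emph{bijection}. Since $S$ is a free $\phi(S)$-module of rank $D:=\lvert\det A\rvert$ (coset representatives of $\mathbb{Z}^{n}/A\mathbb{Z}^{n}$, regarded as monomials), and since the $\phi$-twisted isomorphism $\eta$ identifies $\mathfrak{M}$ with $\mathfrak{M}$ equipped with the $S$-action pulled back along $\phi$, localizing at $Q$ and comparing lengths over $S_{Q}$ and over $\phi(S)_{\phi(Q)}$ yields
\[
\ell_{0}\;=\;\sum_{Q'}\,[\kappa(Q'):\kappa(\phi(Q))]\cdot\mathrm{length}_{S_{Q'}}\mathfrak{M}_{Q'},
\]
the sum running over the finitely many primes $Q'$ of $S$ with $Q'\cap\phi(S)=\phi(Q)$; here one uses that $\mathfrak{M}_{P}=0$ whenever $P\cap\phi(S)\subsetneq\phi(Q)$, which follows from the maximality of $d$ and the integrality of $S$ over $\phi(S)$. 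The term $Q'=Q$ alone contributes $[\kappa(Q):\kappa(\phi(Q))]\cdot\ell_{0}$, and $\ell_{0}\ge 1$, so necessarily $[\kappa(Q):\kappa(\phi(Q))]=1$. Writing $Z:=V(Q)\subseteq T:=(\mathbb{G}_{m})^{n}_{K}$ and letting $\Phi\colon T\to T$ be the morphism dual to $\phi$ — the composite of the isogeny $x\mapsto x^{A}$ with the translation by $(g_{1},\dots,g_{n})$ — this says precisely that $\Phi(Z)=Z$ and that $\Phi|_{Z}\colon Z\to Z$ is \emph{birational}.

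It remains to see that a positive-dimensional irreducible $\Phi$-invariant subvariety $Z$ with $\Phi|_{Z}$ birational cannot exist when $A$ satisfies Assumption~\ref{asm_matrix}; this is the step I expect to be the main obstacle, and the point at which Propositions~\ref{prop_eigenvalue} and~\ref{prop_degree} enter. The idea I would pursue: compactify $T\hookrightarrow(\mathbb{P}^{1}_{K})^{n}$ and let $(d_{\mathbf{S}})_{\lvert\mathbf{S}\rvert=d}$ be the multidegree of the closure $\overline{Z}$, which is nonzero since $\dim\overline{Z}=d\ge 1$. Translations extend to $(\mathbb{P}^{1})^{n}$ acting trivially on $\mathrm{Pic}$, whereas $x\mapsto x^{A^{k}}$ extends to a rational self-map pulling $\mathcal{O}(1,\dots,1)$ back to $\mathcal{O}(c^{(k)}_{1},\dots,c^{(k)}_{n})$ with $c^{(k)}_{i}=\lVert i\text{-th row of }A^{k}\rVert_{1}\ge 1$. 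Since $\Phi^{k}(Z)=Z$ with $\Phi^{k}|_{Z}$ birational, the $\mathcal{O}(1,\dots,1)$- and $\mathcal{O}(c^{(k)}_{1},\dots,c^{(k)}_{n})$-degrees of $\overline{Z}$ coincide for all $k$; as each $c^{(k)}_{i}\ge 1$, this forces $c^{(k)}_{i}=1$ for every $k\ge 1$ and every index $i$ lying in some $\mathbf{S}$ with $d_{\mathbf{S}}>0$. Hence a nonempty set $J$ of rows of $A$ stays, under all powers, equal to $\pm$ standard covectors, and (using $\det A\ne 0$) after reordering coordinates $A=\left(\begin{smallmatrix}P&0\\ C&B\end{smallmatrix}\right)$ with $P$ a signed permutation matrix of size $\lvert J\rvert\ge 1$. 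Some power $A^{m}$ then carries the block $I_{\lvert J\rvert}$ on the diagonal, hence has eigenvalue $1$; a primitive integral vector $v\ne 0$ with $A^{m}v=v$ lies in $\bigcap_{k}\mathrm{im}\,A^{mk}=\bigcap_{k}\mathrm{im}\,A^{k}$, contradicting Assumption~\ref{asm_matrix}. Therefore $d=0$, every associated prime of $\mathfrak{M}$ is maximal, and $\dim_{K}\mathfrak{M}<\infty$.
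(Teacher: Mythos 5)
The first half of your argument is correct and is a genuine, if close, variant of the paper's: in place of Lemma~\ref{lem_dimension_inequality} you localize at a minimal prime of $\operatorname{Supp}_{S}\mathfrak{M}$ and invoke the length formula for the finite extension $\phi(S)\subset S$, and the $\phi$-twisted bijectivity of $\eta$ then forces $[\kappa(Q):\operatorname{im}\phi'']=1$, rather than, as the paper has it, $\kappa(Q)\otimes_{S}\mathfrak{M}=0$. Either way one is left with exactly the dichotomy that Propositions~\ref{prop_degree} and~\ref{prop_variety} resolve: for a positive-dimensional $\Phi$-invariant irreducible closed $Z\subseteq\mathbb{G}_{m}^{n}$, the finite surjection $\Phi|_{Z}$ cannot be birational. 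You correctly identify this as the crux.

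Your multidegree shortcut for that crux, however, has a gap. The maps $\bar\Phi^{k}$ extend to $(\mathbb{P}^{1}_{K})^{n}$ only as rational maps (unless $A^{k}$ is a signed permutation matrix, which Assumption~\ref{asm_matrix} forbids), and nothing prevents $\overline{Z}$ from meeting the indeterminacy locus. On a resolution of indeterminacy $\Gamma\to\overline{Z}$ one has $p_{2}^{*}\mathcal{O}(1,\dots,1)=p_{1}^{*}\mathcal{O}(c^{(k)}_{1},\dots,c^{(k)}_{n})-E_{k}$ with $E_{k}$ effective over the indeterminacy locus; birationality of $\Phi^{k}|_{Z}$ yields $\deg_{\mathcal{O}(1,\dots,1)}\overline{Z}=\int_{\Gamma}\bigl(p_{1}^{*}\mathcal{O}(c^{(k)})-E_{k}\bigr)^{d}$, but the terms involving $E_{k}$ need not vanish, so this is not $\deg_{\mathcal{O}(c^{(k)})}\overline{Z}$ in general. (It is a familiar phenomenon that birational \emph{rational} maps of projective varieties change degrees; think of Cremona transformations.) With the degree equality unjustified, the deduction that some row of $A^{k}$ is a signed standard covector for every $k$ has no footing, and the block-triangularization of $A$ collapses. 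The paper sidesteps this exactly by passing to the normalization $\overline{V}\to V$, on which the finite birational lift of $\Phi|_{V}$ is an isomorphism by Zariski's Main Theorem, and then using the intrinsic torus $T(\overline{V})$ (Proposition~\ref{prop_torus}, Lemmas~\ref{lem_functor}, \ref{lem_diagram_lifting}) to replace $V$ by an honest $\Phi$-invariant subtorus $W$, on which the degree \emph{is} computed exactly by $\lvert\det\rvert$ of a block of $A$ (Lemmas~\ref{lem_degree}, \ref{lem_subtorus_degree}). Your multidegree argument is essentially sound in that subtorus case; the missing content of Proposition~\ref{prop_variety} is precisely the reduction of a general $Z$ to it.
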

\begin{proof}
Let's assume $\mathfrak{M} \neq 0$. By Proposition \ref{prop_ideal_dim_zero} below, it suffices to show $Q$ is maximal for all $Q \in \mathrm{Ass}_{S} (\mathfrak{M})$. Fix a $Q \in \mathrm{Ass}_{S} (\mathfrak{M})$. By Lemma \ref{lem_iteration}, replacing $\phi$ with its power if necessary, we may assume $\phi^{-1} (Q) =Q$. Let $\kappa (Q) := \mathrm{Quot} (S/Q)$ be the residue field of $S$ at $Q$. Then $\phi$ induces a field embedding $\phi'' \colon  \kappa (Q) \rightarrow \kappa (Q)$.

We prove $Q$ is maximal by contradiction. Suppose not, taking the $\phi'$ in Proposition \ref{prop_degree} as $\phi$ here, we would have $[\kappa (Q): \mathrm{im} \phi''] >1$. Note that $\kappa (Q) \otimes_{S} \mathfrak{M}$ is finite dimensional over $\kappa (Q)$ because $\mathfrak{M}$ is a finite $S$-module. On one hand, by Lemma \ref{lem_dimension_inequality} below,
\begin{equation}\label{prop_field_module_1}
\dim_{\mathrm{im} \phi''} \mathrm{im} \phi'' \otimes_{\phi (S)} \mathfrak{M} \geq [\kappa (Q): \mathrm{im} \phi''] \cdot \dim_{\kappa (Q)} \kappa (Q) \otimes_{S} \mathfrak{M}.
\end{equation}
On the other hand, by assumption, $\phi \colon  S \rightarrow \phi (S)$ is a ring isomorphism, $\eta \colon  \mathfrak{M} \rightarrow \mathfrak{M}$ is a group isomorphism, and $\eta (sx) = \phi (s) \eta (x)$ for all $s \in S$ and $x \in \mathfrak{M}$. Thus
\[
\phi'' \otimes \eta \colon  \kappa (Q) \otimes_{S} \mathfrak{M} \rightarrow \mathrm{im} \phi'' \otimes_{\phi (S)} \mathfrak{M}
\]
is a $\phi''$-twisted isomorphism, which implies
\begin{equation}\label{prop_field_module_2}
\dim_{\kappa (Q)} \kappa (Q) \otimes_{S} \mathfrak{M} = \dim_{\mathrm{im} \phi''} \mathrm{im} \phi'' \otimes_{\phi (S)} \mathfrak{M}.
\end{equation}
Combining \eqref{prop_field_module_1}, \eqref{prop_field_module_2}, and the fact $[\kappa (Q): \mathrm{im} \phi''] >1$, we infer $\dim_{\kappa (Q)} \kappa (Q) \otimes_{S} \mathfrak{M} =0$ which contradicts the fact $Q \in \mathrm{Ass}_{S} (\mathfrak{M})$ (cf. \cite[(7.D)]{Matsumura1}).
\end{proof}

\begin{lemma}\label{lem_dimension_inequality}
The inequality \eqref{prop_field_module_1} holds.
\end{lemma}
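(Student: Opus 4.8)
The plan is to read the two sides of \eqref{prop_field_module_1} as fibre dimensions of $\mathfrak{M}$ over the two base rings $S$ and $\phi(S)$, and then to compare these fibres by commutative algebra, using that $S$ is module-finite over $\phi(S)$. Keeping the hypotheses of the proof of Proposition \ref{prop_field_module} (so $\phi^{-1}(Q)=Q$), I would first translate the statement. Since $Q$ is $\phi$-invariant, $P:=\phi(Q)=Q\cap\phi(S)$ is a prime ideal of $\phi(S)$ contained in $Q$; the isomorphism $\phi\colon S\to\phi(S)$ carries $\kappa(Q)$ onto $\kappa(P):=\mathrm{Frac}(\phi(S)/P)$, while the inclusion $\phi(S)\hookrightarrow S$ realizes $\kappa(P)$ as the subfield $\mathrm{im}\,\phi''\subseteq\kappa(Q)$. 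Under these identifications $\mathrm{im}\,\phi''\otimes_{\phi(S)}\mathfrak{M}\cong\kappa(P)\otimes_{\phi(S)}\mathfrak{M}$ and $[\kappa(Q):\mathrm{im}\,\phi'']=[\kappa(Q):\kappa(P)]$, so it suffices to prove
\[
\dim_{\kappa(P)}\bigl(\kappa(P)\otimes_{\phi(S)}\mathfrak{M}\bigr)\ \geq\ [\kappa(Q):\kappa(P)]\cdot\dim_{\kappa(Q)}\bigl(\kappa(Q)\otimes_S\mathfrak{M}\bigr).
\]

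Next I would record that $S$ is a free $\phi(S)$-module of rank $|\det(A)|$: by Assumption \ref{asm_ring_hom} the elements $\phi(t_j)=g_j\prod_i t_i^{a_{ij}}$ are the monomials attached to the columns of $A$, twisted by the units $g_j$, so together with their inverses they span over $R$ precisely the $R$-submodule of $S=R[\mathbb{Z}^n]$ spanned by the monomials lying in the finite-index subgroup $A\mathbb{Z}^n\subseteq\mathbb{Z}^n$; any set of coset representatives for $\mathbb{Z}^n/A\mathbb{Z}^n$ is then a $\phi(S)$-basis of $S$. Consequently $C:=S\otimes_{\phi(S)}\kappa(P)\cong S_P/PS_P$ is a finite-dimensional $\kappa(P)$-algebra, hence Artinian, hence a finite product $\prod_j C_j$ of local Artinian $\kappa(P)$-algebras indexed by the (finitely many) primes of $S$ lying over $P$; as $Q$ is one of these, the corresponding factor is $C_Q\cong S_Q/PS_Q$. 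Associativity of the tensor product then gives
\[
\kappa(P)\otimes_{\phi(S)}\mathfrak{M}\ \cong\ C\otimes_S\mathfrak{M}\ \cong\ \bigoplus_j\bigl(C_j\otimes_S\mathfrak{M}\bigr),
\]
so $\mathfrak{M}_Q/P\mathfrak{M}_Q\cong C_Q\otimes_S\mathfrak{M}$ occurs as a direct summand, giving $\dim_{\kappa(P)}\bigl(\kappa(P)\otimes_{\phi(S)}\mathfrak{M}\bigr)\geq\dim_{\kappa(P)}\bigl(\mathfrak{M}_Q/P\mathfrak{M}_Q\bigr)$. Finally, $P\subseteq Q$ yields the surjection $\mathfrak{M}_Q/P\mathfrak{M}_Q\twoheadrightarrow\mathfrak{M}_Q/Q\mathfrak{M}_Q=\kappa(Q)\otimes_S\mathfrak{M}$, while multiplicativity of field degrees for the finite-dimensional $\kappa(Q)$-vector space $\kappa(Q)\otimes_S\mathfrak{M}$ gives $\dim_{\kappa(P)}\bigl(\kappa(Q)\otimes_S\mathfrak{M}\bigr)=[\kappa(Q):\kappa(P)]\cdot\dim_{\kappa(Q)}\bigl(\kappa(Q)\otimes_S\mathfrak{M}\bigr)$; chaining the three estimates gives the displayed inequality, hence \eqref{prop_field_module_1}.

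I expect the only delicate point to be the translation step: one must be careful about which $\phi(S)$-module structure is carried by $\mathrm{im}\,\phi''$ and verify that the three reductions in the first display respect it, and one should note that $\mathfrak{M}$, restricted along $\phi(S)\hookrightarrow S$, stays finitely generated — which is exactly where module-finiteness of $S$ over $\phi(S)$ is used — so that the left-hand fibre is finite-dimensional. With the freeness of $S$ over $\phi(S)$ in hand, the Artinian decomposition of $C$ and the resulting direct-summand assertion are routine, and I foresee no real obstacle there.
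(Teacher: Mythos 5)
Your proof is correct, but follows a genuinely different route from the paper's. The paper's argument is more elementary and bare-hands: it first notes that a basis of $\kappa(Q)\otimes_S\mathfrak{M}$ over $\kappa(Q)$ can be taken of the form $1\otimes x_1,\dots,1\otimes x_m$ with $x_j\in\mathfrak{M}$, then lifts $k$ elements of $\kappa(Q)$ that are linearly independent over $\mathrm{im}\,\phi''$ to $s_1,\dots,s_k\in S$, and verifies directly that the $km$ elements $1\otimes s_ix_j$ are linearly independent in $\mathrm{im}\,\phi''\otimes_{\phi(S)}\mathfrak{M}$ over $\mathrm{im}\,\phi''$, the key step being to push a hypothetical relation through the inclusion $\phi(S)\subseteq S$ into $\kappa(Q)\otimes_S\mathfrak{M}$ and read it off against the chosen basis; this uses only the ring inclusion $\phi(S)\hookrightarrow S$ and no module-finiteness of $S$ over $\phi(S)$. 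Your argument instead identifies $\mathrm{im}\,\phi''$ with the residue field $\kappa(P)$ of the contraction $P=Q\cap\phi(S)=\phi(Q)$, uses the freeness of $S=K[\mathbb{Z}^n]$ over $\phi(S)=K[A\mathbb{Z}^n]$ of rank $|\det A|$ (which requires $\det A\neq0$ from Assumption \ref{asm_matrix}) to make the fiber $C=S\otimes_{\phi(S)}\kappa(P)$ a finite $\kappa(P)$-algebra, decomposes it as an Artinian ring over the finitely many primes of $S$ above $P$, extracts the $Q$-factor $S_Q/PS_Q$ as a direct summand, and then passes to $\kappa(Q)\otimes_S\mathfrak{M}$ by the surjection $\mathfrak{M}_Q/P\mathfrak{M}_Q\twoheadrightarrow\mathfrak{M}_Q/Q\mathfrak{M}_Q$ together with multiplicativity of degrees. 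The structural picture you give is cleaner (the inequality becomes a fiber-dimension comparison along a finite morphism of tori) and it sidesteps the paper's step of realizing a basis by elements of $\mathfrak{M}$; the paper's version is more self-contained and works with strictly less input. Both proofs are sound.
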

\begin{proof}
Firstly, we claim that a basis of $\kappa (Q) \otimes_{S} \mathfrak{M}$ over $\kappa (Q)$ can be chosen as $1 \otimes x_{1}, \dots, 1 \otimes x_{m}$ for some $x_{j} \in \mathfrak{M}$. Actually, if $y_{1}, \dots, y_{m}$ is a basis, then
\[
y_{j} = \sum_{h} \tfrac{n_{h}}{d_{h}} \otimes z_{h}
\]
for some $n_{h}$ and $d_{h}$ in $S/Q$ and $z_{h} \in \mathfrak{M}$. Multiplying $y_{j}$ by a nonzero element in $S/Q$, we may assume each $d_{h} =1$ and each $n_{h}$ can be represented by an $r_{h} \in S$. Then
\[
y_{j} = \sum_{h} n_{h} \otimes z_{h} = \sum_{h} 1 \otimes r_{h} z_{h} = 1 \otimes \sum_{h} r_{h} z_{h} = 1 \otimes x_{j}.
\]

Secondly, suppose $\alpha_{1}, \dots, \alpha_{k}$ are a family of elements of $\kappa (Q)$ which are linear independent over $\mathrm{im} \phi''$. To finish the proof, it suffices to verify $\dim_{\mathrm{im} \phi''} \mathrm{im} \phi'' \otimes_{\phi (S)} \mathfrak{M} \ge km$.

Multiplying the whole family $\alpha_{1}, \dots, \alpha_{k}$ by a nonzero element in $\kappa (Q)$ if necessary, we may assume they are represented by $s_{1}, \dots, s_{k}$ in $S$. We claim that $1 \otimes s_{i} x_{j}$ ($1 \leq i \leq k$, $1 \leq j \leq m$) are linear independent in $\mathrm{im} \phi'' \otimes_{\phi (S)} \mathfrak{M}$ over $\mathrm{im} \phi''$. Actually, suppose
\[
\sum_{i,j} l_{ij} \cdot 1 \otimes_{\phi (S)} s_{i} x_{j} =0
\]
in $\mathrm{im} \phi'' \otimes_{\phi (S)} \mathfrak{M}$ for some $l_{ij} \in \mathrm{im} \phi''$. Since $\phi (S) \subseteq S$, we see
\[
\sum_{j} \left( \sum_{i} l_{ij} \alpha_{i} \right) \cdot 1 \otimes_{S} x_{j} = \sum_{i,j} l_{ij} \cdot 1 \otimes_{S} s_{i} x_{j} = 0
\]
in $\kappa (Q) \otimes_{S} \mathfrak{M}$, which implies $\sum_{i} l_{ij} \alpha_{i} =0$ in $\kappa (Q)$ for all $j$ and hence all $l_{ij} =0$. Therefore, $\dim_{\mathrm{im} \phi''} \mathrm{im} \phi'' \otimes_{\phi (S)} \mathfrak{M} \ge km$, which completes the proof.
\end{proof}

Now we prove Theorem \ref{thm_finite_module}.
\begin{proof}[Proof of Theorem \ref{thm_finite_module}]
Again, we assume $\mathfrak{M} \neq 0$. By \cite[Theorem~G]{Qin_Su_Wang}, it suffices to verify two facts: firstly, $\kappa (P) \otimes_{R} \mathfrak{M}$ is finite dimensional over $\kappa (P)$ for all $P \in \mathrm{Ass}_{R} (\mathfrak{M})$; secondly, the eigenvalues of the actions of $t_{i}^{\pm 1}$ on $\kappa (P) \otimes_{R} \mathfrak{M}$ are integral over $R/P$ for all $P \in \mathrm{Ass}_{R} (\mathfrak{M})$ and $1 \leq i \leq n$. Here $\kappa (P)$ is the residue field of $R$ at $P$.

We firstly prove $\dim_{\kappa (P)} \kappa (P) \otimes_{R} \mathfrak{M}$ is finite. By Lemma \ref{lem_iteration} again, replacing $\phi$ with its power if necessary, we may assume $\phi^{-1} (P) = P$. Since $\phi|_{R} \in \mathrm{Aut} (R)$, it induces an automorphism $\sigma$ of $R/P$. This $\sigma$ extends to an automorphism of $\kappa (P)$ which is still denoted by $\sigma$. Clearly,
\[
\kappa (P) \otimes_{R} S = \kappa (P) [t_{1}^{\pm 1}, \dots, t_{n}^{\pm 1}],
\]
where $t_{1}, \dots, t_{n}$ are indeterminates over $\kappa (P)$. Since $\mathfrak{M}$ is a finite $S$-module, $\kappa (P) \otimes_{R} \mathfrak{M}$ is a finite $\kappa (P) \otimes_{R} S$-module. By Assumption \ref{asm_ring_hom}, we see
\[
\sigma \otimes \phi \colon \ \kappa (P) \otimes_{R} S \rightarrow \kappa (P) \otimes_{R} S
\]
is a ring monomorphism such that $(\sigma \otimes \phi)|_{\kappa (P)} = \sigma$ and $\forall j$, $\sigma \otimes \phi (t_{j}) = g_{j} \prod_{i=1}^{n} t_{i}^{a_{ij}}$, where we use the same notation $g_{j}$ to denote its image in $\kappa (P)$. Similarly,
\[
\sigma \otimes \eta \colon \ \kappa (P) \otimes_{R} \mathfrak{M} \rightarrow \kappa (P) \otimes_{R} \mathfrak{M},
\]
is a $\sigma \otimes \phi$-twisted isomorphism of $\kappa (P)$-modules. Taking the $(\mathfrak{M}, K, \phi, \eta)$ in Proposition \ref{prop_field_module} as $(\kappa (P) \otimes_{R} \mathfrak{M}, \kappa (P), \sigma \otimes \phi, \sigma \otimes \eta)$ here, we infer $\dim_{\kappa (P)} \kappa (P) \otimes_{R} \mathfrak{M}$ is finite.

It remains to show that the eigenvalues of the action of $t_{j}^{\pm 1}$ on $\kappa (P) \otimes_{R} \mathfrak{M}$ are integral over $R/P$. By the assumption that $\eta (sx) = \phi (s) \eta (x)$, we have the equation of actions
\[
\sigma \otimes \eta \cdot t_{j} = g_{j} \prod_{i=1}^{n} t_{i}^{a_{ij}} \cdot \sigma \otimes \eta
\]
or
\begin{equation}\label{thm_finite_module_2}
t_{j} = (\sigma \otimes \eta)^{-1} \cdot g_{j} \prod_{i=1}^{n} t_{i}^{a_{ij}} \cdot \sigma \otimes \eta = \sigma^{-1} (g_{j}) \cdot (\sigma \otimes \eta)^{-1} \cdot \prod_{i=1}^{n} t_{i}^{a_{ij}} \cdot \sigma \otimes \eta
\end{equation}
on $\kappa (P) \otimes_{R} \mathfrak{M}$. Here the action of $t_{j}$ is actually the action of $1 \otimes t_{j}$, and the one of $g_{j}$ is the multiplication with $1 \otimes g_{j}$ or $g_{j} \otimes 1$ (here $g_{j}$ is identified with its image in $\kappa (P)$). Note that the actions of $t_{j}$ and $g_{j}$ are $\kappa (P)$-linear, while $\sigma \otimes \eta$ is only $\sigma$-twisted $\kappa (P)$-linear. To remedy this $\sigma$-twisted trouble, we argue as follows. Choosing a basis, $\kappa (P) \otimes_{R} \mathfrak{M}$ is identified $\kappa (P)^{m}$ for some $m > 0$. Consider $t_{j}$, $g_{j}$, $\sigma \otimes \eta$ as maps on $\kappa (P)^m$. Define
\[
\Phi_{\sigma}  \colon \kappa (P)^{m} \to \kappa(P)^m, \ \ \ (z_{1}, \dots, z_{m})^{T} \mapsto (\sigma (z_{1}), \dots, \sigma (z_{m}))^{T},
\]
where $(z_{1}, \dots, z_{m})^{T}$ is the transpose of $(z_{1}, \dots, z_{m})$. By \eqref{thm_finite_module_2}, we have
\begin{eqnarray*}
\Phi_{\sigma} t_{j} \Phi_{\sigma}^{-1} & = & g_{j} \cdot \left( \Phi_{\sigma} (\sigma \otimes \eta)^{-1} \right) \cdot \prod_{i=1}^{n} t_{i}^{a_{ij}} \cdot \left( (\sigma \otimes \eta) \Phi_{\sigma}^{-1} \right) \\
& = & g_{j} \cdot \left( \Phi_{\sigma} (\sigma \otimes \eta)^{-1} \right) \cdot \prod_{i=1}^{n} t_{i}^{a_{ij}} \cdot \left( \Phi_{\sigma} (\sigma \otimes \eta)^{-1} \right )^{-1}.
\end{eqnarray*}
Now $t_{i}$, $\Phi_{\sigma} t_{j} \Phi_{\sigma}^{-1}$, and $\Phi_{\sigma} (\sigma \otimes \eta)^{-1}$ are invertible $\kappa (P)$-linear transformations on $\kappa (P)^{m}$ and hence can be identified with invertible matrices over $\kappa (P)$. If $t_{j}$ is represented by a matrix $B_{j}$, then $\Phi_{\sigma} t_{j} \Phi_{\sigma}^{-1}$ is represented by $\sigma (B_{j})$. Note also that $g_{j}$ is a unit in $R$, it hence represents a unit in $R/P$. Taking the $R$, $B_{j}$, $C$, and $u_{j}$ in Proposition \ref{prop_eigenvalue} as $R/P$, $t_{j}$, $\Phi_{\sigma} (\sigma \otimes \eta)^{-1}$, and $g_{j}$ here, respectively, we see all eigenvalues of $t_{j}^{\pm 1}$ are integral over $R/P$. The proof is completed.
\end{proof}

The following proposition is a general and elementary fact in commutative algebra.
\begin{proposition}\label{prop_ideal_dim_zero}
Suppose $K$ is a field, $K \subseteq S'$ and $S'$ is a finitely generated $K$-algebra. Suppose $\mathfrak{M}'$ is a nonzero finite $S'$-module. Then $\dim_{K} \mathfrak{M}'$ is finite if and only if $Q$ is maximal for each $Q \in \mathrm{Ass}_{S'} (\mathfrak{M}')$.
\end{proposition}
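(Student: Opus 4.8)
The plan is to prove the two implications separately, using associated primes, a prime filtration of $\mathfrak{M}'$, and Zariski's version of the Nullstellensatz (a finitely generated $K$-algebra that is a field is automatically a finite extension of $K$). For the forward implication I would start from an arbitrary $Q \in \mathrm{Ass}_{S'}(\mathfrak{M}')$, write $Q = \mathrm{Ann}_{S'}(x)$ with $0 \ne x \in \mathfrak{M}'$, and observe that $s + Q \mapsto sx$ embeds $S'/Q$ as an $S'$-submodule of $\mathfrak{M}'$. When $\dim_{K} \mathfrak{M}'$ is finite this forces $\dim_{K} S'/Q < \infty$; since $Q$ is prime, $S'/Q$ is an integral domain, and a finite-dimensional $K$-algebra that is a domain must be a field, because multiplication by any nonzero element is an injective, hence (by a dimension count) bijective $K$-linear map. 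Therefore $Q$ is maximal.

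For the reverse implication I would use the hypothesis that every $Q \in \mathrm{Ass}_{S'}(\mathfrak{M}')$ is maximal. Since $S'$ is Noetherian and $\mathfrak{M}'$ is finite, it admits a prime filtration $0 = \mathfrak{M}_{0} \subsetneq \mathfrak{M}_{1} \subsetneq \cdots \subsetneq \mathfrak{M}_{\ell} = \mathfrak{M}'$ with $\mathfrak{M}_{i}/\mathfrak{M}_{i-1} \cong S'/P_{i}$ for prime ideals $P_{i}$. Each $P_{i}$ annihilates a subquotient of $\mathfrak{M}'$, so it contains $\mathrm{Ann}_{S'}(\mathfrak{M}')$, i.e. $P_{i} \in \mathrm{Supp}_{S'}(\mathfrak{M}') = V(\mathrm{Ann}_{S'}(\mathfrak{M}'))$. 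The key observation is then that every prime in this support is maximal: its minimal members lie in $\mathrm{Ass}_{S'}(\mathfrak{M}')$ and hence are maximal by hypothesis, while any prime of the support contains one of the finitely many minimal primes and therefore equals it. Consequently each $S'/P_{i}$ is a field which is finitely generated as a $K$-algebra, so $\dim_{K} S'/P_{i} < \infty$ by Zariski's lemma, and summing along the filtration gives $\dim_{K} \mathfrak{M}' = \sum_{i=1}^{\ell} \dim_{K} S'/P_{i} < \infty$.

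I expect the only genuine input to be Zariski's lemma, invoked to force the residue fields $S'/P_{i}$ at the maximal ideals that occur to be finite over $K$; the remainder is bookkeeping with the standard facts that $\mathrm{Ass}_{S'}(\mathfrak{M}') \subseteq \mathrm{Supp}_{S'}(\mathfrak{M}') = V(\mathrm{Ann}_{S'}(\mathfrak{M}'))$, that the minimal primes of the support are associated primes (cf.\ \cite{Matsumura1}), and that a finite module over a Noetherian ring has a prime filtration. No scheme-theoretic machinery is needed here.
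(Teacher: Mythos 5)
Your forward implication matches the paper's essentially verbatim (embed $S'/Q$ into $\mathfrak{M}'$ via $x$, deduce $\dim_K S'/Q < \infty$, conclude $Q$ maximal), though you make explicit the small step that a finite-dimensional $K$-algebra which is a domain must be a field. Your reverse implication, however, takes a genuinely different route. The paper uses primary decomposition: write $0 = \bigcap_i \mathfrak{M}_i$ with $\mathfrak{M}_i$ a $Q_i$-primary submodule, embed $\mathfrak{M}'$ diagonally into $\bigoplus_i \mathfrak{M}'/\mathfrak{M}_i$, reduce to the case of a single associated prime $Q$, observe $\sqrt{\mathrm{Ann}_{S'}(\mathfrak{M}')} = Q$, and then invoke the structure of Artinian local rings (Atiyah--Macdonald Proposition 8.6) to show $S'/\mathrm{Ann}_{S'}(\mathfrak{M}')$ is a finite-dimensional $K$-algebra over which $\mathfrak{M}'$ is finite. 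You instead take a prime filtration $0 = \mathfrak{M}_0 \subsetneq \cdots \subsetneq \mathfrak{M}_\ell = \mathfrak{M}'$ with $\mathfrak{M}_i/\mathfrak{M}_{i-1} \cong S'/P_i$, show every $P_i$ lies in $\mathrm{Supp}(\mathfrak{M}')$, and use the hypothesis together with the fact that the minimal primes of the support are associated to conclude each $P_i$ is maximal; Zariski's lemma then bounds each composition factor and you sum. Both arguments are correct and both ultimately rest on Zariski's lemma. Your filtration argument is somewhat more elementary: it avoids primary decomposition and the Artinian local ring input, trading them for the (equally standard, and arguably lighter) fact that finite modules over Noetherian rings admit prime filtrations.
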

\begin{proof}
``$\Leftarrow$" By primary decomposition, we have $0= \bigcap_{i=1}^{k} \mathfrak{M}_{i}$, where $\mathfrak{M}_{i}$ is a $Q_{i}$-primary submodule of $\mathfrak{M}'$, i.e. $\mathrm{Ass}_{S'} (\mathfrak{M}'/\mathfrak{M}_{i}) = Q_{i}$, and $\mathrm{Ass}_{S'} (\mathfrak{M}') = \{ Q_{i} \mid 1 \leq i \leq k \}$ (see \cite[(8.G)]{Matsumura1}). Then there is a diagonal monomorphism of $S'$-module
\[
\mathfrak{M}' \rightarrow \bigoplus_{i=1}^{k} \mathfrak{M}'/\mathfrak{M}_{i}.
\]
Thus it suffices to show $\dim_{K} (\mathfrak{M}'/\mathfrak{M}_{i}) < \infty$ for all $i$. We may, therefore, assume $\mathrm{Ass}_{S'} (\mathfrak{M}') = \{ Q \}$. Then $\sqrt{\mathrm{Ann}_{S'} (\mathfrak{M}')} = Q$ (see \cite[Theorem~6.6]{Matsumura2}). Since $Q$ is maximal, we infer $S'/Q$ is a finite algebraic extension of $K$ (cf. \cite[I.~\S1]{Mumford}). By \cite[Proposition~8.6]{Atiyah_Macdonald}, $S'/ \mathrm{Ann}_{S'} (\mathfrak{M}')$ is an Artinian local ring with the maximal ideal $Q/ \mathrm{Ann}_{S'} (\mathfrak{M}')$, which further implies that $S'/ \mathrm{Ann}_{S'} (\mathfrak{M}')$ is a finite dimensional $K$-algebra. On the other hand, $\mathfrak{M}'$ is a finite $S'/ \mathrm{Ann}_{S'} (\mathfrak{M}')$-module, which implies the conclusion.

``$\Rightarrow$" Suppose $Q \in \mathrm{Ass}_{S'} (\mathfrak{M}')$. There exists $x \in \mathfrak{M}'$ such that $\mathrm{Ann}_{S'} (x) = Q$. Since $S'x$ is an $S'$-submodule of $\mathfrak{M}'$, we have $\dim_{K} S'x \leq \dim_{K} \mathfrak{M}' < \infty$. On the other hand, $S'x \cong S'/Q$. So $S'/Q$ is an algebraic extension of $K$, which means $Q$ is maximal.
\end{proof}

\begin{remark}
We actually only need the ``$\Leftarrow$" of Proposition \ref{prop_ideal_dim_zero}.
\end{remark}

\section{Eigenvalues}\label{sec_eigenvalue}
In this section, we shall prove the following proposition.

\begin{proposition}\label{prop_eigenvalue}
Suppose $R$ is a Noetherian domain. Suppose $K$ is the quotient field of $R$. Suppose $\sigma \in \mathrm{Aut} (R)$. By abusing notations, we also use $\sigma$ to denote the induced automorphism of $K$. Suppose $B_{i}$ ($1 \leq i \leq n$) and $C$ are invertible matrices with entries in $K$, where these $B_{i}$ are mutually commutative. Suppose further there exist units $u_{j} \in R$ ($1 \leq j \leq n$) such that, $\forall j$,
\begin{equation}\label{prop_eigenvalue_1}
\sigma \left( B_{j} \right) = u_{j} C \prod_{i=1}^{n} B_{i}^{a_{ij}} C^{-1},
\end{equation}
where $(a_{ij})_{n \times n} = A \in \mathrm{End} (\mathbb{Z}^{n})$ satisfying $\bigcap_{k=1}^{\infty} \mathrm{im} A^{k} =0$. Then the eigenvalues of these $B_{i}^{\pm 1}$ are integral over $R$.
\end{proposition}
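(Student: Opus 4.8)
The plan is to reduce to an algebraic-geometric statement about a finite set of points permuted by a ``twisted'' endomorphism, and then exploit the hypothesis $\bigcap_{k}\mathrm{im}\,A^k=0$ to force the relevant points to live over $R$ rather than just over $K$. First I would pass to a common framework for all the $B_i$: since the $B_i$ are mutually commuting invertible matrices over $K$, after extending scalars to an algebraic closure $\overline{K}$ they are simultaneously triangularizable, so there is a common (generalized) eigenvector decomposition, and the joint eigenvalues of $(B_1,\dots,B_n)$ form a finite set $\Lambda\subseteq(\overline{K}^\times)^n$, namely the set of $n$-tuples $\lambda=(\lambda_1,\dots,\lambda_n)$ such that $B_j$ acts with eigenvalue $\lambda_j$ on some common generalized eigenspace. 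Equivalently, $\Lambda$ is the support of the coherent sheaf $\mathfrak{M}\otimes\overline{K}$ over the torus $\mathrm{Spec}\,\overline{K}[t_1^{\pm1},\dots,t_n^{\pm1}]$ (here the ambient module is the $\kappa(P)^m$ of the calling proof, but for the statement of Proposition \ref{prop_eigenvalue} alone one works directly with the matrices). It suffices to show each $\lambda_i$ is integral over $R$; equivalently, that $\Lambda$, viewed as a closed subscheme of $\mathbb{G}_m^n$ over $K$, is in fact defined over the integral closure of $R$, i.e. that its points have coordinates integral over $R$.

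Next I would translate relation \eqref{prop_eigenvalue_1} into a statement about $\Lambda$. Conjugation by $C$ does not change eigenvalues, and applying $\sigma$ to the matrix $B_j$ replaces each eigenvalue by its $\sigma$-image; so \eqref{prop_eigenvalue_1} says precisely that the monomial map $\mu_A\colon\mathbb{G}_m^n\to\mathbb{G}_m^n$, $\lambda\mapsto\bigl(\prod_i\lambda_i^{a_{ij}}\bigr)_j$, composed with the translation by the unit vector $u=(u_1,\dots,u_n)\in(R^\times)^n$, carries $\sigma(\Lambda)$ onto $\Lambda$. (Note $\det A\ne0$ is forced: if a column of $A$ vanished the corresponding $\phi(t_j)$ would be a unit, and in any case $\bigcap_k\mathrm{im}\,A^k=0$ already implies $A$ is injective hence $\det A\ne0$.) Iterating, $\sigma^k(\Lambda)$ is the image of $\Lambda$ under $\lambda\mapsto u^{(k)}\cdot\mu_{A^k}(\lambda)$ for suitable unit vectors $u^{(k)}$. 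Since the automorphism $\sigma$ permutes the finite set $\mathrm{Ass}$ underlying $\Lambda$ — more simply, since $\Lambda$ is finite and $\sigma$ acts on the finitely many $\overline{K}$-points through a Galois-compatible permutation of a finite set — some power $\sigma^{m}$ fixes $\Lambda$ setwise; replacing $(\sigma,A,u)$ by $(\sigma^m,A^m,u^{(m)})$, which is legitimate because $\bigcap_k\mathrm{im}\,A^{mk}=\bigcap_k\mathrm{im}\,A^k=0$ and $A^m$ still has nonzero determinant, I may assume $\sigma(\Lambda)=\Lambda$ as schemes and hence that the composite self-map $\nu\colon\lambda\mapsto u\cdot\mu_A(\sigma^{-1}(\lambda))$ restricts to a bijection of the finite set $\Lambda(\overline K)$ onto itself.

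The heart of the argument is then a boundedness/integrality argument using $\bigcap_k\mathrm{im}\,A^k=0$. The point is that the endomorphism $\mu_{A^k}$ of the character lattice $\mathbb{Z}^n$ has image shrinking to $0$, so on the level of Newton polytopes / valuations the monomials $\prod_i\lambda_i^{b_{ij}}$ with $(b_{ij})=A^k$ become ``trivial'' in the limit. Concretely: fix any valuation $w$ on $\overline K$ extending a valuation on $K$ that is $\ge 0$ on $R$ (it suffices to handle discrete valuations, and by Noetherianity integrality over $R$ is detected by all such valuations together with finiteness, or more cleanly: an element of $\overline K$ is integral over $R$ iff it lies in every valuation ring of $\overline K$ containing $R$ — here one uses that $R$ is a Noetherian domain, so its integral closure is the intersection of these valuation rings). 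For $\lambda\in\Lambda(\overline K)$ write $w(\lambda)=(w(\lambda_1),\dots,w(\lambda_n))\in\mathbb{R}^n$. Because $u\in(R^\times)^n$ we have $w(u_j)=0$, and $\sigma$ can be arranged to preserve $w$ up to the finite action already absorbed; so the relation $\sigma(\Lambda)=u\cdot\mu_A(\Lambda)$ gives, as multisets, $\{\,w(\lambda):\lambda\in\Lambda\,\}=\{\,A^{T}w(\lambda):\lambda\in\Lambda\,\}$, where $A^T$ denotes the transpose acting on $\mathbb{R}^n$. Iterating, this multiset equals $\{\,(A^T)^k w(\lambda):\lambda\in\Lambda\,\}$ for every $k$. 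Now sum the coordinates, or better: since $\Lambda$ is finite the vectors $(A^T)^k w(\lambda)$ for all $\lambda$ and all $k\ge0$ take only finitely many values, i.e. the $\mathbb{Z}[A^T]$-submodule of $\mathbb{R}^n$ generated by $\{w(\lambda)\}$ is finite as a set; but a finitely generated subgroup of $\mathbb{R}^n$ that is finite is $0$. Hence $w(\lambda)=0$ for all valuations $w$ that are $\ge0$ on $R$, for all $\lambda\in\Lambda$, and also (applying this to $w$ coming from valuations that are $\le 0$ on $R$, equivalently replacing $\lambda$ by $\lambda^{-1}$, which corresponds to $B_i^{-1}$) $w(\lambda_i)=0$ for those as well; so each $\lambda_i$ and each $\lambda_i^{-1}$ lies in every valuation ring containing $R$, whence $\lambda_i$ is a unit in the integral closure of $R$, in particular integral over $R$. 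This proves the eigenvalues of $B_i^{\pm1}$ are integral over $R$.

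I expect the main obstacle to be the last step — making rigorous the passage from the multiset identity under a single valuation to the conclusion $w(\lambda)=0$, uniformly over all valuations, and correctly handling the interaction with $\sigma$ (one must check $\sigma$ can be normalized so that it permutes valuations compatibly, or argue that it suffices to work with $\sigma$-invariant valuations, or absorb $\sigma$ into the finite permutation as above before choosing $w$). A secondary subtlety is that $\Lambda$ should really be taken with its scheme structure (the module $\mathfrak{M}$, resp. generalized eigenspaces) so that ``$\sigma(\Lambda)=\Lambda$'' and the $\mu_A$-relation are honest identities of closed subschemes and the bijection on points is genuinely a bijection; but for the integrality conclusion about eigenvalues only the underlying reduced finite point set matters, so one can afford to reduce to it early. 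The hypotheses $\det A\ne0$ and especially $\bigcap_k\mathrm{im}\,A^k=0$ enter exactly once, decisively, to kill the finitely generated group generated by the valuation vectors.
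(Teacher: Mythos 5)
Your strategy — pass to a splitting field, read off joint eigenvalue tuples of the commuting $B_i$, convert \eqref{prop_eigenvalue_1} into a $\sigma$-twisted monomial functional equation, and test against valuations using $\bigcap_k\mathrm{im}\,A^k=0$ — is essentially the paper's: the paper packages the eigenvalues via a ``Jordan decomposition'' over a finite splitting extension $L/K$, passes to a power $\sigma^{k_0}$ fixing each block, and then delegates to Lemma~\ref{lem_integrity}, while your passage to $\sigma^m$ fixing $\Lambda$ setwise is the same manoeuvre. But the valuation step, which you flag as the main obstacle, has both a missing ingredient and an error.

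The missing ingredient is the Mori--Nagata theorem, which you need in order to work with \emph{discrete} ($\mathbb{Z}$-valued) valuations. The decisive implication — that $w(\lambda)\in\bigcap_k\mathrm{im}((A^T)^k)$ forces $w(\lambda)=0$ — only works because $\bigcap_k\mathrm{im}\bigl((A^T)^k\colon\mathbb{Z}^n\to\mathbb{Z}^n\bigr)=0$ (Corollary~\ref{cor_transpose}). For a valuation with a general ordered value group $\Gamma$ this fails: if $\det A\ne0$ and $\Gamma$ is divisible, then $\bigcap_k\mathrm{im}\bigl((A^T)^k\colon\Gamma^n\to\Gamma^n\bigr)=\Gamma^n$. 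So your parenthetical ``more cleanly: intersection of all valuation rings of $\overline K$ containing $R$'' does not suffice; the paper instead invokes Mori--Nagata to see that the integral closure $\overline R\subseteq L$ is a Krull domain, carved out by a family of \emph{discrete rank-one} valuations $v\colon L^*\to\mathbb{Z}$, and tests only these.

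The error is in the ``finite module'' inference: finiteness of the orbit $\{(A^T)^k w(\lambda)\}$ does not make the $\mathbb{Z}[A^T]$-\emph{submodule} it generates finite, since the $\mathbb{Z}$-span of any nonzero vector is already infinite. What your multiset identity $\{w(\lambda)\}_\lambda=\{(A^T)^k w(\lambda)\}_\lambda$ actually gives, for each $k$, is that $w(\lambda_0)\in\mathrm{im}(A^T)^k$, and hence $w(\lambda_0)=0$; this is exactly Lemma~\ref{lem_integrity}, and it works even when $\det A=0$ (your claim that $\bigcap_k\mathrm{im}\,A^k=0$ forces $\det A\ne0$ is also false: $A=0$ satisfies the hypothesis). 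Finally, the $\sigma$-compatibility issue you worry about dissolves in the paper's formulation: write $x=\bigl(\sigma^{-k}(u_{k,1})\prod_i\sigma^{-k}(x_i)^{\tilde a_{i1,k}},\dots\bigr)$ and apply an arbitrary $v$ from the Krull family; $v(\sigma^{-k}(u_{k,j}))=0$ since it is a unit of $R\subseteq\overline R$, and $v(\sigma^{-k}(x))$ is simply some vector in $\mathbb{Z}^n$, so $v(x)^T\in\mathrm{im}(A^T)^k$ with no hypothesis at all on $v\circ\sigma$.
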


Before proving Proposition \ref{prop_eigenvalue}, we make a remark on the assumption about $A$.
\begin{lemma}\label{lem_matrix_eigenvalue}
Suppose $A \in \mathrm{End} (\mathbb{Z}^{n})$. Then $\bigcap_{k=1}^{\infty} \mathrm{im} A^{k} =0$ if and only if, for each eigenvalue $\alpha$ of $A$, either $\alpha =0$ or $\alpha^{-1}$ is not integral over $\mathbb{Z}$.
\end{lemma}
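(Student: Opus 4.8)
The plan is to work over $\mathbb{Q}$, peel off the generalized $0$-eigenspace of $A$, and reduce the statement to the elementary fact that an injective integral endomorphism of $\mathbb{Z}^{r}$ of determinant $\pm 1$ is an automorphism. First I would invoke Fitting's lemma to write $\mathbb{Q}^{n} = V_{0} \oplus V_{1}$ with $V_{0} = \ker A^{n}$ and $V_{1} = \mathrm{im}\, A^{n}$ (computed over $\mathbb{Q}$), so that $A$ is nilpotent on $V_{0}$ and $B := A|_{V_{1}}$ is invertible. Setting $\Lambda := A^{n}\mathbb{Z}^{n}$, which is a finitely generated subgroup of $V_{1}$ spanning it over $\mathbb{Q}$ and satisfying $B\Lambda \subseteq \Lambda$, I would note that the integral images $\mathrm{im}\, A^{k}$ are nested and that $\mathrm{im}\, A^{n+i} = B^{i}\Lambda$ for $i \geq 0$; hence
\[
L := \bigcap_{k=1}^{\infty} \mathrm{im}\, A^{k} = \bigcap_{i=0}^{\infty} B^{i}\Lambda ,
\]
and it remains to characterize when $L = 0$. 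Along the way it is worth recording that a nonzero eigenvalue $\alpha$ of $A$ is automatically an algebraic integer (being a root of the monic integral characteristic polynomial $\chi_{A}$), so the condition ``$\alpha^{-1}$ is not integral over $\mathbb{Z}$'' just says $\alpha$ is not an algebraic unit, i.e.\ the constant term of its minimal polynomial over $\mathbb{Q}$ is not $\pm 1$.

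For the ``if'' implication I would argue by contrapositive: assume some eigenvalue $\alpha \neq 0$ has $\alpha^{-1}$ integral over $\mathbb{Z}$, and produce a nonzero element of $L$. Let $p \in \mathbb{Z}[x]$ be the (monic) minimal polynomial of $\alpha$; then $p(0) = \pm 1$ because $\alpha^{-1}$ is an algebraic integer. The subspace $W := \ker p(A) \subseteq \mathbb{Q}^{n}$ is nonzero (as $p \mid \chi_{A}$) and $A$-invariant, the minimal polynomial of $A|_{W}$ is $p$, so $\chi_{A|_{W}}$ is a power of $p$ and therefore $\det(A|_{W}) = \pm 1$. Now $\Gamma := W \cap \mathbb{Z}^{n}$ is a full lattice in $W$ with $A\Gamma \subseteq \Gamma$, and $A|_{\Gamma}$ is an injective endomorphism of $\Gamma \cong \mathbb{Z}^{\dim W}$ of determinant $\pm 1$, hence an automorphism; thus $A^{k}\Gamma = \Gamma$ for every $k$, giving $0 \neq \Gamma \subseteq \bigcap_{k} \mathrm{im}\, A^{k} = L$.

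For the ``only if'' implication, again by contrapositive, I would assume $L \neq 0$ and produce a nonzero eigenvalue $\alpha$ of $A$ with $\alpha^{-1}$ integral over $\mathbb{Z}$. Using invertibility of $B$ on $V_{1}$ one checks $BL = L$: the inclusion $BL \subseteq L$ comes from $B\Lambda \subseteq \Lambda$, and $L \subseteq BL$ because $x \in L$ implies $B^{-1}x \in B^{i}\Lambda$ for all $i$. Hence $B$ restricts to an automorphism of the finitely generated free abelian group $L$, so in any $\mathbb{Z}$-basis of $L$ one has $\det(B|_{L}) = \pm 1$ and $\chi_{B|_{L}} \in \mathbb{Z}[x]$ is monic with constant term $\pm 1$. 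Any root $\beta$ of $\chi_{B|_{L}}$ is then a nonzero eigenvalue of $B$, hence of $A$, and both $\beta$ and $\beta^{-1}$ are algebraic integers (the latter because the reciprocal polynomial of $\chi_{B|_{L}}$ is again monic over $\mathbb{Z}$); so $\alpha := \beta$ works.

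The only genuinely delicate point is the reduction in the first step: one cannot argue directly that $A$ carries $\bigcap_{k} \mathrm{im}\, A^{k}$ onto itself, since $A$ need not be injective on $\mathbb{Z}^{n}$. Splitting off the nilpotent part via Fitting's lemma is exactly what fixes this, and after that the whole argument is bookkeeping with lattices together with the determinant-$\pm 1$ criterion. As a byproduct, Lemma \ref{lem_matrix_eigenvalue} recasts the hypothesis of Assumption \ref{asm_matrix} purely in terms of the eigenvalues of $A$.
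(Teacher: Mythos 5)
Your proof is correct and shares the same core idea with the paper's proof---exhibit an $A$-invariant lattice on which $A$ acts by an automorphism, then read off integrality of $\alpha^{-1}$ (via Cayley--Hamilton, or via the constant term of the characteristic polynomial)---but the two arguments package this differently. You route the ``$\bigcap_k \mathrm{im}\,A^k \ne 0 \Rightarrow$ bad eigenvalue'' direction through Fitting's decomposition of $\mathbb{Q}^n$ and the auxiliary lattice $\Lambda = A^n\mathbb{Z}^n$, whereas the paper dispenses with Fitting entirely: it observes that $A$ is already injective on $\mathrm{im}\,A^n$ (since $\ker A \cap \mathrm{im}\,A^n = 0$) and uses this to show $AW = W$ directly for $W = \bigcap_k \mathrm{im}\,A^k$, which is shorter. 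In the opposite direction the paper picks a polynomial $\tilde f$ with constant term exactly $1$ and exhibits an explicit inverse for $A$ on $\ker\tilde f(A)\cap\mathbb{Z}^n$, while you use the minimal polynomial of $\alpha$ together with the determinant criterion; these are parallel and equally valid. One small slip: you have transposed the labels ``if'' and ``only if.'' The argument that produces a nonzero element of $L$ from a bad eigenvalue is the contrapositive of the \emph{only if} direction ($\bigcap = 0 \Rightarrow$ eigenvalue condition), and the argument deducing a bad eigenvalue from $L\ne 0$ is the contrapositive of the \emph{if} direction; both implications are in fact established, so the mislabeling is harmless.
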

\begin{proof}
``$\Rightarrow$" We prove by contradiction. Suppose $\alpha$ is a nonzero eigenvalue such that $\alpha^{-1}$ is integral over $\mathbb{Z}$. We infer there exists a polynomial
\[
\tilde{f} (x) = b_{0} x^{m} + \cdots + b_{m-1} x + 1 \in \mathbb{Z}[x]
\]
such that $\tilde{f} (\alpha) =0$. Then the rank of $\tilde{f} (A)$ is less than $n$. Let $V$ be the kernel of $\tilde{f} (A)$ on $\mathbb{Z}^{n}$. We infer $V \neq 0$ and $V$ is $A$-invariant. Furthermore, since
\[
A|_{V} (b_{0} A|_{V}^{m-1} + \cdots + b_{m-1}) = -1,
\]
$A|_{V}$ is invertible. Thus $0 \neq V \subseteq \bigcap_{k=1}^{\infty} \mathrm{im} A^{k}$, which is a contradiction.

``$\Leftarrow$" We also prove by contradiction. Suppose $W := \bigcap_{k=1}^{\infty} \mathrm{im} A^{k} \neq 0$. Then $W$ is $A$-invariant. Since $\ker A \cap \mathrm{im} A^{n} = 0$, we have $A|_{\mathrm{im} A^{n}}$ is injective and
\[
AW = A \bigcap_{k=n}^{\infty} \mathrm{im} A^{k} = \bigcap_{k=n}^{\infty} A \mathrm{im} A^{k} = \bigcap_{k=n+1}^{\infty} \mathrm{im} A^{k} = W.
\]
Thus $A|_{W}$ is invertible. Clearly, $W$ is a finite $\mathbb{Z}$-module. By Cayley-Hamilton Theorem, there exists a monic $f(x) \in \mathbb{Z}[x]$ such that $f(A|_{W}^{-1}) =0$. Thus, for each eigenvalue $\alpha$ of $A|_{W}$, we have $\alpha^{-1}$ is integral over $\mathbb{Z}$, which contradicts the assumption.
\end{proof}

\begin{corollary}\label{cor_transpose}
Suppose $A \in \mathrm{End} (\mathbb{Z}^{n})$ and $\bigcap_{k=1}^{\infty} \mathrm{im} A^{k} =0$. Then $\bigcap_{k=1}^{\infty} \mathrm{im} (A^{T})^{k} =0$, where $A^{T}$ is the transpose of $A$.
\end{corollary}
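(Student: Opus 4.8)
The plan is to reduce everything to the eigenvalue criterion already established in Lemma \ref{lem_matrix_eigenvalue} and to exploit the fact that a matrix and its transpose share the same characteristic polynomial. Concretely, $\det(xI_n - A^T) = \det\big((xI_n - A)^T\big) = \det(xI_n - A)$, so $A$ and $A^T$ have exactly the same eigenvalues over $\overline{\mathbb{Q}}$ (indeed with the same multiplicities, though only the underlying set matters here).

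First I would apply the forward direction of Lemma \ref{lem_matrix_eigenvalue} to $A$: the hypothesis $\bigcap_{k=1}^{\infty} \mathrm{im}\, A^{k} = 0$ gives that every eigenvalue $\alpha$ of $A$ satisfies either $\alpha = 0$ or $\alpha^{-1}$ is not integral over $\mathbb{Z}$. Since the set of eigenvalues of $A^{T}$ coincides with that of $A$, the same dichotomy holds for every eigenvalue of $A^{T}$. Then I would apply the backward (``$\Leftarrow$'') direction of Lemma \ref{lem_matrix_eigenvalue}, now to $A^{T}$, to conclude $\bigcap_{k=1}^{\infty} \mathrm{im}\,(A^{T})^{k} = 0$.

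There is essentially no obstacle: the content is entirely packaged inside Lemma \ref{lem_matrix_eigenvalue}, and the only new ingredient is the elementary observation that $A$ and $A^{T}$ have identical characteristic polynomials. (One could alternatively argue directly that $\mathrm{im}\,(A^{T})^{k}$ is the annihilator, under the standard pairing on $\mathbb{Z}^{n} \otimes \mathbb{Q}$, of $\ker A^{k}$, so $\bigcap_k \mathrm{im}\,(A^{T})^{k} \otimes \mathbb{Q}$ is the annihilator of $\bigcup_k \ker A^{k} \otimes \mathbb{Q}$, and the latter is all of $\mathbb{Q}^n$ precisely when $\bigcap_k \mathrm{im}\, A^k \otimes \mathbb{Q} = 0$; but the eigenvalue route via Lemma \ref{lem_matrix_eigenvalue} is shorter and self-contained.)
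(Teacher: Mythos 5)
Your primary argument is correct and is exactly the intended one: the paper states Corollary \ref{cor_transpose} immediately after Lemma \ref{lem_matrix_eigenvalue} with no separate proof, and the argument being left to the reader is precisely yours -- the criterion in Lemma \ref{lem_matrix_eigenvalue} depends on $A$ only through its set of eigenvalues, and $A$ and $A^{T}$ have the same characteristic polynomial.

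One caution about your parenthetical ``alternative'' via the pairing on $\mathbb{Z}^{n}\otimes\mathbb{Q}$: it does not work, and the reason is instructive. The hypothesis $\bigcap_{k}\mathrm{im}\,A^{k}=0$ is a statement over $\mathbb{Z}$, and tensoring with $\mathbb{Q}$ does not commute with the infinite intersection. Indeed $\bigcap_{k}(\mathrm{im}\,A^{k}\otimes\mathbb{Q})$ is just $\mathrm{im}\,A^{n}\otimes\mathbb{Q}$ by Fitting's lemma, so it vanishes only when $A$ is nilpotent; yet, for instance, $A=\mathrm{diag}(2,3)$ satisfies $\bigcap_{k}\mathrm{im}\,A^{k}=0$ in $\mathbb{Z}^{2}$ while $\mathrm{im}\,A^{k}\otimes\mathbb{Q}=\mathbb{Q}^{2}$ for every $k$. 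The annihilator/row-space argument would only prove the rational statement $\bigcap_{k}(\mathrm{im}\,A^{k}\otimes\mathbb{Q})=0 \Leftrightarrow \bigcap_{k}(\mathrm{im}\,(A^{T})^{k}\otimes\mathbb{Q})=0$, which is a much weaker (and here inapplicable) equivalence. The integrality of the matrix entries is essential, and that is exactly what the eigenvalue criterion in Lemma \ref{lem_matrix_eigenvalue} encodes (via ``$\alpha^{-1}$ not integral over $\mathbb{Z}$''), so the route you take as your main proof is the right one.
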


Let's go back to Proposition \ref{prop_eigenvalue}. Assume these matrices have size $m$. Then they are elements in $\mathrm{GL} (m,K)$. Let $L$ be the splitting field of the product of the characteristic polynomials of these $B_{j}$. Then $L/K$ is a finite algebraic extension.

\begin{lemma}
$\sigma$ extends to an automorphism of $L$.
\end{lemma}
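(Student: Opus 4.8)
The plan is to show that the polynomial $f^{\sigma}(x):=\prod_{j}\sigma\bigl(\det(xI-B_{j})\bigr)$, obtained by applying $\sigma$ to the coefficients of $f(x):=\prod_{j}\det(xI-B_{j})$, already splits completely over $L$. Once this is established, the lemma follows from the standard extension theorem for embeddings of splitting fields together with a dimension count, since $L$ is by definition a splitting field of $f$ over $K$.

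First I would note that for each $j$, applying $\sigma$ entrywise to $B_{j}$ yields a matrix $\sigma(B_{j})\in\mathrm{GL}(m,K)$ with $\sigma\bigl(\det(xI-B_{j})\bigr)=\det\bigl(xI-\sigma(B_{j})\bigr)$. By the defining relation \eqref{prop_eigenvalue_1}, $\sigma(B_{j})$ is conjugate in $\mathrm{GL}(m,K)$, via $C$, to the matrix $M_{j}:=u_{j}\prod_{i=1}^{n}B_{i}^{a_{ij}}$, hence has the same characteristic polynomial as $M_{j}$. Therefore the roots of $f^{\sigma}$ are exactly the eigenvalues of the matrices $M_{1},\dots,M_{n}$, and it suffices to locate these inside $L$.

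Next I would use that, by construction, $L$ contains every eigenvalue of every $B_{i}$, so each $B_{i}$ is triangularizable over $L$; since the $B_{i}$ are pairwise commuting, they are simultaneously triangularizable over $L$. Fixing a common triangularizing basis of $L^{m}$, each $B_{i}$ becomes upper triangular with diagonal entries $\lambda_{i,1},\dots,\lambda_{i,m}\in L^{\times}$ (nonzero because $B_{i}$ is invertible), and in the same basis $M_{j}$ is upper triangular with $k$-th diagonal entry $u_{j}\prod_{i=1}^{n}\lambda_{i,k}^{a_{ij}}\in L$. These diagonal entries are precisely the eigenvalues of $M_{j}$, so all roots of $f^{\sigma}$ lie in $L$; that is, $f^{\sigma}$ splits over $L$. (Note that the hypothesis $\bigcap_{k}\mathrm{im}\,A^{k}=0$ is not needed here; only the integrality of the $a_{ij}$ and the invertibility of the $B_{i}$ are used.)

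Finally, since $L$ is a splitting field of $f$ over $K$ and $\sigma(f)=f^{\sigma}$ splits over $L$, the isomorphism $\sigma\colon K\to K$ extends to a field embedding $\widetilde{\sigma}\colon L\hookrightarrow L$. This embedding is injective and semilinear over the bijection $\sigma$ of $K$, so it sends a $K$-basis of $L$ to a $K$-linearly independent family, which by $[L:K]<\infty$ must again be a $K$-basis of $L$; hence $\widetilde{\sigma}$ is surjective and is an automorphism of $L$ extending $\sigma$. The only step requiring genuine care is the claim that $f^{\sigma}$ splits over $L$, i.e.\ the simultaneous-triangularization argument and the observation that the eigenvalues of $M_{j}$ are monomials in those of the $B_{i}$; the extension and surjectivity steps are routine field theory.
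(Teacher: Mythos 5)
Your proof is correct and takes essentially the same route as the paper: both rest on the observation that, after simultaneous triangularization of the commuting $B_i$ over $L$, the eigenvalues of $u_j C \prod_i B_i^{a_{ij}} C^{-1}$ are $u_j$ times monomials in the eigenvalues of the $B_i$ and hence lie in $L$, followed by a finite-degree count to get surjectivity. The only cosmetic difference is that the paper extends $\sigma$ to the algebraic closure of $K$ and restricts to $L$, while you invoke the splitting-field extension theorem directly and spell out the simultaneous-triangularization step that the paper leaves implicit.
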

\begin{proof}
Let $K'$ be the algebraic closure of $K$. Then $\sigma$ extends to an automorphism of $K'$. Considering $L$ as a subfield of $K'$, it suffices to show that $\sigma (L) = L$. Let $\lambda$ be an eigenvalue of $B_{j}$. Clearly, $\sigma (\lambda)$ is an eigenvalue of $\sigma (B_{j})$. By (\ref{prop_eigenvalue_1}) and the fact that these $B_{i}$ are mutually commutative, we infer $\sigma (\lambda)$ is the product of $u_{j}$ and some eigenvalues of these $B_{i}^{\pm 1}$. Since these factors of $\sigma (\lambda)$ are all in $L$, we see $\sigma (\lambda) \in L$. Thus $\sigma (L) \subseteq L$. Since $[L:K] = [\sigma (L): K]$ is finite, we obtain $\sigma (L) = L$.
\end{proof}

Now we introduce a new concept. Suppose $M_{i}$ ($i \in \mathfrak{I}$) are a family of matrices which are mutually commutative. All of them have dimension $m$, and their characteristic polynomials split over $L$. Then there is a subspace decomposition $L^{m} = \bigoplus_{s=1}^{r} V_{s}$ such that, for all $i$ and $s$, $M_{i}|_{V_{s}}$ has only one eigenvalue without counting multiplicity. If we require the number $r$ to be minimal, we call this decomposition a \textit{Jordan decomposition} of $\{ M_{i} \mid i \in \mathfrak{I} \}$. Obviously, a Jordan decomposition is unique.

Since the above $B_{j}$ are mutually commutative, there exists a Jordan decomposition of $\{ B_{j} \mid 1 \leq j \leq n \}$. Let $H$ be the subgroup of $\mathrm{GL} (m,K)$ generated by these $B_{j}$. Then $H$ shares the same Jordan decomposition as $\{ B_{j} \mid 1 \leq j \leq n \}$ does. If $H_{1} \le H$, then $H_{1}$ has a Jordan decomposition whose each summand is a sum of some of the summands of the Jordan decomposition of $H$.

\begin{proof}[Proof of Proposition \ref{prop_eigenvalue}]
Let $L^{m} = \bigoplus_{s=1}^{r} V_{s}$ be the Jordan decomposition of $H$, where $H$ is the subgroup generated by $\{ B_{j} \mid 1 \leq j \leq n \}$. Let $\lambda_{js}$ denote the eigenvalue of $B_{j}|_{V_{s}}$. It suffices to verify the integrity of $\lambda_{js}^{\pm 1}$.

Clearly, $L^{m} = \sigma L^{m} = \bigoplus_{s=1}^{r} \sigma V_{s}$ is the Jordan decomposition of $\sigma (H)$. On the other hand, each summand of the Jordan decomposition of $\{ u_{j} C \prod_{i=1}^{n} B_{i}^{a_{ij}} C^{-1} \mid 1 \leq j \leq n \}$ must be a sum of some $CV_{s}$. By \eqref{prop_eigenvalue_1}, we infer $\sigma V_{1}, \dots, \sigma V_{r}$ is just a permutation of $CV_{1}, \dots, CV_{r}$. In other words, there exists a permutation $\tau$ on $\{ 1, \dots, r \}$ such that $\sigma V_{s} = C V_{\tau(s)}$ for all $s$. By \eqref{prop_eigenvalue_1} again, for all $j$ and $s$,
\[
\sigma (\lambda_{js}) = u_{j} \prod_{i=1}^{n} \lambda_{i \tau(s)}^{a_{ij}}.
\]
Applying $\sigma$ to the above equation, by a direct calculation, we have
\[
\sigma^{2} (\lambda_{js}) = \sigma (u_{j}) \prod_{i'=1}^{n} u_{i'}^{a_{i'j}} \prod_{l=1}^{n} \lambda_{l \tau^{2}(s)}^{\sum_{i=1}^{n} a_{li} a_{ij}}.
\]
By induction, we see
\begin{equation}\label{prop_eigenvalue_2}
\sigma^{k} (\lambda_{js}) = u_{k,j} \prod_{i=1}^{n} \lambda_{i \tau^{k}(s)}^{\tilde{a}_{ij,k}},
\end{equation}
where $u_{k,j}$ are units in $R$ and $(\tilde{a}_{ij,k})_{n \times n} = A^{k}$. Clearly, $\tau^{k_{0}} = \mathrm{id}$ for some $k_{0} >0$, and $\bigcap_{k=1}^{\infty} \mathrm{im} A^{k_{0}k} = \bigcap_{k=1}^{\infty} \mathrm{im} A^{k} =0$. Replacing $(A, \sigma)$ with $(A^{k_{0}}, \sigma^{k_{0}})$ if necessary, replacing each $u_{j}$ with another unit in $R$ if necessary, we may assume, for all $j$ and $s$,
\[
\sigma (\lambda_{js}) = u_{j} \prod_{i=1}^{n} \lambda_{is}^{a_{ij}}.
\]
Now the integrity of $\lambda_{js}^{\pm 1}$ follows from the following Lemma \ref{lem_integrity}.
\end{proof}

\begin{lemma}\label{lem_integrity}
Let $L^{*} := L \setminus 0$. Suppose $(x_{1}, \dots, x_{n}) \in {L^{*}}^{n}$ such that
\begin{equation}\label{lem_integrity_1}
\sigma (x_{1}, \dots, x_{n}) := (\sigma x_{1}, \dots, \sigma x_{n}) = \left( u_{1} \prod_{i=1}^{n} x_{i}^{a_{i1}}, \dots, u_{n} \prod_{i=1}^{n} x_{i}^{a_{in}} \right),
\end{equation}
where $(a_{ij})_{n \times n}$ and all $u_{j}$ are as above. Then all $x_{i}^{\pm 1}$ are integral over $R$.
\end{lemma}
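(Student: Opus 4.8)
The plan is to show that every $x_i$ is a \emph{unit} of $\overline{R}$, the integral closure of $R$ in $L$; this gives at once that $x_i$ and $x_i^{-1}$ are integral over $R$. We may assume $R$ is not a field, since otherwise $L/R$ is finite and every element of $L$ is already integral over $R$. Because $R$ is a Noetherian domain, the Mori--Nagata theorem says $\overline{R}$ is a Krull domain. Consequently $\overline{R} = \bigcap_{\mathfrak p} \overline{R}_{\mathfrak p}$, where $\mathfrak p$ runs over the height-one primes of $\overline{R}$; each $\overline{R}_{\mathfrak p}$ is a discrete valuation ring with normalized valuation $v_{\mathfrak p} \colon L^{*} \to \mathbb Z$, and any fixed element of $L^{*}$ lies in $\overline{R}_{\mathfrak p}^{\times}$ for all but finitely many $\mathfrak p$. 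Thus it is enough to prove that $v_{\mathfrak p}(x_{i}) = 0$ for every height-one prime $\mathfrak p$ and every $i$.

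Next I would set up how $\sigma$ interacts with these valuations. The automorphism $\sigma$ of $L$ maps $R$ onto $R$, hence maps $\overline{R}$ onto $\overline{R}$; therefore it permutes the height-one primes of $\overline{R}$ and satisfies $v_{\mathfrak p}(\sigma(y)) = v_{\sigma^{-1}(\mathfrak p)}(y)$ for all $y \in L^{*}$. Writing $\mathbf d(\mathfrak p) := (v_{\mathfrak p}(x_{1}), \dots, v_{\mathfrak p}(x_{n}))^{T} \in \mathbb Z^{n}$ and applying $v_{\mathfrak p}$ to \eqref{lem_integrity_1}, and using $v_{\mathfrak p}(u_{j}) = 0$ (each $u_{j}$ being a unit of $R$), one gets $v_{\sigma^{-1}(\mathfrak p)}(x_{j}) = \sum_{i=1}^{n} a_{ij} v_{\mathfrak p}(x_{i})$ for every $j$, that is, $\mathbf d(\sigma^{-1}(\mathfrak p)) = A^{T} \mathbf d(\mathfrak p)$, or equivalently $\mathbf d(\mathfrak p) = A^{T} \mathbf d(\sigma(\mathfrak p))$ for every height-one prime $\mathfrak p$.

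Finally I would exploit finiteness. Let $S := \{ \mathfrak p : \mathbf d(\mathfrak p) \ne 0 \}$, a finite set of height-one primes. From $\mathbf d(\mathfrak p) = A^{T}\mathbf d(\sigma(\mathfrak p))$ we see that $\mathbf d(\sigma(\mathfrak p)) = 0$ would force $\mathbf d(\mathfrak p) = 0$; hence $\sigma(S) \subseteq S$, so $\sigma^{k}(\mathfrak p) \in S$ for all $k \ge 0$ whenever $\mathfrak p \in S$. Iterating the relation yields $\mathbf d(\mathfrak p) = (A^{T})^{k} \mathbf d(\sigma^{k}(\mathfrak p)) \in \mathrm{im}\,(A^{T})^{k}$ for every $k \ge 0$, so $\mathbf d(\mathfrak p) \in \bigcap_{k \ge 1} \mathrm{im}\,(A^{T})^{k}$, which is $0$ by Corollary \ref{cor_transpose}. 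This contradicts $\mathfrak p \in S$, so $S = \emptyset$; that is, $v_{\mathfrak p}(x_{i}) = 0$ for all $\mathfrak p$ and all $i$, and $x_{i} \in \overline{R}^{\times}$ as desired.

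I expect the only delicate points to be the standard but not-quite-elementary input that the integral closure $\overline{R}$ is a Krull domain (so that the finite-support divisor theory applies), together with the small bookkeeping that $\sigma$ really descends to $\overline{R}$ and acts on the $\mathbb Z$-valued valuations by $v_{\mathfrak p} \circ \sigma = v_{\sigma^{-1}(\mathfrak p)}$, and that \eqref{lem_integrity_1} translates into the \emph{transpose} matrix $A^{T}$ acting on divisor vectors. Once this dictionary is in place, the combinatorial core --- a $\sigma$-stable finite set of primes together with $\bigcap_{k} \mathrm{im}\,(A^{T})^{k} = 0$ --- is short, and is exactly the multiplicative analogue of the argument used for Lemma \ref{lem_matrix_eigenvalue}.
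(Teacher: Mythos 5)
Your proof is correct and follows essentially the same route as the paper: pass to the integral closure $\overline R$, invoke Mori--Nagata to get a Krull domain, reduce to showing each height-one valuation vanishes on $x$, and iterate the relation until $\bigcap_k \mathrm{im}\,(A^T)^k = 0$ forces the value to be zero. The one cosmetic difference is that you move $\sigma$ onto the primes (via $v_{\mathfrak p}\circ\sigma = v_{\sigma^{-1}(\mathfrak p)}$) and then invoke finiteness and $\sigma$-stability of the support set $S$, whereas the paper fixes a single valuation $v$ and applies $\sigma^{-k}$ to the tuple $x$ itself, writing $v(x)^T = (A^T)^k\, v(\sigma^{-k}(x))^T$. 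These two iterated identities are literally the same equation (since $\mathbf d(\sigma^k\mathfrak p)_i = v_{\mathfrak p}(\sigma^{-k}(x_i))$), so your extra bookkeeping --- the set $S$, its finiteness, and its $\sigma$-stability --- is unnecessary: once you know $\mathbf d(\sigma^k\mathfrak p)\in\mathbb Z^n$ for every $k$, the containment $\mathbf d(\mathfrak p)\in\mathrm{im}\,(A^T)^k$ follows without knowing anything further about $\sigma^k\mathfrak p$, and the finite-support property of Krull domains is never invoked. The paper's one-valuation-at-a-time formulation is leaner, but the mathematical content is identical.
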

\begin{proof}
Let $\overline{R}$ be the integral closure of $R$ in $L$. It's equivalent to show all $x_{i}^{\pm 1}$ are in $\overline{R}$. Clearly, $\overline{R}$ is a domain with quotient field $L$ (cf. \cite[Corollary~5.3~\&~Proposition~5.12]{Atiyah_Macdonald}).

Since $R$ is Noetherian, by Mori-Nagata Theorem (\cite[Proposition~6]{Nishimura}), we have $\overline{R}$ is a Krull domain. By the definition of a Krull domain, there exists a family $\{ v_{\mathfrak{i}} \mid \mathfrak{i} \in \mathfrak{I} \}$ of discrete valuations $v_{\mathfrak{i}} \colon  L^{*} \rightarrow \mathbb{Z}$ such that
\begin{equation}\label{lem_integrity_2}
\overline{R} \setminus 0 = \bigcap_{\mathfrak{i} \in \mathfrak{I}} v_{\mathfrak{i}}^{-1} ([0, +\infty))
\end{equation}
and, for each $\mathfrak{l} \in L^{*}$, $v_{\mathfrak{i}} (\mathfrak{l}) = 0$ for all but finitely many $\mathfrak{i} \in \mathfrak{I}$. (See \cite[\S~12]{Matsumura2} for more about Krull domains, and see \cite[p.~94]{Atiyah_Macdonald} for the definition of a discrete valuation.) Thus, to finish the proof, it suffices to show $v_{\mathfrak{i}} (x) =0$ for all $\mathfrak{i} \in \mathfrak{I}$. Here $x=(x_{1}, \dots, x_{n})$ and $v_{\mathfrak{i}} (x) = (v_{\mathfrak{i}} (x_{1}), \dots, v_{\mathfrak{i}} (x_{n}))$. We consider $x$ and $v_{\mathfrak{i}} (x)$ as row vectors.

By \eqref{lem_integrity_1} and an inductive calculation similar to that for \eqref{prop_eigenvalue_2}, we obtian
\[
\sigma^{k} (x) = \left( u_{k,1} \prod_{i=1}^{n} x_{i}^{\tilde{a}_{i1,k}}, \dots, u_{k,n} \prod_{i=1}^{n} x_{i}^{\tilde{a}_{in,k}} \right)
\]
or
\[
x = \left( \sigma^{-k} (u_{k,1}) \prod_{i=1}^{n} \sigma^{-k} (x_{i})^{\tilde{a}_{i1,k}}, \dots, \sigma^{-k} (u_{k,n}) \prod_{i=1}^{n} \sigma^{-k} (x_{i})^{\tilde{a}_{in,k}} \right),
\]
where $u_{k,j}$ are units in $R$ and $(\tilde{a}_{ij,k})_{n \times n} = A^{k}$. Let $v$ be an arbitrary discrete valuation in \eqref{lem_integrity_2}, then
\begin{align*}
v (x) & = v (\sigma^{-k} (u_{k,1}), \dots, \sigma^{-k} (u_{k,n})) + \left( \sum_{i=1}^{n} v(\sigma^{-k} (x_{i})) \cdot \tilde{a}_{i1,k}, \dots, \sum_{i=1}^{n} v(\sigma^{-k} (x_{i})) \cdot \tilde{a}_{in,k}  \right) \\
& =  0 + v (\sigma^{-k} (x)) \cdot A^{k} = v (\sigma^{-k} (x)) \cdot A^{k}.
\end{align*}
Transposing the above matrices equation, we get
\[
v(x)^{T} = (A^{T})^{k} \cdot z
\]
for $z = v (\sigma^{-k} (x))^{T} \in \mathbb{Z}^{n}$. Thus $v (x)^{T} \in \mathrm{im} (A^{T})^{k}$ for all $k$. By Corollary \ref{cor_transpose}, we have $\bigcap_{k=1}^{\infty} \mathrm{im} (A^{T})^{k} =0$. Therefore, $v (x) =0$, which finishes the proof.
\end{proof}

\section{Algebraic Affine Tori}\label{sec_algebra_tori}
In this section, we shall prove the following Proposition \ref{prop_degree}. The proof is completely due to Botong Wang.

Suppose $K$ is a field, $\phi' \colon  K[t_{1}^{\pm 1}, \dots, t_{n}^{\pm 1}] \rightarrow K[t_{1}^{\pm 1}, \dots, t_{n}^{\pm 1}]$ is a ring monomorphism such that $\phi'|_{K} = \sigma$ and $\forall j$, $\phi'(t_{j}) = g_{j} \prod_{i=1}^{n} t_{i}^{a_{ij}}$, where $t_{1}, \dots, t_{n}$ are indeterminates, $g_{j} \in K^{*} := K \setminus 0$, $\sigma \in \mathrm{Aut}(K)$, and $(a_{ij})_{n \times n} = A$ satisfies Assumption \ref{asm_matrix}. For each $Q \in \mathrm{Spec} (K[t_{1}^{\pm 1}, \dots, t_{n}^{\pm 1}])$, let $\kappa (Q)$ be the residue field of $K[t_{1}^{\pm 1}, \dots, t_{n}^{\pm 1}]$ at $Q$.
\begin{proposition}\label{prop_degree}
Suppose $Q$ is a prime ideal of $K[t_{1}^{\pm 1}, \dots, t_{n}^{\pm 1}]$ and ${\phi'}^{-1} (Q) = Q$. Then $\phi'$ induces a field embedding $\phi'' \colon  \kappa (Q) \rightarrow \kappa (Q)$. Furthermore, if $Q$ is not maximal, then $[\kappa (Q): \mathrm{im} \phi''] >1$.
\end{proposition}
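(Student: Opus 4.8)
\emph{Existence of $\phi''$.} This part is purely formal. From ${\phi'}^{-1}(Q)=Q$ we get $\phi'(Q)\subseteq Q$, so $\phi'$ descends to a ring endomorphism $\bar{\phi}'$ of the integral domain $S/Q$; its kernel is ${\phi'}^{-1}(Q)/Q=0$, hence $\bar{\phi}'$ is injective and extends to a field embedding $\phi''\colon\kappa(Q)\hookrightarrow\kappa(Q)$ restricting to $\sigma$ on $K$. Moreover, monomials representing coset representatives of $A\mathbb{Z}^{n}$ in $\mathbb{Z}^{n}$ exhibit $S$ as a free module of rank $d:=|\det A|$ over the subring $\phi'(S)=K[t^{m}\mid m\in A\mathbb{Z}^{n}]$, so $S/Q$ is finite over $\bar{\phi}'(S/Q)$ and $[\kappa(Q):\mathrm{im}\,\phi'']\le d<\infty$. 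In particular, proving $[\kappa(Q):\mathrm{im}\,\phi'']>1$ amounts to proving that $\phi''$ is \emph{not} surjective, and I would prove this by contradiction.

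\emph{Setup.} Suppose $\phi''$ is surjective, hence an automorphism of $\kappa(Q)$, and put $\psi:=(\phi'')^{-1}$, so $\psi|_{K}=\sigma^{-1}$. Let $\bar{t}_{i}\in\kappa(Q)^{*}$ be the image of $t_{i}$, so that $\kappa(Q)=K(\bar{t}_{1},\dots,\bar{t}_{n})$, and write $\bar{t}^{m}:=\prod_{i}\bar{t}_{i}^{m_{i}}$ for $m\in\mathbb{Z}^{n}$. A direct computation gives $\phi''(\bar{t}^{m})=h_{m}\,\bar{t}^{Am}$ with $h_{m}:=\prod_{j}g_{j}^{m_{j}}\in K^{*}$, and therefore $\psi(\bar{t}^{Am})=\sigma^{-1}(h_{m})^{-1}\,\bar{t}^{m}$: the automorphism $\psi$ carries $\bar{t}^{Am}$ to a unit of $K$ times $\bar{t}^{m}$.

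\emph{A boundary valuation.} Since $Q$ is not maximal, $\kappa(Q)$ has transcendence degree $\ge1$ over $K$, and the heart of the argument is to produce a rank-one discrete valuation $v\colon\kappa(Q)^{*}\to\mathbb{Z}$ that is trivial on $K^{*}$ but with $\mathbf{v}:=(v(\bar{t}_{1}),\dots,v(\bar{t}_{n}))\ne0$. Because each $\bar{t}_{i}$ is a \emph{unit} of $S/Q$, such a $v$ cannot be centered on $\mathrm{Spec}(S/Q)$ and must instead be sought at the boundary of the torus: one takes the closure of $\mathrm{Spec}(S/Q)$ in a projective toric compactification of the torus $\mathrm{Spec}\,S$, passes to its normalization $Y$, and uses that the complement of a dense affine open in a normal projective integral variety of positive dimension is nonempty of pure codimension one; then $v=\mathrm{ord}_{W}$ for a prime divisor $W$ of $Y$ lying in that complement is trivial on $K$, and it cannot vanish on all the $\bar{t}_{i}$ (otherwise $S/Q$ would be contained in $\mathcal{O}_{Y,W}$, forcing $W$ out of the boundary). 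Equivalently, one may extend the valuation at infinity of $K(\bar{t}_{j})$, for a transcendental $\bar{t}_{j}$, to a divisorial valuation of $\kappa(Q)$.

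\emph{Propagation and conclusion.} Given such a $v$, set $v_{0}:=v$ and $v_{k+1}:=v_{k}\circ\psi$; each $v_{k}$ is again a discrete valuation of $\kappa(Q)$ trivial on $K$ (here we use $\psi|_{K}=\sigma^{-1}$ and that $\psi$ is an automorphism). Writing $\mathbf{v}_{k}:=(v_{k}(\bar{t}_{1}),\dots,v_{k}(\bar{t}_{n}))\in\mathbb{Z}^{n}$ (column vectors) and using $\psi(\bar{t}^{Am})=(\text{unit of }K)\cdot\bar{t}^{m}$, for every $m\in\mathbb{Z}^{n}$ we get
\[
(Am)^{T}\mathbf{v}_{k+1}=v_{k+1}(\bar{t}^{Am})=v_{k}\bigl(\psi(\bar{t}^{Am})\bigr)=v_{k}(\bar{t}^{m})=m^{T}\mathbf{v}_{k},
\]
hence $A^{T}\mathbf{v}_{k+1}=\mathbf{v}_{k}$. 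Iterating, $\mathbf{v}=(A^{T})^{k}\mathbf{v}_{k}\in\mathrm{im}(A^{T})^{k}$ for all $k$, so $\mathbf{v}\in\bigcap_{k=1}^{\infty}\mathrm{im}(A^{T})^{k}=0$ by Corollary \ref{cor_transpose} (which rests on Assumption \ref{asm_matrix}), contradicting $\mathbf{v}\ne0$. This contradiction shows $\phi''$ is not surjective, i.e. $[\kappa(Q):\mathrm{im}\,\phi'']>1$. The single genuinely non-formal ingredient is the construction of the valuation $v$: it must simultaneously witness that $Q$ is not maximal and be nontrivial on the unit functions $\bar{t}_{i}$, which is why a toric compactification enters; everything else is the short monomial computation with $\phi''$ together with Corollary \ref{cor_transpose}.
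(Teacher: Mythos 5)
Your argument is correct, and it takes a genuinely different route from the paper's. The paper reduces Proposition \ref{prop_degree} to Proposition \ref{prop_variety} and argues via Zariski's Main Theorem on the normalization $\overline{V}$, Rosenlicht's theorem on units (through the intrinsic torus $T(\overline{V})$ and Lemmas \ref{lem_functor}--\ref{lem_diagram_lifting}), and a degree computation on the subtorus $W = \mathrm{im}\,\theta$ using Lemma \ref{lem_subtorus_degree}; the linear-algebra input there is Lemma \ref{lem_matrix_eigenvalue}, via $|\det(A_1)|>1$. You instead argue by contradiction directly on the function field: assuming $\phi''$ is an automorphism $\psi^{-1}$, you produce a divisorial valuation $v$ of $\kappa(Q)$ over $K$ supported at the toric boundary (so $\mathbf{v}=(v(\bar t_1),\dots,v(\bar t_n))\neq 0$, using only that $Q$ is not maximal, normality of the compactified model, and that the $\bar t_i$ are units of $S/Q$), and then the monomial identity $\psi(\bar t^{Am}) = (\text{unit of }K)\cdot \bar t^m$ gives the recursion $A^T\mathbf{v}_{k+1}=\mathbf{v}_k$ and hence $\mathbf{v}\in\bigcap_k \mathrm{im}(A^T)^k = 0$ by Corollary \ref{cor_transpose}. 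This is structurally the same mechanism the paper itself uses in Lemma \ref{lem_integrity} (a Krull-valuation count pushed through $A^T$), so your proof effectively unifies Sections \ref{sec_eigenvalue} and \ref{sec_algebra_tori} under one valuation-theoretic device, trading the intrinsic-torus/ZMT machinery for the standard facts that a divisorial valuation over $K$ extends the place at infinity of a transcendental $\bar t_j$ (or, equivalently, that the boundary of a normal projective compactification of a positive-dimensional affine variety is a nonempty divisor). Two points deserve to be stated more explicitly if you write this up: that the chosen $v$ is discrete of rank one (so $\mathbf{v}_k\in\mathbb{Z}^n$ for all $k$, which you need to invoke Corollary \ref{cor_transpose}), and that $v_k := v\circ\psi^k$ remains trivial on $K$ because $\psi|_K = \sigma^{-1}$ is an automorphism of $K$; you note both, but they are load-bearing.
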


The verification of Proposition \ref{prop_degree} is via an argument of algebraic geometry. We shall actually translate this proposition to Proposition \ref{prop_variety} in algebraic geometry. The main task of this section is to prove that proposition. Since the above $K$ is not necessarily algebraically closed, it's convenient to employ the language of schemes.

All varieties of this section are irreducible and are considered as schemes. Actually, they are all irreducible and reduced affine schemes. The morphisms between two affine schemes are in $1-1$ correspondence with homomorphisms between coordinate rings (\cite[p.~79,~Theorem~1]{Mumford}). If $\psi$ is such a morphism, let $\psi^{*}$ denote its related map between coordinate rings. Recall that a morphism $\psi$ is over $K$, or a $K$-morphism, if and only if $\psi^{*}$ is a $K$-homomorphism, i.e. $\psi^{*}|_{K} = \mathrm{id}$.
\begin{definition}
Suppose $\rho \in \mathrm{Aut} (K)$ and $\psi \colon  X \rightarrow Y$ is a morphism between affine $K$-varieties. If $\psi^{*} (K) = K$ and $\psi^{*}|_{K} = \rho^{-1}$, then we call $\psi$ a \textit{$\rho$-twisted morphism}.
\end{definition}

Particularly, if $\rho = \mathrm{\mathrm{id}}$, then a $\rho$-twisted morphism is a $K$-morphism.

\begin{lemma}
If $\psi \colon  X \rightarrow Y$ is a $\rho$-twisted isomorphism, then $\psi^{-1}$ is a $\rho^{-1}$-twisted isomorphism.
\end{lemma}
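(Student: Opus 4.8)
The plan is to reduce everything to coordinate rings via the contravariant bijection between morphisms of affine $K$-varieties (regarded as schemes) and homomorphisms of their coordinate rings, the fact cited as \cite[p.~79,~Theorem~1]{Mumford}. Write $R_X$ and $R_Y$ for the coordinate rings of $X$ and $Y$, so that $\psi$ corresponds to a ring homomorphism $\psi^{*} \colon R_Y \to R_X$. First I would observe that, since $\psi$ is an isomorphism of affine schemes, $\psi^{*}$ is a ring isomorphism; its inverse $(\psi^{*})^{-1} \colon R_X \to R_Y$ is then the homomorphism attached to the scheme morphism $\psi^{-1} \colon Y \to X$, so that $(\psi^{-1})^{*} = (\psi^{*})^{-1}$. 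In particular $\psi^{-1}$ is automatically a morphism of affine $K$-varieties, and the only thing left to check is its effect on the subfield $K$.

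Next I would run the bookkeeping for $K$, keeping in mind that $K$ sits inside both $R_X$ and $R_Y$ through their respective $K$-algebra structures. By hypothesis $\psi^{*}(K) = K$ and $\psi^{*}|_{K} = \rho^{-1}$. Applying $(\psi^{*})^{-1}$ gives $(\psi^{*})^{-1}(K) = (\psi^{*})^{-1}\bigl(\psi^{*}(K)\bigr) = K$, and on this subfield $(\psi^{*})^{-1}$ is the inverse of the automorphism $\rho^{-1}$, i.e.\ it equals $\rho = (\rho^{-1})^{-1}$. Hence $(\psi^{-1})^{*}(K) = K$ and $(\psi^{-1})^{*}|_{K} = (\rho^{-1})^{-1}$, which is exactly the defining condition for $\psi^{-1}$ to be a $\rho^{-1}$-twisted morphism. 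Since $\psi^{-1}$ is also an isomorphism, it is a $\rho^{-1}$-twisted isomorphism, as claimed.

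I do not expect any genuine obstacle: the statement is a formal consequence of the definitions together with the fact that $\psi \mapsto \psi^{*}$ is a contravariant bijection on morphisms carrying isomorphisms to isomorphisms. The only point deserving a sentence of care is distinguishing the copy of $K$ inside $R_X$ from the copy inside $R_Y$ and confirming that $\psi^{*}(K)=K$ really does force $(\psi^{*})^{-1}(K)=K$; once that is in place, the identity $(\rho^{-1})^{-1} = \rho$ finishes the proof.
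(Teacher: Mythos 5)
Your argument is correct. The paper states this lemma without proof, treating it as an immediate consequence of the definitions, and your verification via the contravariant bijection $\psi \mapsto \psi^{*}$ between scheme morphisms and coordinate-ring homomorphisms is exactly the routine check that is being left implicit.
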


An affine $K$-torus $\mathbb{G}_{m}^{n}$ is a scheme over $K$ which is isomorphic to $\mathrm{Spec}(K[t_{1}^{\pm 1}, \dots, t_{n}^{\pm 1}])$ via a $K$-isomorphism, where $t_{i}$'s are indeterminates. If $n=0$, $\mathbb{G}_{m}^{n}$ is a point. Obviously, $\mathbb{G}_{m}^{n}$ has dimension $n$ since
\[
\kappa (\mathbb{G}_{m}^{n}) \cong K(t_{1}, \dots, t_{n})
\]
has degree of transcendency $n$ over $K$. Here $\kappa (\mathbb{G}_{m}^{n})$ is the quotient field of the coordinate ring of $\mathbb{G}_{m}^{n}$.

In the following, $\mathbb{G}_{m}^{n} = \mathrm{Spec}(K[t_{1}^{\pm 1}, \dots, t_{n}^{\pm 1}])$ and $\mathbb{G}_{m}^{p} = \mathrm{Spec}(K[s_{1}^{\pm 1}, \dots, s_{p}^{\pm 1}])$.
\begin{lemma}\label{lem_morphism}
Suppose $\psi \colon  \mathbb{G}_{m}^{n} \rightarrow \mathbb{G}_{m}^{p}$ is a $\rho$-twisted morphism. Then $\psi$ is uniquely determined by $\psi^{*} (s_{j})$ for all $j$, and $\psi^{*} (s_{j}) = h_{j} \prod_{i=1}^{n} t_{i}^{m_{ij}}$ for some $h_{j} \in K^{*}$ and $m_{ij} \in \mathbb{Z}$.
\end{lemma}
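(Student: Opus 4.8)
The plan is to reduce everything to two standard algebraic facts: the antiequivalence between affine $K$-schemes and their coordinate rings, and the description of the unit group of a Laurent polynomial ring over a field.

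First, by \cite[p.~79,~Theorem~1]{Mumford}, the morphism $\psi$ is completely determined by the ring homomorphism $\psi^{*} \colon K[s_{1}^{\pm 1}, \dots, s_{p}^{\pm 1}] \to K[t_{1}^{\pm 1}, \dots, t_{n}^{\pm 1}]$. Since $\psi$ is $\rho$-twisted, the restriction $\psi^{*}|_{K} = \rho^{-1}$ is prescribed. The ring $K[s_{1}^{\pm 1}, \dots, s_{p}^{\pm 1}]$ is generated as a ring by $K$ together with $s_{1}, s_{1}^{-1}, \dots, s_{p}, s_{p}^{-1}$, and since $\psi^{*}$ is a ring homomorphism, $\psi^{*}(s_{j}^{-1}) = \psi^{*}(s_{j})^{-1}$ is forced once $\psi^{*}(s_{j})$ is known. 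Hence $\psi^{*}$, and therefore $\psi$, is uniquely determined by the elements $\psi^{*}(s_{j})$, $1 \le j \le p$; this gives the uniqueness assertion.

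For the shape of $\psi^{*}(s_{j})$, observe that $s_{j}$ is a unit in $K[s_{1}^{\pm 1}, \dots, s_{p}^{\pm 1}]$, so its image $\psi^{*}(s_{j})$ is a unit in $R := K[t_{1}^{\pm 1}, \dots, t_{n}^{\pm 1}]$. It thus remains to show that every unit $u$ of $R$ has the form $h \prod_{i=1}^{n} t_{i}^{m_{i}}$ with $h \in K^{*}$ and $m_{i} \in \mathbb{Z}$. The argument is by clearing denominators: $R$ is the localization of the polynomial ring $P := K[t_{1}, \dots, t_{n}]$ at the multiplicative set generated by $t_{1}, \dots, t_{n}$, and $P$ is a UFD in which each $t_{i}$ is prime and $P^{*} = K^{*}$. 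If $uv = 1$ in $R$, choose $N$ large enough that $u' := (t_{1} \cdots t_{n})^{N} u$ and $v' := (t_{1} \cdots t_{n})^{N} v$ both lie in $P$; then $u' v' = (t_{1} \cdots t_{n})^{2N}$ in $P$, and unique factorization forces $u' = h \prod_{i=1}^{n} t_{i}^{k_{i}}$ for some $h \in K^{*}$ and $0 \le k_{i} \le 2N$, whence $u = h \prod_{i=1}^{n} t_{i}^{k_{i} - N}$. Applying this to $u = \psi^{*}(s_{j})$ yields the desired expression $\psi^{*}(s_{j}) = h_{j} \prod_{i=1}^{n} t_{i}^{m_{ij}}$.

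The content of the lemma is genuinely elementary; the only step requiring a real (if routine) argument is the computation of $R^{*}$, and even that is a short consequence of unique factorization in $P$ once denominators have been cleared. I do not anticipate any serious obstacle.
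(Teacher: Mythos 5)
Your proposal is correct and follows the same overall plan as the paper: invoke \cite[p.~79,~Theorem~1]{Mumford} for uniqueness, then note that $\psi^{*}(s_j)$ must be a unit and identify the unit group of $K[t_1^{\pm 1},\dots,t_n^{\pm 1}]$. The one place you diverge is in the computation of that unit group. You clear denominators to pass to the polynomial ring $P = K[t_1,\dots,t_n]$ and invoke unique factorization together with the primality of the $t_i$ and $P^* = K^*$. The paper instead argues directly in the Laurent ring via the $\mathbb{Z}^n$-grading: the identity $1$ is homogeneous, and if a product of two nonzero Laurent polynomials is homogeneous then each factor is, so each unit is a monomial. Both arguments are sound and elementary; the grading argument is marginally shorter since it avoids passing to $P$ and the bookkeeping with $N$, and it generalizes without change to Laurent polynomials over any integral domain (the UFD route requires $P$ to be a UFD, which for arbitrary coefficient domains is a nontrivial extra hypothesis). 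For the present setting, where the coefficients form a field, the two approaches are equally satisfactory.
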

\begin{proof}
By (\cite[p.~79,~Theorem~1]{Mumford}), we know $\psi$ is uniquely determined by $\psi^{*}$. Since $\psi^{*}|_{K} = \rho^{-1}$, it is uniquely determined by $\psi^{*} (s_{j})$.

Clearly, $\psi^{*} (s_{j})$ and $\psi^{*} (s_{j}^{-1})$ are Laurent polynomials. If the product of two nonzero Laurent polynomials is homogenous with respect to the multi-grading $\mathbb{Z}^{n}$, so are the factors. Since $\psi^{*} (s_{j}) \cdot \psi^{*} (s_{j}^{-1}) =1$ is homogenous, the conclusion follows.
\end{proof}

Let $\mathbf{M}_{\psi} = (m_{ij})_{n \times p}$ be the matrix with entries $m_{ij}$ in Lemma \ref{lem_morphism}. Clearly, $\mathbf{M}_{\psi} \in \mathrm{Hom} (\mathbb{Z}^{p}, \mathbb{Z}^{n})$, where elements in $\mathbb{Z}^{p}$ and $\mathbb{Z}^{n}$ are considered as column vectors.

\begin{lemma}\label{lem_matrix_morphism}
Suppose $\psi_{1} \colon  \mathrm{Spec}(K[u_{1}^{\pm 1}, \dots, u_{q}^{\pm 1}]) \rightarrow \mathbb{G}_{m}^{n}$ and $\psi_{2} \colon  \mathbb{G}_{m}^{n} \rightarrow \mathbb{G}_{m}^{p}$ are $\rho_{1}$-twisted and $\rho_{2}$-twisted morphisms, respectively. Then
\begin{enumerate}
\item $\psi_{2} \psi_{1}$ is a $(\rho_{2} \rho_{1})$-twisted morphism, $\mathbf{M}_{\psi_{2} \psi_{1}} = \mathbf{M}_{\psi_{1}} \mathbf{M}_{\psi_{2}}$;

\item $\psi_{2}$ is an isomorphism if and only if $\mathbf{M}_{\psi_{2}} \in \mathrm{Hom} (\mathbb{Z}^{p}, \mathbb{Z}^{n})$ is invertible.
\end{enumerate}
\end{lemma}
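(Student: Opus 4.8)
The plan is to reduce everything to the explicit normal form for $\rho$-twisted morphisms of tori supplied by Lemma \ref{lem_morphism}, to the contravariant functoriality $(\psi_2\psi_1)^* = \psi_1^*\psi_2^*$, and to the trivial observation that $\mathrm{id}^*(t_i) = t_i$, so that $\mathbf{M}_{\mathrm{id}}$ is an identity matrix.

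For (1), I would first check the twist: $\psi_1^*(\psi_2^*(K)) = \psi_1^*(K) = K$, and on $K$ we get $(\psi_2\psi_1)^*|_K = \psi_1^*|_K \circ \psi_2^*|_K = \rho_1^{-1}\rho_2^{-1} = (\rho_2\rho_1)^{-1}$, so $\psi_2\psi_1$ is $(\rho_2\rho_1)$-twisted. Then, writing $\psi_1^*(t_i) = c_i\prod_k u_k^{(\mathbf{M}_{\psi_1})_{ki}}$ and $\psi_2^*(s_j) = h_j\prod_i t_i^{(\mathbf{M}_{\psi_2})_{ij}}$ as in Lemma \ref{lem_morphism}, I substitute to get $(\psi_2\psi_1)^*(s_j) = \rho_1^{-1}(h_j)\prod_i \psi_1^*(t_i)^{(\mathbf{M}_{\psi_2})_{ij}}$ and read off the exponent of $u_k$, which is $\sum_i (\mathbf{M}_{\psi_1})_{ki}(\mathbf{M}_{\psi_2})_{ij} = (\mathbf{M}_{\psi_1}\mathbf{M}_{\psi_2})_{kj}$; hence $\mathbf{M}_{\psi_2\psi_1} = \mathbf{M}_{\psi_1}\mathbf{M}_{\psi_2}$, the order reversal reflecting contravariance. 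This part is pure exponent bookkeeping.

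For the forward implication of (2), if $\psi_2$ is an isomorphism then $\psi_2^{-1}$ is $\rho_2^{-1}$-twisted by the preceding lemma, and feeding $\psi_2^{-1}\psi_2 = \mathrm{id}_{\mathbb{G}_m^n}$ and $\psi_2\psi_2^{-1} = \mathrm{id}_{\mathbb{G}_m^p}$ into (1) gives $\mathbf{M}_{\psi_2}\mathbf{M}_{\psi_2^{-1}} = I_n$ and $\mathbf{M}_{\psi_2^{-1}}\mathbf{M}_{\psi_2} = I_p$; in particular $n = p$ and $\mathbf{M}_{\psi_2} \in \mathrm{GL}(n,\mathbb{Z})$. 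For the converse I would set $M := \mathbf{M}_{\psi_2}$ (so $n = p$ and $M$ is $\mathbb{Z}$-invertible) and $\psi_2^*(s_j) = h_j\prod_i t_i^{m_{ij}}$, and construct the inverse explicitly: let $\psi_3 \colon \mathbb{G}_m^p \to \mathbb{G}_m^n$ be the $\rho_2^{-1}$-twisted morphism determined, via Lemma \ref{lem_morphism}, by $\psi_3^*(t_i) = e_i\prod_l s_l^{(M^{-1})_{li}}$, where the units $e_i \in K^*$ are chosen so that the compositions become identities. The needed system $\prod_i e_i^{m_{ij}} = \rho_2(h_j)^{-1}$ ($1 \le j \le n$) is uniquely solvable because $(e_1,\dots,e_n)\mapsto\bigl(\prod_i e_i^{m_{ij}}\bigr)_j$ is the automorphism of the abelian group $(K^*)^n$ induced by $M^{T}\in\mathrm{GL}(n,\mathbb{Z})$ — this is the single point where invertibility of $M$ over $\mathbb{Z}$, not merely over $\mathbb{Q}$, is used. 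A direct computation then shows $(\psi_2\psi_3)^*(s_j) = s_j$ and $(\psi_3\psi_2)^*(t_i) = t_i$: the torus exponents cancel via $MM^{-1} = M^{-1}M = I_n$, and the scalar factors cancel via the defining equation for $e$ together with $\rho_2^{-1}(e_i) = \prod_j h_j^{-(M^{-1})_{ji}}$. Hence $\psi_2\psi_3 = \mathrm{id}$ and $\psi_3\psi_2 = \mathrm{id}$, so $\psi_2$ is an isomorphism.

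The hard part — which I would write out in full — is the coordinated bookkeeping in the converse of (2): making sure the twisting automorphism $\rho_2$ is inserted on the correct side at each substitution, and verifying that the multiplicative system for the $e_i$ is solvable over $K^*$ (which is exactly where $\mathbb{Z}$-invertibility of $M$ enters). Everything else is formal manipulation with the matrices $\mathbf{M}_\psi$ and the normal form of Lemma \ref{lem_morphism}.
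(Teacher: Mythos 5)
Your proof is correct and amounts to writing out in full the computation the paper dismisses with ``This follows from an easy calculation.'' The approach is the only natural one: expand $\psi^*$ on coordinates using the normal form of Lemma \ref{lem_morphism}, track exponents for part (1), and for part (2) use (1) on $\psi_2^{\pm 1}\psi_2^{\mp 1} = \mathrm{id}$ in one direction and build the inverse explicitly in the other. (One small simplification available to you in the converse of (2): instead of invoking solvability of the system for the units, you can just set $e_i := \prod_j \rho_2(h_j)^{-(M^{-1})_{ji}}$ directly — the same $\mathbb{Z}$-invertibility of $M$ makes the exponents integers, and a short check gives both identity compositions.)
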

\begin{proof}
This follows from an easy calculation.
\end{proof}

\begin{lemma}\label{lem_tori_morphism}
Suppose $\psi \colon  \mathbb{G}_{m}^{n} \rightarrow \mathbb{G}_{m}^{p}$ is a $\rho$-twisted morphism. Then there exist $K$-automorphisms $\varphi_{1}$ and $\varphi_{2}$ of $\mathbb{G}_{m}^{n}$ and $\mathbb{G}_{m}^{p}$, respectively, such that $(\varphi_{2} \psi \varphi_{1})^{*} (s_{j}) = t_{j}^{d_{j}}$ and $d_{j} >0$ for $j \le r$, and $(\varphi_{2} \psi \varphi_{1})^{*} (s_{j}) = 1$ for $j>r$, where $0 \leq r \leq \min\{n, p \}$.
\end{lemma}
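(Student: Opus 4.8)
The plan is to recognize the statement as the Smith normal form of the integer matrix $\mathbf{M}_{\psi}$, after first normalizing the coefficients of $\psi^{*}(s_{j})$. By Lemma~\ref{lem_morphism} we may write $\psi^{*}(s_{j}) = h_{j}\prod_{i=1}^{n}t_{i}^{m_{ij}}$ with $h_{j}\in K^{*}$, so $\mathbf{M}_{\psi}=(m_{ij})\in\mathrm{Hom}(\mathbb{Z}^{p},\mathbb{Z}^{n})$ is an $n\times p$ integer matrix. The underlying principle is that a $K$-automorphism $\varphi$ of $\mathbb{G}_{m}^{n}$ is nothing but a tuple of scalars $c_{i}\in K^{*}$ together with a matrix $(u_{ki})\in\mathrm{GL}(n,\mathbb{Z})$, via $\varphi^{*}(t_{i})=c_{i}\prod_{k}t_{k}^{u_{ki}}$; conversely every such datum sends the generators $t_{i}$ to units and hence defines a $K$-morphism, which is an isomorphism by Lemma~\ref{lem_matrix_morphism}(2). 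So composing $\psi$ with suitable $K$-automorphisms of the source and of the target lets us left-multiply $\mathbf{M}_{\psi}$ by an arbitrary element of $\mathrm{GL}(n,\mathbb{Z})$, right-multiply it by an arbitrary element of $\mathrm{GL}(p,\mathbb{Z})$, and rescale the monomials.

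First I would dispose of the coefficients $h_{j}$. Define a $K$-automorphism $\alpha$ of $\mathbb{G}_{m}^{p}$ by $\alpha^{*}(s_{j})=\rho(h_{j})^{-1}s_{j}$; its matrix part is the identity, so it is an isomorphism by Lemma~\ref{lem_matrix_morphism}(2). Since $\psi^{*}|_{K}=\rho^{-1}$,
\[
(\alpha\psi)^{*}(s_{j}) \;=\; \psi^{*}\!\bigl(\rho(h_{j})^{-1}s_{j}\bigr) \;=\; \rho^{-1}\!\bigl(\rho(h_{j})^{-1}\bigr)\,h_{j}\prod_{i=1}^{n}t_{i}^{m_{ij}} \;=\; \prod_{i=1}^{n}t_{i}^{m_{ij}}.
\]
Replacing $\psi$ by $\alpha\psi$, we may thus assume $h_{j}=1$ for all $j$; note $\mathbf{M}_{\psi}$ is unchanged, since $\alpha$ has trivial matrix part.

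Now I would invoke Smith normal form. As $\mathbb{Z}$ is a principal ideal domain, there are $U\in\mathrm{GL}(n,\mathbb{Z})$ and $V\in\mathrm{GL}(p,\mathbb{Z})$ with $U\,\mathbf{M}_{\psi}\,V=D$, where $D$ is the $n\times p$ matrix whose diagonal entries are the invariant factors $d_{1},\dots,d_{r}>0$ and whose other entries vanish, with $r=\mathrm{rank}_{\mathbb{Z}}\mathbf{M}_{\psi}\le\min\{n,p\}$. Realize $U$ by the $K$-automorphism $\varphi_{1}$ of $\mathbb{G}_{m}^{n}$ with $\varphi_{1}^{*}(t_{i})=\prod_{k}t_{k}^{U_{ki}}$ and $V$ by the $K$-automorphism $\beta$ of $\mathbb{G}_{m}^{p}$ with $\beta^{*}(s_{j})=\prod_{l}s_{l}^{V_{lj}}$; both are isomorphisms by Lemma~\ref{lem_matrix_morphism}(2), and both have trivial coefficients. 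Applying the composition rule $\mathbf{M}_{\psi_{2}\psi_{1}}=\mathbf{M}_{\psi_{1}}\mathbf{M}_{\psi_{2}}$ of Lemma~\ref{lem_matrix_morphism}(1) twice, the matrix of $\beta\psi\varphi_{1}$ is $U\mathbf{M}_{\psi}V=D$; since every map involved has trivial coefficients, a direct computation gives $(\beta\psi\varphi_{1})^{*}(s_{j})=\prod_{k}t_{k}^{D_{kj}}$, which is $t_{j}^{d_{j}}$ for $j\le r$ and $1$ for $j>r$. Finally set $\varphi_{2}:=\beta\alpha$, a $K$-automorphism of $\mathbb{G}_{m}^{p}$; then $\varphi_{2}\psi\varphi_{1}$ (with the original $\psi$) has the asserted form.

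This argument is essentially bookkeeping, and I expect no genuine obstacle. The two points that need care are: the $\rho$-twist in the coefficient normalization, where one must scale by $\rho(h_{j})^{-1}$ rather than $h_{j}^{-1}$ so that the $\rho^{-1}$ carried by $\psi^{*}$ cancels it; and the order reversal $\mathbf{M}_{\psi_{2}\psi_{1}}=\mathbf{M}_{\psi_{1}}\mathbf{M}_{\psi_{2}}$ of Lemma~\ref{lem_matrix_morphism}(1), which is exactly what makes the Smith factorization $U\mathbf{M}_{\psi}V$ come out with $U$ supplied by the source automorphism and $V$ by the target automorphism.
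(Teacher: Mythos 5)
Your proof is correct and follows essentially the same route as the paper's: both reduce $\mathbf{M}_{\psi}$ to Smith normal form via automorphisms realized through Lemma~\ref{lem_matrix_morphism}, and both absorb the scalar factors $h_{j}$ by a $K$-automorphism of the target built from $\rho(h_{j}^{-1})$, the only cosmetic difference being that you normalize the coefficients before applying Smith normal form while the paper does so afterward.
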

\begin{proof}
There exist $B \in \mathrm{Aut} (\mathbb{Z}^{p})$ and $C \in \mathrm{Aut} (\mathbb{Z}^{n})$ such that
\[
C \mathbf{M}_{\psi} B =
\begin{bmatrix}
D & 0 \\
0 & 0
\end{bmatrix},
\]
where $D = \mathrm{diag} (d_{1}, \dots, d_{r})$ and each $d_{i} >0$. Define a $K$-endomorphism $\varphi_{1}$ (resp. $\varphi_{21}$) of $\mathbb{G}_{m}^{n}$ (resp. $\mathbb{G}_{m}^{p}$) such that $\mathbf{M}_{\varphi_{1}} = C$ (resp. $\mathbf{M}_{\varphi_{21}} = B$). By Lemma \ref{lem_matrix_morphism}, $\varphi_{1}$ and $\varphi_{21}$ are $K$-isomorphisms, $(\varphi_{21} \psi \varphi_{1})^{*} (s_{j}) = h_{j} t_{j}^{d_{j}}$ for $j \leq r$, and $(\varphi_{21} \psi \varphi_{1})^{*} (s_{j}) = h_{j}$ for $j >r$, where $h_{j} \in K^{*}$ for all $j$. Define a $K$-morphism $\varphi_{22} \colon  \mathbb{G}_{m}^{p} \rightarrow \mathbb{G}_{m}^{p}$ such that $\varphi_{22}^{*} (s_{j}) = \rho (h_{j}^{-1}) s_{j}$. Then $\varphi_{22}$ is also an isomorphism. Define $\varphi_{2} = \varphi_{22} \varphi_{21}$. Now $\varphi_{1}$ and $\varphi_{2}$ are the desired ones.
\end{proof}

Recall that a morphism $\psi \colon  X \rightarrow Y$ between affine schemes is finite if $\mathcal{O} (X)$ is a finite $\psi^{*} \mathcal{O} (Y)$-module (see \cite[p.~124]{Mumford}). Here $\mathcal{O} (X)$ and $\mathcal{O} (Y)$ are the coordinate rings of $X$ and $Y$, respectively. Also recall that, if $\psi^{*}$ is injective, then $\mathrm{deg} (\psi) = [\kappa (X): \psi^{*} \kappa (Y)]$, where $\kappa (X)$ and $\kappa (Y)$ are the quotient field of $\mathcal{O} (X)$ and $\mathcal{O} (Y)$, respectively.
\begin{lemma}\label{lem_degree}
Suppose $\psi \colon  \mathbb{G}_{m}^{n} \rightarrow \mathbb{G}_{m}^{n}$ is a $\rho$-twisted morphism. If $|\det(\mathbf{M}_{\psi})| \neq 0$, then $\psi$ is finite and surjective, and $\mathrm{deg} (\psi) = |\det(\mathbf{M}_{\psi})|$.
\end{lemma}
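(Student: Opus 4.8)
The plan is to reduce $\psi$ to a diagonal monomial map by Lemma \ref{lem_tori_morphism} and then read off all three conclusions from an explicit free-module computation. The first observation I would record is that \emph{finiteness, surjectivity, and the degree $\deg(\psi)=[\kappa(\mathbb{G}_m^n):\psi^{*}\kappa(\mathbb{G}_m^n)]$ are all insensitive to pre- and post-composing $\psi$ with isomorphisms}: if $\varphi$ is an isomorphism then $\varphi^{*}$ is a ring automorphism of the coordinate ring (hence of its quotient field), so $\mathcal{O}(\mathbb{G}_m^n)$ is finite over $\psi^{*}\mathcal{O}(\mathbb{G}_m^n)$ iff it is finite over $\varphi_1^{*}\psi^{*}\varphi_2^{*}\mathcal{O}(\mathbb{G}_m^n)$, the associated field degree is unchanged, and bijectivity is obviously preserved.

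Next I would invoke Lemma \ref{lem_tori_morphism} to obtain $K$-automorphisms $\varphi_1,\varphi_2$ of $\mathbb{G}_m^n$ and set $\chi:=\varphi_2\psi\varphi_1$, so that $\chi^{*}(s_j)=t_j^{d_j}$ for $j\le r$ and $\chi^{*}(s_j)=1$ for $j>r$. The point is that $r=n$ here: by Lemma \ref{lem_matrix_morphism}(1) we have $\mathbf{M}_{\chi}=\mathbf{M}_{\varphi_1}\mathbf{M}_{\psi}\mathbf{M}_{\varphi_2}$ with $\mathbf{M}_{\varphi_i}\in\mathrm{GL}(n,\mathbb{Z})$, so $|\det\mathbf{M}_{\chi}|=|\det\mathbf{M}_{\psi}|\ne 0$; but if $r<n$ then $\mathbf{M}_{\chi}=\mathrm{diag}(d_1,\dots,d_r,0,\dots,0)$ is singular, a contradiction. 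Thus $\chi^{*}(s_j)=t_j^{d_j}$ with every $d_j>0$ and $\prod_j d_j=|\det\mathbf{M}_{\chi}|=|\det\mathbf{M}_{\psi}|$. By the first paragraph it now suffices to prove the lemma for $\chi$ in place of $\psi$.

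For $\chi$: since $\chi$ is $\rho$-twisted we have $\chi^{*}|_{K}=\rho^{-1}$, hence $\chi^{*}\mathcal{O}(\mathbb{G}_m^n)=\rho^{-1}(K)\bigl[t_1^{\pm d_1},\dots,t_n^{\pm d_n}\bigr]=K\bigl[t_1^{\pm d_1},\dots,t_n^{\pm d_n}\bigr]$, using that $\rho^{-1}$ fixes $K$ setwise. Now $K[t_1^{\pm1},\dots,t_n^{\pm1}]$ is a free module over $K[t_1^{\pm d_1},\dots,t_n^{\pm d_n}]$ with basis the monomials $t_1^{e_1}\cdots t_n^{e_n}$ with $0\le e_i<d_i$ (Euclidean division in each exponent), a set of cardinality $\prod_i d_i$. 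Therefore $\chi$ is finite and $\deg(\chi)=[\kappa(\mathbb{G}_m^n):\chi^{*}\kappa(\mathbb{G}_m^n)]=\prod_i d_i=|\det\mathbf{M}_{\psi}|$. Finally, $\chi^{*}$ is injective, so $\mathcal{O}(\mathbb{G}_m^n)$ is an integral extension of $\chi^{*}\mathcal{O}(\mathbb{G}_m^n)\cong\mathcal{O}(\mathbb{G}_m^n)$, and by the lying-over theorem the induced map on spectra is surjective; that is, $\chi$ is surjective. Unwinding the reduction gives the statement for $\psi$.

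The routine points are the explicit free basis and the composition-invariance of the three properties. The only place that needs a little care is extracting $r=n$ from $\det\mathbf{M}_{\psi}\ne 0$ and checking that the twisting automorphism $\rho$ does not obstruct the identity $\chi^{*}\mathcal{O}(\mathbb{G}_m^n)=K[t_1^{\pm d_1},\dots,t_n^{\pm d_n}]$, which works precisely because $\rho$ stabilizes $K$. I do not expect a genuine obstacle: essentially all the substance is already contained in Lemmas \ref{lem_morphism}, \ref{lem_matrix_morphism} and \ref{lem_tori_morphism}.
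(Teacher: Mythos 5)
Your proposal is correct and follows essentially the same route as the paper's proof: reduce to the diagonal monomial case $\chi^{*}(s_j)=t_j^{d_j}$ via Lemma \ref{lem_tori_morphism}, extract $d_j>0$ from $\det\mathbf{M}_\psi\ne 0$, and read off finiteness, surjectivity, and $\deg=\prod d_j$. You merely make explicit two steps the paper compresses (the free-module basis of $K[t_1^{\pm1},\dots,t_n^{\pm1}]$ over $K[t_1^{\pm d_1},\dots,t_n^{\pm d_n}]$ and the lying-over argument for surjectivity, which the paper delegates to Mumford), and you verify the $\rho$-twisting is harmless because $\rho^{-1}(K)=K$; the substance is the same.
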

\begin{proof}
By Lemma \ref{lem_tori_morphism}, there are $K$-automorphisms $\varphi_{1}$ and $\varphi_{2}$ such that $(\varphi_{2} \psi \varphi_{1})^{*} (t_{j}) = t_{j}^{d_{j}}$ and $d_{j} \ge 0$ for all $j$. By Lemma \ref{lem_matrix_morphism}, we have $|\det (\mathbf{M}_{\varphi_{1}})| = |\det (\mathbf{M}_{\varphi_{2}})| =1$ and
\[
\prod_{j=1}^{n} d_{j} = |\det (\mathbf{M}_{\varphi_{2} \psi \varphi_{1}})| = |\det (\mathbf{M}_{\varphi_{1}})| \cdot |\det (\mathbf{M}_{\psi})| \cdot |\det (\mathbf{M}_{\varphi_{2}})| = |\det (\mathbf{M}_{\psi})|.
\]
Now $d_{j} >0$ for all $j$ since $|\det(\mathbf{M}_{\psi})| \neq 0$. We infer $\varphi_{2} \psi \varphi_{1}$ is dominating and finite, and hence is surjective (\cite[p.~123,~Proposition~4]{Mumford}). Thus so is $\psi$. It's easy to see $\deg (\varphi_{2} \psi \varphi_{1}) = \prod_{j=1}^{n} d_{j}$. Since $\mathrm{deg} (\varphi_{2} \psi \varphi_{1}) = \mathrm{deg} (\psi)$, the proof is completed.
\end{proof}

\begin{definition}
If $W$ is a closed subscheme of $\mathbb{G}_{m}^{n}$ and $W$ is $K$-isomorphic to $\mathbb{G}_{m}^{r}$ for some $r \ge 0$, then we call $W$ an $r$-dimensional \textit{subtorus} of $\mathbb{G}_{m}^{n}$.
\end{definition}

\begin{lemma}\label{lem_morphism_image}
Suppose $\psi \colon  \mathbb{G}_{m}^{n} \rightarrow \mathbb{G}_{m}^{p}$ is a $\rho$-twisted morphism. Then $\mathrm{im} \psi$ is a subtorus of $\mathbb{G}_{m}^{p}$. Furthermore, $[\overline{\psi^{*} \kappa} : \psi^{*} \kappa] < \infty$, where $\psi^{*} \kappa$ is the image of $\psi^{*} \colon  \kappa (\mathrm{im} \psi) \rightarrow \kappa (\mathbb{G}_{m}^{n})$, and $\overline{\psi^{*} \kappa}$ is the algebraic closure of $\psi^{*} \kappa$ in $\kappa (\mathbb{G}_{m}^{n})$.
\end{lemma}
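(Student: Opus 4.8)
The plan is to reduce to the normal form furnished by Lemma \ref{lem_tori_morphism} and then read off both assertions from it. First I would invoke Lemma \ref{lem_tori_morphism} to choose $K$-automorphisms $\varphi_{1}$ of $\mathbb{G}_{m}^{n}$ and $\varphi_{2}$ of $\mathbb{G}_{m}^{p}$ so that, writing $\psi' := \varphi_{2}\psi\varphi_{1}$, one has ${\psi'}^{*}(s_{j}) = t_{j}^{d_{j}}$ with $d_{j}>0$ for $j\le r$ and ${\psi'}^{*}(s_{j}) = 1$ for $j>r$, where $0\le r\le\min\{n,p\}$ is as in that lemma. Since every $d_{j}>0$, the elements $t_{1}^{d_{1}},\dots,t_{r}^{d_{r}}$ are algebraically independent over $K$, so the induced map $K[s_{1}^{\pm 1},\dots,s_{r}^{\pm 1}]\to K[t_{1}^{\pm 1},\dots,t_{n}^{\pm 1}]$ is injective; hence $\ker{\psi'}^{*} = (s_{r+1}-1,\dots,s_{p}-1)$, a prime ideal, and the (scheme-theoretic) image $\mathrm{im}\,\psi' = V(\ker{\psi'}^{*})$ is the closed subscheme $\mathrm{Spec}\bigl(K[s_{1}^{\pm 1},\dots,s_{p}^{\pm 1}]/(s_{r+1}-1,\dots,s_{p}-1)\bigr)$, which is visibly $K$-isomorphic to $\mathbb{G}_{m}^{r}$. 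Thus $\mathrm{im}\,\psi'$ is an $r$-dimensional subtorus of $\mathbb{G}_{m}^{p}$.

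Next I would transport this back along the automorphisms. As $\varphi_{1}$ is an isomorphism, precomposition by $\varphi_{1}^{-1}$ does not change the scheme-theoretic image, so $\mathrm{im}\,\psi' = \mathrm{im}(\psi'\varphi_{1}^{-1}) = \mathrm{im}(\varphi_{2}\psi)$; and as $\varphi_{2}$ is an isomorphism, $\mathrm{im}(\varphi_{2}\psi) = \varphi_{2}(\mathrm{im}\,\psi)$. Hence $\mathrm{im}\,\psi = \varphi_{2}^{-1}(\mathrm{im}\,\psi')$, which, being the image of the closed subscheme $\mathrm{im}\,\psi'$ under the $K$-automorphism $\varphi_{2}^{-1}$, is again a closed subscheme of $\mathbb{G}_{m}^{p}$ that is $K$-isomorphic to $\mathbb{G}_{m}^{r}$. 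This gives the first assertion: $\mathrm{im}\,\psi$ is a subtorus.

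For the degree statement I would use that $\psi$ factors as $\mathbb{G}_{m}^{n}\xrightarrow{\ \bar\psi\ }\mathrm{im}\,\psi\hookrightarrow\mathbb{G}_{m}^{p}$ with $\bar\psi$ dominant, so $\psi^{*} = \bar\psi^{*}\colon\kappa(\mathrm{im}\,\psi)\to\kappa(\mathbb{G}_{m}^{n})$ is an injection of fields and $\psi^{*}\kappa$ is its image. Now $\kappa(\mathrm{im}\,\psi)\cong\kappa(\mathbb{G}_{m}^{r}) = K(u_{1},\dots,u_{r})$ is finitely generated over $K$, and since $\psi$ is $\rho$-twisted we have $\psi^{*}(K) = \rho^{-1}(K) = K$; hence $\psi^{*}\kappa$ is finitely generated over $K$ as a subfield of $\kappa(\mathbb{G}_{m}^{n}) = K(t_{1},\dots,t_{n})$. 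Consequently $\kappa(\mathbb{G}_{m}^{n})$ is finitely generated over the intermediate field $\psi^{*}\kappa$, and therefore its relative algebraic closure $\overline{\psi^{*}\kappa}$ in $\kappa(\mathbb{G}_{m}^{n})$ is finitely generated over $\psi^{*}\kappa$ (the relative algebraic closure of a subfield inside a finitely generated field extension is finitely generated — via a transcendence basis of $\kappa(\mathbb{G}_{m}^{n})$ over $\psi^{*}\kappa$ together with the linear disjointness of the resulting purely transcendental part from any algebraic subextension). A finitely generated algebraic extension is finite, whence $[\overline{\psi^{*}\kappa}:\psi^{*}\kappa]<\infty$.

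The ideal-theoretic computation of $\mathrm{im}\,\psi'$ and the bookkeeping with $\varphi_{1},\varphi_{2}$ are routine; the one genuinely non-formal ingredient is the field-theoretic fact that the relative algebraic closure of a subfield inside a finitely generated field extension is again finitely generated over it (hence finite here, since the extension in question is algebraic). That is the step I expect to be the crux, and if one prefers not to cite it, the cleanest route is to prove it directly by the linear-disjointness argument sketched above. (When $r = n$ one could instead avoid it altogether and quote Lemma \ref{lem_degree}, but the case $r<n$ really does need the general statement.)
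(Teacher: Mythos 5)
Your argument is correct, and its first half — normalizing via Lemma \ref{lem_tori_morphism}, computing the image of the normal form, and transporting back along the automorphisms $\varphi_1,\varphi_2$ — is essentially the paper's. The finiteness argument, however, takes a genuinely different route. The paper factors the normalized map as $\psi_2\psi_1$ with $\psi_1\colon\mathbb{G}_{m}^{n}\to\mathbb{G}_{m}^{r}$ the coordinate projection and $\psi_2\colon\mathbb{G}_{m}^{r}\to W$ finite dominating; it then observes that $\psi_1^*\kappa(\mathbb{G}_{m}^{r})=K(t_1,\dots,t_r)$ is algebraically closed in $K(t_1,\dots,t_n)$, so $\overline{\psi^*\kappa}$ is \emph{exactly} $\psi_1^*\kappa(\mathbb{G}_{m}^{r})$ and $[\overline{\psi^*\kappa}:\psi^*\kappa]=\deg(\psi_2)<\infty$. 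You instead invoke the general field-theoretic fact that the relative algebraic closure of an intermediate field inside a finitely generated field extension is finite over it. That lemma is correct (by the transcendence-basis plus linear-disjointness argument you sketch — a purely transcendental extension is regular, hence linearly disjoint from any algebraic extension of the base), so your proof is valid; it is more portable but less explicit, and it does not pin down $\overline{\psi^*\kappa}$ the way the paper does. One small inaccuracy: your closing aside that the case $r<n$ ``really does need the general statement'' is not quite right — the paper's factorization through the projection handles $r<n$ without invoking it.
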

\begin{proof}
Similar to the proof of Lemma \ref{lem_degree}, by composing $K$-isomorphisms if necessary, we may assume $\psi^{*} (s_{j}) = t_{j}^{d_{j}}$ with $d_{j} >0$ for $j \le r$ and $\psi^{*} (s_{j}) = 1$ for $j>r$. Let $\mathbb{G}_{m}^{r} = \mathrm{Spec}(K[t_{1}^{\pm 1}, \dots, t_{r}^{\pm 1}])$ and $\psi_{1} \colon  \mathbb{G}_{m}^{n} \rightarrow \mathbb{G}_{m}^{r}$ be the projection, then $\psi_{1}$ is surjective. Let $W:= \mathrm{Spec}(K[s_{1}^{\pm 1}, \dots, s_{r}^{\pm 1}])$, then $W$ is a subtorus of $\mathbb{G}_{m}^{p}$ and $\mathrm{im} \psi \subseteq W$.

Define a $\rho$-twisted morphism $\psi_{2} \colon  \mathbb{G}_{m}^{r} \rightarrow W$ such that $\psi_{2}^{*} (s_{j}) = t_{j}^{d_{j}}$ for $j \le r$. Then $\psi = \psi_{2} \psi_{1}$. Since $\psi_{2}$ is dominating and finite, it is surjective. So $\mathrm{im} \psi = \mathrm{im} \psi_{2} = W$.

Note that $\psi_{1}^{*} \kappa (\mathbb{G}_{m}^{r})$ is algebraically closed in $\kappa (\mathbb{G}_{m}^{n})$, and $[\kappa (\mathbb{G}_{m}^{r}): \psi_{2}^{*} \kappa (W)]$ is finite. Thus $\psi^{*} \kappa = \psi_{1}^{*} \psi_{2}^{*} \kappa (W)$ and $\overline{\psi^{*} \kappa} = \psi_{1}^{*} \kappa (\mathbb{G}_{m}^{r})$, which finishes the proof.
\end{proof}

\begin{lemma}\label{lem_subtorus}
A closed subscheme $W$ of $\mathbb{G}_{m}^{n}$ is an $r$-dimensional subtorus if and only if there exists a $K$-automorphism $\varphi$ of $\mathbb{G}_{m}^{n}$ such that $\varphi (W) = \mathbb{G}_{m}^{r} := \mathrm{Spec}(K[t_{1}^{\pm 1}, \dots, t_{r}^{\pm 1}])$.
\end{lemma}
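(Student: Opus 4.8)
The ``if'' direction of Lemma~\ref{lem_subtorus} is immediate from the definition: if a $K$-automorphism $\varphi$ of $\mathbb{G}_{m}^{n}$ satisfies $\varphi(W) = \mathbb{G}_{m}^{r}$, then $W$ is $K$-isomorphic, via $\varphi$, to $\mathbb{G}_{m}^{r}$, hence is an $r$-dimensional subtorus. So the plan is to establish the ``only if'' direction, and the idea is to recover the wanted automorphism from the lattices of units.

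Write $R := K[t_{1}^{\pm 1}, \dots, t_{n}^{\pm 1}] = \mathcal{O}(\mathbb{G}_{m}^{n})$ and let $\iota \colon W \hookrightarrow \mathbb{G}_{m}^{n}$ be the closed immersion, so that $\iota^{*} \colon R \to \mathcal{O}(W)$ is a surjective $K$-algebra homomorphism and, by hypothesis, there is a $K$-algebra isomorphism $\mathcal{O}(W) \cong K[s_{1}^{\pm 1}, \dots, s_{r}^{\pm 1}]$. The structural input I will use is the elementary fact that every unit of a Laurent polynomial ring over a field is a nonzero scalar times a monomial; consequently $R^{*}/K^{*}$ is free abelian of rank $n$ on the classes $[t_{i}]$, and $\mathcal{O}(W)^{*}/K^{*}$ is free abelian of rank $r$ on the classes $[s_{j}]$. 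Since $\iota^{*}$ is a $K$-algebra map, it sends units to units and $K^{*}$ onto $K^{*}$, hence induces a group homomorphism $f \colon \mathbb{Z}^{n} = R^{*}/K^{*} \to \mathcal{O}(W)^{*}/K^{*} = \mathbb{Z}^{r}$, $[t_{i}] \mapsto [\iota^{*} t_{i}]$.

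The heart of the argument is to show that $f$ is surjective. Let $\Gamma \le \mathbb{Z}^{r}$ be its image. Each $\iota^{*} t_{i}^{\pm 1}$ is a unit of $\mathcal{O}(W) = K[s_{1}^{\pm 1},\dots,s_{r}^{\pm 1}]$ whose class lies in $\Gamma$; by the unit description it is therefore a scalar multiple of a monomial $s^{\gamma}$ with $\gamma \in \Gamma$, so it lies in the $K$-span $K[\Gamma]$ of the monomials $\{ s^{\gamma} : \gamma \in \Gamma \}$. As $R$ is generated over $K$ by $t_{1}^{\pm 1},\dots,t_{n}^{\pm 1}$ and $\iota^{*}$ is onto, it follows that $\mathcal{O}(W) = K[\Gamma]$; since the monomials $s^{\lambda}$ ($\lambda \in \mathbb{Z}^{r}$) are $K$-linearly independent, this forces $\Gamma = \mathbb{Z}^{r}$, i.e.\ $f$ is surjective.

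Finally I build $\varphi$. The exact sequence $0 \to \ker f \to \mathbb{Z}^{n} \to \mathbb{Z}^{r} \to 0$ splits because $\mathbb{Z}^{r}$ is free, so there is a $\mathbb{Z}$-basis $v_{1},\dots,v_{n}$ of $\mathbb{Z}^{n}$ with $f(v_{1}),\dots,f(v_{r})$ a basis of $\mathbb{Z}^{r}$ and $v_{r+1},\dots,v_{n} \in \ker f$. By Lemma~\ref{lem_matrix_morphism} the corresponding change of basis is realized by a $K$-automorphism of $\mathbb{G}_{m}^{n}$; composing with it and simultaneously choosing the coordinates $s_{1},\dots,s_{r}$ on $W$ to match, I may assume $\iota^{*} t_{i} = c_{i} s_{i}$ with $c_{i} \in K^{*}$ for $i \le r$ and $\iota^{*} t_{i} = c_{i} \in K^{*}$ for $i > r$. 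A further diagonal $K$-automorphism of $\mathbb{G}_{m}^{n}$, namely $t_{i} \mapsto c_{i}^{-1} t_{i}$ (legitimate by Lemma~\ref{lem_morphism}), then makes $\iota^{*}$ the standard projection $t_{i} \mapsto s_{i}$ ($i \le r$), $t_{i} \mapsto 1$ ($i > r$); equivalently the composite automorphism $\varphi$ carries $W$ onto the closed subscheme $V(t_{r+1}-1,\dots,t_{n}-1) = \mathbb{G}_{m}^{r}$. (The case $r = 0$ is just translating a $K$-point to the identity.) The one genuinely nontrivial step is the surjectivity of $f$, resting on the two elementary facts about Laurent polynomial rings over a field (units are scalar monomials; distinct monomials are linearly independent); everything else is bookkeeping with $\mathrm{GL}(n,\mathbb{Z})$-automorphisms and scalar rescalings supplied by Lemmas~\ref{lem_morphism} and~\ref{lem_matrix_morphism}.
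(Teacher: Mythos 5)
Your proposal is correct. Both directions work, and the key step---surjectivity of the induced map $f\colon R^{*}/K^{*}\cong\mathbb{Z}^{n}\to\mathcal{O}(W)^{*}/K^{*}\cong\mathbb{Z}^{r}$ coming from the closed immersion---is established cleanly via the observation that $\iota^{*}$ onto forces $\mathcal{O}(W)=K[\Gamma]$ and hence $\Gamma=\mathbb{Z}^{r}$ by linear independence of monomials. The final normalization using the splitting of $0\to\ker f\to\mathbb{Z}^{n}\to\mathbb{Z}^{r}\to 0$ plus a diagonal rescaling is also fine, and your verification that the resulting ideal is exactly $(t_{r+1}-1,\dots,t_{n}-1)$ is implicit but easily filled in.

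The paper's proof is the same argument with the steps in a different order and packaging: it first applies Lemma~\ref{lem_tori_morphism} (a Smith-normal-form normalization of the matrix $\mathbf{M}_{\psi}$, giving $(\varphi\psi\varphi_{1})^{*}(t_{j})=t_{j}^{d_{j}}$), and then uses surjectivity of $(\varphi\psi\varphi_{1})^{*}$ to conclude every $d_{j}=1$. You instead prove surjectivity of the character-lattice map up front---directly from the units-are-scalar-monomials fact---and then extract the normalizing automorphism from a splitting of the short exact sequence of lattices. The two routes use precisely the same ingredients (Lemma~\ref{lem_morphism}, Lemma~\ref{lem_matrix_morphism}, the elementary structure of $R^{*}$), and the linear-algebra-over-$\mathbb{Z}$ content of Smith normal form is equivalent to your choice of adapted basis; the payoff of your ordering is that it makes transparent \emph{why} all the diagonal entries must be $1$, rather than having this fall out of the normal form after the fact.
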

\begin{proof}
It suffices to prove ``$\Rightarrow$". By the assumption, there exists a closed immersion
\[
\psi \colon \ \mathbb{G}_{m}^{r} = \mathrm{Spec}(K[t_{1}^{\pm 1}, \dots, t_{r}^{\pm 1}]) \rightarrow \mathbb{G}_{m}^{n} = \mathrm{Spec}(K[t_{1}^{\pm 1}, \dots, t_{n}^{\pm 1}])
\]
such that $\mathrm{im} \psi = W$. By Lemma \ref{lem_tori_morphism}, there exists a $K$-automorphism $\varphi_{1}$ (resp.~$\varphi$) of $\mathbb{G}_{m}^{r}$ (resp.~$\mathbb{G}_{m}^{n}$) such that $(\varphi \psi \varphi_{1})^{*} (t_{j}) = t_{j}^{d_{j}}$ with $d_{j} \ge 0$ for $j \le r$ and $(\varphi \psi \varphi_{1})^{*} (t_{j}) = 1$ for $j>r$. Since $\psi$ is a closed immersion, so is $\varphi \psi \varphi_{1}$, i.e. $(\varphi \psi \varphi_{1})^{*}$ is surjective. We infer each $d_{j} =1$. Then $\varphi (W) = \mathrm{im} (\varphi \psi \varphi_{1}) = \mathbb{G}_{m}^{r}$.
\end{proof}

\begin{lemma}\label{lem_subtorus_degree}
Suppose $\psi$ is a $\rho$-twisted endomorphism of $\mathbb{G}_{m}^{n}$, and $A := \mathbf{M}_{\psi}$ satisfies Assumption \ref{asm_matrix}. If $W$ is subtorus, $\dim W >0$, and $\psi (W) = W$, then $\mathrm{deg} (\psi|_{W}) >1$.
\end{lemma}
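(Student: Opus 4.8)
The plan is to reduce to the case where $W$ is the standard coordinate subtorus, identify the matrix of $\psi|_{W}$ as a diagonal block of $\mathbf{M}_{\psi}$, and then combine the eigenvalue criterion of Lemma \ref{lem_matrix_eigenvalue} with the degree formula of Lemma \ref{lem_degree}. First I would use Lemma \ref{lem_subtorus} to pick a $K$-automorphism $\varphi$ of $\mathbb{G}_m^{n}$ with $\varphi(W) = \mathbb{G}_m^{r} = \mathrm{Spec}(K[t_{1}^{\pm 1}, \dots, t_{r}^{\pm 1}])$, sitting inside $\mathbb{G}_m^{n} = \mathrm{Spec}(K[t_{1}^{\pm 1}, \dots, t_{n}^{\pm 1}])$ as the closed subscheme $V(t_{r+1} - 1, \dots, t_{n} - 1)$. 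Replacing $\psi$ by $\psi' := \varphi \psi \varphi^{-1}$ and $W$ by $\varphi(W)$, I may assume $W$ is this coordinate subtorus: $\psi'$ is again $\rho$-twisted, $\mathbf{M}_{\psi'} = \mathbf{M}_{\varphi}^{-1} A \mathbf{M}_{\varphi}$ is conjugate to $A$ over $\mathbb{Z}$ by Lemma \ref{lem_matrix_morphism} and hence still satisfies Assumption \ref{asm_matrix}, and $\deg(\psi|_{W})$ is unchanged because $\varphi$ is an isomorphism. I then rename $\psi'$, $\mathbf{M}_{\psi'}$ back to $\psi$, $A = (a_{ij})$.

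Next I would unwind the hypothesis $\psi(W) = W$. Writing $\psi^{*}(t_{j}) = g_{j} \prod_{i=1}^{n} t_{i}^{a_{ij}}$, the requirement that $\psi$ carry $V(t_{r+1}-1,\dots,t_{n}-1)$ into itself forces (after reducing $\psi^{*}(t_{j}) - 1$ modulo that ideal, for $j > r$) $g_{j} = 1$ and $a_{ij} = 0$ whenever $1 \le i \le r < j \le n$. Thus $A = \begin{pmatrix} A_{11} & 0 \\ * & A_{22} \end{pmatrix}$ is block lower triangular in the splitting $\mathbb{Z}^{n} = \mathbb{Z}^{r} \oplus \mathbb{Z}^{n-r}$, and the restriction $\psi|_{W} \colon \mathbb{G}_m^{r} \to \mathbb{G}_m^{r}$ is the $\rho$-twisted endomorphism with $(\psi|_{W})^{*}(t_{j}) = g_{j}\prod_{i=1}^{r} t_{i}^{a_{ij}}$, so that $\mathbf{M}_{\psi|_{W}} = A_{11} \in \mathrm{End}(\mathbb{Z}^{r})$. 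Since $\psi(W) = W$, the morphism $\psi|_{W}$ is dominant, hence $(\psi|_{W})^{*}$ is injective on $K[t_{1}^{\pm 1},\dots,t_{r}^{\pm 1}]$, which forces $A_{11}$ to have full rank $r$, i.e. $\det A_{11} \ne 0$ (equivalently, invoke Lemma \ref{lem_morphism_image}: $\dim \mathrm{im}(\psi|_{W}) = \operatorname{rank}\mathbf{M}_{\psi|_{W}}$, and this equals $\dim W = r$).

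The conceptual core is then an eigenvalue comparison. Because $A$ is block triangular with diagonal blocks $A_{11}$ and $A_{22}$, every eigenvalue of $A_{11}$ is an eigenvalue of $A$. By Assumption \ref{asm_matrix}, $\bigcap_{k\ge 1}\mathrm{im}A^{k} = 0$, so by Lemma \ref{lem_matrix_eigenvalue} each eigenvalue $\alpha$ of $A$ is either $0$ or has $\alpha^{-1}$ not integral over $\mathbb{Z}$; the same then holds for every eigenvalue of $A_{11}$, and Lemma \ref{lem_matrix_eigenvalue} applied in the reverse direction gives $\bigcap_{k\ge 1}\mathrm{im}A_{11}^{k} = 0$. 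Since $r = \dim W > 0$ and $\det A_{11} \ne 0$, we get $|\det A_{11}| \ge 2$: otherwise $A_{11} \in \mathrm{GL}(r,\mathbb{Z})$, so $\mathrm{im}A_{11}^{k} = \mathbb{Z}^{r}$ for all $k$ and the intersection would be $\mathbb{Z}^{r} \ne 0$. Finally, applying Lemma \ref{lem_degree} to $\psi|_{W} \colon \mathbb{G}_m^{r} \to \mathbb{G}_m^{r}$ (legitimate since $|\det\mathbf{M}_{\psi|_{W}}| = |\det A_{11}| \ne 0$) yields $\deg(\psi|_{W}) = |\det A_{11}| \ge 2 > 1$, which is the assertion. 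The only slightly delicate point is the bookkeeping in the second paragraph — verifying that $\psi(W) \subseteq W$ forces the off-diagonal block to vanish and correctly identifying $\mathbf{M}_{\psi|_{W}}$ with $A_{11}$ — but this is a routine application of the torus/character-lattice dictionary established in Lemmas \ref{lem_morphism}–\ref{lem_morphism_image}; all the real content is in Lemma \ref{lem_matrix_eigenvalue}, which is exactly what makes Assumption \ref{asm_matrix} descend to an invariant subtorus.
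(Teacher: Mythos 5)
Your proposal is correct and follows essentially the same route as the paper's proof: conjugate $W$ to the standard coordinate subtorus via Lemma \ref{lem_subtorus}, observe $\mathbf{M}_{\varphi\psi\varphi^{-1}}$ is block triangular with diagonal block $A_{11} = \mathbf{M}_{\psi|_W}$ conjugate to a block of $A$, use Lemma \ref{lem_matrix_eigenvalue} on the eigenvalues to rule out $|\det A_{11}| = 1$, and finish with Lemma \ref{lem_degree}. The only small divergence is in the middle: the paper gets $\det A_{11}\ne 0$ immediately from $\det A \ne 0$ and block-triangularity, and then derives a direct contradiction of Lemma \ref{lem_matrix_eigenvalue} from $|\det A_{11}|=1$ (since then every eigenvalue $\alpha$ of $A_{11}$ is nonzero with $\alpha^{-1}$ integral, yet $\alpha$ is also an eigenvalue of $A$); you instead argue $\det A_{11}\ne 0$ from dominance of $\psi|_W$, and take the slight detour of first proving $\bigcap_k \mathrm{im}\,A_{11}^k = 0$ before ruling out $A_{11}\in\mathrm{GL}(r,\mathbb{Z})$. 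Both routes are valid and of comparable length; the paper's is marginally more economical.
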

\begin{proof}
By Lemma \ref{lem_subtorus}, there exists a $K$-automorphism $\varphi$ of $\mathbb{G}_{m}^{n}$ such that $\varphi (W) = \mathbb{G}_{m}^{r}$. Then $\varphi \psi \varphi^{-1} (\mathbb{G}_{m}^{r}) = \mathbb{G}_{m}^{r}$. Therefore, for some matrices $A_{i}$ with integer entries,
\[
\mathbf{M}_{\varphi \psi \varphi^{-1}} =
\begin{bmatrix}
A_{1} & 0 \\
A_{2} & A_{3}
\end{bmatrix},
\]
where $A_{1}$ is the matrix corresponding to $\varphi \psi \varphi^{-1}|_{\mathbb{G}_{m}^{r}}$.

By Lemma \ref{lem_matrix_morphism}, $\mathbf{M}_{\varphi \psi \varphi^{-1}}$ is conjugate to $A= \mathbf{M}_{\psi}$. Since $\det (A) \ne 0$ by Assumption \ref{asm_matrix}, we have $\det (A_{1}) \ne 0$. We claim $|\det (A_{1})| >1$. Suppose not, then $|\det (A_{1})| =1$. So, for each eigenvalue $\alpha$ of $A_{1}$, $\alpha^{-1}$ is integral over $\mathbb{Z}$. On the other hand, $\bigcap_{k=1}^{\infty} \mathrm{im} A^{k} =0$ by Assumption \ref{asm_matrix}. Since $\alpha$ is also an eigenvalue of $A$, this contradicts Lemma \ref{lem_matrix_eigenvalue}.

By Lemma \ref{lem_degree}, we infer $\mathrm{deg} (\psi|_{W}) = \mathrm{deg} (\varphi \psi \varphi^{-1}|_{\mathbb{G}_{m}^{r}}) = |\det (A_{1})| >1$.
\end{proof}

To prove Proposition \ref{prop_variety}, we also need a general construction. Suppose $V$ is a variety over $K$. We shall construct an affine torus $T(V)$ which is called the \textit{intrinsic torus} of $V$ (see e.g. \cite[p.~280]{Maclagan_Sturmfels}). Let $\mathcal{O}^{*} (V)$ denote the multiplicative group of $\mathcal{O} (V)$, i.e. $\mathcal{O}^{*} (V)$ consists of invertible elements of multiplication of $\mathcal{O} (V)$.

The following Proposition \ref{prop_torus} is a slight extension of \cite[Proposition~6.4.4]{Maclagan_Sturmfels} to the case of twisted morphisms.
\begin{proposition}\label{prop_torus}
Suppose $V$ is an affine variety over $K$. Then there exists an affine torus $T(V)$ and a $K$-morphism $\tau_{V} \colon  V \rightarrow T(V)$ such that, for each $\rho$-twisted morphism $\alpha \colon  V \rightarrow \mathbb{G}_{m}^{n}$, there exists a unique $\rho$-twisted morphism $\tilde{\alpha} \colon  T(V) \rightarrow \mathbb{G}_{m}^{n}$ such that the following diagram commutes.
\[
  \xymatrix{
  V \ar[r]^{\tau_{V}} \ar[dr]_{\alpha} & T(V) \ar[d]^{\tilde{\alpha}} \\
  & \mathbb{G}_{m}^{n} }
\]
\end{proposition}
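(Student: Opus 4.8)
The plan is to construct $T(V)$ as the \emph{intrinsic torus} of $V$, built from the group of units of the coordinate ring $\mathcal{O}(V)$; compared with \cite[Proposition~6.4.4]{Maclagan_Sturmfels} the only new point is to carry the twisting automorphism $\rho$ along, which is purely formal. Since $V$ is an irreducible reduced affine $K$-variety, $\mathcal{O}(V)$ is a finitely generated $K$-domain, and the key input I would invoke (or, if necessary, prove first) is the classical fact that the abelian group $\Gamma := \mathcal{O}^{*}(V)/K^{*}$ is free of finite rank $r$; equivalently, one can choose units $f_{1}, \dots, f_{r} \in \mathcal{O}^{*}(V)$ representing a basis of $\Gamma$ which are algebraically independent over $K$, so that the subring of $\mathcal{O}(V)$ generated by all units is the Laurent polynomial ring $K[f_{1}^{\pm 1}, \dots, f_{r}^{\pm 1}]$. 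With such a choice I would set $T(V) := \mathrm{Spec}(K[s_{1}^{\pm 1}, \dots, s_{r}^{\pm 1}]) = \mathbb{G}_{m}^{r}$ and take $\tau_{V} \colon V \to T(V)$ to be the $K$-morphism determined by $\tau_{V}^{*}(s_{i}) = f_{i}$. (When $r = 0$ one has $\mathcal{O}^{*}(V) = K^{*}$, $T(V)$ is a point, and everything below specializes trivially.)

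To produce $\tilde{\alpha}$ from a given $\rho$-twisted morphism $\alpha \colon V \to \mathbb{G}_{m}^{n} = \mathrm{Spec}(K[t_{1}^{\pm 1}, \dots, t_{n}^{\pm 1}])$, I would first note that each $\alpha^{*}(t_{j})$ lies in $\mathcal{O}^{*}(V)$, being the image of a unit, so its class in $\Gamma$ can be written as $\sum_{i=1}^{r} m_{ij}[f_{i}]$, and hence $\alpha^{*}(t_{j}) = c_{j} \prod_{i=1}^{r} f_{i}^{m_{ij}}$ for a unique $c_{j} \in K^{*}$ and integers $m_{ij}$. I would then define $\tilde{\alpha}^{*} \colon K[t_{1}^{\pm 1}, \dots, t_{n}^{\pm 1}] \to K[s_{1}^{\pm 1}, \dots, s_{r}^{\pm 1}]$ to be the ring homomorphism with $\tilde{\alpha}^{*}|_{K} = \rho^{-1}$ and $\tilde{\alpha}^{*}(t_{j}) = c_{j} \prod_{i=1}^{r} s_{i}^{m_{ij}}$; this is well defined because a ring homomorphism out of a Laurent polynomial ring is determined by its restriction to $K$ together with the (necessarily unit) images of the variables, which may be prescribed arbitrarily, and it yields a $\rho$-twisted morphism $\tilde{\alpha} \colon T(V) \to \mathbb{G}_{m}^{n}$ since $\tilde{\alpha}^{*}|_{K} = \rho^{-1} \in \mathrm{Aut}(K)$. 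A direct check then gives $\tau_{V}^{*}(\tilde{\alpha}^{*}(t_{j})) = c_{j} \prod_{i} f_{i}^{m_{ij}} = \alpha^{*}(t_{j})$ and $\tau_{V}^{*}(\tilde{\alpha}^{*}(\lambda)) = \rho^{-1}(\lambda) = \alpha^{*}(\lambda)$ for $\lambda \in K$, so $\tau_{V}^{*} \circ \tilde{\alpha}^{*} = \alpha^{*}$, i.e. $\tilde{\alpha} \circ \tau_{V} = \alpha$.

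For uniqueness, suppose $\beta \colon T(V) \to \mathbb{G}_{m}^{n}$ is another $\rho$-twisted morphism with $\beta \circ \tau_{V} = \alpha$. Then $\beta^{*}|_{K} = \rho^{-1}$, and $\beta^{*}(t_{j})$ is a unit of $K[s_{1}^{\pm 1}, \dots, s_{r}^{\pm 1}]$, hence of the form $d_{j} \prod_{i=1}^{r} s_{i}^{k_{ij}}$ with $d_{j} \in K^{*}$ and $k_{ij} \in \mathbb{Z}$ (by Lemma \ref{lem_morphism}, or directly because units of a Laurent polynomial ring over a domain are monomials up to a scalar). Applying $\tau_{V}^{*}$ gives $d_{j} \prod_{i} f_{i}^{k_{ij}} = \alpha^{*}(t_{j}) = c_{j} \prod_{i} f_{i}^{m_{ij}}$ in $\mathcal{O}^{*}(V)$; reducing modulo $K^{*}$ and using that the $[f_{i}]$ form a basis of the free group $\Gamma$ forces $k_{ij} = m_{ij}$ for all $i,j$, and then $d_{j} = c_{j}$. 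Hence $\beta^{*}$ and $\tilde{\alpha}^{*}$ agree on $K$ and on each $t_{j}$, so $\beta = \tilde{\alpha}$.

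I expect the only genuinely non-formal step to be the structural fact that $\mathcal{O}^{*}(V)/K^{*}$ is free of finite rank — equivalently, that the generators $f_{i}$ may be chosen algebraically independent over $K$ — which is standard for affine varieties (finite generation goes back to Rosenlicht and Samuel, and the freeness/algebraic-independence is precisely the content underlying the intrinsic torus in \cite{Maclagan_Sturmfels}); everything after that is routine bookkeeping with the units $\alpha^{*}(t_{j})$ and the automorphism $\rho^{-1}$ of $K$, which is why this is merely a ``slight extension'' of the untwisted statement.
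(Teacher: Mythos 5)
Your construction matches the paper's essentially exactly: both take $T(V)$ to be the torus on a basis of the finitely generated free group $\mathcal{O}^{*}(V)/K^{*}$ (Rosenlicht's lemma), define $\tau_{V}^{*}(s_{i}) = f_{i}$, extend $\alpha$ via the decomposition $\alpha^{*}(t_{j}) = c_{j}\prod_{i} f_{i}^{m_{ij}}$, and get uniqueness from the fact that $\tau_{V}^{*}$ is an isomorphism on unit groups (equivalently, a comparison in $\mathcal{O}^{*}(V)/K^{*}$).

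One side remark should be deleted, though: it is \emph{not} true that freeness of $\Gamma = \mathcal{O}^{*}(V)/K^{*}$ is equivalent to being able to choose representatives $f_{1},\dots,f_{r}$ algebraically independent over $K$, nor that the subring generated by all units is a Laurent polynomial ring. For $V = \mathbb{A}^{1}\setminus\{0,1\} = \mathrm{Spec}\,K[t, t^{-1}, (t-1)^{-1}]$, the group $\Gamma$ is free of rank $2$ on $[t]$ and $[t-1]$, yet $\dim V = 1$, so no two units of $\mathcal{O}(V)$ are algebraically independent over $K$, and the subring generated by the units is all of $\mathcal{O}(V)$, which has Krull dimension $1$ and is not a rank-$2$ Laurent polynomial ring. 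Fortunately your proof never uses this claim: for existence you only need that the Laurent ring $K[s_{1}^{\pm1},\dots,s_{r}^{\pm1}]$ maps to $\mathcal{O}(V)$ by freely prescribing $s_{i}\mapsto f_{i}$ (a unit), and for uniqueness you only use that the classes $[f_{i}]$ form a $\mathbb{Z}$-basis of $\Gamma$. So the argument stands once that parenthetical is removed.
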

\begin{proof}
By \cite[p.~28,~Lemma]{Rosenlicht} (see also \cite[Lemma~6.4.1]{Maclagan_Sturmfels}), the group $\mathcal{O}^{*} (V)/K^{*} \cong \mathbb{Z}^{p}$ for some $p \geq 0$. We choose $f_{1}, \dots, f_{p} \in \mathcal{O}^{*} (V)$ representing a basis of $\mathcal{O}^{*} (V)/K^{*}$. Let $T(V) = \mathrm{Spec}(K[s_{1}^{\pm 1}, \dots, s_{p}^{\pm 1}])$. Define a $K$-morphism $\tau_{V} \colon  V \rightarrow T(V)$ as $\tau_{V}^{*} (s_{j}) = f_{j}$. Suppose $\mathbb{G}_{m}^{n} = \mathrm{Spec}(K[t_{1}^{\pm 1}, \dots, t_{n}^{\pm 1}])$. Then $\alpha^{*} (t_{j}) \in \mathcal{O}^{*} (V)$ since $t_{j} \in \mathcal{O}^{*} (\mathbb{G}_{m}^{n})$. Thus $\alpha^{*} (t_{j}) = h_{j} \prod_{i=1}^{p} f_{i}^{m_{ij}}$ with $h_{j} \in K^{*}$. We define a $\rho$-twisted morphism $\tilde{\alpha} \colon  \mathbb{G}_{m}^{n} \rightarrow T(V)$ such that $\tilde{\alpha}^{*} (t_{j}) = h_{j} \prod_{i=1}^{p} s_{i}^{m_{ij}}$. Clearly, $\alpha = \tilde{\alpha} \tau_{V}$ since $\mathbb{G}_{m}^{n}$ is affine.

For the uniqueness of $\tilde{\alpha}$, note that $\tau_{V}^{*} \colon  \mathcal{O}^{*} (T(V)) \rightarrow \mathcal{O}^{*} (V)$ is a group isomorphism. Since $t_{j} \in \mathcal{O}^{*} (\mathbb{G}_{m}^{n})$, we infer $\tilde{\alpha}^{*} (t_{j}) \in \mathcal{O}^{*} (T(V))$ no matter how $\tilde{\alpha}$ is defined. Thus $\tilde{\alpha}^{*} (t_{j}) = (\tau_{V}^{*})^{-1} \alpha^{*} (t_{j})$ is uniquely determined.
\end{proof}

\begin{lemma}\label{lem_functor}
For each $\rho$-twisted morphism $\psi \colon  V_{1} \rightarrow V_{2}$, there exists a unique $\rho$-twisted morphism $T(\psi) \colon  T(V_{1}) \rightarrow T(V_{2})$ such that the following diagram commutes. If $\psi$ is the identity or an isomorphism, so is $T(\psi)$.
\[
\xymatrix{
V_{1} \ar[d]_{\tau_{V_{1}}} \ar[r]^{\psi} & V_{2} \ar[d]^{\tau_{V_{2}}} \\
T(V_{1}) \ar[r]^{T(\psi)} & T(V_{2}) }
\]
\end{lemma}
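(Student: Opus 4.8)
The plan is to deduce this lemma directly from the universal property established in Proposition~\ref{prop_torus}, using a functoriality-by-uniqueness argument. First I would observe that $T(V_2)$ is by construction an affine torus, hence $K$-isomorphic to some $\mathbb{G}_{m}^{p}$, so Proposition~\ref{prop_torus} applies with the target $\mathbb{G}_{m}^{n}$ there replaced by $T(V_2)$. Next, consider the composite $\tau_{V_2} \circ \psi \colon V_1 \rightarrow T(V_2)$. Since $\tau_{V_2}$ is a $K$-morphism, i.e.\ $\tau_{V_2}^{*}|_{K} = \mathrm{id}$, while $\psi^{*}|_{K} = \rho^{-1}$, we get $(\tau_{V_2}\psi)^{*}|_{K} = \rho^{-1}$, so $\tau_{V_2}\psi$ is a $\rho$-twisted morphism. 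Applying Proposition~\ref{prop_torus} to $\alpha := \tau_{V_2}\psi$ then produces a $\rho$-twisted morphism, which I name $T(\psi) \colon T(V_1) \rightarrow T(V_2)$, satisfying $T(\psi) \circ \tau_{V_1} = \tau_{V_2} \circ \psi$; this is precisely the asserted commuting square. Uniqueness of $T(\psi)$ among $\rho$-twisted morphisms making that square commute is immediate from the uniqueness clause of Proposition~\ref{prop_torus}.

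It then remains to treat identities and isomorphisms. For $\psi = \mathrm{id}_{V_1}$ (so $\rho = \mathrm{id}$), the morphism $\mathrm{id}_{T(V_1)}$ is a $K$-morphism with $\mathrm{id}_{T(V_1)} \circ \tau_{V_1} = \tau_{V_1} \circ \mathrm{id}_{V_1}$, whence $T(\mathrm{id}_{V_1}) = \mathrm{id}_{T(V_1)}$ by uniqueness. More generally, given a $\rho_1$-twisted $\psi_1 \colon V_1 \rightarrow V_2$ and a $\rho_2$-twisted $\psi_2 \colon V_2 \rightarrow V_3$, both $T(\psi_2) \circ T(\psi_1)$ and $T(\psi_2 \psi_1)$ are $(\rho_2\rho_1)$-twisted morphisms $T(V_1) \rightarrow T(V_3)$ fitting the commuting square associated with $\psi_2\psi_1$ (for the composite one simply stacks the two naturality squares), so the composition law $T(\psi_2\psi_1) = T(\psi_2) \circ T(\psi_1)$ follows again from uniqueness. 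Finally, if $\psi$ is a $\rho$-twisted isomorphism, then $\psi^{-1}$ is a $\rho^{-1}$-twisted isomorphism by the lemma preceding Proposition~\ref{prop_torus}, and the composition law gives $T(\psi^{-1}) \circ T(\psi) = T(\mathrm{id}_{V_1}) = \mathrm{id}_{T(V_1)}$ and $T(\psi) \circ T(\psi^{-1}) = T(\mathrm{id}_{V_2}) = \mathrm{id}_{T(V_2)}$, so $T(\psi)$ is an isomorphism.

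The argument is essentially formal, so there is no serious obstacle here; the only points that need a moment of care are (i) verifying that composites of twisted morphisms carry the expected twist, so that the universal property is applicable and the composition law type-checks, and (ii) noting that the universal property of Proposition~\ref{prop_torus}, though stated for targets $\mathbb{G}_{m}^{n}$, applies verbatim to the affine torus $T(V_2)$. Both are routine.
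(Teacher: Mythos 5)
Your proof is correct and follows essentially the same route as the paper: apply Proposition \ref{prop_torus} to $\tau_{V_2}\psi$ for existence and uniqueness, then use uniqueness to get the identity and composition laws, and hence the isomorphism claim. The extra care you take in checking that the composite $\tau_{V_2}\psi$ is $\rho$-twisted and that the universal property applies with target $T(V_2)$ rather than $\mathbb{G}_m^n$ are details the paper leaves implicit but are exactly right.
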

\begin{proof}
Consider the morphism $\tau_{V_{2}} \psi$, by Proposition \ref{prop_torus}, the existence and uniqueness of $T(\psi)$ is trivial. The uniqueness also implies that $T(\psi)$ is the identity if so is $\psi$.

Also by the uniqueness, we see $T(\psi_{1}) T(\psi_{2}) = T(\psi_{1} \psi_{2})$ for two composable twisted morphisms $\psi_{1}$ and $\psi_{2}$. Thus, if $\psi$ is an isomorphism, so is $T(\psi)$.
\end{proof}

\begin{lemma}\label{lem_diagram_lifting}
If $\varphi$, $\psi$, $\alpha$, and $\beta$ are twisted, and $\varphi \alpha = \beta \psi$, then the following diagram commutes.
\[
\xymatrix{
V_{1} \ar[dd]_{\alpha} \ar[rrr]^{\psi} \ar[dr]^{\tau_{V_{1}}} & & & V_{2} \ar[dd]^{\beta} \ar[dl]_{\tau_{V_{2}}} \\
& T(V_{1}) \ar[dl]^{\tilde{\alpha}} \ar[r]^{T(\psi)} & T(V_{2}) \ar[dr]_{\tilde{\beta}} & \\
\mathbb{G}_{m}^{n} \ar[rrr]^{\varphi} & & & \mathbb{G}_{m}^{p} }
\]
\end{lemma}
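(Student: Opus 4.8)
The plan is to reduce the commutativity of the whole diagram to the commutativity of its bottom square, namely the identity $\varphi \tilde{\alpha} = \tilde{\beta} \, T(\psi)$ between $\rho$-twisted morphisms $T(V_{1}) \to \mathbb{G}_{m}^{n} \to \mathbb{G}_{m}^{p}$, and then to establish this identity via the universal property in Proposition \ref{prop_torus}. First I would note that the two slanted triangles and the top square of the diagram already commute for free: the relations $\alpha = \tilde{\alpha} \, \tau_{V_{1}}$ and $\beta = \tilde{\beta} \, \tau_{V_{2}}$ are precisely the defining property of $\tilde{\alpha}$ and $\tilde{\beta}$ from Proposition \ref{prop_torus}, while $\tau_{V_{2}} \, \psi = T(\psi) \, \tau_{V_{1}}$ is the commutativity supplied by Lemma \ref{lem_functor}. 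Thus only the bottom face remains to be checked.

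To prove $\varphi \tilde{\alpha} = \tilde{\beta} \, T(\psi)$, I would first verify that both sides are $\rho$-twisted morphisms $T(V_{1}) \to \mathbb{G}_{m}^{p}$ for one and the same $\rho \in \mathrm{Aut}(K)$. Indeed, a composite of twisted morphisms is again twisted, with twist the product of the two twists (compare the (1) in Lemma \ref{lem_matrix_morphism}); moreover $\tilde{\alpha}$ carries the same twist as $\alpha$ and $T(\psi)$ the same twist as $\psi$ by construction; and since the twist of a morphism is determined by its restriction to $K$, the hypothesis $\varphi \alpha = \beta \psi$ forces the twists of $\varphi \tilde{\alpha}$ and of $\tilde{\beta} \, T(\psi)$ to agree. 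Next I would precompose each side with $\tau_{V_{1}}$ and invoke the commutativities already recorded:
\[
(\varphi \tilde{\alpha}) \, \tau_{V_{1}} = \varphi (\tilde{\alpha} \, \tau_{V_{1}}) = \varphi \alpha , \qquad (\tilde{\beta} \, T(\psi)) \, \tau_{V_{1}} = \tilde{\beta} (T(\psi) \, \tau_{V_{1}}) = \tilde{\beta} (\tau_{V_{2}} \, \psi) = (\tilde{\beta} \, \tau_{V_{2}}) \, \psi = \beta \psi .
\]
By the hypothesis $\varphi \alpha = \beta \psi$, the two composites with $\tau_{V_{1}}$ coincide, so $\varphi \tilde{\alpha}$ and $\tilde{\beta} \, T(\psi)$ are both factorizations of the single $\rho$-twisted morphism $\varphi \alpha = \beta \psi \colon V_{1} \to \mathbb{G}_{m}^{p}$ through $\tau_{V_{1}}$. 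The uniqueness clause of Proposition \ref{prop_torus} then gives $\varphi \tilde{\alpha} = \tilde{\beta} \, T(\psi)$, which is exactly the missing face, and the diagram commutes.

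The bookkeeping with the matrices $\mathbf{M}_{(-)}$ is unnecessary here, and the argument is essentially a diagram chase. The one point that deserves attention is confirming that Proposition \ref{prop_torus} genuinely applies, i.e.\ that $\varphi \tilde{\alpha}$ and $\tilde{\beta} \, T(\psi)$ are both $\rho$-twisted morphisms for the $\rho$ that is the twist of $\varphi \alpha = \beta \psi$, so that each of them is forced to equal the unique lift of that morphism. Once this is in place the conclusion is immediate, so I do not anticipate any serious obstacle.
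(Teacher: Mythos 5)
Your proof is correct and takes essentially the paper's approach: reduce to the bottom square $\varphi\tilde\alpha = \tilde\beta\, T(\psi)$, check that both composites with $\tau_{V_1}$ equal $\varphi\alpha = \beta\psi$, and conclude. The paper then re-runs the coordinate-ring computation that underlies the uniqueness clause of Proposition~\ref{prop_torus} (passing to $\mathcal{O}^{*}$ and using injectivity of $\tau_{V_1}^{*}$ there), whereas you invoke that uniqueness clause directly after verifying the two sides carry the same twist $\rho$; this is a clean shortcut rather than a genuinely different route.
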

\begin{proof}
By Proposition \ref{prop_torus} and Lemma \ref{lem_functor}, it remains to show $\varphi \tilde{\alpha} = \tilde{\beta} T(\psi)$. By a diagram chasing, we see $\varphi \tilde{\alpha} \tau_{V_{1}} = \tilde{\beta} T(\psi) \tau_{V_{1}}$. Then $\tau_{V_{1}}^{*} \tilde{\alpha}^{*} \varphi^{*} = \tau_{V_{1}}^{*} T(\psi)^{*} \tilde{\beta}^{*}$. Suppose $\mathcal{O}(\mathbb{G}_{m}^{p}) = K[s_{1}^{\pm 1}, \dots, s_{p}^{\pm 1}]$. Then both $\tilde{\alpha}^{*} \varphi^{*} (s_{j})$ and $T(\psi)^{*} \tilde{\beta}^{*} (s_{j})$ are in $\mathcal{O}^{*} (T(V_{1}))$. Since $\tau_{V_{1}}^{*} \colon  \mathcal{O}^{*} (T(V_{1})) \rightarrow \mathcal{O}^{*} (V_{1})$ is injective, $\tilde{\alpha}^{*} \varphi^{*} (s_{j}) = T(\psi)^{*} \tilde{\beta}^{*} (s_{j})$ and, therefore, $\tilde{\alpha}^{*} \varphi^{*} = T(\psi)^{*} \tilde{\beta}^{*}$. This further implies $\varphi \tilde{\alpha} = \tilde{\beta} T(\psi)$ because $\mathbb{G}_{m}^{p}$ is affine.
\end{proof}

\begin{proposition}\label{prop_variety}
Suppose $\psi$ is a $\rho$-twisted endomorphism of $\mathbb{G}_{m}^{n}$ such that $A := \mathbf{M}_{\psi}$ satisfies Assumption \ref{asm_matrix}. Suppose $V$ is a closed subvariety of $\mathbb{G}_{m}^{n}$ such that $\dim V >0$ and $\psi (V) = V$. Then $\mathrm{deg} (\psi|_{V}) >1$.
\end{proposition}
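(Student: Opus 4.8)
The plan is to argue by contradiction, using the intrinsic torus of $V$ --- more precisely, of the normalization of $V$ --- to turn the hypothetical equality $\deg(\psi|_{V})=1$ into a statement about an integral matrix lying in $\mathrm{GL}$, which is then excluded by the eigenvalue condition in Assumption \ref{asm_matrix} (via Lemma \ref{lem_matrix_eigenvalue}). First I would make some preliminary reductions. Since $\det A\neq 0$, the morphism $\psi$ is finite and surjective by Lemma \ref{lem_degree}; as $V$ is closed in $\mathbb{G}_{m}^{n}$ and $\psi(V)=V$, the restriction $\psi|_{V}\colon V\to V$ is then a finite, surjective, $\rho$-twisted morphism, and $\deg(\psi|_{V})=[\kappa(V):(\psi|_{V})^{*}\kappa(V)]$ is defined. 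Assume $\deg(\psi|_{V})=1$. Let $\nu\colon \widetilde{V}\to V$ be the normalization, a finite birational morphism of affine varieties over $K$. Since $\psi|_{V}$ is finite and dominant it lifts uniquely to a finite $\rho$-twisted morphism $\widetilde{\psi}\colon \widetilde{V}\to\widetilde{V}$ with $\nu\circ\widetilde{\psi}=(\psi|_{V})\circ\nu$, and $\deg\widetilde{\psi}=\deg(\psi|_{V})=1$. A finite birational morphism onto a normal variety is an isomorphism, so $\widetilde{\psi}$ is a $\rho$-twisted automorphism of $\widetilde{V}$.

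Next I would pass to intrinsic tori. Let $\tau\colon \widetilde{V}\to T(\widetilde{V})$ be as in Proposition \ref{prop_torus}, with $T(\widetilde{V})\cong\mathbb{G}_{m}^{p}$. By Lemma \ref{lem_functor} the induced $\rho$-twisted morphism $T(\widetilde{\psi})\colon T(\widetilde{V})\to T(\widetilde{V})$ is again an isomorphism, so $\mathbf{M}_{T(\widetilde{\psi})}\in\mathrm{GL}_{p}(\mathbb{Z})$ by Lemma \ref{lem_matrix_morphism}. Let $f\colon \widetilde{V}\to\mathbb{G}_{m}^{n}$ be the $\rho$-twisted morphism given by $\nu$ followed by the inclusion $V\hookrightarrow\mathbb{G}_{m}^{n}$; from $\psi|_{V}\circ\nu=\nu\circ\widetilde{\psi}$ one gets $\psi\circ f=f\circ\widetilde{\psi}$, so Proposition \ref{prop_torus} and Lemma \ref{lem_diagram_lifting} applied to this square supply the extension $\widetilde{f}\colon T(\widetilde{V})\to\mathbb{G}_{m}^{n}$ of $f$ (with $\widetilde{f}\circ\tau=f$) together with the identity
\[
\psi\circ\widetilde{f}=\widetilde{f}\circ T(\widetilde{\psi}).
\]
Since $\overline{\mathrm{im}\,\widetilde{f}}\supseteq\overline{f(\widetilde{V})}=\overline{\nu(\widetilde{V})}=V$ and $\dim V>0$, the morphism $\widetilde{f}$ is non-constant, hence $\mathbf{M}_{\widetilde{f}}\neq 0$.

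Finally I would extract the contradiction on the level of matrices. Taking matrices in the displayed identity (Lemma \ref{lem_matrix_morphism}) gives $\mathbf{M}_{\widetilde{f}}A=\mathbf{M}_{T(\widetilde{\psi})}\mathbf{M}_{\widetilde{f}}$ in $\mathrm{Hom}(\mathbb{Z}^{n},\mathbb{Z}^{p})$. Tensor with $\mathbb{Q}$. As $A$ is invertible over $\mathbb{Q}$, the subspace $U:=\mathrm{im}(\mathbf{M}_{\widetilde{f}}\otimes\mathbb{Q})\subseteq\mathbb{Q}^{p}$ is $\mathbf{M}_{T(\widetilde{\psi})}$-invariant, the subspace $\ker(\mathbf{M}_{\widetilde{f}}\otimes\mathbb{Q})\subseteq\mathbb{Q}^{n}$ is $A$-invariant, and $\mathbf{M}_{T(\widetilde{\psi})}|_{U}$ is conjugate to the endomorphism that $A$ induces on $\mathbb{Q}^{n}/\ker(\mathbf{M}_{\widetilde{f}}\otimes\mathbb{Q})$; in particular every eigenvalue of $\mathbf{M}_{T(\widetilde{\psi})}|_{U}$ is an eigenvalue of $A$. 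On the other hand $\mathbf{M}_{T(\widetilde{\psi})}\in\mathrm{GL}_{p}(\mathbb{Z})$ has integral inverse, so every eigenvalue $\beta$ of $\mathbf{M}_{T(\widetilde{\psi})}$ --- hence of $\mathbf{M}_{T(\widetilde{\psi})}|_{U}$ --- has $\beta^{-1}$ integral over $\mathbb{Z}$. But by Assumption \ref{asm_matrix} we have $\det A\neq 0$ and $\bigcap_{k}\mathrm{im}\,A^{k}=0$, so Lemma \ref{lem_matrix_eigenvalue} says every eigenvalue $\alpha$ of $A$ has $\alpha^{-1}$ \emph{not} integral over $\mathbb{Z}$. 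These two facts are incompatible unless $\mathbf{M}_{T(\widetilde{\psi})}|_{U}$ has no eigenvalue, i.e.\ unless $U=0$; but $U=0$ means $\mathbf{M}_{\widetilde{f}}=0$, contradicting the previous paragraph. Hence $\deg(\psi|_{V})>1$.

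The main obstacle is the reduction that makes all of this possible: one must pass to the normalization so that the hypothesis ``$\deg=1$'' becomes ``$\widetilde{\psi}$ is an isomorphism'', and one must know that the intrinsic-torus functor transports $\widetilde{\psi}$ together with the ambient square to $\mathbb{G}_{m}^{p}$ compatibly (Lemmas \ref{lem_functor} and \ref{lem_diagram_lifting}); after that the eigenvalue comparison is short. Equivalently, one may observe that $W:=\mathrm{im}\,\widetilde{f}$ is a positive-dimensional $\psi$-invariant subtorus (Lemma \ref{lem_morphism_image}), so $\deg(\psi|_{W})>1$ by Lemma \ref{lem_subtorus_degree}, while the same matrix computation forces $\mathbf{M}_{\psi|_{W}}\in\mathrm{GL}(\mathbb{Z})$ and hence $\deg(\psi|_{W})=1$ --- the identical contradiction.
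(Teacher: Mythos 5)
Your proposal is correct, and it begins the same way the paper does: reduce to the normalization $\widetilde{V}$, observe that the hypothesis $\deg(\psi|_V)=1$ makes $\widetilde{\psi}$ a finite birational (hence by normality an iso-) morphism, pass to the intrinsic torus $T(\widetilde{V})$, and use Lemmas \ref{lem_functor} and \ref{lem_diagram_lifting} to transport everything into a commuting square with $T(\widetilde{\psi})\in\mathrm{Aut}(T(\widetilde{V}))$. The two proofs then diverge in how they extract a contradiction. The paper takes the image $W=\mathrm{im}\,\widetilde{f}$, invokes Lemma \ref{lem_morphism_image} to see that $W$ is a subtorus with $[\overline{\widetilde{f}^{*}\kappa}:\widetilde{f}^{*}\kappa]<\infty$, applies Lemma \ref{lem_subtorus_degree} to get $\deg(\psi|_{W})>1$, and then runs the degree chain $[\overline{\widetilde{f}^{*}\kappa}:\widetilde{f}^{*}\kappa]<[\overline{\widetilde{f}^{*}\kappa}:\widetilde{f}^{*}\kappa]\cdot\deg(\psi|_{W})=\cdots=[\overline{\widetilde{f}^{*}\kappa}:\widetilde{f}^{*}\kappa]$ to reach the absurdity. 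You instead read off the intertwining relation $\mathbf{M}_{\widetilde{f}}A=\mathbf{M}_{T(\widetilde{\psi})}\mathbf{M}_{\widetilde{f}}$ from Lemma \ref{lem_matrix_morphism}, note that $\mathbf{M}_{T(\widetilde{\psi})}\in\mathrm{GL}(\mathbb{Z})$ (so all its eigenvalues have integral inverse), that $\mathbf{M}_{\widetilde{f}}\neq 0$ since $\widetilde{f}$ is nonconstant, and that every eigenvalue of $\mathbf{M}_{T(\widetilde{\psi})}$ on $\mathrm{im}(\mathbf{M}_{\widetilde{f}}\otimes\mathbb{Q})$ is an eigenvalue of $A$ — while Lemma \ref{lem_matrix_eigenvalue} plus $\det A\neq 0$ say no eigenvalue of $A$ has integral inverse. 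This bypasses Lemmas \ref{lem_morphism_image} and \ref{lem_subtorus_degree} and the field-degree bookkeeping entirely, replacing them with a short piece of linear algebra over $\mathbb{Q}$; it is arguably a streamlining, and it makes the role of Assumption \ref{asm_matrix} more transparent. Your closing ``equivalently'' remark is looser than the rest (nothing you compute directly identifies $\mathbf{M}_{\psi|_{W}}$ with an element of $\mathrm{GL}(\mathbb{Z})$, since $\mathbf{M}_{\widetilde{f}}$ is only of full rank, not invertible), but this does not affect the main argument, which is complete and correct.
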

\begin{proof}
We prove by contradiction. Let's assume $\mathrm{deg} (\psi|_{V}) =1$.

Let $\overline{V} \rightarrow V$ be a normalization, i.e.~$\mathcal{O}(\overline{V})$ is the integral closure of $\mathcal{O}(V)$ in $\kappa (V)$. Then $\psi|_{V} \colon  V \rightarrow V$ is lifted to a $\rho$-twisted $\bar{\psi}$. The following diagram commutes.
\[
\xymatrix{
\overline{V} \ar[r]^{\bar{\psi}} \ar[d] & \overline{V} \ar[d] \\
V \ar[r]^{\psi|_{V}} & V }
\]
Since $\det (A) \neq 0$, by Lemma \ref{lem_degree}, $\psi$ and hence $\psi|_{V}$ are finite. Since $\mathcal{O} (V)$ is a finitely generated $K$-algebra, $\overline{V} \rightarrow V$ is also finite (\cite[(31.H)]{Matsumura1}). Therefore, $\bar{\psi}$ is finite. We would have $\mathrm{deg} (\bar{\psi}) =1$ because $\mathrm{deg} (\psi|_{V}) =1$. By Zariski's Main Theorem (\cite[p.~209,~I]{Mumford} and \cite[p.~280]{Hartshorne}), we infer $\bar{\psi}$ is an isomorphism. (Note that Zariski's Main Theorem usually deals with $K$-morphisms. Our $\rho$-twisted case follows from the usually one immediately. There is an evident $\rho^{-1}$-twisted automorphism of $\mathbb{G}_{m}^{n}$ which induces a $\rho^{-1}$-twisted automorphism $\varphi$ of $\overline{V}$. Then $\bar{\psi} \varphi$ is a $K$-morphism, and we apply the theorem to $\bar{\psi} \varphi$.)

By Lemma \ref{lem_diagram_lifting}, we obtain the following commutative diagram.
\[
\xymatrix{
\overline{V} \ar[d] \ar[rrr]^{\bar{\psi}}_{\cong} \ar[dr] & & & \overline{V} \ar[d] \ar[dl] \\
V \ar[d] & T(\overline{V}) \ar[dl]^{\theta} \ar[r]^-{T(\bar{\psi})}_-{\cong} & T(\overline{V}) \ar[dr]_{\theta} & V \ar[d] \\
\mathbb{G}_{m}^{n} \ar[rrr]^{\psi} & & & \mathbb{G}_{m}^{n} }
\]
By Lemma \ref{lem_functor}, since $\bar{\psi}$ is an isomorphism, so is $T(\bar{\psi})$. Thus
\[
\psi (\mathrm{im} \theta) = \mathrm{im} (\psi \theta) = \mathrm{im} (\theta T(\bar{\psi})) = \mathrm{im} \theta.
\]
Let $W$ denote $\mathrm{im} \theta$, by Lemma \ref{lem_morphism_image}, $W$ is a subtorus of $\mathbb{G}_{m}^{n}$. We obtain the following commutative diagram, where all morphisms are surjective.
\begin{equation}\label{prop_variety_1}
\xymatrix{
T(\overline{V}) \ar[d]_{\theta} \ar[r]^-{T(\bar{\psi})}_-{\cong} & T(\overline{V}) \ar[d]^{\theta} \\
W \ar[r]^{\psi|_{W}} & W }
\end{equation}

Let $\theta^{*} \kappa$ denote the image of $\theta^{*} \colon  \kappa (W) \rightarrow \kappa (T(\overline{V}))$. Let $\overline{\theta^{*} \kappa}$ be the algebraic closure of $\theta^{*} \kappa$ in $\kappa (T(\overline{V}))$. Similarly, we define $\psi|_{W}^{*} \kappa$, $(\psi|_{W} \theta)^{*} \kappa$ and $\overline{(\psi|_{W} \theta)^{*} \kappa}$. Since $\psi|_{W}$ is finite, we infer $\overline{\theta^{*} \kappa} = \overline{(\psi|_{W} \theta)^{*} \kappa}$. By Lemma \ref{lem_morphism_image}, $[\overline{\theta^{*} \kappa}: \theta^{*} \kappa] < \infty$. Furthermore, $\dim W >0$ since $W \supseteq V$ and $\dim V >0$. By Lemma \ref{lem_subtorus_degree}, we see $\deg(\psi|_{W}) >1$. Put these data into \eqref{prop_variety_1}, we would have
\begin{align*}
[\overline{\theta^{*} \kappa}: \theta^{*} \kappa] & < [\overline{\theta^{*} \kappa}: \theta^{*} \kappa] \cdot \deg(\psi|_{W}) = [\overline{\theta^{*} \kappa}: \theta^{*} \kappa] \cdot [\kappa (W): \psi|_{W}^{*} \kappa] \\
& = [\overline{(\psi|_{W} \theta)^{*} \kappa}: (\psi|_{W} \theta)^{*} \kappa] = [\overline{(\theta T(\bar{\psi}))^{*} \kappa} : (\theta T(\bar{\psi}))^{*} \kappa] = [\overline{\theta^{*} \kappa}: \theta^{*} \kappa],
\end{align*}
which is a contradiction.
\end{proof}

Proposition \ref{prop_degree} is nothing but an algebraic formulation of Proposition \ref{prop_variety}.
\begin{proof}[Proof of Proposition \ref{prop_degree}]
Now $\phi'$ defines a $\sigma^{-1}$-twisted finite morphism $\psi \colon  \mathbb{G}_{m}^{n} \rightarrow \mathbb{G}_{m}^{n}$ with $\psi^{*} = \phi'$. The ideal $Q$ defines a closed subvariety $V$ of $\mathbb{G}_{m}^{n}$. Since $\psi$ is finite and $(\phi')^{-1} (Q) = Q$, we have $\psi (V) = V$. Then $\deg (\psi|_{V}) = [\kappa (Q): \mathrm{im} \phi'']$. If $Q$ is  not maximal, then $\dim V >0$. The conclusion follows from Proposition \ref{prop_variety}.
\end{proof}

\section{CW Complexes}\label{sec_CW}
In this section, we shall prove Theorem \ref{thm_homotopy_finite} and Corollary \ref{cor_group}. They follow from the more general Theorem \ref{thm_homology_finite} below on homological finiteness which is also a generalization of \cite[Theorem~3.2]{Qin_Su_Wang}.

Suppose $X$ is a CW complex, $q \colon  X' \rightarrow X$ is a nontrivial covering, and $h \colon  X \rightarrow X'$ is a continuous map preserving base points. Suppose $h$ induces an isomorphism
\[
h_{\#} \colon  \pi_{1} (X) \overset{\cong}{\rightarrow} \pi_{1} (X').
\]
Let $G := \bigcap_{k=1}^{\infty} \mathrm{im} h_{\#}^{k}$ and $\overline{X}$ be the cover of $X$ with $\pi_{1} (\overline{X}) =G$.
\begin{theorem}\label{thm_homology_finite}
Let $X$, $X'$, $h$, $G$ and $\overline{X}$ be as the above, here $\pi_{1} (X)$ is unnecessarily abelian. Let $G_{0}$ be a subgroup of $G$. Suppose further $\pi_{1} (X)$ is finitely generated, $[\pi_{1} (X), \pi_{1} (X)] \le G_{0}$ and $h_{\#} (G_{0}) = G_{0}$. Let $R_{0}$ be a Noetherian commutative ring, and let $\pi_{1} (\overline{X}) =G$ act on $R:= R_{0} [G/G_{0}]$ by multiplication. If $X$ has only finitely many $m$-cells and $h \colon  X \rightarrow X'$ is $(m+1)$-connected, then $H_{m} (\overline{X};R)$ is a finite $R$-module, where $H_{m} (\overline{X};R)$ is the homology with local coefficient system $R$.
\end{theorem}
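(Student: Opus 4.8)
The plan is to realise $H_m(\overline{X};R)$ as a module of the form treated in Theorem~\ref{thm_finite_module} and then to quote that theorem. Write $\pi := \pi_1(X)$ and let $\varphi\colon \pi\to\pi$ denote the monomorphism $(q\circ h)_\#$ (the $h_\#$ of the statement), so $\mathrm{im}\,\varphi\subsetneq\pi$ and $G=\bigcap_{k}\mathrm{im}\,\varphi^k$. Since $[\pi,\pi]\le G_0\le G$, both $G_0$ and $G$ are normal in $\pi$ with abelian quotients. First I would record the elementary consequences of injectivity of $\varphi$ and of the hypothesis $\varphi(G_0)=G_0$: one has $\varphi^{-1}(\mathrm{im}\,\varphi^k)=\mathrm{im}\,\varphi^{k-1}$, hence $\varphi^{-1}(G)=G$ and $\varphi^{-1}(G_0)=G_0$; thus $\varphi|_{G_0}$ is an automorphism of $G_0$, and $\varphi$ descends to an injective endomorphism $\lambda$ of $\pi/G$ with $\bigcap_k\mathrm{im}\,\lambda^k=\bigl(\bigcap_k\mathrm{im}\,\varphi^k\bigr)/G=0$. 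Since $\pi/G$ is finitely generated abelian and its (finite, $\lambda$-invariant) torsion subgroup is contained in every $\mathrm{im}\,\lambda^k$, that subgroup vanishes, so $\pi/G\cong\mathbb{Z}^n$ with $n\ge1$ (here $n\ne0$ because $G\subseteq\mathrm{im}\,\varphi\subsetneq\pi$), and the matrix $A$ of $\lambda$ in any basis satisfies Assumption~\ref{asm_matrix}. Finally, with $Q:=\pi/G_0$ the extension $0\to G/G_0\to Q\to\pi/G\to0$ splits because $\mathbb{Z}^n$ is free, so, fixing a splitting and lifts $t_1,\dots,t_n\in Q$ of a basis of $\pi/G$, I obtain a ring identification $S:=R_0[Q]\cong R[t_1^{\pm1},\dots,t_n^{\pm1}]$, where $R:=R_0[G/G_0]$ is again Noetherian.

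Next I would construct the self-map and the twisted module. Let $\mathrm{pr}\colon X_0\to X$ be the cover with $\pi_1(X_0)=G_0$; note $X_0$ also covers $X'$ and $\overline{X}$ since $G_0\le G\le\mathrm{im}\,\varphi$. Because $\varphi(G_0)=G_0$, the map $(q\circ h)\circ\mathrm{pr}\colon X_0\to X$ lifts along $\mathrm{pr}$ to a self-map $\psi$ of $X_0$, and $\psi_\#$ is $\varphi|_{G_0}$, an isomorphism of $\pi_1(X_0)$. Since $q$ is a covering and $h$ is $(m+1)$-connected, $\psi$ agrees up to conjugation with $q\circ h$ on $\pi_i$ for $i\ge2$ and with $\varphi|_{G_0}$ on $\pi_1$; hence $\psi$ is itself $(m+1)$-connected, so by relative Hurewicz $\psi_*\colon H_i(X_0;R_0)\to H_i(X_0;R_0)$ is an isomorphism for $i\le m$. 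Now $\mathfrak{M}:=H_m(X_0;R_0)$ is a module over $S=R_0[Q]$, with $Q$ acting by deck transformations of $X_0\to X$; it is a \emph{finite} $S$-module, because $C_m(X_0;R_0)$ is free over the Noetherian ring $S$ of rank equal to the (finite) number of $m$-cells of $X$, so $\ker\partial_m$ is finitely generated and $\mathfrak{M}$ is a quotient of it. The lift $\psi$ intertwines the two $Q$-actions through the monomorphism $\overline\varphi\colon Q\to Q$ induced by $\varphi$; extending $\overline\varphi$ $R_0$-linearly to $\phi\colon S\to S$, one checks $\phi(R)=R$ (because $\overline\varphi|_{G/G_0}$ is an automorphism) and $\phi(t_j)=g_j\prod_i t_i^{a_{ij}}$ with $g_j\in G/G_0\subseteq R^{\times}$ and $(a_{ij})=A$, i.e.\ $\phi$ satisfies Assumption~\ref{asm_ring_hom}. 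Moreover $\eta:=\psi_*\colon\mathfrak{M}\to\mathfrak{M}$ is a group isomorphism with $\eta(sx)=\phi(s)\eta(x)$, that is, a $\phi$-twisted isomorphism of $S$-modules.

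With $R$, $S$, $A$, $\phi$, $\mathfrak{M}$, $\eta$ as above, all the hypotheses of Theorem~\ref{thm_finite_module} hold, so $\mathfrak{M}=H_m(X_0;R_0)$ is a finite $R$-module. It remains to identify this with $H_m(\overline{X};R)$. The universal cover of $\overline{X}$ is that of $X$, with deck group $G$, and as a $\mathbb{Z}[G]$-module $R=R_0[G/G_0]$ is induced from the trivial $G_0$-module $R_0$; hence the chain complex $R\otimes_{\mathbb{Z}[G]}C_*(\widetilde{X})$ computing $H_*(\overline{X};R)$ is naturally isomorphic to $R_0\otimes_{\mathbb{Z}[G_0]}C_*(\widetilde{X})=C_*(X_0;R_0)$, compatibly with the $R$-module structures (the factor $G/G_0$ of $R$ acting as the deck group of $X_0\to\overline{X}$). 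By Shapiro's lemma this gives $H_m(\overline{X};R)\cong H_m(X_0;R_0)$ as $R$-modules, completing the proof.

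The main obstacle is the middle step: one has to recognise that the right move is to pass to the cover $X_0$, where the $\pi_1$-defect of $q\circ h$ disappears so that its lift $\psi$ becomes a genuine $(m+1)$-connected self-map inducing an \emph{isomorphism} on $H_m$, and then to check that $\psi$ twists the deck action of $Q$ exactly by a map $\overline\varphi$ of the shape demanded by Assumption~\ref{asm_ring_hom}; only after this is the topological assertion faithfully reduced to the algebraic Theorem~\ref{thm_finite_module}. By contrast, the surrounding group theory ($\pi/G\cong\mathbb{Z}^n$, the splitting of $Q$) and the finiteness of $\mathfrak{M}$ over $S$ are routine once the framework is set up.
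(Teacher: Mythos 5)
Your proposal is correct and follows essentially the same route as the paper's proof: pass to the cover with fundamental group $G_0$ (the paper's $Y$, your $X_0$), identify $H_m(\overline{X};R)$ with $H_m(Y;R_0)$, lift $h$ (composed with the covering $q$) to an $(m+1)$-connected self-map of $Y$, read off the $\phi$-twisted $S$-module structure on $H_m(Y;R_0)$, and invoke Theorem~\ref{thm_finite_module}. Your write-up makes two of the paper's silent steps explicit — the Shapiro-type identification $H_m(\overline{X};R)\cong H_m(X_0;R_0)$ and the verification that the lift $\psi$ is $(m+1)$-connected — but the overall strategy and the key reduction to the algebraic theorem are the same.
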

\begin{proof}
We have the following exact sequence of finitely generated abelian groups
\[
0 \rightarrow G/G_{0} \rightarrow \pi_{1} (X)/G_{0} \rightarrow \pi_{1} (X)/G \rightarrow 0.
\]
By Lemma \ref{lem_fundamental_group} below, $\pi_{1} (X)/G \cong \mathbb{Z}^{n}$ for some $n>0$. Thus $\pi_{1} (X)/G_{0} \cong G/G_{0} \times \mathbb{Z}^{n}$. By $h_{\#} (G) = G$ (Lemma \ref{lem_fundamental_group}) and the assumption $h_{\#} (G_{0}) = G_{0}$, we see $h_{\#}$ induces an endomorphism $\varphi$ of $G/G_{0} \times \mathbb{Z}^{n}$ such that $\varphi (G/G_{0}) = G/G_{0}$. Since $h_{\#}$ is injective, we infer $h_{\#}^{-1} (G_{0}) = G_{0}$ and $\varphi$ is injective. The fact $\bigcap_{k=1}^{\infty} \mathrm{im} h_{\#}^{k} =G$ implies that $\bigcap_{k=1}^{\infty} \mathrm{im} \varphi^{k} = G/G_{0}$. Consequently, the composition of the following maps
\[
\mathbb{Z}^{n} \rightarrow G/G_{0} \times \mathbb{Z}^{n} \overset{\varphi}{\longrightarrow} G/G_{0} \times \mathbb{Z}^{n} \rightarrow \mathbb{Z}^{n}
\]
can be represented by a matrix $A \in \mathrm{End} (\mathbb{Z}^{n})$ such that $\det (A) \neq 0$ and $\bigcap_{k=1}^{\infty} \mathrm{im} A^{k} =0$.

Now $R= R_{0} [G/G_{0}]$ and $S: = R_{0} [(G/G_{0}) \times \mathbb{Z}^{n}]$ are commutative Noetherian rings because so is $R_{0}$. Obviously,
\[
S= R[\mathbb{Z}^{n}] = R[t_{1}^{\pm 1}, \dots, t_{n}^{\pm 1}]
\]
is the Laurent polynomial ring over $R$ with indeterminates $t_{1}, \dots, t_{n}$. Here the indeterminates correspond to the natural basis of $\mathbb{Z}^{n}$.

Clearly, the cellular chain complex $C_{\bullet} (\overline{X}; R)$ with local coefficient system $R$ is a chain complex of free $S$-module with a basis corresponding to the cells of $X$. Since $X$ has only finite many $m$-cells, we see $C_{m} (\overline{X}; R)$ is finite over $S$. Since $S$ is Noetherian, $H_{m} (\overline{X}; R)$ is a finite $S$-module.

We have to show a stronger conclusion that $H_{m} (\overline{X}; R)$ is actually a finite $R$-module. Note that the group monomorphism $\varphi \colon  G/G_{0} \times \mathbb{Z}^{n} \rightarrow G/G_{0} \times \mathbb{Z}^{n}$ induces a ring monomorphism $\phi \colon  S \rightarrow S$ such that $\phi (R) =R$. For $1 \leq j \leq n$, we have
\[
\phi (t_{j}) = \varphi (t_{j}) = g_{j} \prod_{i=1}^{n} t_{i}^{a_{ij}},
\]
where $(a_{ij})_{n \times n} =A$ is the matrix above and $g_{j} \in G/G_{0}$ is a unit of $R$.

Let $Y$ be the cover of $X$ with $\pi_{1} (Y) = G_{0}$. By assumption, $h_{\#} \colon  \pi_{1} (X) \rightarrow \pi_{1} (X')$ is an isomorphism such that $h_{\#} (G) = G$ and $h_{\#} (G_{0}) = G_{0}$. The $(m+1)$-connected map $h \colon  X \rightarrow X'$ is lifted to $(m+1)$-connected maps $\bar{h} \colon  \overline{X} \rightarrow \overline{X}$ and $\bar{h}_{Y} \colon  Y \rightarrow Y$. This $\bar{h}$ induces an automorphism $\bar{h}_{\#}$ of $\pi_{1} (\overline{X}) = G$. Clearly, $\bar{h}_{\#} = {h_{\#}}|_{G}$. Meanwhile, by Hurewicz Theorem (\cite[Theorem~(7.1)]{G.Whitehead}) and the Universal Coefficient Theorem, the $(m+1)$-connected $\bar{h}$ induces a group isomorphism
\[
\bar{h}_{*} \colon \ H_{m} (\overline{X}; R) = H_{m} (Y; R_{0}) \rightarrow H_{m} (\overline{X}; R) = H_{m} (Y; R_{0})
\]
which is $\phi$-twisted linear, i.e. $\forall s \in S$, $\forall x \in H_{m} (\overline{X}; R)$, we have $\bar{h}_{*} (sx) = \phi (s) \bar{h}_{*} (x)$. Now we finish the proof by taking the $R$, $S$, $\mathfrak{M}$, $\phi$ and $\eta$ in Theorem \ref{thm_finite_module} as $R$, $S$, $H_{m} (\overline{X}; R)$, $\phi$ and $\bar{h}_{*}$ here.
\end{proof}

\begin{lemma}\label{lem_free_abelian}
Suppose $H$ is a finitely generated nontrivial abelian group. Suppose
\[
H_{1} \ge H_{2} \ge \cdots \ge H_{k} \ge \cdots
\]
is a nested sequence of subgroups of $H$ such that $\bigcap_{k=1}^{\infty} H_{k} = 0$ and $H_{k} \cong H$ for each $k$. Then $H \cong \mathbb{Z}^{n}$ for some $n>0$.
\end{lemma}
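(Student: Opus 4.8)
The plan is to invoke the structure theorem for finitely generated abelian groups and use the hypothesis $\bigcap_{k=1}^{\infty} H_{k} = 0$ to eliminate torsion. Write $H \cong \mathbb{Z}^{n} \oplus T$, where $T$ is the (finite) torsion subgroup of $H$ and $n = \mathrm{rank}(H) \ge 0$. Since $H$ is nontrivial, it suffices to prove $T = 0$: this immediately forces $H \cong \mathbb{Z}^{n}$ with $n > 0$.

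The key step is to show that $T \le H_{k}$ for every $k$. First, the torsion subgroup of the abelian group $H_{k}$ — the set of its elements of finite order — coincides with $H_{k} \cap T$, because an element of $H_{k} \le H$ has finite order in $H_{k}$ if and only if it has finite order in $H$. On the other hand, any isomorphism $H_{k} \cong H$ preserves orders of elements, hence carries the torsion subgroup of $H_{k}$ bijectively onto $T$; thus $|H_{k} \cap T| = |T|$. Since $H_{k} \cap T$ is then a subgroup of the finite group $T$ of the same cardinality, $H_{k} \cap T = T$, i.e.\ $T \le H_{k}$.

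Combining this with the hypothesis gives $T \le \bigcap_{k=1}^{\infty} H_{k} = 0$, so $T = 0$ and $H$ is free abelian; as $H \ne 0$, we conclude $H \cong \mathbb{Z}^{n}$ for some $n > 0$.

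There is no serious obstacle here. The only point demanding a little care is the identification of the torsion subgroup of $H_{k}$ with $H_{k} \cap T$ together with the cardinality count; this is precisely where the nested-intersection hypothesis is used, and it is essential — without it the statement is false, as $H = \mathbb{Z} \oplus \mathbb{Z}/2$ with $H_{k} = 2^{k}\mathbb{Z} \oplus \mathbb{Z}/2$ shows.
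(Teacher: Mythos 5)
Your proof is correct, and it reaches the conclusion along a genuinely different path from the paper's, even though both arguments reduce to showing the torsion subgroup $T$ of $H$ vanishes. The paper's proof uses the \emph{nesting} of the $H_k$ together with the finiteness of $T$: for each nonzero $t\in T$ there is a $k(t)$ with $t\notin H_{k(t)}$, and taking $k_0=\max k(t)$ (finitely many $t$) gives $H_{k_0}\cap T=0$, so $H_{k_0}$ is torsion-free and hence so is $H\cong H_{k_0}$. You instead run the isomorphism $H_k\cong H$ first: it sends $\mathrm{Tor}(H_k)=H_k\cap T$ onto $T$, forcing $|H_k\cap T|=|T|$ and therefore $H_k\cap T=T$, so $T\le H_k$ for \emph{every} $k$, whence $T\le\bigcap_k H_k=0$. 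Notably, your argument never uses the nesting hypothesis at all, only $\bigcap_k H_k=0$, so it proves a marginally stronger statement; by contrast, the paper's argument does lean on the nesting to pass from pointwise failure of membership to a single index $k_0$. One small slip in your write-up: your closing remark says the cardinality count "is precisely where the nested-intersection hypothesis is used," but in your proof the nesting plays no role and the intersection hypothesis is only used at the final step $T\le\bigcap_k H_k=0$. The example $H=\mathbb{Z}\oplus\mathbb{Z}/2$, $H_k=2^k\mathbb{Z}\oplus\mathbb{Z}/2$ you give correctly shows the intersection hypothesis cannot be dropped.
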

\begin{proof}
It suffices to show $\mathrm{Tor} H =0$, where $\mathrm{Tor} H$ is the torsion of $H$. Since $\mathrm{Tor} H$ is finite, $H_{k} \ge H_{k+1}$ and $\bigcap_{k=1}^{\infty} H_{k} = 0$, we infer $H_{k_{0}} \cap \mathrm{Tor} H = 0$ for some $k_{0}$. Thus $H_{k_{0}}$ is torsion free, and hence so is $H$.
\end{proof}

\begin{lemma}\label{lem_fundamental_group}
Let $X$, $X'$, $h$ and $G$ be the ones in Theorem \ref{thm_homology_finite}, and $[\pi_{1} (X), \pi_{1} (X)] \le G$. Then $h_{\#} (G) = G$, $\pi_{1} (X)/G \cong \mathbb{Z}^{n}$ for some $n>0$, and $X'$ is a finite cover of $X$.
\end{lemma}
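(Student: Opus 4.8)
The plan is to reduce everything to a short group-theoretic argument about the monomorphism $\varphi := h_\#\colon\pi_1(X)\to\pi_1(X)$, where I recall that $h_\#$ abbreviates $(qh)_\#$, so that $\mathrm{im}\,\varphi$ is exactly the subgroup $\pi_1(X')$ of $\pi_1(X)$ and $[\pi_1(X):\mathrm{im}\,\varphi]>1$ because $q$ is a nontrivial covering. Write $\pi:=\pi_1(X)$; it is finitely generated, $G=\bigcap_{k\ge 1}\mathrm{im}\,\varphi^k$, and by hypothesis $[\pi,\pi]\le G$, so $\pi/G$ is abelian. Since the cover $X'\to X$ has $[\pi:\mathrm{im}\,\varphi]$ sheets, the last assertion of the lemma is equivalent to $[\pi:\mathrm{im}\,\varphi]<\infty$.

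First I would prove the set identities $\varphi(G)=G$ and $\varphi^{-1}(G)=G$, which need neither finite generation nor the commutator hypothesis. The inclusion $\varphi(G)\subseteq G$ is immediate from $\varphi(\mathrm{im}\,\varphi^k)=\mathrm{im}\,\varphi^{k+1}$ together with the fact that the images form a nested decreasing sequence. For the reverse, note that each $g\in G$ lies in $\mathrm{im}\,\varphi$ and hence has a unique preimage $\varphi^{-1}(g)\in\pi$; writing $g=\varphi^{k+1}(x_k)$ and using injectivity of $\varphi$ gives $\varphi^{-1}(g)=\varphi^k(x_k)\in\mathrm{im}\,\varphi^k$ for every $k$, so $\varphi^{-1}(g)\in G$ and $g\in\varphi(G)$. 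The same computation, applied to an $x$ with $\varphi(x)\in G$, shows $x\in G$, so $\varphi^{-1}(G)=G$. In particular $h_\#(G)=G$.

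Next I would pass to $\pi/G$. As $[\pi,\pi]\le G$, the group $\pi/G$ is finitely generated abelian, and $\varphi$ descends to an endomorphism $\bar\varphi$ of $\pi/G$ which is injective because $\varphi^{-1}(G)=G$. Since $G\subseteq\mathrm{im}\,\varphi^k$ for all $k$, one has $\mathrm{im}\,\bar\varphi^k=(\mathrm{im}\,\varphi^k)/G$ as subgroups of $\pi/G$; these form a nested sequence with $\bigcap_k\mathrm{im}\,\bar\varphi^k=(\bigcap_k\mathrm{im}\,\varphi^k)/G=0$, and injectivity of $\bar\varphi$ gives $\mathrm{im}\,\bar\varphi^k\cong\pi/G$ for every $k$. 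Moreover $\pi/G\ne 0$ because $\mathrm{im}\,\varphi$ is a proper subgroup of $\pi$. Applying Lemma \ref{lem_free_abelian} to $H=\pi/G$ and the chain $H_k=\mathrm{im}\,\bar\varphi^k$ then yields $\pi/G\cong\mathbb{Z}^n$ for some $n>0$.

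Finally I would read off the index. Identifying $\pi/G$ with $\mathbb{Z}^n$, the endomorphism $\bar\varphi$ is given by a matrix $A\in\mathrm{End}(\mathbb{Z}^n)$ which is injective over $\mathbb{Z}$, hence over $\mathbb{Q}$, so $\det A\ne 0$ and $[\mathbb{Z}^n:\mathrm{im}\,\bar\varphi]=|\det A|<\infty$. Because $G\subseteq\mathrm{im}\,\varphi$, this gives $[\pi:\mathrm{im}\,\varphi]=[\pi/G:\mathrm{im}\,\bar\varphi]=|\det A|<\infty$, and since $\mathrm{im}\,\varphi=\pi_1(X')$ inside $\pi_1(X)$, this index is precisely the number of sheets of $q$, so $X'$ is a finite cover of $X$. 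None of these steps is genuinely hard; the only things requiring care are the bookkeeping of the identifications ($h_\#$ versus $(qh)_\#$, $\mathrm{im}\,\varphi=\pi_1(X')$, index versus sheet number) and the verification that all hypotheses of Lemma \ref{lem_free_abelian} hold — in particular the nontriviality of $\pi/G$, which is exactly where the hypothesis $[\pi:\mathrm{im}\,\varphi]>1$ enters.
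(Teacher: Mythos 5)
Your proof is correct and follows essentially the same route as the paper's: establish $h_\#(G)=G$ and $h_\#^{-1}(G)=G$ via injectivity, pass to the induced endomorphism of $\pi_1(X)/G$, invoke Lemma \ref{lem_free_abelian} for the isomorphism $\pi_1(X)/G\cong\mathbb{Z}^n$, and obtain finiteness of $[\pi_1(X):\mathrm{im}\,h_\#]$ from $[\pi_1(X)/G:\mathrm{im}\,[h_\#]]$ using $G\le\mathrm{im}\,h_\#$. The only cosmetic difference is that you spell out the element-wise argument for $\varphi(G)=G$ and the $|\det A|$ computation where the paper treats them as immediate.
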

\begin{proof}
Since $h_{\#}$ is injective, we have
\[
h_{\#} (G)= h_{\#} \left( \bigcap_{k=1}^{\infty} \mathrm{im} h_{\#}^{k} \right) = \bigcap_{k=1}^{\infty} \mathrm{im} h_{\#}^{k+1} =G
\]
and $h_{\#}^{-1} (G) = G$. Thus $h_{\#}$ induces a monomorphism $[h_{\#}] \colon  \pi_{1} (X)/G \rightarrow \pi_{1} (X)/G$ such that $\bigcap_{k=1}^{\infty} \mathrm{im} [h_{\#}]^{k} =0$. Since $\pi_{1} (X)$ is finitely generated, $[\pi_{1} (X), \pi_{1} (X)] \le G$ and $G \ne \pi_{1} (X)$, we have $\pi_{1} (X)/G$ is a finitely generated nontrivial abelian group. By Lemma \ref{lem_free_abelian}, we see $\pi_{1} (X)/G \cong \mathbb{Z}^{n}$ for some $n>0$. By $G \le \mathrm{im} h_{\#}$, we infer
\[
[\pi_{1} (X): \mathrm{im} h_{\#}] = [\pi_{1} (X)/G : \mathrm{im} [h_{\#}]] < \infty,
\]
which implies that $X'$ is a finite cover.
\end{proof}

Now we can prove a theorem on homotopy finiteness which is more general than Theorem \ref{thm_homotopy_finite}.
\begin{theorem}\label{thm_homotopy_skeleton}
Let $X$, $X'$, $h$ and $\overline{X}$ be the ones in Theorem \ref{thm_homology_finite}. Suppose $\pi_{1} (X)$ is finitely generated and abelian. Suppose the $m$-skeleton of $X$ is finite, and $h \colon  X \rightarrow X'$ is $(m+1)$-connected. Then $\overline{X}$ is homotopy equivalent to CW complex whose $m$-skeleton is finite.
\end{theorem}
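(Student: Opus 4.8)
The plan is to deduce the statement from the homological finiteness of Theorem~\ref{thm_homology_finite} by invoking the classical finiteness criterion for CW complexes (Wall; see also the proof of \cite[Theorem~A]{Qin_Su_Wang}): a connected CW complex $Z$ is homotopy equivalent to one with finite $m$-skeleton provided $\pi_{1}(Z)$ is finitely presented and $H_{i}(\widetilde{Z};\mathbb{Z})$ is a finitely generated $\mathbb{Z}[\pi_{1}(Z)]$-module for every $i\le m$, where $\widetilde{Z}$ denotes the universal cover with its deck action. Since $\pi_{1}(\overline{X})=G$ and the universal cover of $\overline{X}$ coincides with $\widetilde{X}$, the universal cover of $X$, it suffices to verify these two conditions for $Z=\overline{X}$. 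The condition on $\pi_{1}$ is immediate: as $\pi_{1}(X)$ is finitely generated abelian and $G$ is a subgroup of it, $G$ is finitely generated abelian, hence finitely presented; note in particular that $\mathbb{Z}[G]$ is then Noetherian.

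For the homological condition, fix $i$ with $0\le i\le m$. The $i$-skeleton of $X$ is contained in its $m$-skeleton and is therefore finite, and $h\colon X\to X'$, being $(m+1)$-connected, is in particular $(i+1)$-connected. Because $\pi_{1}(X)$ is abelian we may take $G_{0}=1$ in Theorem~\ref{thm_homology_finite}, so that the hypotheses $[\pi_{1}(X),\pi_{1}(X)]\le G_{0}$ and $h_{\#}(G_{0})=G_{0}$ hold trivially; taking in addition $R_{0}=\mathbb{Z}$, we have $R=\mathbb{Z}[G]$ with $G=\pi_{1}(\overline{X})$ acting by left multiplication. Applying Theorem~\ref{thm_homology_finite} with $m$ replaced by $i$ shows that $H_{i}(\overline{X};\mathbb{Z}[G])$ is a finite $\mathbb{Z}[G]$-module. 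Since the cellular chain complex of $\overline{X}$ with the regular local system $\mathbb{Z}[G]$ is the cellular chain complex of the universal cover, one has $H_{i}(\overline{X};\mathbb{Z}[G])\cong H_{i}(\widetilde{X};\mathbb{Z})$ as $\mathbb{Z}[G]$-modules, and the homological condition follows.

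Granting both conditions, one produces the model by the standard inductive construction, run exactly as in the proof of \cite[Theorem~A]{Qin_Su_Wang}: begin with a finite $2$-complex carrying a finite presentation of $G$ together with a map to $\overline{X}$ inducing an isomorphism on $\pi_{1}$; then, inductively for $j=2,\dots,m$, given a finite complex $K_{j}$ of dimension $\le j$ with a map $f_{j}\colon K_{j}\to\overline{X}$ that is a $\pi_{1}$-isomorphism and satisfies $\pi_{i}(f_{j})=0$ for $2\le i<j$, identify $\pi_{j}(f_{j})$ via the relative Hurewicz theorem applied to the universal covers with a $\mathbb{Z}[G]$-module which --- from the homology exact sequence of the pair, the finiteness of $K_{j}$, and the finite generation of $H_{j}(\widetilde{X};\mathbb{Z})$ over the Noetherian ring $\mathbb{Z}[G]$ --- is finitely generated; then attach finitely many $(j+1)$-cells along a generating set to obtain a finite $K_{j+1}$ of dimension $\le j+1$ with $\pi_{i}(f_{j+1})=0$ for $2\le i\le j$. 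After the step $j=m$ one has a finite complex with finite $m$-skeleton and an $m$-connected map to $\overline{X}$, which one promotes to a homotopy equivalence by attaching cells of dimension $>m$ inside the mapping cylinder, leaving the finite $m$-skeleton untouched. I expect the only genuinely delicate point to be the bookkeeping in this last construction --- chiefly the $\pi_{1}$ stage and the degree-by-degree identification of the obstruction modules --- since all of the homological input is already packaged in Theorem~\ref{thm_homology_finite}.
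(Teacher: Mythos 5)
Your proof is correct and follows essentially the same route as the paper: both reduce the statement to Wall's finiteness criterion (\cite[Theorems A and B]{Wall65}) and then verify the homological hypothesis by applying Theorem~\ref{thm_homology_finite} with $R_{0}=\mathbb{Z}$ and $G_{0}$ trivial, using that the abelianity of $\pi_{1}(X)$ makes those specializations legal. The final paragraph in which you re-run the inductive cell-attachment is not needed (it is exactly the content of the cited Wall theorems), but it does no harm.
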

\begin{proof}
Clearly, both $X$ and $\overline{X}$ are connected. Since $\pi_{1} (X)$ is finitely generated and abelian, so is $\pi_{1} (\overline{X})$. Thus the conclusion is trivial when $m \le 1$.

Now we assume $m \ge 2$. Since $G= \pi_{1} (\overline{X})$ is finitely presented and $\mathbb{Z}[G]$ is a Noetherian ring, by Wall's Theorems A and B in \cite{Wall65}, it suffices to show that
\[
H_{i} (\widetilde{X}; \mathbb{Z}) = H_{i} (\overline{X}; \mathbb{Z}[G])
\]
is a finite $\mathbb{Z}[G]$-module for each $i \le m$, where $\widetilde{X}$ is the universal cover of $X$. Taking $R_{0} = \mathbb{Z}$ and $G_{0} =0$ in Theorem \ref{thm_homology_finite}, we finish the proof.
\end{proof}

It's ready to prove Theorem \ref{thm_homotopy_finite} and Corollary \ref{cor_group}.
\begin{proof}[Proof of Theorem \ref{thm_homotopy_finite}]
(1). This follows from Lemma \ref{lem_fundamental_group}.

(2). Suppose $X$ is homotopy equivalent to a CW complex of finite type, we may assume $X$ itself is of finite type. By Theorem \ref{thm_homotopy_skeleton}, we have $\overline{X}$ satisfies the condition $\mathrm{F}_{m}$ in \cite{Wall65} for each $m$. By Wall's Theorems A in \cite{Wall65}, we infer $\overline{X}$ is homotopy equivalent to a CW complex of finite type.

If $X$ is finitely dominated, by Wall's Theorem A in \cite{Wall65}, $X$ is homotopy equivalent to a CW complex of finite type. By the above arguments, so is $\overline{X}$. Clearly, $X$ satisfies the condition $\mathrm{D}_{k}$ in Wall's Theorem F for some $k$, hence so does $\overline{X}$. By Wall's Theorems A and F again, $\overline{X}$ is finitely dominated.
\end{proof}

\begin{proof}[Proof of Corollary \ref{cor_group}]
Let $X$ be the Eilenberg-MacLane space $K(\pi, 1)$, and let $X'$ be the cover of $X$ with $\pi_{1} (X') = \mathrm{im} \varphi$. Then there is a homotopy equivalence $h \colon  X \rightarrow X'$ such that $h_{\#} \colon  \pi_{1} (X) \rightarrow \pi_{1} (X')$ equals $\varphi$. Since $\pi /G$ is abelian, $[\pi, \pi] \le G$. By Lemma \ref{lem_fundamental_group}, we have $\pi / G\cong \mathbb{Z}^{n}$ for some $n>0$ and $[\pi : \mathrm{im} \varphi] < \infty$.

Since $\pi$ is finitely generated, we may assume $X$ has only finitely many $1$-cells. Taking $R_{0} = \mathbb{Z}$ and $G_{0} = G$ in Theorem \ref{thm_homology_finite}, we have
\[
G/[G,G] = H_{1} (\overline{X}; \mathbb{Z}) = H_{1} (\overline{X}; \mathbb{Z} [G/G])
\]
is finitely generated, where $\overline{X}$ is the cover of $X$ with $\pi_{1} (\overline{X}) = G$.
\end{proof}

\section{Poincar\'{e} Duality Spaces}\label{sec_Poincare}
In this section, we shall prove Theorem \ref{thm_Poincare}, Corollary \ref{cor_torus} and other related results.

\begin{proof}[Proof of Theorem \ref{thm_Poincare}]
(1). This follows from the (1) in Theorem \ref{thm_homotopy_finite}.

(2). Also by the (1) in Theorem \ref{thm_homotopy_finite}, there is an epimorphism $\psi \colon  \pi_{1} (X) \rightarrow \mathbb{Z}^{n}$ such that $\ker \psi = G$. Since $\mathbb{T}^{n}$ is the Eilenberg-MacLane space $K(\mathbb{Z}^{n}, 1)$, there is a continuous $p \colon  X \rightarrow \mathbb{T}^{n}$ such that $p_{\#} \colon  \pi_{1} (X) \rightarrow \pi_{1} (\mathbb{T}^{n})$ equals $\psi$ and the homotopy fiber is $\overline{X}$.

(3). By the (2) in Theorem \ref{thm_homotopy_finite}, we see $\overline{X}$ is finitely dominated. Since $X$ is a finitely dominated Poincar\'{e} space, by \cite{Gottlieb} (see also \cite[Theorem~G]{Klein_Qin_Su}), $\overline{X}$ is also a Poincar\' e space, whose formal dimension is $d-n$.

Since $q_{\#} h_{\#}$ is injective and $G = \bigcap_{k=1}^{\infty} \mathrm{im} h_{\#}^{k}$, we know $q_{\#} h_{\#} G = G$. (Note we abuse the notation $h_{\#}^{k}$ to denote $(qh)_{\#}^{k}$ for brevity.) Then $q_{\#} h_{\#}$ induces a monomorphism
\[
A \colon \ \ \mathbb{Z}^{n} = \pi_{1} (\mathbb{T}^{n}) = \pi_{1} (X)/ G \longrightarrow \pi_{1} (\mathbb{T}^{n}) = \pi_{1} (X)/ G
\]
such that $p_{\#}^{-1} (\mathrm{im} A) =  \mathrm{im} (qh)_{\#}$. Particularly, $\det (A) \ne 0$. We also have $\bigcap_{k=1}^{\infty} \mathrm{im} A^{k} =0$ because $G = \bigcap_{k=1}^{\infty} \mathrm{im} h_{\#}^{k}$. Let $A_{\mathbb{T}}$ be the linear endomorphism of $\mathbb{T}^{n}$ induced by $A$. By $\mathrm{im} A = (pq)_{\#} \pi_{1} (X')$, we obtain the following commutative diagram.
\begin{equation}\label{thm_Poincare_2}
\xymatrix{
  X' \ar[d]_{p'} \ar[r]^{q} & X \ar[d]^{p} \\
  \mathbb{T}^{n} \ar[r]^{A_{\mathbb{T}}} & \mathbb{T}^{n}   }
\end{equation}
Here we get $p'$ by identifying $\mathbb{T}^{n}$ with its cover $(\mathbb{T}^{n})'$ such that $\pi_{1} ((\mathbb{T}^{n})') = \mathrm{im} A$. Since $p'_{\#}$ is surjective and $\ker p'_{\#} = \ker p_{\#} = G$, the homotopy fiber of $p'$ is also $\overline{X}$ as that of $p$. Note that $q$ is lifted to $\mathrm{id} \colon  \overline{X} \rightarrow \overline{X}$. By \cite[Proposition~7.6.1]{Selick}, we infer \eqref{thm_Poincare_2} is a homotopy pullback.

Furthermore, we have the following diagram.
\begin{equation}\label{thm_Poincare_3}
\xymatrix{
  X \ar[d]_{p} \ar[r]^{h} & X' \ar[d]^{p'} \\
  \mathbb{T}^{n} \ar[r]^{=} & \mathbb{T}^{n}   }
\end{equation}
Since $p' \circ h$ and $p$ induce the same homomorphism on fundamental groups, and since $\mathbb{T}^{n}$ is aspherical, we see \eqref{thm_Poincare_3} commutes up to homotopy.

Combining \eqref{thm_Poincare_2} and \eqref{thm_Poincare_3} together, we infer \eqref{thm_Poincare_1} commutes up to homotopy. Since $h$ is a homotopy equivalence, and since \eqref{thm_Poincare_2} is a homotopy pullback, so is \eqref{thm_Poincare_1}.
\end{proof}

By the classification of low dimensional Poincar\'{e} spaces, we obtain a few corollaries.
\begin{corollary}
Let $d$ and $n$ be the ones in Theorem \ref{thm_Poincare}. Then $d-n \ge 0$.
\end{corollary}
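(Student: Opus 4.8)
The plan is to read the inequality off directly from Theorem \ref{thm_Poincare}(3), which asserts that $\overline{X}$ is a finitely dominated Poincar\'e space of formal dimension $d-n$, combined with the elementary fact that a nonempty path-connected Poincar\'e space has nonnegative formal dimension.

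First I would note that $\overline{X}$ is nonempty and path-connected: it is a connected covering space of $X$, and $X$ is path-connected by the standing convention of the paper (and nonempty, being a Poincar\'e space), so the same holds for $\overline{X}$. Next I would recall the general fact that if $Y$ is a nonempty path-connected Poincar\'e space of formal dimension $e$, with orientation character $w$ and fundamental class $[Y] \in H_{e}(Y; \mathbb{Z}^{w})$, then cap product with $[Y]$ gives an isomorphism
\[
H^{0}(Y; \mathbb{Z}) \longrightarrow H_{e}(Y; \mathbb{Z}^{w}), \qquad x \mapsto x \cap [Y].
\]
Since $Y$ is path-connected, $H^{0}(Y; \mathbb{Z}) \cong \mathbb{Z} \ne 0$, hence $H_{e}(Y; \mathbb{Z}^{w}) \ne 0$; but the homology of any space vanishes in negative degrees, so $e \ge 0$. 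Applying this with $Y = \overline{X}$ and $e = d-n$ yields $d - n \ge 0$.

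There is no substantive obstacle here; the only point worth stating carefully is that $\overline{X}$ is nonempty, which is what makes the cap-product (equivalently, fundamental-class) argument applicable. One could equally well phrase the last step via the dual isomorphism $H_{0}(\overline{X}; \mathbb{Z}^{w}) \cong H^{d-n}(\overline{X}; \mathbb{Z})$: the left-hand group is nonzero (coinvariants of $\mathbb{Z}^{w}$ under a quotient of $\mathbb{Z}$), while the right-hand group vanishes when $d-n<0$, forcing $d-n \ge 0$.
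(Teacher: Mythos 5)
Your proof is correct and is essentially the paper's (implicit) argument: the corollary is stated without a written proof precisely because it reads off immediately from Theorem~\ref{thm_Poincare}(3), which makes $\overline{X}$ a nonempty, path-connected, finitely dominated Poincar\'e space of formal dimension $d-n$, and such a space cannot have negative formal dimension. Your cap-product justification that the formal dimension is nonnegative is the standard one, and your care about nonemptiness/path-connectedness is appropriate but unproblematic given the paper's standing conventions.
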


\begin{corollary}\label{cor_Poincare-0}
If $d-n=0$, then $\overline{X}$ is contractible and $X \simeq \mathbb{T}^{d}$.
\end{corollary}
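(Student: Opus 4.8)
The plan is to reduce the whole statement to one assertion: that $\overline{X}$ is contractible. Granting this, Theorem \ref{thm_Poincare}(2) supplies a homotopy fibration $\overline{X}\to X\overset{p}{\to}\mathbb{T}^{n}$ whose fibre is contractible, so $p_{*}\colon\pi_{i}(X)\to\pi_{i}(\mathbb{T}^{n})$ is an isomorphism for every $i$; since $X$ is finitely dominated (hence has the homotopy type of a CW complex) and $\mathbb{T}^{n}$ is a CW complex, Whitehead's theorem makes $p$ a homotopy equivalence, and because $d-n=0$ we obtain $X\simeq\mathbb{T}^{n}=\mathbb{T}^{d}$.

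To prove $\overline{X}\simeq *$, write $G:=\pi_{1}(\overline{X})$, let $\widetilde{\overline{X}}$ be its universal cover, and recall from Theorem \ref{thm_Poincare}(3) that $\overline{X}$ is a finitely dominated Poincar\'e space of formal dimension $d-n=0$. First I would unwind Poincar\'e duality in formal dimension $0$: for any local coefficient system $\mathcal{L}$ the duality isomorphism reads $H_{k}(\overline{X};\mathcal{L})\cong H^{-k}(\overline{X};\mathcal{L}\otimes\mathbb{Z}^{w})$, hence $H_{k}(\overline{X};\mathcal{L})=0$ for all $k\ge 1$. Taking $\mathcal{L}=\mathbb{Z}[G]$ gives $H_{k}(\widetilde{\overline{X}};\mathbb{Z})=0$ for $k\ge 1$, while $H_{0}(\widetilde{\overline{X}};\mathbb{Z})=\mathbb{Z}$ since $\widetilde{\overline{X}}$ is connected. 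Thus $\widetilde{\overline{X}}$ is an acyclic, simply connected complex, so by the Hurewicz and Whitehead theorems it is contractible; hence $\overline{X}$ is aspherical and $\overline{X}\simeq BG$.

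The remaining, and I expect main, difficulty is to show $G=1$. The orientation system $\mathbb{Z}^{w}$ is trivial here, because $G$ acts trivially on $H_{0}(\widetilde{\overline{X}};\mathbb{Z})=\mathbb{Z}$; hence the degree-$0$ duality isomorphism reads $\mathbb{Z}\cong H_{0}(\overline{X};\mathbb{Z}[G])\cong H^{0}(\overline{X};\mathbb{Z}[G])=\mathbb{Z}[G]^{G}$. If $G$ were infinite we would have $\mathbb{Z}[G]^{G}=0$, a contradiction, so $G$ is finite. But $BG$ is finitely dominated, being homotopy equivalent to $\overline{X}$; consequently so is its finite cover $B\langle g\rangle$ for any $g\in G$ of prime order $p$, which is absurd because $H_{*}(B\mathbb{Z}/p;\mathbb{F}_{p})$ is nonzero in infinitely many degrees whereas a finitely dominated complex is a homotopy retract of a finite complex and so has homology vanishing above some degree. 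Therefore $G$ has no element of prime order, so $G=1$, $\overline{X}$ is contractible, and the argument concludes as in the first paragraph. As an alternative to the torsion argument, one may instead quote that a finitely dominated $K(G,1)$ forces $G$ to have finite cohomological dimension, hence to be torsion free, and then exclude infinite torsion-free $G$ exactly as above.
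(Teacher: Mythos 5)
Your argument is correct, but it takes a more hands-on route than the paper, which simply quotes Wall's classification of low-dimensional Poincar\'e complexes: by \cite[Theorem~4.2]{Wall67}, a finitely dominated Poincar\'e space of formal dimension $0$ is contractible, and then $\pi_1(X)\cong\mathbb{Z}^d$ with $\widetilde X$ contractible gives $X\simeq\mathbb{T}^d$. What you have done, in effect, is reprove the relevant case of Wall's theorem: extracting acyclicity of the universal cover from duality with $\mathbb{Z}[G]$-coefficients, using $H^0(\overline X;\mathbb{Z}[G])\cong\mathbb{Z}[G]^G$ together with $H_0\cong\mathbb{Z}$ to conclude $G$ is finite, and then ruling out nontrivial finite $G$ because a finitely dominated $K(G,1)$ forces $G$ torsion-free (via the $H_*(B\mathbb{Z}/p;\mathbb{F}_p)$ argument, relying on the fact that a finite cover of a finitely dominated space is finitely dominated). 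All of these steps are sound, and in particular the pairing of the finiteness argument with the torsion-freeness argument is exactly what makes the dimension-$0$ case of Wall's classification work. The trade-off is clear: the paper's proof is a one-line citation and thereby also covers the $d-n=1$ case uniformly, whereas yours is self-contained but would need genuinely new ideas to extend to higher formal dimensions, where the classification becomes much harder. Your first paragraph (deducing $X\simeq\mathbb{T}^d$ from contractibility of $\overline X$) also differs slightly in presentation from the paper — you run through the fibration sequence and Whitehead's theorem, while the paper notes directly that $\widetilde X$ is contractible and $\pi_1(X)\cong\mathbb{Z}^d$ — but these are essentially the same argument.
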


\begin{corollary}\label{cor_Poincare-1}
If $d-n=1$, then $\overline{X} \simeq S^{1}$ and $X \simeq \mathbb{T}^{d}$.
\end{corollary}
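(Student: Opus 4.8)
The plan is to combine the Poincar\'e-duality conclusion of Theorem \ref{thm_Poincare}(3) with the homotopy fibration of Theorem \ref{thm_Poincare}(2). First I would invoke Theorem \ref{thm_Poincare}(3): since $\overline{X}$ is a finitely dominated Poincar\'e space of formal dimension $d-n=1$, the classification of $1$-dimensional Poincar\'e spaces forces $\overline{X}\simeq S^{1}$ (a connected finitely dominated Poincar\'e complex of formal dimension $1$ has first Betti number $1$, hence is homotopy equivalent to $S^{1}$). This is already the first assertion, and in particular $G=\pi_{1}(\overline{X})\cong\mathbb{Z}$.

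Next I would feed $\overline{X}\simeq S^{1}$ into the homotopy fibration $\overline{X}\to X\xrightarrow{p}\mathbb{T}^{n}$ supplied by Theorem \ref{thm_Poincare}(2) and read off homotopy groups from its long exact sequence. Because $\pi_{i}(S^{1})=0$ and $\pi_{i}(\mathbb{T}^{n})=0$ for every $i\ge 2$, the exact sequence gives $\pi_{i}(X)=0$ for all $i\ge 2$; thus $X$ is aspherical. The tail of the same sequence (using $\pi_{2}(\mathbb{T}^{n})=0$ and $\pi_{0}(\overline{X})=\ast$) yields a short exact sequence $1\to\pi_{1}(\overline{X})\to\pi_{1}(X)\to\pi_{1}(\mathbb{T}^{n})\to 1$, i.e. $1\to\mathbb{Z}\to\pi_{1}(X)\to\mathbb{Z}^{n}\to 1$. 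Since $\pi_{1}(X)$ is abelian and $\mathbb{Z}^{n}$ is free abelian, this extension splits, so $\pi_{1}(X)\cong\mathbb{Z}^{n+1}=\mathbb{Z}^{d}$.

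Finally, $X$ is homotopy equivalent to a CW complex (being finitely dominated), is aspherical, and has $\pi_{1}(X)\cong\mathbb{Z}^{d}$; hence $X\simeq K(\mathbb{Z}^{d},1)\simeq\mathbb{T}^{d}$, which is the second assertion. The only non-formal ingredient is the identification of a finitely dominated formal-dimension-$1$ Poincar\'e space with $S^{1}$ — the classical classification of $1$-dimensional Poincar\'e complexes — so I expect no genuine obstacle beyond citing that classification; the remainder is a routine chase through the long exact homotopy sequence of $\overline{X}\to X\to\mathbb{T}^{n}$ together with the splitting of an abelian group extension by a free abelian quotient. (This also parallels the degenerate case $d-n=0$ of Corollary \ref{cor_Poincare-0}, where $\overline{X}$ is contractible and the same fibration argument gives $X\simeq\mathbb{T}^{d}$.)
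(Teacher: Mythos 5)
Your proof is correct and follows essentially the same route as the paper: identify $\overline{X}\simeq S^{1}$ via the classification of formal-dimension-$1$ Poincar\'e complexes (the paper cites Wall's Theorem~4.2 for this), then use the fibration over $\mathbb{T}^{n}$ to see $X$ is aspherical with $\pi_{1}(X)\cong\mathbb{Z}^{d}$, hence $X\simeq\mathbb{T}^{d}$. You merely spell out the homotopy long exact sequence and the splitting of the abelian extension, which the paper leaves implicit.
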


\begin{proof}[Proof of Corollaries \ref{cor_Poincare-0} and \ref{cor_Poincare-1}]
By Theorem \ref{thm_Poincare}, $\overline{X}$ is a finitely dominated Poincar\'{e} space of formal dimension $0$ (resp.~$1$). Hence, by \cite[Theorem~4.2]{Wall67}, $\overline{X}$ is contractible (resp.~$\overline{X} \simeq S^{1}$). In either case, $\pi_{1} (X) \cong \mathbb{Z}^{d}$ and $\widetilde{X}$ is contractible, thus $X \simeq \mathbb{T}^{d}$.
\end{proof}

\begin{corollary}\label{cor_Poincare-2}
If $d-n=2$, then $(X, \overline{X})$ satisfies one of the followings:
\begin{enumerate}
\item $\overline{X} \simeq S^{2}$ and $\pi_{1} (X) \cong \mathbb{Z}^{n}$;

\item $\overline{X} \simeq \mathbb{RP}^{2}$ and $\pi_{1} (X) \cong \mathbb{Z}^{n} \oplus \mathbb{Z}/2$;

\item $\overline{X} \simeq \mathbb{T}^{2}$ and $X \simeq \mathbb{T}^{d}$.
\end{enumerate}
\end{corollary}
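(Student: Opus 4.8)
The plan is to identify the homotopy type of $\overline{X}$ first, and then deduce $\pi_{1}(X)$ from the extension relating it to $G$. By Theorem \ref{thm_Poincare}, $\overline{X}$ is a finitely dominated Poincar\'{e} space of formal dimension $d-n=2$, and by (1) of that theorem there is a short exact sequence $0 \to G \to \pi_{1}(X) \to \mathbb{Z}^{n} \to 0$ with $G=\pi_{1}(\overline{X})$; in particular $G$ is a finitely generated abelian group, being a subgroup of the finitely generated abelian group $\pi_{1}(X)$.

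Next I would invoke the classification of $2$-dimensional Poincar\'{e} duality complexes with finitely generated fundamental group: every such complex is homotopy equivalent to a closed surface (Wall for finite $\pi_{1}$; the remaining case is the Eckmann--M\"{u}ller and Eckmann--Linnell theorem that a $\mathrm{PD}_{2}$ group is a surface group). Hence $\overline{X} \simeq \Sigma$ for a closed surface $\Sigma$ with $\pi_{1}(\Sigma)\cong G$ abelian, and since the Klein bottle has nonabelian fundamental group we are left with $\Sigma \in \{ S^{2}, \mathbb{RP}^{2}, \mathbb{T}^{2} \}$, i.e.\ $G \in \{1, \mathbb{Z}/2, \mathbb{Z}^{2}\}$.

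It then remains to handle these three cases, all elementary. If $G=1$ then $\overline{X}\simeq S^{2}$ and the exact sequence gives $\pi_{1}(X)\cong\mathbb{Z}^{n}$. If $G=\mathbb{Z}/2$ then $\overline{X}\simeq\mathbb{RP}^{2}$, and the extension splits because $\mathbb{Z}^{n}$ is free, so $\pi_{1}(X)\cong\mathbb{Z}^{n}\oplus\mathbb{Z}/2$. If $G=\mathbb{Z}^{2}$ then $\overline{X}\simeq\mathbb{T}^{2}$ is aspherical, so the universal cover of $X$ --- which is the universal cover of $\overline{X}$, hence $\mathbb{R}^{2}$ --- is contractible; thus $X$ is aspherical, the extension splits to give $\pi_{1}(X)\cong\mathbb{Z}^{n+2}=\mathbb{Z}^{d}$, and therefore $X\simeq K(\mathbb{Z}^{d},1)=\mathbb{T}^{d}$.

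The only nonelementary input is the classification of $2$-dimensional Poincar\'{e} complexes, and only its abelian-fundamental-group instance is needed, which can also be obtained more directly: if $G$ is finite then $\widetilde{\overline{X}}$ is a simply connected finitely dominated $\mathrm{PD}_{2}$ complex, hence homotopy equivalent to $S^{2}$ by Poincar\'{e} duality together with the Hurewicz and Whitehead theorems, and then $\chi(\overline{X})=2/|G|\in\mathbb{Z}$ forces $|G|\le 2$ (the case $|G|=2$ being $\mathbb{RP}^{2}$); if $G$ is infinite then Poincar\'{e} duality with $\mathbb{Z}[G]$-coefficients gives $H_{2}(\widetilde{\overline{X}};\mathbb{Z})\cong H^{0}(\overline{X};\mathbb{Z}[G])=0$, so $\widetilde{\overline{X}}$ is contractible and $\overline{X}$ is an aspherical $2$-dimensional finitely dominated complex, whence $G$ is torsion-free of cohomological dimension $\le 2$, and rank $1$ is impossible since $S^{1}$ is not a $\mathrm{PD}_{2}$ space, so $G\cong\mathbb{Z}^{2}$. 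The identification of the homotopy type of $\overline{X}$ is thus the crux; everything afterwards is bookkeeping with the exact sequence.
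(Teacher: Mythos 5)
Your proof is correct and uses the paper's approach: invoke the Eckmann--Linnell / Eckmann--M\"{u}ller classification of finitely dominated $\mathrm{PD}_{2}$ complexes as closed surfaces, rule out the Klein bottle since $G\le\pi_{1}(X)$ is abelian, and read $\pi_{1}(X)$ off the extension $0\to G\to\pi_{1}(X)\to\mathbb{Z}^{n}\to 0$ (the paper compresses this into ``The conclusion follows''). Your supplementary elementary argument avoiding the full surface classification is a nice bonus and is correct, modulo the unproved assertion that a $\mathrm{PD}_{2}$ space with $\pi_{1}=\mathbb{Z}/2$ and universal cover $S^{2}$ must be $\mathbb{RP}^{2}$, which deserves a line (e.g.\ $\chi=1$ forces nonorientability, so the covering involution reverses the orientation of $S^{2}$ and is hence homotopic to the antipodal map).
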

\begin{proof}
Now $\overline{X}$ is a finitely dominated Poincar\'{e} space of formal dimension $2$. By \cite[Theorem~4]{Eckmann_Linnell} (see also \cite{Eckmann_Muller}), $\overline{X}$ is homotopy equivalent to a closed surface. The conclusion follows.
\end{proof}

\begin{corollary}\label{cor_Poincare-3}
If $d-n=3$, then $(X, \overline{X})$ satisfies one of the followings:
\begin{enumerate}
\item $\overline{X} \simeq S^{3}$, and $\pi_{1} (X) \cong \mathbb{Z}^{n}$;

\item $\overline{X}$ is homotopy equivalent to a $3$-dimensional lens space, and $\pi_{1} (X) \cong \mathbb{Z}^{n} \oplus \pi_{1} (\overline{X})$;

\item $\overline{X} \simeq S^{1} \times S^{2}$ or $\overline{X} \simeq S^{1} \tilde{\times} S^{2}$, and $\pi_{1} (X) \cong \mathbb{Z}^{n+1}$, where $S^{1} \tilde{\times} S^{2}$ is the $S^{2}$-bundle over $S^{1}$ with monodromy $(x_{1}, x_{2}, x_{3}) \mapsto (x_{1}, x_{2}, -x_{3})$;

\item $\overline{X} \simeq S^{1} \times \mathbb{RP}^{2}$, and $\pi_{1} (X) \cong \mathbb{Z}^{n+1} \oplus \mathbb{Z}/2$;

\item $\overline{X} \simeq \mathbb{T}^{3}$ and $X \simeq \mathbb{T}^{d}$.
\end{enumerate}
\end{corollary}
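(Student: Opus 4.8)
The plan is to classify the finitely dominated Poincar\'e $3$-complex $\overline X$ up to homotopy, and to read off $\pi_1(X)$, by a case analysis on $G=\pi_1(\overline X)$, which is a finitely generated abelian group since it is a subgroup of $\pi_1(X)$. Throughout I will use that $\pi_1(X)/G\cong\mathbb Z^{n}$ is free (Theorem \ref{thm_Poincare}), so the extension $1\to G\to\pi_1(X)\to\mathbb Z^{n}\to 1$ splits and $\pi_1(X)\cong\mathbb Z^{n}\oplus G$; together with the homotopy type of $\overline X$ this pins down $\pi_1(X)$, and whenever $\overline X$ turns out aspherical it also forces $X\simeq\mathbb T^{d}$, because then $\widetilde X=\widetilde{\overline X}$ is contractible and $\pi_1(X)$ is free abelian of rank $d$. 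The main tool is the fibering theorem of Gottlieb and Klein--Qin--Su used in the proof of Theorem \ref{thm_Poincare} (\cite{Gottlieb}, \cite[Theorem~G]{Klein_Qin_Su}): if the total space of a homotopy fibration is a finitely dominated Poincar\'e complex and the base is a closed aspherical manifold, then the homotopy fiber is a finitely dominated Poincar\'e complex of the complementary formal dimension.

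If $G$ is finite, then the universal cover $\widetilde{\overline X}$ is a simply connected finitely dominated Poincar\'e $3$-complex, so by the Hurewicz theorem and Poincar\'e duality $\widetilde{\overline X}\simeq S^{3}$. Hence $G$ acts freely and cellularly (up to homotopy) on a finite-dimensional complex homotopy equivalent to $S^{3}$ and so has periodic cohomology; being abelian, $G$ is therefore cyclic, say $G\cong\mathbb Z/k$. By the classical homotopy classification of Poincar\'e $3$-complexes with finite cyclic fundamental group, $\overline X$ is homotopy equivalent to $S^{3}$ if $k=1$ and to a genuine $3$-dimensional lens space $L(k;q)$ if $k\ge 2$ (a Reidemeister-torsion argument being needed to pass from an abstract homotopy lens space to a standard one). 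This gives cases (1) and (2).

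If $G$ is infinite, write $G\cong\mathbb Z^{r}\oplus T$ with $r\ge 1$ and $T$ finite, choose an epimorphism $G\twoheadrightarrow\mathbb Z$ (whose kernel is $\cong\mathbb Z^{r-1}\oplus T$), and apply the fibering theorem to $\overline X\to K(\mathbb Z,1)=S^{1}$: the homotopy fiber $F$ is a finitely dominated Poincar\'e $2$-complex with $\pi_1(F)\cong\mathbb Z^{r-1}\oplus T$. By \cite[Theorem~4]{Eckmann_Linnell} (as used for Corollary \ref{cor_Poincare-2}), $F$ is homotopy equivalent to a closed surface, so $\pi_1(F)$ is an abelian surface group, namely $1$, $\mathbb Z/2$, or $\mathbb Z^{2}$; correspondingly $G\cong\mathbb Z$, $\mathbb Z\oplus\mathbb Z/2$, or $\mathbb Z^{3}$. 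In each subcase $\overline X$ is homotopy equivalent to the mapping torus of a self-homotopy-equivalence of $F$. For $G=\mathbb Z$, $F\simeq S^{2}$ and the monodromy has degree $\pm1$, hence is homotopic to the identity or to a reflection, so $\overline X\simeq S^{1}\times S^{2}$ or $\overline X\simeq S^{1}\tilde{\times}S^{2}$ (case (3)). For $G=\mathbb Z\oplus\mathbb Z/2$, $F\simeq\mathbb{RP}^{2}$, and since $\pi_0\,\mathrm{hAut}(\mathbb{RP}^{2})$ is trivial (every self-homotopy-equivalence of a closed surface is homotopic to a diffeomorphism, and $\mathrm{Diff}(\mathbb{RP}^{2})\simeq SO(3)$ is connected), the mapping torus is $S^{1}\times\mathbb{RP}^{2}$ (case (4)). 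For $G=\mathbb Z^{3}$, $F\simeq\mathbb T^{2}$ is aspherical, so $\overline X$, fibering over the aspherical $S^{1}$ with aspherical fiber, is aspherical, whence $\overline X\simeq\mathbb T^{3}$ and $X\simeq\mathbb T^{d}$ (case (5)).

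The main obstacle is the low-dimensional Poincar\'e-complex input. The finite case rests on the homotopy classification of Poincar\'e $3$-complexes with finite cyclic $\pi_1$, whose final step---recognizing the homotopy type as that of a linear lens space---requires Reidemeister torsion; one must also remember that $\overline X$ is only a Poincar\'e complex, not a priori a manifold, so manifold classifications cannot simply be quoted, which is precisely why the fibering theorem plus the surface classification of \cite{Eckmann_Linnell} is the right route. The remaining ingredients---the splitting $\pi_1(X)\cong\mathbb Z^{n}\oplus G$, the degree computation for $S^{2}$-monodromies, and the vanishing of $\pi_0\,\mathrm{hAut}(\mathbb{RP}^{2})$---are routine.
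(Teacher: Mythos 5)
Your overall strategy (split on $G$ finite vs.\ infinite, and in the infinite case fiber $\overline X$ over $S^1$ and feed the fiber into the surface classification of Eckmann--Linnell) is genuinely different from the paper's, which invokes Wall's classification of Poincar\'e $3$-complexes organized by the number of ends of $\pi_1$ (\cite[Theorems 4.3, 4.4 and p.~235]{Wall67}) and never drops to dimension $2$. Your finite case is fine and is in fact essentially Wall's $e=0$ case.

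The infinite case, however, has a real gap. You assert that ``the homotopy fiber $F$ of $\overline X \to S^1$ is a finitely dominated Poincar\'e $2$-complex'' as an output of the Gottlieb/Klein--Qin--Su fibering theorem, but that theorem \emph{assumes} the homotopy fiber is finitely dominated; it does not produce finite domination. (This is exactly why, in the proof of Theorem~\ref{thm_Poincare}, finite domination of $\overline X$ is first extracted from Theorem~\ref{thm_homotopy_finite} and only then is Gottlieb applied.) The fiber $F$ here is the cover of $\overline X$ with $\pi_1 \cong \mathbb Z^{r-1}\oplus T$, an infinite cyclic cover of $\overline X$, and infinite cyclic covers of finitely dominated Poincar\'e complexes are not finitely dominated in general. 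Moreover Theorem~\ref{thm_homotopy_finite} cannot be re-applied to shrink $G$ further, since $h_\#(G)=G$ and $\bigcap_k \mathrm{im}\, h_\#^k$ has already stabilized at $G$. So your argument is circular at this step: the finite domination of $F$ is exactly the sort of structural information that Wall's Theorem~4.4 (for $e=2$, yielding $\overline X\simeq S^1\times S^2$, $S^1\tilde\times S^2$, $S^1\times\mathbb{RP}^2$, or $\mathbb{RP}^3\#\mathbb{RP}^3$) and the $e=1$ argument (universal cover contractible) deliver, and which your route needs as input rather than output. To repair this you would have to supply an independent proof that the infinite cyclic cover of a finitely dominated Poincar\'e $3$-complex with infinite abelian $\pi_1$ is finitely dominated --- at which point you are effectively re-proving Wall's low-dimensional results, and it is cleaner to just cite them as the paper does.
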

\begin{proof}
Now $\overline{X}$ is a finitely dominated Poincar\'{e} space of formal dimension $3$. Let $e$ denote the number of ends of $\pi_{1} (\overline{X})$ (see \cite[p.~231]{Wall67}). Since $\pi_{1} (\overline{X})$ is finitely generated and abelian, we have $e=0$, $1$ or $2$ (see e.g.~\cite[$\S$8.2]{Loh}).

When $e=0$, by \cite[Theorem~4.3~and~p.~235]{Wall67}, we have $\widetilde{X} \simeq S^{3}$, $\overline{X}$ is orientable, $\pi_{1} (\overline{X})$ is finite, and the cohomology of $\pi_{1} (\overline{X})$ has period $4$. Again, since $\pi_{1} (\overline{X})$ is abelian, by the periodicity and \cite[p.~262,~Theorem~11.6]{Cartan_Eilenberg}, we have $\pi_{1} (\overline{X})$ is cyclic. Clearly, $\pi_{1} (X) \cong \mathbb{Z}^{n} \oplus \pi_{1} (\overline{X})$. If $\pi_{1} (\overline{X}) = 0$, we obtain the (1) of the statement. Otherwise, by \cite[p.~94]{Thomas}, we know $\overline{X}$ is homotopy equivalent to a $3$-dimensional lens space, which yields the (2).

When $e=1$, by \cite[p.~235]{Wall67}, we have $\widetilde{X}$ is contractible. Hence $\overline{X}$ is the Eilenberg-MacLane space $K(\pi_{1} (\overline{X}), 1)$. Since $\pi_{1} (\overline{X})$ is abelian, the (5) of the statement follows.

For the rest case of $e=2$, by \cite[Theorem~4.4]{Wall67} and the commutativity of $\pi_{1} (\overline{X})$, we see $\overline{X} \simeq S^{1} \times S^{2}$, $\overline{X} \simeq S^{1} \tilde{\times} S^{2}$, or $\overline{X} \simeq S^{1} \times \mathbb{RP}^{2}$, which implies the (3) and (4) of the statement.
\end{proof}

\begin{corollary}\label{cor_Poincare-4}
If $d-n=4$ and $\overline{X}$ is simply connected, then $\overline{X}$ is homotopy equivalent to a topological closed $4$-manifold.
\end{corollary}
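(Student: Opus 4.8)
The plan is to combine the Poincar\'e-duality conclusion of Theorem \ref{thm_Poincare} with the homotopy classification of simply connected $4$-dimensional Poincar\'e complexes and Freedman's realization theorem. By Theorem \ref{thm_Poincare}(3), $\overline{X}$ is a finitely dominated Poincar\'e space of formal dimension $4$; since $\pi_{1} (\overline{X}) = 1$, Wall's finiteness obstruction lies in $\widetilde{K}_{0} (\mathbb{Z}) = 0$ and hence vanishes, so $\overline{X}$ is homotopy equivalent to a finite simply connected $4$-dimensional Poincar\'e complex. I may thus assume $\overline{X}$ itself is such a complex.

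Next I would read off the homology. Poincar\'e duality together with simple connectivity gives $H_{0} (\overline{X}) \cong H_{4} (\overline{X}) \cong \mathbb{Z}$, $H_{1} (\overline{X}) = H_{3} (\overline{X}) = 0$, and $H_{2} (\overline{X})$ a free abelian group $\mathbb{Z}^{r}$ (any torsion in $H_{2}$ would, via the universal coefficient theorem and duality, force torsion in $H^{3} \cong H_{1} = 0$). The cup product endows $H^{2} (\overline{X}) \cong H_{2} (\overline{X})$ with a unimodular symmetric bilinear form $\lambda$. A standard cell-by-cell argument then shows that $\overline{X}$ is homotopy equivalent to a complex $X_{\lambda} = \bigl( \bigvee_{i=1}^{r} S^{2} \bigr) \cup_{f} e^{4}$ whose homotopy type depends only on the isometry class of $\lambda$; equivalently, simply connected $4$-dimensional Poincar\'e complexes are classified up to homotopy equivalence by their intersection form.

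Finally I would invoke Freedman's classification of simply connected topological $4$-manifolds (see \cite{Freedman_Teichner}): every unimodular symmetric bilinear form over $\mathbb{Z}$ occurs as the intersection form of a closed simply connected topological $4$-manifold $M$. Since a closed $\mathrm{TOP}$ $4$-manifold is a Poincar\'e space, $M$ is a simply connected $4$-dimensional Poincar\'e complex whose intersection form is isometric to $\lambda$, hence $M \simeq X_{\lambda} \simeq \overline{X}$ by the classification of the previous step. This proves the corollary.

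The main obstacle is not a single hard computation but rather quoting the right inputs at the right generality: one needs the homotopy classification of simply connected $4$-dimensional Poincar\'e complexes by the intersection form (to which the finiteness and homology bookkeeping reduces everything) together with the deep fact, due to Freedman, that every such form is realized topologically. Some care is needed in the reduction to a finite complex, though for simply connected spaces this is automatic; and the non-uniqueness of the realizing $4$-manifold (the Kirby--Siebenmann ambiguity for odd forms) is irrelevant here, since only existence of a manifold in the homotopy type is asserted.
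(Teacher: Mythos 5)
Your proposal is correct and follows essentially the same route as the paper: reduce to the classification of simply connected $4$-dimensional Poincar\'e complexes by the intersection form, then invoke Freedman's realization of unimodular forms to produce a closed $\mathrm{TOP}$ $4$-manifold in the same homotopy type. One small slip worth fixing: you cite \cite{Freedman_Teichner} for Freedman's realization theorem, but that paper concerns $4$-manifolds with subexponential fundamental groups; the relevant reference is \cite{Freedman_Quinn} (together with \cite{MH} for the homotopy classification, extended from manifolds to Poincar\'e complexes via \cite[Theorem~2.4]{Wall67}, which is in effect the ``cell-by-cell argument'' you sketch).
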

\begin{proof}
Now $\overline{X}$ is a $1$-connected Poincar\'{e} space of formal dimension $4$. It's a well-known fact that every such space is homotopy equivalent to a closed TOP $4$-manifold. This follows from \cite[p.~161,~Theorem]{Freedman_Quinn} and \cite[p.~103,~Theorem.~1.5]{MH}. (Note that \cite{MH} proved the result for manifolds. Its argument can be extended for Poincar\'e spaces thanks to \cite[Theorem~2.4]{Wall67}.)
\end{proof}

\begin{remark}
There is no speciality of $\pi_{1} (\overline{X})$ when $d-n=4$. Actually, every finitely presented group can be realized as the fundamental group of a closed DIFF $4$-manifold. Furthermore, if $\overline{X}$ is not simply connected, then it is unnecessarily homotopy equivalent to a topological closed $4$-manifold, see \cite[Corollaries~5.4.1~\&~5.4.2]{Wall67}.
\end{remark}

Now we can prove Corollary \ref{cor_torus}.
\begin{proof}[Proof of Corollary \ref{cor_torus}]
By the above corollaries, $M$ is homotopy equivalent to $\mathbb{T}^{m}$. It had been known that every closed TOP manifold homotopy equivalent to $\mathbb{T}^{m}$ is in fact homeomorphic to $\mathbb{T}^{m}$. For $m \le 2$, this is trivial. By the solution to Poincar\'{e} conjecture and \cite[Main~Theorem]{Luft_Sjerve}, this is true for $m=3$. For $m=4$, see e.g.~\cite[\S~11.5]{Freedman_Quinn}. For $m \ge 5$, see \cite[Theorem~A]{Farrell_Hsiang}.
\end{proof}

\section{Low Dimensional Manifolds}\label{sec_low_dim}
In this section, we shall prove Theorem \ref{thm_4-manifold}. First of all, we list some canonical closed self-covering $4$-manifolds with abelian fundamental groups.

The following lemma is obvious.
\begin{lemma}\label{lem_4-manifold}
The followings are closed smooth $4$-manifolds diffeomorphic to certain nontrivial covers of themselves. They are $S^{1} \times S^{3}$, $S^{1} \tilde{\times} S^{3}$, $S^{1} \times \mathbb{R}P^{3}$, $S^{1} \tilde{\times} \mathbb{R}P^{3}$, $S^{1} \times L$, and $\mathbb{T}^{4}$, where $S^{1} \tilde{\times} S^{3}$ (resp.~$S^{1} \tilde{\times} \mathbb{R}P^{3}$) is the $S^{3}$-bundle (resp.~$\mathbb{R}P^{3}$-bundle) over $S^{1
}$ with monodromy $(x_{1}, x_{2}, x_{3}, x_{4}) \mapsto (x_{1}, x_{2}, x_{3}, -x_{4})$ (resp.~$[x_{1}, x_{2}, x_{3}, x_{4}] \mapsto [x_{1}, x_{2}, x_{3}, -x_{4}]$), and $L$ is a $3$-dimensional lens space.
\end{lemma}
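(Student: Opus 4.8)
\emph{The plan} is to realise each manifold in the list as the mapping torus of a finite-order diffeomorphism and to obtain the desired self-coverings by pulling back along the standard $k$-fold self-coverings $\rho_{k}\colon S^{1}\to S^{1}$, $z\mapsto z^{k}$.

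First I would observe that every manifold $M$ on the list is diffeomorphic to a mapping torus $M_{\phi}:=F\times[0,1]/\bigl((x,1)\sim(\phi(x),0)\bigr)$ of a diffeomorphism $\phi\colon F\to F$ of finite order $d\ge 1$. For $S^{1}\times S^{3}$, $S^{1}\times\mathbb{R}P^{3}$, $S^{1}\times L$, and $\mathbb{T}^{4}=S^{1}\times\mathbb{T}^{3}$ one takes $F=S^{3},\mathbb{R}P^{3},L,\mathbb{T}^{3}$ respectively with $\phi=\mathrm{id}$ (so $d=1$); for $S^{1}\tilde{\times}S^{3}$ and $S^{1}\tilde{\times}\mathbb{R}P^{3}$ one takes $F=S^{3}$ (resp.\ $\mathbb{R}P^{3}$) with $\phi$ the displayed monodromy, which is an involution, so $d=2$.

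Next I would invoke the standard fact that, for every $k\ge 1$, the pullback of the smooth fibre bundle $M_{\phi}\to S^{1}$ along $\rho_{k}$ is a smooth $k$-fold covering of $M_{\phi}$ diffeomorphic to $M_{\phi^{k}}$; concretely this is the cyclic cover classified by the composite $\pi_{1}(M_{\phi})\twoheadrightarrow\mathbb{Z}\twoheadrightarrow\mathbb{Z}/k$, and it is a nontrivial covering as soon as $k\ge 2$. Then, choosing an integer $k\ge 2$ with $k\equiv 1\pmod d$ (e.g.\ $k=d+1$), one has $\phi^{k}=\phi$ and hence $M_{\phi^{k}}=M_{\phi}$, so the $k$-fold cover above is a nontrivial smooth covering of $M_{\phi}$ diffeomorphic to $M_{\phi}$ itself. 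This settles all six families at once. For $\mathbb{T}^{4}$ I would also note the alternative of using any $A\in\mathrm{End}(\mathbb{Z}^{4})$ with $\det(A)\ne 0$ and $\bigcap_{k=1}^{\infty}\mathrm{im}\,A^{k}=0$, say $A=2\,\mathrm{Id}$, whose induced linear endomorphism $A_{\mathbb{T}}$ is a nontrivial smooth self-covering.

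There is essentially no obstacle here — which is why the statement is labelled obvious — and the only point meriting a line of care is the identification of the pulled-back cover with $M_{\phi^{k}}$ together with the check that it is genuinely nontrivial, both of which are immediate once $k\ge 2$ and $d\mid k-1$.
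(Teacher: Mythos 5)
Your proof is correct, and the paper itself offers no argument — it simply asserts the lemma is obvious. Your unified mapping-torus argument (realize each manifold as $M_{\phi}$ with $\phi$ of finite order $d$, pull back along $z\mapsto z^{k}$ with $k\ge 2$ and $k\equiv 1\pmod d$ to get a nontrivial $k$-fold self-cover $M_{\phi^{k}}\cong M_{\phi}$) is exactly the kind of one-line justification the authors had in mind, and the two points you flag as meriting care — that the pullback is $M_{\phi^{k}}$ and that it is a genuine nontrivial cover for $k\ge 2$ — are indeed the only things to check.
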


\begin{lemma}\label{lem_S1_lens}
Suppose $L_{1}$ and $L_{2}$ are two $3$-dimensional lens spaces. Then the following are equivalent:
\begin{enumerate}
\item $S^{1} \times L_{1}$ is simple homotopy equivalent to $S^{1} \times L_{2}$;

\item $S^{1} \times L_{1}$ is homotopy equivalent to $S^{1} \times L_{2}$;

\item $L_{1}$ is homotopy equivalent to $L_{2}$.
\end{enumerate}
\end{lemma}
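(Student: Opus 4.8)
The plan is to prove the cycle of implications $(1)\Rightarrow(2)\Rightarrow(3)\Rightarrow(1)$. The implication $(1)\Rightarrow(2)$ is immediate, since a simple homotopy equivalence is in particular a homotopy equivalence, so all the content lies in the other two arrows.

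For $(3)\Rightarrow(1)$, I would invoke the classical product formula for Whitehead torsion. If $f\colon L_1\to L_2$ is a homotopy equivalence, then $\mathrm{id}_{S^1}\times f\colon S^1\times L_1\to S^1\times L_2$ is a homotopy equivalence of closed manifolds (hence of finite CW complexes), and its Whitehead torsion is $\chi(S^1)\cdot j_{*}\tau(f)$, where $j_{*}\colon \mathrm{Wh}(\pi_1 L_2)\to \mathrm{Wh}(\pi_1(S^1\times L_2))$ is induced by inclusion of a fibre. Since $\chi(S^1)=0$, this torsion vanishes, so $S^1\times L_1$ is simple homotopy equivalent to $S^1\times L_2$. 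It is worth recording that this does not require $f$ itself to be simple: for instance $L(7;1)$ and $L(7;2)$ are homotopy equivalent but not simple homotopy equivalent, yet their products with $S^1$ are simple homotopy equivalent.

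For $(2)\Rightarrow(3)$, suppose $g\colon S^1\times L_1\to S^1\times L_2$ is a homotopy equivalence, and write $\pi_1 L_i=\mathbb{Z}/p_i$, so that $\pi_1(S^1\times L_i)=\mathbb{Z}\times\mathbb{Z}/p_i$. Since the torsion subgroup is characteristic, $g_{\#}$ restricts to an isomorphism $\mathbb{Z}/p_1\cong\mathbb{Z}/p_2$, forcing $p_1=p_2=:p$, and on the infinite cyclic quotients $g_{\#}$ acts by $\pm1$; hence $g_{\#}$ carries a generator of the $S^1$-summand of $\pi_1(S^1\times L_1)$ to an element whose $S^1$-coordinate is $\pm1$. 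Consequently the composite $S^1\times L_1\xrightarrow{g}S^1\times L_2\xrightarrow{\mathrm{pr}}S^1$ induces $\pm(\mathrm{pr}_1)_{\#}$ on $\pi_1$, where $\mathrm{pr}_1\colon S^1\times L_1\to S^1$ is the first projection. Since $S^1=K(\mathbb{Z},1)$, a map into $S^1$ is determined up to homotopy by its effect on $\pi_1$, so $\mathrm{pr}\circ g$ is homotopic to $r\circ\mathrm{pr}_1$ for a degree $\pm1$ self-map $r$ of $S^1$. Taking homotopy fibres, the fact that $g$ is an equivalence gives $\mathrm{hofib}(\mathrm{pr}\circ g)\simeq\mathrm{hofib}(\mathrm{pr})=L_2$, while the fact that $r$ is an equivalence gives $\mathrm{hofib}(r\circ\mathrm{pr}_1)\simeq\mathrm{hofib}(\mathrm{pr}_1)=L_1$; since $\mathrm{pr}\circ g\simeq r\circ\mathrm{pr}_1$ have equivalent homotopy fibres, $L_1\simeq L_2$. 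Equivalently, one may phrase this through the infinite cyclic covers $\mathbb{R}\times L_i\simeq L_i$ corresponding to the projection $\pi_1(S^1\times L_i)\to\mathbb{Z}$, which $g$ respects up to sign.

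The only step with genuine content is $(2)\Rightarrow(3)$, and even there the single delicate point is the verification that $\mathrm{pr}\circ g$ is homotopic to a standard projection; I have reduced this to the elementary classification of maps into $K(\mathbb{Z},1)$ by their effect on $\pi_1$, after which the homotopy-fibre comparison is formal. The implication $(3)\Rightarrow(1)$ is then a one-line consequence of the Whitehead-torsion product formula together with $\chi(S^1)=0$.
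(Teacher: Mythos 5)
Your proof is correct and takes essentially the same route as the paper: $(3)\Rightarrow(1)$ via the Whitehead-torsion product formula with $\chi(S^1)=0$ (the paper cites Cohen (23.2) for exactly this), and $(2)\Rightarrow(3)$ by passing to infinite cyclic covers $\mathbb{R}\times L_i\simeq L_i$, which is precisely what your homotopy-fibre comparison amounts to and which you observe yourself. The only difference is that you spell out the $\pi_1$-level bookkeeping showing the projection to $S^1$ is respected up to sign, which the paper leaves implicit.
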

\begin{proof}
(1)$\Rightarrow$(2). Trivial.

(2)$\Rightarrow$(3). A homotopy equivalence $h \colon  S^{1} \times L_{1} \rightarrow S^{1} \times L_{2}$ is lifted to be a homotopy equivalence $\bar{h} \colon  \mathbb{R}^{1} \times L_{1} \rightarrow \mathbb{R}^{1} \times L_{2}$ between infinite cyclic covers. The conclusion follows.

(3)$\Rightarrow$(1). Suppose $g \colon  L_{1} \rightarrow L_{2}$ is a homotopy equivalence. By \cite[(23.2)]{Cohen}, we infer $\mathrm{id} \times g \colon  S^{1} \times L_{1} \rightarrow S^{1} \times L_{2}$ is a simple homotopy equivalence.
\end{proof}

\begin{example}\label{exa_S2_T2}
Let $\Phi (t)$ ($t \in \mathbb{R}$) and $\Psi$ be the following elements in $\mathrm{O} (3)$:
\[
\Phi (t) =
\begin{bmatrix}
\cos 2 \pi t & - \sin 2 \pi t & 0 \\
\sin 2 \pi t & \cos 2 \pi t & 0 \\
0 & 0 & 1
\end{bmatrix},
\qquad
\Psi =
\begin{bmatrix}
1 & 0 & 0 \\
0 & 1 & 0 \\
0 & 0 & -1
\end{bmatrix}.
\]
Define a $\mathbb{Z}^{2}$-action on $\mathbb{R}^{2} \times \mathbb{R}^{3}$ as
\begin{align*}
\mathbb{Z} \times \mathbb{Z} \times \mathbb{R} \times \mathbb{R} \times \mathbb{R}^{3} & \rightarrow \mathbb{R} \times \mathbb{R} \times \mathbb{R}^{3} \\
(m,n,a,b,v) & \mapsto (a+m, b+n, \Phi^{m} (b) v)
\end{align*}
or
\begin{align*}
\mathbb{Z} \times \mathbb{Z} \times \mathbb{R} \times \mathbb{R} \times \mathbb{R}^{3} & \rightarrow \mathbb{R} \times \mathbb{R} \times \mathbb{R}^{3} \\
(m,n,a,b,v) & \mapsto (a+m, b+n, \Psi^{m} \Phi^{m} (b) v).
\end{align*}
Let $\xi_{1}$ and $\xi_{2}$ denote the quotient spaces $\mathbb{R}^{2} \times \mathbb{R}^{3} / \sim$ resulted from the first and second actions, respectively. As the projection $\mathbb{R}^{2} \times \mathbb{R}^{3} \rightarrow \mathbb{R}^{2}$ is $\mathbb{Z}^{2}$-equivariant with respect to these actions, $\xi_{1}$ and $\xi_{2}$ are smooth vector bundles over $\mathbb{T}^{2}$ with fiber $\mathbb{R}^{3}$.

Let $E_{i}$ (resp.~$PE_{i}$) denote the associated sphere bundle (resp.~projective bundle) of $\xi_{i}$, i.e.~the fiber of $E_{i}$ (resp.~$PE_{i}$) is the unit sphere (resp.~projective plane) of the corresponding fiber of $\xi_{i}$.

Note that $PE_{1} = PE_{2}$. In fact, the curves $\Phi (t)$ and $\Psi \Phi (t)$, $t \in [0,1]$, represent two loops in $\mathrm{PO} (3) : = \mathrm{O}(3) /\{ \pm 1 \}$. Since $\mathrm{PO} (3) = \mathrm{PSO} (3)$ is connected, there is a path connecting $\Psi$ and $\mathrm{id}$ in $\mathrm{PO} (3)$ which induces a free homotopy between these two loops.

Furthermore, we claim $\pi_{1} (PE_{1}) = \mathbb{Z}^{2} \oplus \mathbb{Z} /2$. Consider the projection of $PE_{1}$ on the first factor $S^{1}$ of $\mathbb{T}^{2}$, then $PE_{1}$ is a $S^{1} \times \mathbb{R}P^{2}$-bundle over $S^{1}$. The monodromy action on $\pi_{1} (S^{1} \times \mathbb{R}P^{2})$ is trivial. The conclusion follows.
\end{example}

\begin{proposition}\label{prop_Z2_4-manifold}
The $E_{1}$, $E_{2}$, and $PE_{1}$ in Example \ref{exa_S2_T2}, $\mathbb{T}^{2} \times S^{2}$, $S^{1} \times (S^{1} \tilde{\times} S^{2})$ and $\mathbb{T}^{2} \times \mathbb{R} P^{2}$ have the following properties:
\begin{enumerate}
\item $\mathbb{T}^{2} \times \mathbb{R} P^{2}$ and $PE_{1}$ are smooth $\mathbb{R}P^{2}$-bundles over $\mathbb{T}^{2}$, others are smooth $S^{2}$-bundles over $\mathbb{T}^{2}$.

\item $\mathbb{T}^{2} \times S^{2}$ is a $2$-cover of $S^{1} \times (S^{1} \tilde{\times} S^{2})$, $\mathbb{T}^{2} \times \mathbb{R} P^{2}$, $E_{1}$ and $E_{2}$; moreover, $E_{1}$ is a $2$-cover of $PE_{1}$.

\item They are diffeomorphic to certain nontrivial covers of themselves.

\item They are of distinct homotopy types.
\end{enumerate}
\end{proposition}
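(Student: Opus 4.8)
The plan is to reduce all four assertions to elementary bundle theory over $\mathbb{T}^2$. The main preliminary step is to identify the bundles of Example~\ref{exa_S2_T2}: writing $\eta$ for the oriented rank-$2$ bundle over $\mathbb{T}^2$ obtained from the $x_1x_2$-block of $\Phi$ (i.e. $(\mathbb{R}^2\times\mathbb{R}^2)/\mathbb{Z}^2$ with $(m,n)$ acting on $\mathbb{R}^2$ by the rotation $\Phi(b)^m|_{x_1x_2}$), and $L$ for the real line bundle over $\mathbb{T}^2$ coming from the $x_3$-action of $\Psi^m$ (monodromy $(-1)^m$ around the first circle), one has $\xi_1\cong\eta\oplus\underline{\mathbb{R}}$ and $\xi_2\cong\eta\oplus L$. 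The circle bundle $S(\eta)\to\mathbb{T}^2$ is, after the substitution $z=\theta-2\pi ab$, precisely the integer Heisenberg nilmanifold, so $e(\eta)=\pm1$ and hence $w_2(\eta)\neq0$ in $H^2(\mathbb{T}^2;\mathbb{Z}/2)$. Consequently $w_1(\xi_1)=0$, $w_2(\xi_1)\neq0$, $w_1(\xi_2)=w_1(L)\neq0$, and $w_2(\xi_2)=w_2(\eta)\neq0$. With this in hand, part (1) is immediate: $\mathbb{T}^2\times S^2$ and $\mathbb{T}^2\times\mathbb{R}P^2$ are the trivial bundles, $E_i=S(\xi_i)$ and $PE_1=P(\xi_1)$ are by definition sphere, resp. projective, bundles, and $S^1\times(S^1\tilde{\times}S^2)$ is the product of $S^1$ with an $S^2$-bundle over $S^1$, hence an $S^2$-bundle over $S^1\times S^1=\mathbb{T}^2$.

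For part (2) I would exhibit all five coverings directly. The fibrewise antipodal double cover $S^2\to\mathbb{R}P^2$ yields $\mathbb{T}^2\times S^2\to\mathbb{T}^2\times\mathbb{R}P^2$ and $E_1=S(\xi_1)\to P(\xi_1)=PE_1$. Pulling $S^1\tilde{\times}S^2$ back along the degree-$2$ self-cover of its base $S^1$ squares the order-$2$ monodromy, giving the trivial bundle $S^1\times S^2\to S^1\tilde{\times}S^2$, and crossing with $\mathrm{id}_{S^1}$ gives $\mathbb{T}^2\times S^2\to S^1\times(S^1\tilde{\times}S^2)$. Finally, for a degree-$2$ cover $f\colon\mathbb{T}^2\to\mathbb{T}^2$ (taken to be the orientation cover of $E_2$ in the $\xi_2$ case), the pullback $f^*\xi_i$ has vanishing $w_1$ and $w_2$ — since $f^*$ multiplies $H^2(\mathbb{T}^2;\mathbb{Z}/2)$ by $\deg f\equiv 0\pmod 2$ — hence is the trivial oriented rank-$3$ bundle (oriented rank-$3$ bundles over $\mathbb{T}^2$ being classified by $w_2$), so the corresponding double covers of $E_1$ and $E_2$ are $\mathbb{T}^2\times S^2$. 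In each case the total space is connected and the relevant subgroup of $\pi_1$ has index $2$, so these are genuine connected double covers.

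For part (3) I would pull the relevant bundle back along the self-cover $p\mapsto kp$ of $\mathbb{T}^2$ with $k$ odd, say $k=3$. Then $f^*$ multiplies $w_1$ by $k$ and $w_2$ by $k^2$, hence fixes both mod $2$; invoking the classification of $S^2$-bundles over surfaces by $(w_1,w_2)$, and of $\mathbb{R}P^2$-bundles by $w_2$ (using that $\mathrm{Diff}(\mathbb{R}P^2)\simeq\mathrm{SO}(3)$ is connected), gives $f^*\xi_i\cong\xi_i$ and $f^*P(\xi_1)\cong P(\xi_1)$, so the index-$k^2$ cover of $E_1$ (resp. $E_2$, $PE_1$) corresponding to $k\mathbb{Z}\oplus k\mathbb{Z}$ is diffeomorphic to the original, the induced self-monomorphism of $\pi_1$ being multiplication by $k$ on the free-abelian part. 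The product manifolds $\mathbb{T}^2\times S^2$ and $\mathbb{T}^2\times\mathbb{R}P^2$ self-cover via $(p\mapsto kp)\times\mathrm{id}$, and $S^1\times(S^1\tilde{\times}S^2)$ via a product of self-covers of its two circle factors, using an odd-degree self-cover on the $S^1\tilde{\times}S^2$ factor (which pulls that space back to itself).

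For part (4) I would use that $\pi_1$, orientability, and the Stiefel–Whitney classes $w_i(TM)$ of a closed manifold are homotopy invariants (the last via Wu's formula, valid for the $\mathbb{Z}/2$-fundamental class of any closed manifold). The fundamental groups split the six spaces into $\{\mathbb{T}^2\times S^2,\ S^1\times(S^1\tilde{\times}S^2),\ E_1,\ E_2\}$ (with $\pi_1=\mathbb{Z}^2$) and $\{\mathbb{T}^2\times\mathbb{R}P^2,\ PE_1\}$ (with $\pi_1=\mathbb{Z}^2\oplus\mathbb{Z}/2$). In the first group $\mathbb{T}^2\times S^2$ and $E_1$ are orientable and the other two are not; from $w(TE_i)=p^*w(\xi_i)$ (as $T E_i=p^*T\mathbb{T}^2\oplus\tau_{\mathrm{fib}}$ with $\tau_{\mathrm{fib}}\oplus\underline{\mathbb{R}}\cong p^*\xi_i$) and the injectivity of $p^*$ on $H^2(\,\cdot\,;\mathbb{Z}/2)$, one gets that $\mathbb{T}^2\times S^2$ is spin while $w_2(E_1)\neq0$, and $w_2(S^1\times(S^1\tilde{\times}S^2))=0$ while $w_2(E_2)\neq0$; so all four are pairwise non-homotopy-equivalent. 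For the remaining pair, the canonical double cover corresponding to the torsion subgroup $\mathbb{Z}/2\le\pi_1$ is $\mathbb{T}^2\times S^2$ for $\mathbb{T}^2\times\mathbb{R}P^2$ and $E_1$ for $PE_1$; these are not homotopy equivalent, and a homotopy equivalence respects this canonical cover, so $\mathbb{T}^2\times\mathbb{R}P^2\not\simeq PE_1$. The main obstacle throughout is the opening step — establishing $\xi_1\cong\eta\oplus\underline{\mathbb{R}}$, $\xi_2\cong\eta\oplus L$ and the computation $w_2(\eta)\neq0$ (equivalently, that $E_1$ is not spin); once that and the classification of low-rank bundles over $\mathbb{T}^2$ are in place, everything else is routine bookkeeping with covers.
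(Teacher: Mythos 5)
Your proof is correct and has the same skeleton as the paper's: parts (1)--(3) all come down to pulling the bundles $\xi_i$ back along self-covers of $\mathbb{T}^2$, and part (4) compares Stiefel--Whitney classes via $w(TE_i)=p^*w(\xi_i)$ (with $p^*$ injective on $H^2(\,\cdot\,;\mathbb{Z}/2)$ by the Gysin sequence, and Wu's formula giving homotopy invariance), plus the canonical orientation double cover to separate $\mathbb{T}^2\times\mathbb{R}P^2$ from $PE_1$. The divergence is in the computational engine. The paper works directly from the clutching description: the loop $\Phi$ is the nontrivial class in $\pi_1(\mathrm{SO}(3))=\mathbb{Z}/2$, so the obstruction-theoretic reading of $w_2$ (Milnor--Stasheff Theorem~12.1) gives $w_2(\xi_i)\neq0$, and the isomorphisms $f^*\xi_i\cong\underline{\mathbb{R}}^3$ in part (2) and $f^*\xi_i\cong\xi_i$ in part (3) are read off from $\Phi^2\simeq\mathrm{const}$ and $\Phi^3\simeq\Phi$ in $\mathrm{SO}(3)$, respectively. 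You instead decompose $\xi_1\cong\eta\oplus\underline{\mathbb{R}}$, $\xi_2\cong\eta\oplus L$, compute $e(\eta)=\pm1$ via the Heisenberg nilmanifold, and then invoke the classification of rank-$3$ bundles over a surface by $(w_1,w_2)$ together with $\mathrm{Diff}(\mathbb{R}P^2)\simeq\mathrm{SO}(3)$ — a correct and conceptually transparent route, though it trades the paper's elementary clutching observation for a classification theorem (that $B\mathrm{O}(3)\to B\mathrm{O}$ is $3$-connected) which you should state and cite rather than just invoke. One cosmetic difference in (3): you pull back along $p\mapsto kp$ on both circle factors (an index-$k^2$ self-cover), while the paper uses $([a],[b])\mapsto([3a],[b])$ (index $3$), which better reflects that only the $a$-direction carries the $\Phi$-twist; both work.
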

\begin{proof}
(1). This is obvious.

(2). It suffices to prove $\mathbb{T}^{2} \times S^{2}$ is a $2$-cover of $E_{1}$ and $E_{2}$. Let $f \colon  \mathbb{T}^{2} \rightarrow \mathbb{T}^{2}$ be $f([a],[b]) = ([2a],[b])$. Since the loop $\Phi (t)$, $t \in [0,1]$, in Example \ref{exa_S2_T2} represents an element in $\pi_{1} (\mathrm{SO}(3)) = \mathbb{Z}/2$, we know $f^{*} \xi_{1}$ is smoothly isomorphic to a trivial bundle. Thus $\mathbb{T}^{2} \times S^{2}$ is a $2$-cover of $E_{1}$. Similarly, $\Psi^{2} \Phi^{2} (t) = \Phi^{2} (t)$, it is also a $2$-cover of $E_{2}$.

(3). It suffices to verify the cases of $E_{1}$, $E_{2}$, and $PE_{1}$. Let $f \colon  \mathbb{T}^{2} \rightarrow \mathbb{T}^{2}$ be $f([a],[b]) = ([3a],[b])$. Similar to (2), we know $f^{*} \xi_{i}$ is isomorphic to $\xi_{i}$ as smooth vector bundles. Thus $E_{i}$ and $PE_{i}$ are diffeomorphic to their certain $3$-covers.

(4). Clearly, $\mathbb{T}^{2} \times \mathbb{R} P^{2}$ and $PE_{1}$ have fundamental groups $\mathbb{Z}^{2} \oplus \mathbb{Z}/2$ (see Example \ref{exa_S2_T2}), others have fundamental groups $\mathbb{Z}^{2}$; additionally, $\mathbb{T}^{2} \times S^{2}$ and $E_{1}$ are orientable, others are nonorientable. We separate these manifolds into three families. The first (resp.~second) one consists of $\mathbb{T}^{2} \times S^{2}$ and $E_{1}$ (resp.~$S^{1} \times (S^{1} \tilde{\times} S^{2})$ and $E_{2}$), the third one contains the remained. One manifold in a family is not homotopy equivalent to one in another family.

Firstly, we prove $\mathbb{T}^{2} \times S^{2}$ is not homotopy equivalent to $E_{1}$ by comparing their Stiefel-Whitney classes $w_{2}$. Since $\mathbb{T}^{2} \times S^{2}$ is parallelizable, $w_{2} (\mathbb{T}^{2} \times S^{2}) = 0$. On the other hand, the tangent bundle of $E_{1}$ has the decomposition $TE_{1} \cong p^{*} T \mathbb{T}^{2} \oplus \eta$, where $p \colon  E_{1} \rightarrow \mathbb{T}^{2}$ is the projection and $\eta$ is the subbundle of $TE_{1}$ tangent to the fiber of $p$. Since $T \mathbb{T}^{2}$ is trivial,
\[
w_{2} (E_{1}) = w_{2} (T E_{1}) = w_{2} (\eta).
\]
Recall that $E_{1}$ is the associated sphere bundle of the vector bundle $\xi_{1}$ in Example \ref{exa_S2_T2}. Clearly, $p^{*} \xi_{1} = \eta \oplus \varepsilon$, where $\varepsilon$ is a trivial line bundle. Then
\[
w_{2} (\eta) = w_{2} (p^{*} \xi_{1}) = p^{*} w_{2} (\xi_{1}),
\]
where the last $p^{*}$ is the homomorphism $H^{2} (\mathbb{T}^{2}; \mathbb{Z}/2) \rightarrow H^{2} (E_{1}; \mathbb{Z}/2)$. Note that the loop $\Phi (t)$, $t \in [0,1]$, in Example \ref{exa_S2_T2} represents the nontrivial element in $\pi_{1} (\mathrm{SO} (3)) = \mathbb{Z} /2$. By the obstruction theoretic interpretation of Stiefel-Whitney classes (\cite[Theorem~12.1]{Milnor_Stasheff}), we infer $w_{2} (\xi_{1}) \ne 0$. Since the fiber of $\xi_{1}$ is $3$-dimensional, the Euler class $e (\xi_{1}) =0$. By Gysin sequence (\cite[Theorem~12.2]{Milnor_Stasheff}), $p^{*} w_{2} (\xi_{1}) \ne 0$ too, which implies $w_{2} (E_{1}) \ne 0$. By Wu's formula (\cite[Theorem~11.14]{Milnor_Stasheff}), Stiefel-Whitney classes are homotopy invariants of closed manifolds. Thus $\mathbb{T}^{2} \times S^{2}$ is not homotopy equivalent to $E_{1}$.

Secondly, we compare $S^{1} \times (S^{1} \tilde{\times} S^{2})$ with $E_{2}$. By a calculation similar to the above, we also have $w_{2} (S^{1} \times (S^{1} \tilde{\times} S^{2})) =0$ and $w_{2} (E_{2}) \ne 0$, which implies that $S^{1} \times (S^{1} \tilde{\times} S^{2})$ is not homotopy equivalent to $E_{2}$.

It remains to compare $\mathbb{T}^{2} \times \mathbb{R} P^{2}$ with $PE_{1}$. Consider their Stiefel-Whitney classes $w_{1}$ as homomorphisms from their fundamental groups to $\mathbb{Z} /2$. Then $\mathbb{T}^{2} \times S^{2}$ (resp. $E_{1}$) is the $2$-cover of $\mathbb{T}^{2} \times \mathbb{R} P^{2}$ (resp. $PE_{1}$) with fundamental group $\ker w_{1}$. We prove $\mathbb{T}^{2} \times \mathbb{R} P^{2}$ and $PE_{1}$ are of different homotopy types by contradiction. Suppose not, there would be a homotopy equivalence $h \colon  \mathbb{T}^{2} \times \mathbb{R} P^{2} \rightarrow PE_{1}$. By Wu's formula again,
\[
w_{1} (\mathbb{T}^{2} \times \mathbb{R} P^{2}) = h^{*} w_{1} (PE_{1}).
\]
Thus $h$ would be lifted to be a homotopy equivalence $\bar{h} \colon  \mathbb{T}^{2} \times S^{2} \rightarrow E_{1}$, which is a contradiction.
\end{proof}

\begin{lemma}\label{lem_self-homotopy}
Suppose $f$ is a self-homotopy equivalence of $S^{1} \times S^{2}$ or $S^{1} \times \mathbb{R} P^{2}$ which induces the identity automorphism on the homology with $\mathbb{Z}$-coefficient. Then $f$ is homotopic to $\mathrm{id}$ or the map $([t], v) \mapsto ([t], \Phi (t) v)$, where $\Phi$ is the one in Example \ref{exa_S2_T2}.
\end{lemma}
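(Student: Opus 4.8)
The plan is, in both cases $N = S^{2}$ and $N = \mathbb{R}P^{2}$, to reduce $f$ to a self-map ``over $S^{1}$'' of the trivial bundle $S^{1}\times N \to S^{1}$, and then to read off the classification from $\pi_{1}$ of the identity component of the space of self-homotopy-equivalences of $N$. First I would note that $H_{1}(S^{1}\times N;\mathbb{Z})$ already pins down $f_{\#}$ on $\pi_{1}$: it equals $\pi_{1}$ when $N=S^{2}$, and it equals $\mathbb{Z}\oplus\mathbb{Z}/2$ when $N=\mathbb{R}P^{2}$, a group on which $\mathrm{Aut}$ acts faithfully through the abelianization. So, after a homotopy, $f$ fixes the base point and induces the identity on $\pi_{1}$. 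Writing $f=(f_{1},f_{2})$ with respect to the product structure of the target and using $S^{1}=K(\mathbb{Z},1)$, we have $[\,S^{1}\times N,S^{1}\,] = H^{1}(S^{1}\times N;\mathbb{Z})\cong\mathbb{Z}$ (since $H^{1}(S^{2};\mathbb{Z})=H^{1}(\mathbb{R}P^{2};\mathbb{Z})=0$), detected faithfully on $H_{1}$; the normalization on $\pi_{1}$ forces $f_{1}\simeq\mathrm{pr}_{1}$. Homotoping $f$ (using that $\mathrm{pr}_{1}$ is a fibration) I may assume $f_{1}=\mathrm{pr}_{1}$, so $f$ is a map over $S^{1}$; being a homotopy equivalence over $S^{1}$ it is a fibrewise homotopy equivalence (Dold), and the restriction of $f_{2}$ to the base-point fibre is a self-homotopy-equivalence of $N$ inducing the identity on $\pi_{1}(N)$. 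Such a map is homotopic to $\mathrm{id}_{N}$: for $N=S^{2}$ it is a degree-one self-map, while for $N=\mathbb{R}P^{2}$ one lifts it to a $\mathbb{Z}/2$-equivariant self-map of $S^{2}$, composes with the antipode if necessary to get degree one, and uses that a degree-one equivariant self-map of $S^{2}$ is equivariantly homotopic to $\mathrm{id}$.

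Consequently $f$ is represented, up to homotopy, by a loop in $\mathrm{hAut}_{\mathrm{id}}(N)$, the identity path component of the (open-in-$\mathrm{Map}(N,N)$) monoid of self-homotopy-equivalences of $N$, and two such $f$ are homotopic whenever the loops are freely homotopic. Thus it suffices to prove $\pi_{1}(\mathrm{hAut}_{\mathrm{id}}(N),\mathrm{id})\cong\mathbb{Z}/2$, with the conjugation action trivial and with a generator represented by the rotation loop $\Phi\colon S^{1}\to\mathrm{SO}(3)\to\mathrm{hAut}_{\mathrm{id}}(N)$. Granting this, the constant loop yields $f\simeq\mathrm{id}$ and the rotation loop yields the map $([t],v)\mapsto([t],\Phi(t)v)$, which is precisely the asserted dichotomy.

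For $N=S^{2}$ I would use the evaluation fibration $\Omega^{2}_{1}S^{2}\to\mathrm{hAut}_{\mathrm{id}}(S^{2})=\mathrm{Map}_{1}(S^{2},S^{2})\xrightarrow{\mathrm{ev}}S^{2}$: since $\pi_{1}(S^{2})=0$, its exact sequence gives $\pi_{1}(\mathrm{Map}_{1}(S^{2},S^{2}))\cong\pi_{3}(S^{2})/\partial\pi_{2}(S^{2})$, where the connecting map is the Whitehead product with the fundamental class, $\alpha\mapsto[\alpha,\iota_{2}]$; hence its image is $\mathbb{Z}\cdot[\iota_{2},\iota_{2}]=\mathbb{Z}\cdot 2\eta=2\pi_{3}(S^{2})$, so $\pi_{1}(\mathrm{Map}_{1}(S^{2},S^{2}))\cong\mathbb{Z}/2$, which is abelian (so conjugation is trivial), and $\mathrm{SO}(3)\to\mathrm{Map}_{1}(S^{2},S^{2})$ hits the nontrivial element. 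For $N=\mathbb{R}P^{2}$ I would observe that a $\pi_{1}$-isomorphism self-map of $\mathbb{R}P^{2}$ has exactly two $\mathbb{Z}/2$-equivariant lifts to $S^{2}$, of opposite degrees, the two lifts of $\mathrm{id}_{\mathbb{R}P^{2}}$ having degrees $\pm1$; restricting the double cover $\mathrm{Map}(S^{2},S^{2})^{\mathbb{Z}/2}\to\mathrm{Map}(\mathbb{R}P^{2},\mathbb{R}P^{2})_{\pi_{1}\text{-iso}}$ then identifies $\mathrm{hAut}_{\mathrm{id}}(\mathbb{R}P^{2})$ with the space of degree-one $\mathbb{Z}/2$-equivariant self-maps of $S^{2}$, i.e. with a component of the section space of the $S^{2}$-bundle over $\mathbb{R}P^{2}$ with antipodal monodromy. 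An obstruction-theoretic computation of $\pi_{1}$ of this section space — primary term $H^{1}(\mathbb{R}P^{2};\pi_{2}(S^{2}))$ with the sign local system (the antipode acts on $\pi_{2}(S^{2})$ by $-1$, giving $H^{1}(\mathbb{R}P^{2};\mathbb{Z}_{-})\cong\mathbb{Z}/2$), secondary obstruction in $H^{2}(\mathbb{R}P^{2};\pi_{3}(S^{2}))$ with trivial coefficients (by bilinearity of the Whitehead product, a degree-$d$ self-map of $S^{2}$ acts on $\pi_{3}(S^{2})$ by $d^{2}$, so the antipode acts trivially) — again produces $\mathbb{Z}/2$, with $\Phi\subset\mathrm{SO}(3)$ a generator.

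The main obstacle is this mapping-space computation in the last step, and within it the identification of the connecting homomorphism of the evaluation fibration: it is exactly the Whitehead square $[\iota_{2},\iota_{2}]=2\eta$ that produces the $2$-torsion, and the equivariant/section-space analogue for $\mathbb{R}P^{2}$ must be carried through with the correct twisted coefficients. Verifying that the rotation loop is genuinely nontrivial, rather than merely knowing that there are at most two homotopy classes, is a secondary point and, since the statement is phrased with ``or'', is not strictly needed for the conclusion.
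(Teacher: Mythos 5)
Your strategy agrees with the paper's: homotope $f$ to a fiber-homotopy equivalence over the trivial bundle $S^1 \times N \to S^1$, so that its homotopy class is governed by a free homotopy class of loop in $G(N)$, and then compute $\pi_1$ of the relevant component of $G(N)$. The paper cites Hansen (for $S^2$) and Yamanoshita together with Gon\c{c}alves--Spreafico (for $\mathbb{R}P^2$) for the isomorphism $\pi_1(\mathrm{SO}(3)) \cong \pi_1(G_1(N))$; you instead attempt to reprove these computations. The $S^2$ computation via the evaluation fibration and the Whitehead square $[\iota_2,\iota_2]=2\eta$ is classical and correct insofar as it gives $\pi_1(\mathrm{Map}_1(S^2,S^2)) \cong \mathbb{Z}/2$. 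The $\mathbb{R}P^2$ sketch via section-space obstruction theory is considerably more fragmentary: you would still need to handle potential differentials between the $H^1$ and $H^2$ terms of the Federer spectral sequence, and to justify the identification of the identity component of $G(\mathbb{R}P^2)$ with a component of the equivariant mapping space.

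The concrete error is your closing remark that ``verifying that the rotation loop is genuinely nontrivial \dots is not strictly needed for the conclusion.'' It \emph{is} needed, and it is exactly the substance of the results the paper cites. Your own argument gives $\pi_1(G_1(N)) \cong \mathbb{Z}/2$ \emph{exactly}, so after the reduction there are precisely two homotopy classes of $f$. If $\Phi$ were nullhomotopic in $G_1(N)$, the map $([t],v)\mapsto([t],\Phi(t)v)$ would be homotopic to $\mathrm{id}$, and a loop representing the nontrivial element of $\pi_1(G_1(N))$ would produce an $f$ homotopic to neither named representative, making the lemma false. So you still owe the claim (asserted without argument in your proof) that $\mathrm{SO}(3)\to G_1(N)$ is nontrivial on $\pi_1$. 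For $S^2$ this can be extracted by mapping the fibration $\mathrm{SO}(2)\to\mathrm{SO}(3)\to S^2$ into the evaluation fibration: the fiber map $\mathrm{SO}(2)\to\Omega^2_1 S^2$ has adjoint the Hopf map and is hence a $\pi_1$-isomorphism, after which the five lemma does the rest. A smaller gap: for $N=S^2$ you infer ``degree-one'' from the $\pi_1$-normalization, but $\pi_1(S^2)=0$ carries no degree information; you must invoke the hypothesis that $f$ is the identity on $H_2$, restricted to a fiber.
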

\begin{proof}
We first verify the case of $S^{1} \times \mathbb{R} P^{2}$. Consider the bundle $p \colon  S^{1} \times \mathbb{R} P^{2} \rightarrow S^{1}$. Since the action of $f$ on $\pi_{1} (S^{1} \times \mathbb{R} P^{2})$ is the identity, we have $p f \simeq p$. By a homotopy perturbation, we may assume $f$ is a fiber-homotopy equivalence, i.e. it has the form $([t], v) \mapsto ([t],  g([t]) v)$, where $g$ is a loop in $G(\mathbb{R} P^{2})$ which is the space of self-homotopy equivalence of $\mathbb{R} P^{2}$. There is an evident inclusion $\mathrm{SO} (3) \hookrightarrow G(\mathbb{R} P^{2})$. By \cite[Theorem~4.3]{Yamanoshita} and \cite[Remark~3.2]{Goncalves_Spreafico}, this inclusion induces an isomorphism between fundamental groups. As the elements in $\pi_{1} (\mathrm{SO} (3)) = \mathbb{Z}/2$ are represented by a constant loop and the loop $\Phi$, we obtain the conclusion for $S^{1} \times \mathbb{R} P^{2}$.

Duplicating the above argument, we obtain the conclusion for $S^{1} \times S^{2}$. Now we need the fact $\pi_{1} (\mathrm{SO} (3)) = \pi_{1} (G_{1} (S^{2}))$ which is proved in \cite{Hansen}, where $G_{1} (S^{2})$ is the component of $G (S^{2})$ containing the identity.
\end{proof}

\begin{proposition}\label{prop_S2_T2}
Suppose $M$ is a closed $4$-manifold which is homotopy equivalent to a fibration over $\mathbb{T}^{2}$ with fiber $S^{2}$ (resp.~$\mathbb{R}P^{2}$). Then $M$ is homeomorphic (resp.~homotopy equivalent) to $\mathbb{T}^{2} \times S^{2}$, $S^{1} \times (S^{1} \tilde{\times} S^{2})$, $E_{1}$ or $E_{2}$ (resp.~$\mathbb{T}^{2} \times \mathbb{R} P^{2}$ or $PE_{1}$).
\end{proposition}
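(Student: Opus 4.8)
The plan is to reduce the problem to a classification of fibrations over $\mathbb{T}^{2}$ with fiber $S^{2}$ (resp.\ $\mathbb{R}P^{2}$) up to fiber homotopy equivalence, to recognize the resulting models as the bundles already exhibited in Lemma~\ref{lem_4-manifold} and Example~\ref{exa_S2_T2}, and finally---in the $S^{2}$ case, where the conclusion is up to homeomorphism---to upgrade the homotopy equivalence $M\simeq E$ to a homeomorphism by topological surgery.

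First I would classify the fibrations. A fibration over $\mathbb{T}^{2}$ with fiber $F$ is a homotopy class of map $\mathbb{T}^{2}\to B\mathrm{haut}(F)$. The key observation is that the natural map $B\mathrm{O}(3)\to B\mathrm{haut}(S^{2})$ (resp.\ $B\mathrm{SO}(3)\to B\mathrm{haut}(\mathbb{R}P^{2})$, coming from the $\mathrm{SO}(3)$-action on $\mathbb{R}P^{2}=S^{2}/\{\pm 1\}$) is $2$-connected: it is a bijection on components, and on $\pi_{2}$ it is the isomorphism $\pi_{1}(\mathrm{SO}(3))\xrightarrow{\cong}\pi_{1}(\mathrm{haut}_{1}(F))$ supplied by Hansen's theorem (resp.\ by Yamanoshita and Gon\c{c}alves--Spreafico), exactly the input already used in Lemma~\ref{lem_self-homotopy}. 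Since $\mathbb{T}^{2}$ has dimension $2$, it follows that $[\mathbb{T}^{2},B\mathrm{O}(3)]\to[\mathbb{T}^{2},B\mathrm{haut}(S^{2})]$ and $[\mathbb{T}^{2},B\mathrm{SO}(3)]\to[\mathbb{T}^{2},B\mathrm{haut}(\mathbb{R}P^{2})]$ are bijections; hence every $S^{2}$-fibration over $\mathbb{T}^{2}$ is fiber homotopy equivalent to the sphere bundle of an $\mathrm{O}(3)$-bundle, and every $\mathbb{R}P^{2}$-fibration to the projectivization $\mathbb{P}(\xi)$ of an $\mathrm{SO}(3)$-bundle $\xi$. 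Now $\mathrm{O}(3)$-bundles over $\mathbb{T}^{2}$ are classified by $(w_{1},w_{2})\in H^{1}(\mathbb{T}^{2};\mathbb{Z}/2)\times H^{2}(\mathbb{T}^{2};\mathbb{Z}/2)$, and after normalizing $w_{1}$ by an automorphism of $\mathbb{T}^{2}$ (which changes the total space only by a diffeomorphism) there remain exactly four total spaces; computing Stiefel--Whitney classes as in the proof of Proposition~\ref{prop_Z2_4-manifold} identifies them with $\mathbb{T}^{2}\times S^{2}$, $S^{1}\times(S^{1}\tilde{\times}S^{2})$, $E_{1}$ and $E_{2}$. Likewise $\mathrm{SO}(3)$-bundles over $\mathbb{T}^{2}$ are classified by $w_{2}\in H^{2}(\mathbb{T}^{2};\mathbb{Z}/2)=\mathbb{Z}/2$, yielding $\mathbb{T}^{2}\times\mathbb{R}P^{2}$ and $PE_{1}$. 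Proposition~\ref{prop_Z2_4-manifold}(4) guarantees these candidates are genuinely of distinct homotopy types, so in the $\mathbb{R}P^{2}$ case the proof is complete.

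It remains, in the $S^{2}$ case, to show that a closed $\mathrm{TOP}$ $4$-manifold $M$ homotopy equivalent to one of $\mathbb{T}^{2}\times S^{2}$, $S^{1}\times(S^{1}\tilde{\times}S^{2})$, $E_{1}$, $E_{2}$ is homeomorphic to it. Here $\pi_{1}(M)=\mathbb{Z}^{2}$ is good in the sense of Freedman--Quinn, so topological surgery and the $s$-cobordism theorem are available in dimension $4$, and I would run the surgery exact sequence
\[
L_{5}(\mathbb{Z}[\mathbb{Z}^{2}])\to\mathcal{S}^{\mathrm{TOP}}(X)\to\mathcal{N}^{\mathrm{TOP}}(X)\to L_{4}(\mathbb{Z}[\mathbb{Z}^{2}])
\]
for each such $X$. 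Using the Shaneson splitting $L_{*}(\mathbb{Z}[\mathbb{Z}^{2}])\cong L_{*}(\mathbb{Z})\oplus 2L_{*-1}(\mathbb{Z})\oplus L_{*-2}(\mathbb{Z})$ and the computation of $\mathcal{N}^{\mathrm{TOP}}(X)=[X,G/\mathrm{TOP}]$ from $H^{*}(X)$, one analyzes $\mathcal{S}^{\mathrm{TOP}}(X)$ and checks that, modulo the large group of self-homotopy-equivalences of these bundles (base automorphisms, the fiber flip, and twists by maps $\mathbb{T}^{2}\to\mathrm{SO}(3)$), the only homeomorphism type occurring is that of $X$ itself; equivalently one may appeal to the existing classification of closed $\mathrm{TOP}$ $4$-manifolds with fundamental group $\mathbb{Z}^{2}$ whose universal cover is homotopy equivalent to $S^{2}$.

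The main obstacle I expect is precisely this last surgery step: one must pin down the topological structure sets of the four manifolds and verify that no exotic homeomorphism type escapes the list. By contrast, once the $2$-connectivity of $B\mathrm{O}(3)\to B\mathrm{haut}(S^{2})$ is granted, the homotopy-theoretic classification is essentially forced, the only remaining (routine) work being the Stiefel--Whitney-class bookkeeping matching the four $\mathrm{O}(3)$-bundles and the two $\mathrm{SO}(3)$-bundles with the named manifolds.
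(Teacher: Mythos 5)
Your route to the homotopy classification is correct but genuinely different from the paper's. The paper invokes Hillman's Theorem~6.11 up front so that, in the $S^2$ case, it suffices to determine the homotopy type of $M$; it then splits $\pi_1(\mathbb{T}^2)$ according to whether the monodromy acts trivially on $H_2(S^2)$, refibers over $S^1$ with fiber $S^1\times S^2$ (resp.\ $S^1\times\mathbb{R}P^2$), and pins down the monodromy via Lemma~\ref{lem_self-homotopy}. You instead classify $S^2$- and $\mathbb{R}P^2$-fibrations over $\mathbb{T}^2$ directly through $[\mathbb{T}^2,B\mathrm{haut}(F)]$ and the connectivity of $B\mathrm{O}(3)\to B\mathrm{haut}(S^2)$ and $B\mathrm{SO}(3)\to B\mathrm{haut}(\mathbb{R}P^2)$. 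Both approaches consume the identical input from the literature, namely the $\pi_1$-computations for the self-equivalence spaces due to Hansen, Yamanoshita and Gon\c{c}alves--Spreafico, so this is a trade in presentation rather than a shortcut: yours is conceptually cleaner, the paper's avoids classifying-space machinery by fibering over $S^1$ twice. For the homeomorphism upgrade in the $S^2$ case you and the paper land on the same reference (Hillman); your outlined surgery-exact-sequence alternative remains a sketch, as you acknowledge.

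One imprecision worth fixing: a $2$-connected map $B\mathrm{O}(3)\to B\mathrm{haut}(S^2)$ yields only a \emph{surjection} $[\mathbb{T}^2,B\mathrm{O}(3)]\to[\mathbb{T}^2,B\mathrm{haut}(S^2)]$, since $\mathbb{T}^2$ is $2$-dimensional. Bijectivity would require the map to be $3$-connected, i.e.\ some control of $\pi_2(\mathrm{haut}_1(S^2))$ and $\pi_2(\mathrm{haut}_1(\mathbb{R}P^2))$, which you have not invoked. Fortunately your argument never uses injectivity: surjectivity already reduces the problem to $\mathrm{O}(3)$- (resp.\ $\mathrm{SO}(3)$-) bundles, and the distinctness of the resulting candidates is established independently by Proposition~\ref{prop_Z2_4-manifold}(4). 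Replace ``bijections'' by ``surjections'' and the logic is sound.
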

\begin{proof}
We first consider the case that the fiber is $S^{2}$. Obviously, the Euler characteristic of $M$ is $0$. Since $\pi_{1} (X) = \mathbb{Z}^{2}$, by \cite[Theorem~6.11]{Hillman}, it suffices to determine the homotopy type of $M$.

Let $K$ be the set consisting of elements of $\pi_{1} (\mathbb{T}^{2})$ whose monodromy actions on $H_{2} (S^{2}; \mathbb{Z})$ are trivial. Then $K \le \pi_{1} (\mathbb{T}^{2})$ and $[\pi_{1} (\mathbb{T}^{2}): K] \le 2$.

Suppose $K = \pi_{1} (\mathbb{T}^{2})$. Then $M$ is homotopy equivalent to a fibration over $S^{1}$ with fiber $S^{1} \times S^{2}$. We already know the monodromy action on
\[
H_{2} (S^{1} \times S^{2}; \mathbb{Z}) \cong H_{2} (S^{2}; \mathbb{Z})
\]
is trivial. Since $\pi_{1} (M) = \pi_{1} (\mathbb{T}^{2})$ is abelian, the monodromy action on
\[
H_{1} (S^{1} \times S^{2}; \mathbb{Z}) \cong H_{1} (S^{1}; \mathbb{Z}),
\]
and hence on the whole homology of $S^{1} \times S^{2}$, is trivial too. By Lemma \ref{lem_self-homotopy}, the monodromy is homotopic to $\mathrm{id}$ or the map $([t], v) \mapsto ([t], \Phi (t) v)$, which implies $M$ is homotopy equivalent to $\mathbb{T}^{2} \times S^{2}$ or $E_{1}$.

Otherwise, $K \ne \pi_{1} (\mathbb{T}^{2})$. Changing a basis of $\pi_{1} (\mathbb{T}^{2}) = \mathbb{Z}^{2}$ if necessary, we may assume
\[
K = (2 \mathbb{Z}) \oplus \mathbb{Z} \le \mathbb{Z}^{2} = \pi_{1} (\mathbb{T}^{2}).
\]
Then $M$ is homotopy equivalent to a fibration over $S^{1}$ with fiber $S^{1} \times S^{2}$. The monodromy action on $H_{1} (S^{1} \times S^{2}; \mathbb{Z})$ is trivial, whereas the action on $H_{2} (S^{1} \times S^{2}; \mathbb{Z})$ is $-\mathrm{id}$. Note that the map $([t],v) \mapsto ([t], \Psi v)$ is self-homeomorphism of $S^{1} \times S^{2}$ whose action on homology is the same as that of the monodromy, where $\Psi$ is the one in Example \ref{exa_S2_T2}. By Lemma \ref{lem_self-homotopy}, the monodromy is homotopic to $([t],v) \mapsto ([t], \Psi v)$ or $([t], v) \mapsto ([t], \Psi \Phi (t) v)$, which implies $M$ is homotopy equivalent to $S^{1} \times (S^{1} \tilde{\times} S^{2})$ or $E_{2}$.

The remained case is that the fiber is $\mathbb{R}P^{2}$. Now $M$ is homotopy equivalent to a fibration over $S^{1}$ with fiber $S^{1} \times \mathbb{R}P^{2}$. (Note that every fibration over $S^{1}$ with fiber $\mathbb{R}P^{2}$ is trivial.) The conclusion also follows from Lemma \ref{lem_self-homotopy}.
\end{proof}

We are at a position to prove Theorem \ref{thm_4-manifold}.
\begin{proof}[Proof of Theorem \ref{thm_4-manifold}]
Clearly, the concrete manifolds are closed $\mathrm{DIFF}$ $4$-manifolds which are diffeomorphic to certain nontrivial covers of themselves. The only probably nontrivial part of this claim is proved in the (3) in Proposition \ref{prop_Z2_4-manifold}. By comparing fundamental groups and the (4) in Proposition \ref{prop_Z2_4-manifold}, they are of distinct homotopy types.

By Corollaries \ref{cor_Poincare-0}, \ref{cor_Poincare-1}, \ref{cor_Poincare-2} and \ref{cor_Poincare-3}, the $M$ is homotopy equivalent to a fibration over a torus whose fiber is a closed manifold with abelian fundamental group, and the fiber has dimension less than $4$. Thus $M$ is a fibration over certain $\mathbb{T}^{n}$ whose fiber is a closed $(4-n)$-manifold with finite abelian fundamental group. Thus the only possibilities of $\pi_{1} (M)$ are $\mathbb{Z}$, $\mathbb{Z} \oplus \mathbb{Z} /2$, $\mathbb{Z} \oplus \mathbb{Z} /k$ with $k>2$, $\mathbb{Z}^{2}$, $\mathbb{Z}^{2} \oplus \mathbb{Z} /2$ or $\mathbb{Z}^{4}$. We study these possibilities one by one.

(1). The conclusion follows from \cite[Theorem~E]{Qin_Su_Wang}.

(2). We know $M$ is homotopy equivalent to a fibration over $S^{1}$ with fiber $\mathbb{R}P^{3}$. Since $\pi_{2} (\mathbb{R}P^{3}) =0$, by \cite[Theorem~IIb]{Olum}, the monodromy is homotopic to the $\mathrm{id}$ or the homeomorphism $[x_{1}, x_{2}, x_{3}, x_{4}] \mapsto [x_{1}, x_{2}, x_{3}, -x_{4}]$. Thus $M$ is homotopy equivalent to $S^{1} \times \mathbb{R}P^{3}$ or $S^{1} \tilde{\times} \mathbb{R}P^{3}$. A $2$-cover is homotopy equivalent to $S^{1} \times S^{3}$ or $S^{1} \tilde{\times} S^{3}$ respectively, and hence the conclusion follows from (1).

(3). We know $M$ is homotopy equivalent to a fibration over $S^{1}$ with fiber $L$. Let $f \colon  L \rightarrow L$ denote the monodromy. We shall prove $f \simeq \mathrm{id}$.

Since $\pi_{1} (M)$ is abelian, $f_{\#} \colon  \pi_{1} (L) \rightarrow \pi_{1} (L)$ is the identity. Let's construct an Eilenberg-MacLane space $K(\mathbb{Z}/k, 1)$ by attaching cells with dimension above $3$ to $L$. Let $\iota \colon  L \rightarrow K(\mathbb{Z}/k, 1)$ be the inclusion. Then $\iota f \simeq \iota$. Let $1 \in H_{3} (L; \mathbb{Z}) = \mathbb{Z}$ be the chosen orientation. Then $f_{*} (1) = \pm 1$ and
\[
\iota_{*} (1) = [1] \in H_{3} (K(\mathbb{Z}/k, 1), \mathbb{Z}) = \mathbb{Z}/k.
\]
As $\iota_{*} (1) = \iota_{*} f_{*} (1)$, we obtain $1 \equiv \pm 1 (\mathrm{mod}\ k)$. Since $k>2$, we infer $f_{*} (1) = 1$. Since $\pi_{2} (L) =0$, by \cite[Theorem~IIb]{Olum} again, we get $f \simeq \mathrm{id}$, and hence $M \simeq S^{1} \times L$.

A $k$-cover is homotopy equivalent to $S^{1} \times S^{3}$. The conclusion follows from (1) again.

(4) and (5). We know $M$ is homotopy equivalent to a fibration over $\mathbb{T}^{2}$ with fiber $S^{2}$ or $\mathbb{R}P^{2}$. The conclusion follows from Proposition \ref{prop_S2_T2}.

(6). Now $\pi_{1} (M) = \mathbb{Z}^{4}$. We know $M$ is homotopy equivalent to $\mathbb{T}^{4}$. The conclusion follows from \cite[\S~11.5]{Freedman_Quinn}.
\end{proof}

\section{High Dimensional Manifolds: Fibering}\label{sec_fibering}
In this section, we shall prove fibering Theorems \ref{thm_bundle_s1}, \ref{thm_approximate_fibration}, \ref{thm_block_bundle}, \ref{thm_stable_bundle}, \ref{thm_bundle_T2} and \ref{thm_bundle}.

Let's recall the definition of a $\mathrm{CAT}$ (fiber) bundle at first (see e.g. \cite[p.~10]{Kirby_Siebenmann}).
\begin{definition}
Suppose $E$, $B$ and $F$ are $\mathrm{CAT}$ manifolds, $p \colon  E \rightarrow B$ is a $\mathrm{CAT}$ morphism. Suppose $\forall b \in B$, there exist an open neighborhood $U$ of $b$ and a $\mathrm{CAT}$ isomorphism $\Phi \colon  U \times F \rightarrow p^{-1} (U)$ such that the following diagram commutes
\[
\xymatrix{
  U \times F \ar[rr]^{\Phi} \ar[dr]
                &  &    p^{-1} (U) \ar[dl]^{p}    \\
                & U                 },
\]
where $U \times F \rightarrow U$ is the projection. Then we call $p \colon  E \rightarrow B$ a \textit{$\mathrm{CAT}$ fiber bundle}, or a \textit{$\mathrm{CAT}$ bundle}, with fiber $F$.
\end{definition}

\begin{proof}[Proof of Theorem \ref{thm_bundle_s1}]
(1). This follows from the (1) in Theorem \ref{thm_Poincare}.

(2). Let $\overline{M}$ be the cover of $M$ with $\pi_{1} (\overline{M}) =G$. By the (2) in Theorem \ref{thm_Poincare}, $\overline{M}$ is finitely dominated. By the assumption that $\pi_{1} (M)$ is free abelian, so is $G$. We then know the reduced $K$-group $\widetilde{K}_{0} (\mathbb{Z} [G]) =0$, which implies that the Wall's finiteness obstruction of $\overline{M}$ vanishes. Thus $\overline{M}$ is homotopy equivalent to a finite CW complex. Let $Y$ be the cover of $M$ with $\pi_{1} (Y) = \ker \theta$. We see $Y$ is homotopy equivalent to a fibration over $\mathbb{T}^{n-1}$ with fiber $\overline{M}$. So $Y$ is also homotopy equivalent to a finite CW complex. Clearly, there exists a continuous map $p \colon  M \rightarrow S^{1}$ such that $p_{\#} = \theta$.

Since $\pi_1(M)$ is free abelian, the Cappell Splitting Theorem \cite[Corollary 6]{Cappell} together with the $S$-Cobordism Theorem implies that
$p$ is homotopic to a $\mathrm{CAT}$ bundle projection. (The details are identical to those in the 7th and 8th paragraphs of \cite[p.~1899]{Qin_Su_Wang}.)
\end{proof}

Let's recall the definition of approximate fibrations. Following \cite[Definition~2.2]{FLS2018}, we only consider the case that the base space is compact. (For general case, see \cite[\S~12]{HTW90}.)
\begin{definition}\label{def_approximate_fibration}
Let $E$ be a topological space, $(B,d)$ be a compact metric space, $p \colon  E \rightarrow B$ be a continuous map. Suppose $\forall \epsilon >0$, for any topological space $X$, for any homotopy $\lambda  \colon  X \times [0,1] \rightarrow B$ and any continuous map $f \colon  X \rightarrow E$ with $pf = \lambda_{0}$, there exits a homotopy $\Lambda  \colon  X \times [0,1] \rightarrow E$ such that $\Lambda_{0} = f$ and, $\forall (x,t) \in X \times [0,1]$, $d(p \Lambda (x,t), \lambda (x,t)) < \epsilon$. Then we call $p \colon  E \rightarrow B$ an \textit{approximate fibration}.
\end{definition}
Since $B$ is compact, it's easy to see Definition \ref{def_approximate_fibration} is independent of the choice of the metric $d$ compatible with the topology of $B$.

\begin{proof}[Proof of Theorem \ref{thm_approximate_fibration}]
By Theorem \ref{thm_Poincare}, $\overline{M}$ is finitely dominated. Since $\pi_{1} (\overline{M})$ is free abelian, we further infer $\overline{M}$ is homotopy equivalent to a finite CW complex. By Theorem \ref{thm_Poincare} again, the homotopy fiber of $p$ is homotopy equivalent to a finite Poincar\'{e} complex.

We know $\mathbb{T}^{n}$ satisfies the condition of the $B$ in \cite[Theorem~1.4]{FLS2018}. Since $\pi_{1} (M)$ is free abelian, the Whitehead group $\mathrm{Wh} (\pi_{1} (M))$, and hence all quotient groups of $\mathrm{Wh} (\pi_{1} (M))$, vanish. Thus the $N \tau (p)$ in \cite[Conjecture~1.3]{FLS2018} vanishes. Additionally, $\dim M \ne 4$ by the assumption, the conclusion follows from \cite[Theorem~1.4]{FLS2018}.
\end{proof}

We recall the definition of block bundles in the special case that the total spaces are $\mathrm{CAT}$ manifolds. (See e.g. \cite[p.~473]{Rourke_Sanderson71} and \cite[Definition~2.4]{Ebert_Randal} for a general definition.)
\begin{definition}\label{def_block_bundle}
Suppose $E$ and $F$ are $\mathrm{CAT}$ manifolds, $K$ is a simplicial complex, $p \colon  E \rightarrow |K|$ is a continuous map. Suppose for any simplex $\sigma \in K$ with faces $\partial_{0} \sigma, \dots, \partial_{n} \sigma$, we have $(p^{-1} (\sigma); p^{-1} (\partial_{0} \sigma), \dots, p^{-1} (\partial_{n} \sigma))$ is an $(n+2)$-ad of $\mathrm{CAT}$ submanifolds of $E$. Furthermore, there is a $\mathrm{CAT}$ isomorphism of $(n+2)$-ads
\[
\Phi \colon  \ \ (\sigma \times F; \partial_{0} \sigma \times F, \dots, \partial_{n} \sigma \times F) \rightarrow (p^{-1} (\sigma); p^{-1} (\partial_{0} \sigma), \dots, p^{-1} (\partial_{n} \sigma)).
\]
Then we call $p \colon  E \rightarrow |K|$ a \textit{$\mathrm{CAT}$ block bundle} with fiber $F$.
\end{definition}

Obviously, a $\mathrm{CAT}$ bundle is a $\mathrm{CAT}$ block bundle, but not vice versa. (Here we assume the base of this $\mathrm{CAT}$ bundle carries a triangulation compatible with its $\mathrm{CAT}$ structure.) One remarkable difference between a bundle and a block bundle is that, for a block bundle, the following diagram needs \textit{not} to commute.
\[
\xymatrix{
  \sigma \times F \ar[rr]^{\Phi} \ar[dr]
                &  &    p^{-1} (\sigma) \ar[dl]^{p}    \\
                & \sigma                 }
\]

\begin{proof}[Proof of Theorem \ref{thm_block_bundle}]
By Theorem \ref{thm_approximate_fibration}, we can arrange $p \colon  M \rightarrow \mathbb{T}^{n}$ as an approximate fibration with homotopy fiber $\overline{M}$. Since $\pi_{1} (\overline{M})$ is free abelian, we infer $\overline{M}$ is homotopy equivalent to a finite CW complex and $\mathrm{Wh} (\pi_{1} (\overline{M}) \times \mathbb{Z}^{k}) =0$ for all $k \ge 0$. Since $m-n \ge 5$, by \cite[Theorem~3.3.2]{Quinn79}, choosing a suitable triangulation of $\mathbb{T}^{n}$ compatible with its standard $\mathrm{PL}$ structure, we can modify $p$ further to be a block bundle projection via a homotopy perturbation.
\end{proof}

\begin{remark}
The definition of approximate fibration in \cite[\S~3.3]{Quinn79} is slightly different from \cite[Definition~2.2]{FLS2018} and hence our Definition \ref{def_approximate_fibration}. However, by \cite[Theorem~12.13]{HTW90}, these definitions are equivalent when $E$ and $B$ are compact $\mathrm{TOP}$ manifolds.
\end{remark}

Before proving Theorems \ref{thm_stable_bundle}, \ref{thm_bundle_T2} and \ref{thm_bundle}, we need some general results promoting $\mathrm{TOP}$ block bundles to $\mathrm{TOP}$ bundles. In the following Lemmas \ref{lem_stable}, \ref{lem_base_2_dim} and \ref{lem_fiber_high_connected}, let $p \colon  E \rightarrow B$ be a $\mathrm{TOP}$ block bundle with fiber a closed $\mathrm{TOP}$ manifold $F$, let $B$ be a $\mathrm{PL}$ manifold with a fixed triangulation, $\dim B =b$, and $\dim F = f$.

Following the convention of \cite{BLR1975}, let $A(F)$ denote the $\mathrm{CAT}$ automorphism group of $F$, let $\tilde{A}(F)$ denote the semisimplicial group of $\mathrm{CAT}$ automorphisms of $F$. More precisely, an $n$-simplex of $\tilde{A}(F)$ is a $\mathrm{CAT}$ automorphism of $\Delta^{n} \times F$, where $\Delta^{n}$ is the standard geometric $n$-simplex. Furthermore, $\Delta^{n} \times F$ has a face structure induced by that of $\Delta^{n}$, and the automorphisms of $\Delta^{n} \times F$ preserve faces. The semisipicial structure of $\tilde{A}(F)$ follows from that of $\{ \Delta^{n} \mid n \geq 0 \}$ in an evident way. We can also consider $A(F)$ as a subgroup of $\tilde{A}(F)$, i.e. $A(F)_{n} = \{ g \in \tilde{A}(F)_{n} \mid g= \mathrm{id} \times a, a \in A(F) \}$.

\begin{lemma}\label{lem_stable}
Suppose $B$ is compact. Then the composition map $E \times \mathbb{T}^{s} \rightarrow E \overset{p}{\rightarrow} B$ is homotopic to a $\mathrm{TOP}$ bundle projection, where $s= \tfrac{1}{2} b(b-1)$.
\end{lemma}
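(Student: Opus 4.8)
The plan is to put a genuine $\mathrm{TOP}$ bundle structure on the block bundle $E\times\mathbb{T}^{s}\to E\overset{p}{\to}B$ by straightening its block transition data one skeleton of $B$ at a time, using the extra torus factors as the ``room'' needed to absorb the block--versus--bundle discrepancy. Fix the given triangulation of $B$ and write $B^{(0)}\subseteq B^{(1)}\subseteq\dots\subseteq B^{(b)}=B$ for its skeleta. Recall from \cite{BLR1975} (cf.\ \cite{Kirby_Siebenmann}) that $\mathrm{TOP}$ block bundles over $B$ with fiber a closed $\mathrm{TOP}$ manifold $F$ are classified by homotopy classes of maps $B\to B\tilde A(F)$, that honest $\mathrm{TOP}$ bundles with fiber $F$ are classified by maps $B\to BA(F)$, and that a block bundle is fiber-homotopic over $B$ to a genuine bundle precisely when its classifying map lifts along $BA(F)\to B\tilde A(F)$, whose homotopy fiber is $\tilde A(F)/A(F)$. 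Crossing $E$ with $\mathbb{T}^{s}$ does not change $B$: it replaces $F$ by $F\times\mathbb{T}^{s}$ and composes the classifying map with the stabilization $B\tilde A(F)\to B\tilde A(F\times\mathbb{T}^{s})$. So it suffices to show that for $s=\tfrac12 b(b-1)$ this composite lifts along $BA(F\times\mathbb{T}^{s})\to B\tilde A(F\times\mathbb{T}^{s})$.

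I would argue by induction on $i$, maintaining: after crossing the fiber with $\mathbb{T}^{s_{i}}$, where $s_{1}=0$ and $s_{i}=s_{i-1}+(i-1)$, the resulting block bundle is a genuine $\mathrm{TOP}$ bundle over $B^{(i)}$. For $i=1$ this is the elementary fact that a block bundle over a $1$-complex is a bundle: a $0$-simplex of $\tilde A(F)$ is an honest homeomorphism, so $A(F)_{0}=\tilde A(F)_{0}$, hence $\pi_{0}A(F)\to\pi_{0}\tilde A(F)$ is onto and $\pi_{0}\bigl(\tilde A(F)/A(F)\bigr)=0$, which kills the only obstruction (a class in $H^{1}(B;\pi_{0}(\tilde A(F)/A(F)))$, with the local coefficients determined by the block bundle). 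For the inductive step, suppose the bundle is genuine over $B^{(i-1)}$ after crossing with $\mathbb{T}^{s_{i-1}}$; set $F':=F\times\mathbb{T}^{s_{i-1}}$. The obstruction to extending the genuine bundle structure over the $i$-cells of $B$ is a single class in $H^{i}\bigl(B;\pi_{i-1}(\tilde A(F')/A(F'))\bigr)$ (twisted coefficients). Crossing with one further $\mathbb{T}^{i-1}$ (handling all $i$-cells at once) sends this class, by naturality of obstruction classes under the map of fibrations induced by stabilization, to the image of the coefficient homomorphism
\[
\pi_{i-1}\!\bigl(\tilde A(F')/A(F')\bigr)\longrightarrow\pi_{i-1}\!\bigl(\tilde A(F'\times\mathbb{T}^{i-1})/A(F'\times\mathbb{T}^{i-1})\bigr).
\]
By the ``wrapping up''/torus--trick results of \cite{BLR1975} this homomorphism is zero --- intuitively the $i-1$ new circle directions supply exactly the independent parameters needed to straighten an $(i-1)$-parameter family of block homeomorphisms of $F'$ into fiber--preserving ones, an iterated parametrized form of the $i=1$ case --- so the stage-$i$ obstruction vanishes and the bundle is genuine over $B^{(i)}$ after crossing with $\mathbb{T}^{s_{i}}$. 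Crossing with more circles never destroys a genuine bundle structure already present over a lower skeleton, so the induction is monotone.

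Taking $i=b$ yields a genuine $\mathrm{TOP}$ bundle structure on $E\times\mathbb{T}^{s}\to B$ with $s=s_{b}=\sum_{i=1}^{b}(i-1)=\tfrac12 b(b-1)$; since block-isomorphic block bundles have homotopic projections, the resulting bundle projection is homotopic, through maps $E\times\mathbb{T}^{s}\to B$, to the given composite $E\times\mathbb{T}^{s}\to E\overset{p}{\to}B$. The main obstacle is the ``wrapping up'' step: making rigorous that $i-1$ extra circle factors both suffice and are what is needed to kill the degree-$(i-1)$ block--versus--bundle obstruction --- i.e.\ the vanishing of the displayed stabilization map --- and checking that this vanishing is compatible with the local coefficient systems carried along. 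Everything else (the classifying-space dictionary for block bundles, the obstruction-theoretic set-up, and the bookkeeping that sums the costs $i-1$ to $\tfrac12 b(b-1)$) is routine once that input is in hand.
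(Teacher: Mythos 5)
Your argument is a correct reconstruction of the proof of Corollary~2 in \cite[p.~42]{BLR1975}, which is exactly what the paper's proof cites directly: the skeletal induction, the wrapping-up input killing the degree-$(i-1)$ obstruction at a cost of $i-1$ circle factors, and the bookkeeping $\sum_{i=1}^{b}(i-1)=\tfrac12 b(b-1)$ all match the structure of BLR's argument, so this is essentially the same approach spelled out rather than cited. The one point the paper makes explicit that you leave implicit is that Corollary~2 and its supporting Lemmas 3.12 and 3.14 in \cite{BLR1975} are stated in the $\mathrm{PL}$ category, so one must check (as the paper notes) that those lemmas carry over to $\mathrm{TOP}$ before the conclusion can be used here.
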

\begin{proof}
This follows immediately from Corollary 2 in \cite[p.~42]{BLR1975}. That corollary essentially relies on Lemmas 3.12 and 3.14 in \cite{BLR1975} which hold for both $\mathrm{PL}$ and $\mathrm{TOP}$.
\end{proof}

\begin{lemma}\label{lem_base_2_dim}
Suppose $b=2$, $F$ is $1$-connected and $f \ge 4$. Then $p$ is homotopic to a $\mathrm{TOP}$ bundle projection.
\end{lemma}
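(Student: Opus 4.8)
The plan is to promote $p$ to a genuine bundle by obstruction theory over the simplices of a triangulation of $B$, the input being the comparison of $\mathrm{TOP}$ automorphism groups with their block analogues from \cite{BLR1975}. Recall that a $\mathrm{TOP}$ block bundle over $B$ with fibre $F$ is classified by a map $B \to B\tilde{A}(F)$, while a genuine $\mathrm{TOP}$ bundle with fibre $F$ is classified by a map $B \to BA(F)$, and the two are compared by the fibration $BA(F) \to B\tilde{A}(F)$ with homotopy fibre $\tilde{A}(F)/A(F)$. Thus $p$ is homotopic to a bundle projection exactly when the classifying map of $p$ lifts across this fibration; since $\dim B = b = 2$, the only possible obstructions to such a lift lie in $H^{1}\big(B;\pi_{0}(\tilde{A}(F)/A(F))\big)$ and $H^{2}\big(B;\pi_{1}(\tilde{A}(F)/A(F))\big)$. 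Hence it is enough to prove that $\tilde{A}(F)/A(F)$ is $1$-connected.

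Concretely, this plays out in two steps over the triangulation. Over the $1$-skeleton $B^{(1)}$ the block bundle is already a bundle: over an edge $\sigma$ the ad-isomorphism $\Phi_{\sigma}\colon \sigma \times F \to p^{-1}(\sigma)$ lets one replace $p|_{p^{-1}(\sigma)}$ by the honest projection $\mathrm{pr}_{1}\circ \Phi_{\sigma}^{-1}$, and after absorbing the discrepancies of the two induced identifications $p^{-1}(v)\cong F$ at each vertex $v$ into the charts, these assemble to a $\mathrm{TOP}$ bundle $q^{(1)}$ over $B^{(1)}$ with $q^{(1)}$ fibre-homotopic to $p|_{p^{-1}(B^{(1)})}$. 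Over a $2$-simplex $\tau$ one must extend $q^{(1)}|_{\partial\tau}$ to a $\mathrm{TOP}$ bundle over $\tau$; as $\tau$ is contractible this is possible precisely when the monodromy of $q^{(1)}|_{\partial\tau}$ vanishes in $\pi_{0}A(F)$, and since this bundle is block-trivial over $\partial\tau$ (its block monodromy is trivial), the vanishing follows once $\pi_{0}A(F)\to \pi_{0}\tilde{A}(F)$ is injective, i.e.\ once concordance implies isotopy for self-homeomorphisms of $F$. When the monodromy vanishes, any extension over $\tau$ serves (the extensions form a torsor over $\pi_{1}A(F)$, so there is no secondary obstruction), and the extensions over the $2$-simplices, which meet only along $B^{(1)}$, glue to the required bundle; the resulting projection is homotopic to $p$ because it carries the same block-bundle data. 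Note that $B$ need not be compact, since the extension is done one $2$-simplex at a time.

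The crux, and the only content beyond this routine bookkeeping, is the $1$-connectivity of $\tilde{A}(F)/A(F)$, equivalently the injectivity of $\pi_{0}A(F)\to\pi_{0}\tilde{A}(F)$ together with $\pi_{1}(\tilde{A}(F)/A(F))=0$. This is exactly where the hypotheses enter: $F$ is $1$-connected and $f=\dim F\ge 4$, so the handle-theoretic arguments underlying ``concordance implies isotopy'' are carried out in $F\times[0,1]$, a simply connected manifold of dimension $\ge 5$, the stable range in which the pseudoisotopy theorem and its $\mathrm{TOP}$ refinements apply. The comparison results of \cite{BLR1975}, which reduce the low-dimensional homotopy of $\tilde{A}(F)/A(F)$ to connectivity properties of the pseudoisotopy space of $F$ and hold in $\mathrm{TOP}$ as well as $\mathrm{PL}$, then supply the needed $1$-connectivity. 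I expect the genuinely delicate point to be the verification that the $\mathrm{TOP}$ pseudoisotopy space of a simply connected $f$-manifold with $f\ge 4$ is connected, since this sits at the boundary of where four-manifold pathologies could intrude; that is the step where one must lean on the machinery of \cite{BLR1975} rather than on elementary arguments, while the simplex-by-simplex assembly above is then formal.
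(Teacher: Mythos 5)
Your approach matches the paper's: both reduce the promotion to showing $\pi_1(\tilde A(F)/A(F))=0$, feed this into obstruction theory (or simplex-by-simplex extension, which amounts to the same thing over a $2$-complex after observing that a block bundle over the $1$-skeleton is already a bundle), and trace the vanishing of $\pi_1$ back to the connectedness of the TOP concordance space $C(F)$, which holds because $F$ is $1$-connected and $f\ge 4$ (Kirby--Siebenmann/Hatcher for $f\ge 5$, Perron/Quinn for $f=4$). The one place where your write-up is looser than it should be is the passage from $\pi_0 C(F)=0$ to $\pi_1(\tilde A(F)/A(F))=0$: the paper makes this explicit via Hatcher's spectral sequence $E^1_{ij}=\pi_j C(F\times[0,1]^i)\Rightarrow\pi_{i+j+1}(\tilde A(F)/A(F))$, whereas you gesture at ``comparison results of \cite{BLR1975}'' without naming the mechanism, and your claimed ``equivalence'' of $1$-connectivity of $\tilde A(F)/A(F)$ with (injectivity of $\pi_0 A(F)\to\pi_0\tilde A(F)$ and $\pi_1(\tilde A(F)/A(F))=0$) is not an equivalence as stated — though it is harmless, since $\pi_0(\tilde A(F)/A(F))$ is automatically trivial (the vertices of $A(F)$ and $\tilde A(F)$ agree) and injectivity of the $\pi_0$ map follows from $\pi_1(\tilde A(F)/A(F))=0$ via the exact sequence of the quotient.
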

\begin{proof}
By the extension of Cerf's Theorem to $\mathrm{TOP}$, we have $C(F)$ is connected, where $C(F)$ is the concordance group of $F$. (When $f \ge 5$, see \cite[p.~116]{Kirby_Siebenmann} or \cite[Theorem~3.1]{Hatcher}. When $f = 4$, see \cite[Corollaire~1]{Perron} or \cite[Theorem~1.4]{Quinn86}.)

We have the Hatcher's spectral sequence \cite[Proposition~2.1]{Hatcher}
\[
E_{ij}^{1} = \pi_{j} C(F \times [0,1]^{i}) \Longrightarrow \pi_{i+j+1} (\tilde{A} (F) / A(F)).
\]
Thus $\pi_{1} (\tilde{A} (F) / A(F)) =0$. Clearly, $E$ can always be reduced to a bundle on the $1$-skeleton of $B$. Since $b=2$, the conclusion follows.
\end{proof}

\begin{lemma}\label{lem_fiber_high_connected}
Suppose $F$ is $r$-connected, $f \ge r+4$, and $\min \{ 2r-1, r+4 \} \ge b$. (Hence $f \ge 5$.) Then $p$ is homotopic to a $\mathrm{TOP}$ bundle projection.
\end{lemma}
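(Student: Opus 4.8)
The plan is to run the classifying-space and obstruction-theoretic argument used in the proof of Lemma~\ref{lem_base_2_dim}, but to push the connectivity input further by exploiting the high connectivity of $F$.

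First I would recall the relevant classifying spaces: a $\mathrm{TOP}$ block bundle over $|B|$ with fibre the closed manifold $F$ is classified by a homotopy class of maps $B\to B\widetilde A(F)$, a genuine $\mathrm{TOP}$ bundle by a map $B\to BA(F)$, and $BA(F)\to B\widetilde A(F)$ is a fibration with fibre $\widetilde A(F)/A(F)$; so $p$ is homotopic to a $\mathrm{TOP}$ bundle projection precisely when its classifying map lifts through this fibration. The fibre $\widetilde A(F)/A(F)$ is connected (since $A(F)$ and $\widetilde A(F)$ have the same $0$-simplices) and, exactly as in Lemma~\ref{lem_base_2_dim}, $p$ reduces to a bundle over the $1$-skeleton of $B$; the successive obstructions to extending this bundle structure over the remaining cells therefore lie in $H^{i+1}(B;\pi_i(\widetilde A(F)/A(F)))$ with $i\ge 1$, and these vanish once $i+1>b=\dim B$. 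Hence it suffices to prove that $\widetilde A(F)/A(F)$ is $(b-1)$-connected, i.e.\ that $\pi_i(\widetilde A(F)/A(F))=0$ for $1\le i\le b-1$.

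Next I would feed this into the Hatcher spectral sequence $E^1_{ij}=\pi_j C(F\times[0,1]^i)\Rightarrow\pi_{i+j+1}(\widetilde A(F)/A(F))$ already quoted in the proof of Lemma~\ref{lem_base_2_dim}, which presents $\pi_n(\widetilde A(F)/A(F))$ as a subquotient of $\bigoplus_{i+j=n-1}\pi_j C(F\times[0,1]^i)$. So the task reduces to showing
\[
\pi_j C\bigl(F\times[0,1]^i\bigr)=0\qquad\text{whenever }i+j\le b-2 .
\]
Every $F\times[0,1]^i$ is $r$-connected of dimension $f+i\ge f\ge 5$, so the terms with $j=0$ vanish by the $\mathrm{TOP}$ version of Cerf's theorem (as in Lemma~\ref{lem_base_2_dim}). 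For $j\ge 1$ I would invoke the connectivity estimates for $\mathrm{TOP}$ concordance spaces of highly connected manifolds: a handle-exchange argument on a concordance of an $r$-connected manifold removes the low-index handles and gives vanishing of $\pi_j C(N)$ in a range growing roughly like $2r$, while Igusa's pseudoisotopy stability theorem governs the stabilised manifolds $F\times[0,1]^i$ for the larger values of $i$ and is responsible for the $r+4$ in the hypothesis (it is here that one needs the $4$-dimensional gap $f\ge r+4$ between the connectivity and the dimension of $F$). Since $b\le\min\{2r-1,r+4\}$, the inequality $i+j\le b-2$ falls within the union of these two ranges, and the lemma then follows from the obstruction-theoretic reduction above.

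The crux is exactly the concordance-connectivity input just described: one must combine the geometric handle-trading bound (the source of the $2r-1$) with the stable pseudoisotopy bound for each stabilisation $F\times[0,1]^i$ (the source of the $r+4$, and the reason for the hypothesis $f\ge r+4$), and check that together they cover the full range $i+j\le b-2$ forced by $b\le\min\{2r-1,r+4\}$. This is the only place where both halves of the numerical hypothesis are genuinely needed, and it requires a careful simultaneous bookkeeping in $i$ and $j$.
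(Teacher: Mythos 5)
There is a genuine gap at the crux of your argument, namely the claimed vanishing $\pi_j C(F\times[0,1]^i)=0$ for $j\ge 1$. The two inputs you invoke do not deliver this. Igusa's pseudoisotopy stability theorem (apart from being a DIFF statement, whereas the paper is working in $\mathrm{TOP}$) is a \emph{stability} result: it identifies $\pi_j C(N)$ with $\pi_j C(N\times I)$ in a range depending on $\dim N$, but it says nothing about vanishing. And the handle-exchange/disjunction technique applied to a concordance of an $r$-connected manifold is exactly what gives Cerf's theorem $\pi_0 C(N)=0$; it does not, by itself, kill $\pi_j C(N)$ for $j\ge 1$. In fact the absolute concordance groups of a highly connected manifold are generally nonzero in positive degrees --- in the stable range they encode Waldhausen-type $K$-theoretic information and are nontrivial already for simply connected $N$ --- so the $E^1$-page of Hatcher's spectral sequence does not vanish in the range $i+j\le b-2$ that your argument requires, and the deduction breaks down. (This is precisely why the Hatcher spectral sequence route works in Lemma \ref{lem_base_2_dim}: there $b=2$, the only $E^1$-term that enters is $\pi_0 C(F)$, and Cerf suffices. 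It does not scale to $b\ge 3$ in the way you propose.)

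The paper instead avoids absolute concordance groups entirely. It quotes \cite[Corollary~3.2]{BLR1975}, which packages Morlet's lemma of disjunction as a \emph{relative} connectivity statement
\[
\pi_j\bigl(\tilde A(F)/A(F),\,\tilde A(D^f)/A(D^f)\bigr)=0\qquad\text{for } j\le b-1,
\]
under exactly your numerical hypotheses ($F$ $r$-connected, $f\ge r+4$, $\min\{2r-1,r+4\}\ge b$); it then observes that $\tilde A(D^f)/A(D^f)$ is contractible in $\mathrm{TOP}$ by the Alexander trick, and finally transports the BLR argument from $\mathrm{PL}$ to $\mathrm{TOP}$ using the $\mathrm{TOP}$ version of Morlet disjunction and the Burghelea--Lashof computation $\pi_j(\widetilde{\mathrm{TOP}}_n,\mathrm{TOP}_n)=0$ for $j\le n+2$. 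The $(b-1)$-connectivity of $\tilde A(F)/A(F)$ then follows from the long exact sequence of the pair, with no appeal to the vanishing of higher concordance groups. You should replace your spectral-sequence step with this relative disjunction argument, or else supply a genuine vanishing theorem for the $E^1$-terms --- which I do not believe exists.
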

\begin{proof}
It suffices to show $\tilde{A} (F) / A(F)$ is $(b-1)$-connected.

Note that this has been essentially proved in \cite{BLR1975} for $\mathrm{PL}$. Let's recall some details. By \cite[Corollary~3.2]{BLR1975},
\begin{equation}\label{lem_fiber_high_connected_1}
\forall j \le b-1, \qquad \pi_{j} (\tilde{A} (F) / A(F), \tilde{A} (D^{f}) / A(D^{f})) =0.
\end{equation}
Here $D^{f}$ is an $f$-dimensional closed disk, $\tilde{A} (D^{f})$ (resp.~$A(D^{f})$) is defined similarly as $\tilde{A} (F)$ (resp.~$A(F)$) except that its $n$-simplices fix $\Delta^{n} \times \partial D^{f}$. By Remark c in \cite[p.~30]{BLR1975}, $\tilde{A} (D^{f}) / A(D^{f})$ is contractible. Thus $\tilde{A} (F) / A(F)$ is $(b-1)$-connected.

A duplication of the above argument provides a proof for $\mathrm{TOP}$ because all facts needed for $\mathrm{PL}$ are also true for $\mathrm{TOP}$. For the convenience of the reader, we mention the key points with references. First of all, the Morlet's Lemma of disjunction holds for $\mathrm{TOP}$ (see \cite[p.~142]{BLR1975}). Second, $\pi_{j} (\widetilde{\mathrm{TOP}}_{n}, \mathrm{TOP}_{n}) =0$ for $j \le n+2$ (see \cite[Proposition~5.6]{Burghelea_Lashof}), where $\mathrm{TOP}_{n}$ (resp.~$\widetilde{\mathrm{TOP}}_{n}$) is the automorphism group (resp.~semisimplicial automorphism group) of $\mathbb{R}^{n}$ in $\mathrm{TOP}$. Thus, following the proof of \cite[Corollary~3.2]{BLR1975}, we see \eqref{lem_fiber_high_connected_1} also holds for $\mathrm{TOP}$. Finally, $\tilde{A} (D^{f}) / A(D^{f})$ is contractible in $\mathrm{TOP}$ by the Alexander's trick (see e.g.~the proof of \cite[Lemma~5.1]{Browder66}).
\end{proof}

\begin{proof}[Proof of Theorem \ref{thm_stable_bundle}]
This follows from Theorem \ref{thm_block_bundle} and Lemma \ref{lem_stable}.
\end{proof}

\begin{proof}[Proof of Theorem \ref{thm_bundle_T2}]
(1).  For $m \ge 5$ and $m=4$, this had been proved in Theorems B and E in \cite{Qin_Su_Wang}, respectively. For $m<4$, this is well-known, see e.g.~the third paragraph in \cite[p.~1887]{Qin_Su_Wang}.

(2). By Theorem \ref{thm_block_bundle}, we can arrange $p \colon  M \rightarrow \mathbb{T}^{n}$ as a block bundle projection with $1$-connected fiber $F$, and $\dim F \ge 5$. By Lemma \ref{lem_base_2_dim}, we can modify $p$ further to be a bundle projection via a homotopy pertubation.
\end{proof}

\begin{proof}[Proof of Theorem \ref{thm_bundle}]
By Theorem \ref{thm_block_bundle}, we can arrange $p \colon  M \rightarrow \mathbb{T}^{n}$ as a block bundle projection with $1$-connected fiber $F$. Since $F$ is homotopy equivalent to the universal cover of $M$, and $\pi_{i} (M) =0$ for $2 \le i \le r$, we see $F$ is $r$-connected. Now the conclusion follows from Lemma \ref{lem_fiber_high_connected}.
\end{proof}

\section{High Dimensional Manifolds: Non-Fibering}\label{sec_nonfibering}
In this section, we shall prove the non-fibering Theorem \ref{thm_nonfibering}.

\begin{lemma}\label{lem_Tn_S2}
Suppose $M$ is a closed $\mathrm{TOP}$ manifold homotopy equivalent to $\mathbb{T}^{n} \times S^{2}$ or $\mathbb{T}^{n} \times S^{3}$. If $M$ is a $\mathrm{TOP}$ bundle over $\mathbb{T}^{n}$, then $M$ is smoothable and the rational Pontryagin class $p_{2} (M) =0$.
\end{lemma}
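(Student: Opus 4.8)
The plan is to prove that such a bundle is, up to TOP isomorphism, a linear sphere bundle over $\mathbb{T}^{n}$; both conclusions then follow at once from the structure of real vector bundles over a torus.

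First I would pin down the fiber. Let $F$ be the fiber of $p\colon M\to\mathbb{T}^{n}$; after replacing $\mathbb{T}^{n}$ by a finite (hence still toral) cover if necessary, we may assume $F$ is connected, so $F$ is a closed $\mathrm{TOP}$ manifold with $\dim F=\dim M-n\in\{2,3\}$. The homotopy exact sequence of the bundle shows $\pi_{1}(M)\to\pi_{1}(\mathbb{T}^{n})$ is onto; as both groups are $\mathbb{Z}^{n}$ and $\mathbb{Z}^{n}$ is Hopfian, this map is an isomorphism, so $\pi_{1}(F)$, being its kernel, is the image of $\pi_{2}(\mathbb{T}^{n})=0$. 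Thus $F$ is simply connected, hence $F\cong S^{2}$ (a simply connected closed surface) or $F\cong S^{3}$ (a simply connected closed $3$-manifold, using that $\mathrm{DIFF}=\mathrm{TOP}$ in dimension $3$ and the Poincar\'e conjecture). So $p$ is a $\mathrm{TOP}$ $S^{i}$-bundle over $\mathbb{T}^{n}$ with $i\in\{2,3\}$, classified by a map $\mathbb{T}^{n}\to B\mathrm{Homeo}(S^{i})$.

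Next I would linearize. Since $\mathrm{Homeo}(S^{2})\simeq O(3)$ (classical) and $\mathrm{Homeo}(S^{3})\simeq O(4)$ (Hatcher's proof of the Smale conjecture $\mathrm{Diff}(S^{3})\simeq O(4)$, together with the equivalence of the smooth and topological categories in dimension $3$), we have $B\mathrm{Homeo}(S^{i})\simeq BO(i+1)$, so $M$ is $\mathrm{TOP}$ isomorphic to the unit sphere bundle $S(\xi)$ of a rank $(i+1)$ real vector bundle $\xi$ over $\mathbb{T}^{n}$. In particular $M$ is the total space of a smooth fiber bundle, hence smoothable, which is the first assertion. For the Pontryagin class, writing $\pi\colon M=S(\xi)\to\mathbb{T}^{n}$, one has $TM\oplus\varepsilon^{1}\cong\pi^{*}(T\mathbb{T}^{n}\oplus\xi)$, and triviality of $T\mathbb{T}^{n}$ gives $p_{j}(M)=\pi^{*}p_{j}(\xi)$ for all $j$. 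If $i=2$ then $\xi$ has rank $3$ and $p_{2}(\xi)=0$ for rank reasons. If $i=3$ then $\xi$ has rank $4$; pulling $\xi$ back along the orientation double cover $\rho\colon T'\to\mathbb{T}^{n}$ of $\xi$ (again a torus, with $\rho^{*}$ injective on rational cohomology) produces an oriented bundle $\rho^{*}\xi$ with $S(\rho^{*}\xi)=\rho^{*}M$ a connected double cover of $M$, hence homotopy equivalent to $\mathbb{T}^{n}\times S^{3}$, and it suffices to show $p_{2}(\rho^{*}\xi)=0$. Rationally $p_{2}(\rho^{*}\xi)=e(\rho^{*}\xi)^{2}$, and comparing the third rational Betti numbers across the Gysin sequence of $S(\rho^{*}\xi)\to T'$ forces the rational Euler class $e(\rho^{*}\xi)$ to vanish. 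Either way $p_{2}(\xi)=0$, so $p_{2}(M)=\pi^{*}p_{2}(\xi)=0$.

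The main obstacle is the linearization step, i.e.\ knowing that $\mathrm{TOP}$ sphere bundles with fiber $S^{2}$ or $S^{3}$ are linear: for $S^{3}$ this rests on Hatcher's theorem and on the low-dimensional smoothing results needed to pass from $\mathrm{Diff}(S^{3})$ to $\mathrm{Homeo}(S^{3})$, while the identification of the fiber in the $3$-dimensional case uses the Poincar\'e conjecture. Once the fiber is recognized as a genuine sphere and the bundle is linearized, everything else is routine characteristic-class bookkeeping.
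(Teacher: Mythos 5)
Your proof is correct and follows the same blueprint as the paper's: identify the fiber as $S^i$ (via the Poincar\'e conjecture), linearize the sphere bundle using $\mathrm{O}(i+1)\hookrightarrow\mathrm{Homeo}(S^i)$ being a homotopy equivalence (Kneser for $i=2$; Cerf--Hatcher for $i=3$), deduce smoothability, and compute $p_2$ from a tangent-bundle decomposition with trivial $T\mathbb{T}^n$. Two remarks. First, you are more explicit than the paper about why the fiber is a sphere---you handle a possibly disconnected fiber by factoring through an intermediate toral cover and run the homotopy exact sequence to get simple connectivity; the paper simply asserts the identification. Second, your $p_2$ computation takes a detour in the $i=3$ case that can be avoided. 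The paper works with the vertical (fiberwise) tangent bundle $\zeta$, of rank $i\le 3$, via $TM\cong\zeta\oplus p^*T\mathbb{T}^n$; since $\zeta$ has rank at most $3$, $p_2(\zeta)=0$ for degree reasons immediately, in both cases. You instead write $TM\oplus\varepsilon^1\cong\pi^*(T\mathbb{T}^n\oplus\xi)$ with $\xi$ of rank $i+1$, and when $i=3$ (so $\xi$ has rank $4$) you separately kill the rational Euler class by a Gysin argument to conclude $p_2(\xi)=e(\xi)^2=0$. That step is correct but superfluous: $\zeta\oplus\varepsilon^1\cong\pi^*\xi$, so $p_2(M)=\pi^*p_2(\xi)=p_2(\zeta)$, which already vanishes for rank reasons without any information about $e(\xi)$.
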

\begin{proof}
By the assumption and the solution to Poincar\'{e} conjecture, $M$ is a $\mathrm{TOP}$ $S^{i}$-bundle over $\mathbb{T}^{n}$, where $i=2$ or $3$.

We know the inclusion $\mathrm{O} (i+1) \hookrightarrow \mathrm{Homeo} (S^{i})$ is a homotopy equivalence, where $\mathrm{Homeo} (S^{i})$ is the group of self-homeomorphims of $S^{i}$. For $i=2$, this fact had been proved by Kneser \cite{Kneser} (see also \cite{Friberg}). For $i=3$, this is a consequence of the combination of Cerf's \cite[3.2.1]{Cerf61} and Hatcher's \cite{Hatcher83}.

Therefore, $M$ is homeomorphic to the sphere bundle associated to a smooth vector bundle over $\mathbb{T}^{n}$ with fiber $\mathbb{R}^{i+1}$, let alone it is smoothable.

Since rational Pontryagin classes are topological invariants (see \cite{Novikov} and the Theorem 5.3 in \cite[p.~317]{Kirby_Siebenmann}), to analyze $p_{2} (M)$, we may assume $M$ itself is a smooth bundle $p \colon  M \rightarrow \mathbb{T}^{n}$ with fiber $S^{i}$. The tangent bundle $TM$ of $M$ has a decomposition
\[
TM \cong \zeta \oplus p^{*} T \mathbb{T}^{n},
\]
where $\zeta$ is the vertical vector bundle over $M$ whose fiber is tangent to the fiber of $p$. Since $T \mathbb{T}^{n}$ is trivial, $p_{2} (M) = p_{2} (TM) = p_{2} (\zeta)$. Since the fiber of $\zeta$ is $\mathbb{R}^{i}$ with $i \le 3$, we infer $p_{2} (\zeta) =0$ and hence $p_{2} (M) = 0$.
\end{proof}

\begin{proof}[Proof of Theorem \ref{thm_nonfibering}]
Let's firstly assume $n \ge 4$. The Spivak normal fibration of $\mathbb{T}^{n}$ is trivial. Consider $\mathrm{TOP}$ surgery problems with target $\mathbb{T}^{n}$. Each element of $[\mathbb{T}^{n}, \mathrm{G}/ \mathrm{TOP}]$ is represented by a surgery problem
\[
(f,b) \colon  \ \ (N, \nu N) \rightarrow (\mathbb{T}^{n}, \xi),
\]
where $N$ is a closed $\mathrm{TOP}$ manifold with stable normal bundle $\nu N$, $\xi$ is a $\mathrm{TOP}$ Euclidean space bundle over $\mathbb{T}^{n}$ whose image in $B \mathrm{G}$ is trivial, $f \colon  N \rightarrow \mathbb{T}^{n}$ is a degree $1$ map, and $b \colon  \nu N \rightarrow \xi$ is a bundle map covering $f$ (see e.g.~\cite[Proposition~9.43]{Ranicki}). By the Theorem 15.1 in \cite[p.~328]{Kirby_Siebenmann}, there is an epimorphism
\[
\Delta_{*} \colon  \ \ [\mathbb{T}^{n}, \mathrm{G}/ \mathrm{TOP}] \rightarrow H^{4} (\mathbb{T}^{n}; \mathbb{Z}/2)
\]
such that $\Delta_{*}$ maps the element represented by the above $(f,b)$ to the Kirby-Siebenmann invariant $\mathrm{ks} (\xi)$ of $\xi$. We have $H^{4} (\mathbb{T}^{n}; \mathbb{Z}/2) \ne 0$ since $n \ge 4$. Thus we may choose a surgery problem $(f,b)$ with $\mathrm{ks} (\xi) \ne 0$.

By the product formula for surgery obstruction (see e.g.~\cite[Theorem IV.1.1]{Morgan}), the surgery obstruction of
\[
(f \times 1, b \times1) \colon  (N \times S^{i}, \nu (N \times S^{i})) \rightarrow (\mathbb{T}^{n} \times S^{i}, \xi \times \nu S^{i})
\]
vanishes, where $i=2$ or $3$. Thus we obtain a homotopy equivalence $\Phi \colon  M \rightarrow \mathbb{T}^{n} \times S^{i}$ such that $M$ is a closed $\mathrm{TOP}$ manifold with $\nu M \cong \Phi^{*} (\xi \times \nu S^{i})$. Since $\nu S^{i}$ is trivial and $\Phi$ induces an isomorphism $\Phi^{*}$ of cohomologies, we have
\[
\mathrm{ks} (\nu M) = \Phi^{*} (\mathrm{ks} (\xi \times \nu S^{i})) = \Phi^{*} (\mathrm{ks} (\xi) \times 1) \ne 0.
\]
So this $M$ is not smoothable. By Lemma \ref{lem_Tn_S2}, it is not a $\mathrm{TOP}$ bundle over $\mathbb{T}^{n}$. We obtain the desired $M$ for $n \ge 4$.

Now let's assume $n \ge 8$. The desired $M$ needs to be a $\mathrm{DIFF}$ manifold. The argument almost duplicates the above. We consider $\mathrm{DIFF}$ surgery problems and analyze rational Pontryagin classes. The Pontryagin character provides a well-known isomorphism (which follows immediately from the property of the Chern character, cf.~\cite[p.~19]{Atiyah_Hirzebruch})
\[
[\mathbb{T}^{n}, B\mathrm{O}] \otimes \mathbb{Q} \overset{\cong}{\longrightarrow} \bigoplus_{j=1}^{\infty} H^{4j} (\mathbb{T}^{n}; \mathbb{Q}).
\]
Since $n \ge 8$, there is a vector bundle $\eta$ over $\mathbb{T}^{n}$ with rational Pontryagin classes $p_{1} (\eta) =0$ and $p_{2} (\eta) \ne 0$. We also have the isomorphism (see e.g.~\cite[\S~9.2]{Ranicki})
\[
[\mathbb{T}^{n}, \mathrm{G} /\mathrm{O}] \otimes \mathbb{Q} \overset{\cong}{\longrightarrow} [\mathbb{T}^{n}, B\mathrm{O}] \otimes \mathbb{Q}
\]
which is induced by the inclusion $\mathrm{G} /\mathrm{O} \rightarrow B\mathrm{O}$. Each element of $[\mathbb{T}^{n}, \mathrm{G} /\mathrm{O}]$ is represented by a $\mathrm{DIFF}$ surgery problem. So we can get a $\mathrm{DIFF}$ surgery problem with target $(\mathbb{T}^{n}, \xi)$ such that $\xi$ is a vector bundle and $\xi = k \eta$ in $[\mathbb{T}^{n}, B\mathrm{O}]$ for some integer $k>0$. Thus $p_{2} (\xi) \ne 0$. Similar to the above, we obtain a homotopy equivalence $\Phi \colon  M \rightarrow \mathbb{T}^{n} \times S^{i}$ such that $M$ is a closed $\mathrm{DIFF}$ manifold with $\nu M \cong \Phi^{*} (\xi \times \nu S^{i})$. Then
\[
p_{2} (M) = - p_{2} (\nu M) = - \Phi^{*} (p_{2} (\xi) \times 1) \ne 0.
\]
By Lemma \ref{lem_Tn_S2} again, $M$ is not a $\mathrm{TOP}$ bundle over $\mathbb{T}^{n}$.
\end{proof}

\section{Noncommutative Fundamental Groups}\label{sec_noncommutative}
In this section, we shall prove Theorems \ref{thm_BS_manifold}, \ref{thm_cw_manifold} and \ref{thm_cw_4-manifold}.

We firstly prove Theorems \ref{thm_cw_manifold} and \ref{thm_cw_4-manifold}. In the proof, a domain in $\mathbb{R}^{m+1}$ inherits the positive orientation of $\mathbb{R}^{m+1}$, the boundary of an oriented manifold carries its boundary orientation.

\begin{proof}[Proof of Theorem \ref{thm_cw_manifold}]
We firstly verify the special case that $X$ is the underlying space of a finite simplicial complex, $\hat{h}$ is a simple homotopy equivalence, and $m \ge \max \{ 2r, r+2, 5 \}$. The idea is to take $M$ as the boundary of a regular neighborhood of $X$ in $\mathbb{R}^{m+1}$.

Since $m \ge 2r$, $X$ can be simplicially embedded in $\mathbb{R}^{m+1}$, we can take a smooth regular neighborhood $N$ of $X$ in $\mathbb{R}^{m+1}$ (see \cite[Theorem~1]{Hirsch}). Then the inclusion $X \hookrightarrow N$ is a simple homotopy equivalence. Let $M$ denote $\partial N$. Then $M$ is a compact smooth hypersurface of $\mathbb{R}^{m+1}$. (See the left part of Figure \ref{fig_neighborhood} for an illustration, where the shadowed domain is $N$, and the line segment inside $N$ stands for $X$.) Note that $M$ is a deformation retract of $N \setminus X$. (Actually, $N \setminus X$ is homeomorphic to $M \times [0, +\infty)$. This can be proved by applying the Corollary 2 of Theorem 8 in \cite[Chapter~3]{Zeeman} to a sequence of nested regular neighborhoods of $X$.) Since $m \geq r+2$, we have $\pi_{1} (M) = \pi_{1} (N) = \pi_{1} (X)$. Here $N$ and $X$ share the same base points, $\pi_{1} (M)$ is identified with $\pi_{1} (X)$ via a chosen path $\gamma$ in $N$ connecting base points.

Let $N'$ be the $k$-cover of $N$ with $\pi_{1} (N') = \pi_{1} (X')$. Let $M'$ denote $\partial N'$. Then $M'$ is also a $k$-cover of $M$ with $\pi_{1} (M') = \pi_{1} (N')$. Here $N'$ and $X'$ share the same base points, the covering map from $M'$ (resp.~$X'$) to $M$ (resp.~$X$) preserves base points, and $\pi_{1} (M')$ is identified with $\pi_{1} (X')$ via a path $\gamma'$ which is a lifting of $\gamma$. (See the right part of Figure \ref{fig_neighborhood} for an illustration, where the shadowed domain is $N'$, and the line segment inside $N'$ stands for $X'$.) Note that $\hat{h} \colon  X \rightarrow X'$ is a simple homotopy equivalence preserving base points. It extends to a simple homotopy equivalence $\hat{h} \colon  N \rightarrow N'$.

\begin{figure}[htbp]
\centering
  \includegraphics[width=0.8\textwidth]{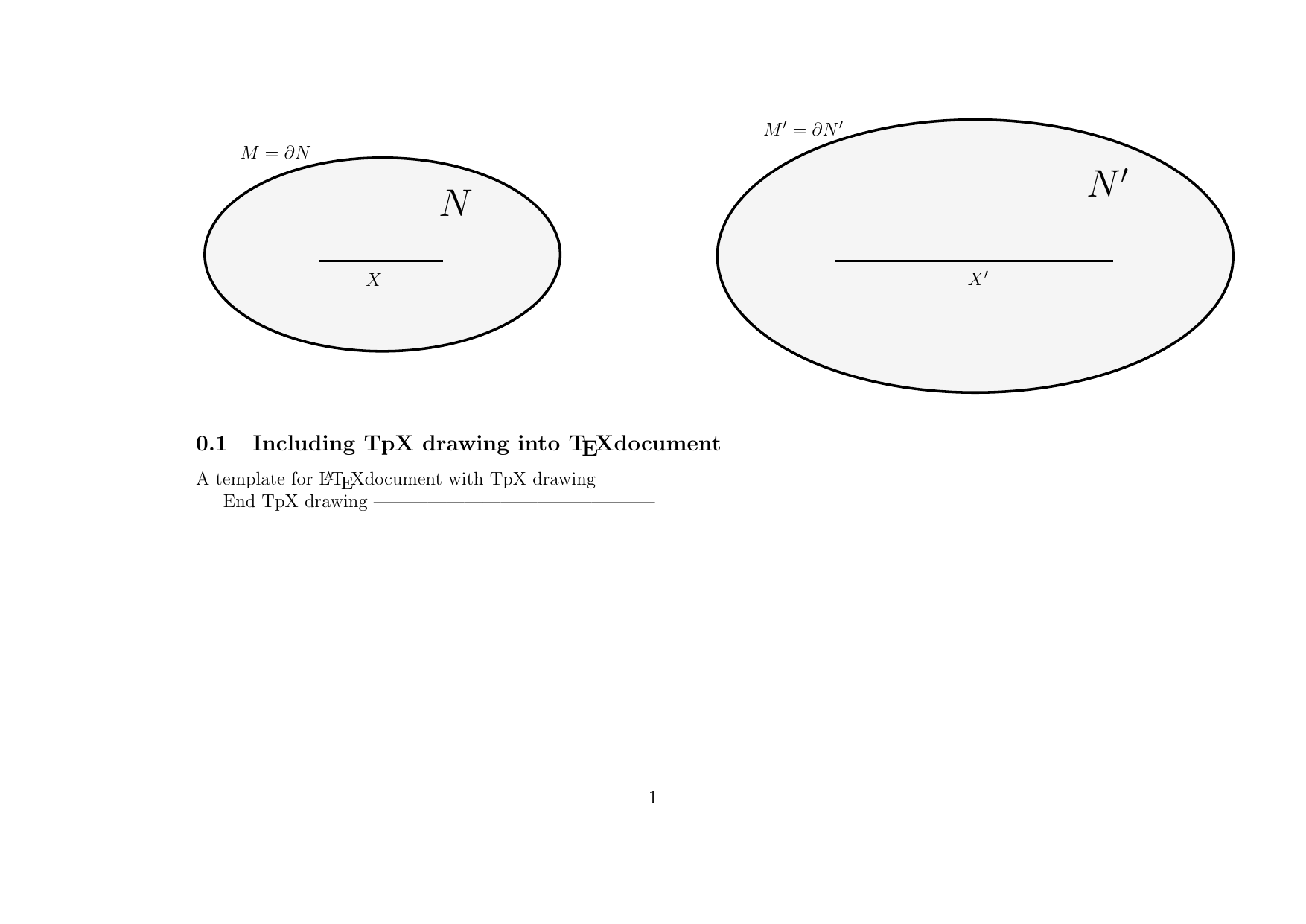}
  \caption{Regular Neighborhood}
  \label{fig_neighborhood}
\end{figure}

The main part of the proof is to construct a diffeomorphism $f \colon  N \rightarrow N'$ which maps the base point of $M$ (resp.~$X$) to that of $M'$ (resp.~$X'$). Furthermore, there is a homotopy between $f|_{X} \colon  X \rightarrow N'$ and $\hat{h}|_{X} \colon  X \rightarrow N'$ which is stationary at the base point.

Clearly, the tangent bundle of $N$ is trivial, so is that of $N'$. Since $N$ has boundary, by Smale-Hirsch immersion theory (see \cite[p.~7,~Theorem~(A)]{Gromov}), $\hat{h}$ is homotopic to an immersion $\phi \colon  N \rightarrow \mathrm{int} N'$, where $\mathrm{int} N'$ is the interior of $N'$ and the homotopy is stationary at the base point of $X$. Furthermore, $\phi$ can be arranged to preserve or reverse the orientations as one wishes. Since $\dim N' \ge 2 \dim X +1$, we may assume $\phi|_{X}$ is an embedding. Thus we can find another smooth regular neighborhood $N_{1}$ of $X$ in $\mathbb{R}^{m+1}$ such that $N_{1} \subseteq \mathrm{int} N$ and $\phi|_{N_{1}}$ is an embedding. (See the left part of Figure \ref{fig_immersion} for an illustration, where the large domain is $N$, the shadowed domain inside $N$ is $N_{1}$.) Since the composition
\[
X \hookrightarrow N_{1} \overset{\phi|_{N_{1}}}{\rightarrow} \phi (N_{1}) \hookrightarrow N'
\]
is a simple homotopy equivalence, so is $\phi (N_{1}) \hookrightarrow N'$.

Let $M_{1}$ denote $\partial N_{1}$, let $M_{2}$ denote $\partial \phi (N_{1})$, let $W = N' \setminus (\mathrm{int} \phi (N_{1}))$. (See the right part of Figure \ref{fig_immersion} for an illustration, where the large domain is $N'$, the shadowed domain inside $N'$ is $\phi (N_{1})$, the line segment inside $\phi (N_{1})$ stands for $\phi (X)$, the unshadowed part inside $N'$ is $W$, and $\partial W = M' \cup M_{2}$. Compare also with Figure \ref{fig_neighborhood}.) Since $\dim N' \ge \dim \phi (X) +3$, we have $\pi_{1} (N') = \pi_{1} (N' \setminus \phi (X))$. Since $\phi (N_{1})$ is a regular neighborhood of $\phi (X)$, $W$ is a deformation retract of $N' \setminus \phi (X)$. We further infer
\[
\pi_{1} (W) = \pi_{1} (N' \setminus \phi (X)) = \pi_{1} (N') = \pi_{1} (M') = \pi_{1} (\phi (N_{1})) = \pi_{1} (M_{2}),
\]
where $W$ and $M'$ share the same base point, $\pi_{1} (W)$ is identified with $\pi_{1} (M_{2})$ via a path in $W$. We also set base points of $M_{1}$ and $M_{2}$ such that $\phi|_{M_{1}} \colon  M_{1} \rightarrow M_{2}$ preserves base points.

\begin{figure}[htbp]
\centering
  \includegraphics[width=0.8\textwidth]{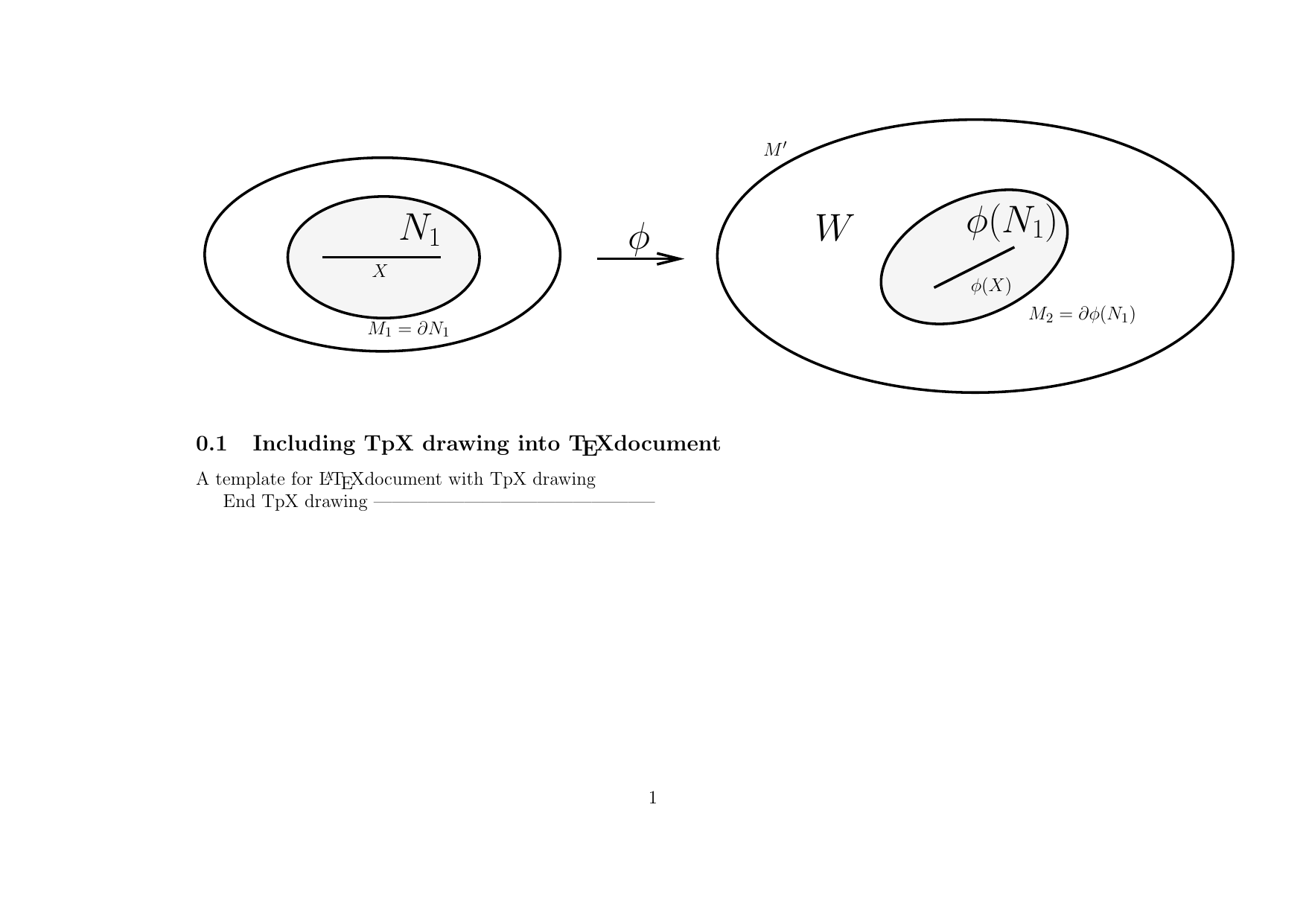}
  \caption{Immersion}
  \label{fig_immersion}
\end{figure}

Recall that $\phi (N_{1}) \hookrightarrow N'$ is a simple homotopy equivalence. By the excision of homology with local coefficient system, $(W; M', M_{2})$ is an h-cobordism. Clearly, $\tau (W, M_{2}) = \tau (N', \phi (N_{1}))$, where $\tau$ is the Whitehead torsion. Hence $(W; M', M_{2})$ is an s-cobordism. Since $\dim W \ge 6$, by the S-Cobordism Theorem, there is a diffeomorphism $\phi_{2} \colon  \phi (N_{1}) \rightarrow N'$ satisfying the following properties: (i) $\phi_{2}$ fixes $\phi (X)$; (ii) $\phi_{2}$ maps the base points of $M_{2}$ to that of $M'$; (iii) $\phi_{2}$ is homotopic to the inclusion $\phi (N_{1}) \hookrightarrow N'$, hence it preserves orientations.

On the other hand, since $N$ and $N_{1}$ are nested smooth regular neighborhoods of $X$, there is also a diffeomorphism (see \cite[Theorem~1]{Hirsch}) $\phi_{1} \colon  N \rightarrow N_{1}$ satisfying the following properties: (i) $\phi_{1}$ fixes $X$; (ii) $\phi_{1}$ maps the base points of $M$ to that of $M_{1}$; (iii) the map $N \overset{\phi_{1}}{\rightarrow} N_{1} \hookrightarrow N$ is homotopic to the identity of $N$, hence $\phi_{1}$ preserves orientations.

Define $f \colon  N \rightarrow N'$ as the composition of diffeomorphisms
\[
N \overset{\phi_{1}}{\rightarrow} N_{1} \overset{\phi|_{N_{1}}}{\rightarrow} \phi (N_{1}) \overset{\phi_{2}}{\rightarrow} N'.
\]
Then $f$ is a diffeomorphism which maps the base point of $M$ (resp.~$X$) to that of $M'$ (resp.~$X'$), and there is a homotopy between $f|_{X} \colon  X \rightarrow N'$ and $\hat{h}|_{X} \colon  X \rightarrow N'$ which is stationary at the base point. Furthermore, $f|_{M} \colon  M \rightarrow M'$ preserves orientations if and only if so does $\phi \colon  N \rightarrow N'$. So $f|_{M}$ can be arranged to preserve or reverse orientations as one wishes.

We would have defined the desired $h \colon  M \rightarrow M'$ as $f|_{M}$. However, to guarantee that $h_{\#} \colon  \pi_{1} (M) \rightarrow \pi_{1} (M')$ equals $\hat{h}_{\#} \colon  \pi_{1} (X) \rightarrow \pi_{1} (X')$, a slight modification is needed. Let $m_{0}$ (resp.~$x_{0}$, $m'_{0}$, $x'_{0}$) denote the base point of $M$ (resp.~$X$, $M'$, $X'$). Since there is a homotopy between $f|_{X} \colon  X \rightarrow N'$ and $\hat{h}|_{X} \colon  X \rightarrow N'$ which is stationary at $x_{0}$,  we have $f_{\#} \colon  \pi_{1} (N, x_{0}) \rightarrow \pi_{1} (N', x'_{0})$ equals $\hat{h}_{\#} \colon  \pi_{1} (X, x_{0}) \rightarrow \pi_{1} (X', x'_{0})$. Here $\pi_{1} (X, x_{0})$ (resp.~$\pi_{1} (X', x'_{0})$) is identified with $\pi_{1} (N, x_{0})$ (resp.~$\pi_{1} (N', x'_{0})$) via the inclusion. (See Figure \ref{fig_neighborhood} for an illustration.) Recall that $\pi_{1} (M, m_{0})$ is identified with $\pi_{1} (N, x_{0})$ via a path $\gamma$ from $m_{0}$ to $x_{0}$. Accordingly, $\pi_{1} (M', m'_{0})$ is identified with $\pi_{1} (N', x'_{0})$ via a path $\gamma'$ in $N'$ which is a lifting of $\gamma$. Note that $\gamma'$ and $f \circ \gamma$ share the same ends. If $\gamma'$ is homotopic to $f \circ \gamma$ relative to the ends, we can define $h= f|_{M}$. If not, we modify $f$ as follows. There is a loop $\alpha$ in $M'$ based at $m'_{0}$ such that $\alpha \cdot (f \circ \gamma)$ is homotopic to $\gamma'$ relative to the ends. By a homotopy pertuabtion relative to the ends, we may assume $f \circ \gamma$ is smoothly embedded and transverse to $M'$. Then a collar neighborhood of $M'$ in $N'$ can be identified with $M' \times [0,1]$, and the intersection of $f \circ \gamma$ with $M' \times [0,1]$ can be identified with $\{ m'_{0} \} \times [0,1]$. We can find a diffeomorphism $\phi_{3} \colon  N' \rightarrow N'$ satisfying: (i) $\phi_{3}$ is isotopic to the identity of $N'$; (ii) $\phi_{3}$ is the identity outside $M' \times [0,1]$; (iii) on $M' \times [0,1]$, $\phi_{3} (m'_{0}, t) = (\beta (t), t)$ such that $\beta$ is a loop representing the same element in $\pi_{1} (M', m'_{0})$ as $\alpha$ does. (Actually, $\phi_{3}|_{M' \times [0,1]}$ represents an isotopy on $M'$ which pushes $m'_{0}$ along $\beta$.) Then $\gamma'$ is homotopic to $\phi_{3} \circ f \circ \gamma$ relative to the ends. We obtain the desired $h= \phi_{3} \circ f|_{M}$, which finishes the proof of the special case.

Finally, let's consider the general case, i.e.~$X$ is only assumed to be finitely dominated. By \cite[Theorem 0.1]{Gersten} and \cite[Theorem~13]{Whitehead}, $X \times S^{3}$ is homotopy equivalent to the underlying space $Y$ of a finite simplicial complex  with $\dim Y = \dim X +3$. The homotopy equivalence $\hat{h} \times \mathrm{id} \colon  X \times S^{3} \rightarrow X' \times S^{3}$ induces a homotopy equivalence $\hat{h}_{Y} \colon  Y \rightarrow Y'$, where $Y'$ is a cover of $Y$ with $\pi_{1} (Y') = \pi_{1} (X')$. Let $Z = Y \times S^{3}$. Then $Z$ is the underlying space of a finite simplicial complex with $\dim Z = \dim X +6$. Let $\hat{h}_{Z} = \hat{h}_{Y} \times \mathrm{id}$. By \cite[(23.2)]{Cohen}, we have $\hat{h}_{Z} \colon  Z \rightarrow Z'$ is a simple homotopy equivalence, where $\pi_{1} (Z) = \pi_{1} (X)$ and $Z'$ is a cover of $Z$ with $\pi_{1} (Z') = \pi_{1} (X')$. Taking the $X$ in the special case as $Z$ now, the conclusion follows.
\end{proof}

\begin{proof}[Proof of Theorem \ref{thm_cw_4-manifold}]
The proof duplicates the main part of that proof of Theorem \ref{thm_cw_manifold}. Let's go back to that proof. We still get an $h$-cobordism (resp.~$s$-cobordism) $(W; M', M_{2})$ if $\hat{h}$ is a homotopy (resp.~simple homotopy) equivalence. However, even if $(W; M', M_{2})$ is an $s$-cobordism, that proof does not go through now because $\dim W =5$ and the usual $S$-Cobordism Theorem is not applicable.

On the other hand, we can still define a homotopy (resp.~simple homotopy) equivalence of pairs  $\phi_{2} \colon  (\phi (N_{1}), M_{2}) \rightarrow (N',M')$ which yields a desired homotopy (resp.~simple homotopy) equivalence $h$. Furthermore, if  $(W; M', M_{2})$ is an $s$-cobordism and $\pi_{1} (W) = \pi_{1} (X)$ is good, by the $S$-Cobordism Theorem in \cite[p.~511]{Freedman_Teichner}, we can arrange $\phi_{2}$ as a homeomorphism which provides a desired homeomorphism $h$.
\end{proof}

Finally, let's prove Theorem \ref{thm_BS_manifold}. Recall the notation $BS(2,3) = \langle a,t \mid t a^{2} t^{-1} = a^{3} \rangle$.
\begin{proposition}[\cite{BDT}]\label{prop_BS_space}
The Eilenberg-MacLane space $K(BS(2,3), 1)$ can be represented by the underlying space $X$ of a $2$-dimensional finite simplicial complex. There are a $5$-cover $X'$ of $X$ with $\pi_{1} (X') = \langle a^{5}, t \rangle \le BS(2,3)$ and a base point preserving homeomorphism $\hat{h} \colon  X \rightarrow X'$ such that $\hat{h}_{\#} (a) = a^{-5}$ and $\hat{h}_{\#} (t) = t$.
\end{proposition}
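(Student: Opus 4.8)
The plan is to build everything from the HNN (graph-of-groups) decomposition $BS(2,3)=\langle a\rangle\,*_{a^{2}=a^{3}}$. First I would take $X$ to be a finite $2$-dimensional $K(BS(2,3),1)$: concretely, the presentation complex of $\langle a,t\mid ta^{2}t^{-1}a^{-3}\rangle$, which is aspherical because the relator has $a$-exponent-sum $-1$ and so is not a proper power (Lyndon); equivalently, the graph of spaces formed from a circle $S^{1}_{a}$ by attaching an annulus $S^{1}\times[0,1]$ along its two boundary circles via maps of degrees $2$ and $3$. Triangulating this gives the required finite simplicial complex. Next I would record the endomorphism $\phi\colon a\mapsto a^{-5}$, $t\mapsto t$: it is well defined since $ta^{2k}t^{-1}=a^{3k}$ for all $k$ forces $\phi(ta^{2}t^{-1}a^{-3})=ta^{-10}t^{-1}a^{15}=a^{-15}a^{15}=1$, and its image is $H:=\langle a^{-5},t\rangle=\langle a^{5},t\rangle$.

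The technical heart is two statements about $H$. (i) $[BS(2,3):H]=5$, and more precisely $H\cap\langle a\rangle=\langle a^{5}\rangle$ and $H\cap\langle a^{2}\rangle=\langle a^{10}\rangle$. For the index I would show the five right cosets $Ha^{i}$, $0\le i\le4$, are distinct (any $a^{j}\in H$ lies in $H\cap\langle a\rangle$, and $a^{j}\in\langle a^{5}\rangle$ iff $5\mid j$) and that $\{Ha^{i}:0\le i\le4\}$ is invariant under right multiplication by the generators: by $a^{\pm1}$ this is immediate, and by $t$ it follows from $Ht=H$, from $a^{3}t=ta^{2}$ and $a^{-3}t=ta^{-2}$, and from $atat^{-1}\in H$ (which holds because $ta^{5}t^{-1}=a^{6}\,tat^{-1}$ and $a^{5}\in H$). (ii) $\phi$ is injective. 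Since $\phi=\phi_{5}\circ\iota$ where $\iota\colon a\mapsto a^{-1},t\mapsto t$ is an involutive automorphism and $\phi_{5}\colon a\mapsto a^{5},t\mapsto t$, it suffices that $\phi_{5}$ be injective; in fact $\phi_{5}$ is an isomorphism onto $H$. Indeed, reading (i) off the Bass–Serre tree $T$ of $BS(2,3)$ shows $H$ acts on $T$ with a single vertex orbit (vertex group $\langle a^{5}\rangle$) and a single edge orbit (edge group $\langle a^{10}\rangle$), the two edge-inclusions being $\times2$ and $\times3$ since $a^{10}=(a^{5})^{2}$ and $ta^{10}t^{-1}=a^{15}=(a^{5})^{3}$; hence $H=\langle a^{5}\rangle\,*_{(a^{5})^{2}=(a^{5})^{3}}\cong BS(2,3)$ via $a^{5}\leftrightarrow a$, $t\leftrightarrow t$, i.e. via $\phi_{5}^{-1}$.

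Finally I would let $X'\to X$ be the connected $5$-fold cover with $\pi_{1}X'=H$; being a cover of an aspherical $2$-complex it is again a finite aspherical $2$-complex, and by (ii) it is a $K(BS(2,3),1)$. Using the graph-of-spaces description of $X$ together with the single-orbit statements from (i), the preimage of $S^{1}_{a}$ in $X'$ is its connected $5$-fold cover (a circle), the preimage of the annulus is its connected $5$-fold cover (an annulus), and the new attaching maps have degrees $2$ and $3$ relative to the covering circle because $10=2\cdot5$ and $15=3\cdot5$ each factor as a degree $2$ (resp. $3$) map followed by the degree-$5$ covering; thus $X'$ has exactly the combinatorial type of $X$, and I would fix a homeomorphism $X\cong X'$. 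Composing with the covering data and, if necessary, a self-homeomorphism of $X'$ to correct the induced map on fundamental groups, this produces a base-point preserving homeomorphism $\hat h\colon X\to X'$ with $\hat h_{\#}(a)=a^{-5}$ and $\hat h_{\#}(t)=t$, as claimed. The main obstacle is the group-theoretic step (i)–(ii): because $BS(2,3)$ is non-Hopfian one cannot argue the index or injectivity cheaply, and it is precisely the HNN/Bass–Serre bookkeeping that makes both go through; a secondary point of care is the last step, where the abstract isomorphism $\phi$ must be promoted to an honest base-point-preserving homeomorphism realizing it, rather than merely a homotopy equivalence.
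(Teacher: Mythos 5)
The paper's own argument is a pure citation to \cite{BDT}: it identifies $(a,t)$ with the $(x,y^{-1})$ of \cite[Example~2.3]{BDT} and simply imports the self-covering map $f$ constructed there. Your proposal reconstructs that content from scratch via the HNN description of $BS(2,3)$: the graph of spaces (a circle with an annulus attached along its boundary by degree-$2$ and degree-$3$ maps), asphericity because the relator is not a proper power, the factorisation $\phi=\phi_5\circ\iota$ through the involutive automorphism $\iota\colon a\mapsto a^{-1},\,t\mapsto t$, and the observation that the $5$-cover $X'$ reproduces the same graph-of-spaces picture because $\gcd(5,6)=1$. In spirit this is exactly the geometry behind \cite{BDT}; the difference is that you carry it out rather than cite it, and your final step correctly insists on promoting the abstract isomorphism $\phi$ to an honest basepoint-preserving homeomorphism (by first building $X\cong X'$ from the combinatorial coincidence and then correcting by a realization of $\iota$), which is exactly what is needed for the later use in Theorem~\ref{thm_BS_manifold}.

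The one place that needs tightening is your step (i). As sketched it is somewhat circular: the closure-under-right-multiplication argument gives $[G:H]\le 5$, but to see the five cosets $Ha^{i}$ are \emph{distinct} you invoke $H\cap\langle a\rangle=\langle a^{5}\rangle$, while the Bass--Serre computation of $T/H$ (from which you want to read off that $H\cap\langle a\rangle=\langle a^{5}\rangle$, $H\cap\langle a^{2}\rangle=\langle a^{10}\rangle$, and hence that $\phi_5$ is an isomorphism onto $H$) itself presupposes knowing the orbit structure, i.e.\ the index. This is easily repaired. For instance, verify that $a\mapsto(12345)$, $t\mapsto(25)(34)$ kills the relator, so defines a homomorphism $G\to S_{5}$; its image is transitive and contains $a^5,t$ in the stabiliser of a point, forcing $[G:H]\ge 5$ and hence $[G:H]=5$. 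Alternatively, map to $Q(2,3)=\mathbb{Z}[\frac{1}{6}]\rtimes\mathbb{Z}$ as the paper does and use $[\mathbb{Z}[\frac{1}{6}]:5\mathbb{Z}[\frac{1}{6}]]=5$ from Lemma~\ref{lem_Q(2,3)_hom}. Once the index is pinned down, orbit--stabiliser gives $H\cap\langle a\rangle=\langle a^{5}\rangle$ and $H\cap\langle a^{2}\rangle=\langle a^{10}\rangle$, the quotient graph of groups is a single loop with edge inclusions $\times 2$ and $\times 3$ as you claim, and the rest of your argument goes through.
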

\begin{proof}
This has been proved in \cite[Example~2.3]{BDT}. We can choose $X$ as the space $K_{2}$ there. In that example, $\pi_{1} (K_{2}) = \langle x,y \mid y^{-1} x^{2} y = x^{3} \rangle$. We take our $a$ and $t$ as $x$ and $y^{-1}$ there, respectively. Our desired $\hat{h}$ follows from the self-covering map $f$ there. (See also there Fig.~2 and Fig.~4 which are very helpful.)
\end{proof}

Let $\mathbb{Z}[\frac{1}{6}] = \{ \frac{m}{6^{n}} \mid m,n \in \mathbb{Z} \}$. Then $\mathbb{Z}[\frac{1}{6}]$ is an abelian group with usual addition. Let $Q(2,3) = \mathbb{Z}[\frac{1}{6}] \rtimes_{\frac{3}{2}} \mathbb{Z}$, where the conjugate action of $\mathbb{Z}$ on $\mathbb{Z}[\frac{1}{6}]$ is given by
\[
(0,1) (x,0) (0,1)^{-1} = (\tfrac{3}{2} x,0).
\]
The following lemma is well-known and easy.

\begin{lemma}\label{lem_BS_quotient}
There exists an epimorphism $\varphi \colon  BS(2,3) \rightarrow Q(2,3)$ such that $\varphi (a) = (1,0)$ and $\varphi (t) = (0,1)$.
\end{lemma}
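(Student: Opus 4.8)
The plan is to build $\varphi$ directly from the presentation $BS(2,3) = \langle a,t \mid ta^{2}t^{-1} = a^{3}\rangle$ by prescribing the images of the generators and then checking the sole defining relation. First I would set $\varphi(a) := (1,0)$ and $\varphi(t) := (0,1)$ in $Q(2,3) = \mathbb{Z}[\tfrac{1}{6}] \rtimes_{3/2} \mathbb{Z}$, and verify, using the given conjugation rule $(0,1)(x,0)(0,1)^{-1} = (\tfrac{3}{2} x, 0)$, that
\[
\varphi(t)\varphi(a)^{2}\varphi(t)^{-1} = (0,1)(2,0)(0,1)^{-1} = (3,0) = (1,0)^{3} = \varphi(a)^{3}.
\]
Since the relation is respected, von Dyck's theorem (the universal property of a group presentation) yields a well-defined homomorphism $\varphi \colon BS(2,3) \to Q(2,3)$ taking the asserted values on $a$ and $t$.

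The remaining point is surjectivity. As $\varphi(t) = (0,1)$ maps onto a generator of the quotient $\mathbb{Z} = Q(2,3)/(\mathbb{Z}[\tfrac16]\times\{0\})$, it suffices to show the image of $\varphi$ contains the normal subgroup $\mathbb{Z}[\tfrac16]\times\{0\}$. I would argue that the subgroup $H \le \mathbb{Z}[\tfrac16]$ consisting of first coordinates of elements of $\operatorname{im}\varphi$ contains $1$ and is stable under multiplication by $\tfrac32$ and $\tfrac23$ (conjugating $(1,0)$ by $(0,1)^{\pm1}$), hence contains $\tfrac12 = \tfrac32 - 1$ and $\tfrac13 = 1 - \tfrac23$, and inductively $\tfrac1{2^{n}}$ and $\tfrac1{3^{n}}$ for all $n \ge 0$; a Bézout identity $u\,2^{n} + v\,3^{n} = 1$ then gives $\tfrac1{6^{n}} = \tfrac{u}{3^{n}} + \tfrac{v}{2^{n}} \in H$, so $H = \mathbb{Z}[\tfrac16]$ and $\varphi$ is onto.

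There is no real obstacle here; this is exactly the routine ``well-known and easy'' verification the statement advertises. The only mildly nonformal ingredient is the additive generation of $\mathbb{Z}[\tfrac16]$ by the elements $\tfrac1{2^{n}}$ and $\tfrac1{3^{n}}$, which the Bézout observation disposes of cleanly.
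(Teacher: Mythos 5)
The paper offers no proof of this lemma at all; it is dismissed as ``well-known and easy.'' Your argument is a correct and complete verification. The relation check is right: $(0,1)(2,0)(0,1)^{-1} = (3,0) = (1,0)^3$ in $Q(2,3)$, so von Dyck's theorem produces the homomorphism with the prescribed values. For surjectivity, your reduction to showing $\mathbb{Z}[\tfrac{1}{6}]\times\{0\} \subseteq \operatorname{im}\varphi$ is valid because $(0,1)$ projects to a generator of the $\mathbb{Z}$ factor. One small point worth making explicit: the set $H$ of ``first coordinates of elements of $\operatorname{im}\varphi$'' equals $\{x : (x,0)\in\operatorname{im}\varphi\}$ only because $(0,1)\in\operatorname{im}\varphi$ allows one to clear the second coordinate; with that identification $H$ is genuinely a subgroup of $(\mathbb{Z}[\tfrac16],+)$, stable under $\cdot\tfrac32$ and $\cdot\tfrac23$, and the induction $\tfrac{1}{2^{n}}\in H \Rightarrow \tfrac{3}{2^{n+1}}\in H \Rightarrow \tfrac{1}{2^{n+1}} = \tfrac{3}{2^{n+1}} - \tfrac{1}{2^{n}}\in H$ (and similarly for powers of $3$) together with the B\'ezout step gives $H = \mathbb{Z}[\tfrac16]$. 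No gaps.
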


\begin{proposition}\label{prop_Q(2,3)_present}
$Q(2,3)$ is not finitely presented.
\end{proposition}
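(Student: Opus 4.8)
The plan is to apply the Bieri--Strebel criterion for finite presentability of finitely generated metabelian groups, in its rank-one form. First I would record the structure: $Q(2,3)$ sits in an extension $1\to \mathbb{Z}[\tfrac16]\to Q(2,3)\to\mathbb{Z}\to 1$ with abelian kernel $A:=\mathbb{Z}[\tfrac16]$ and abelian quotient, so it is metabelian, and it is finitely generated (by $a$ and $t$). I would then compute its abelianization. The commutator $[t,a]$ maps to $\tfrac32\cdot 1-1=\tfrac12\in A$, and the $\mathbb{Z}[t^{\pm1}]$-submodule of $A$ that $\tfrac12$ generates (with $t$ acting as multiplication by $\tfrac32$) contains $\tfrac12$ and $\tfrac23\cdot\tfrac12=\tfrac13$, hence $\tfrac16=\tfrac12-\tfrac13$, hence every $6^{-k}$, i.e.\ all of $\mathbb{Z}[\tfrac16]$. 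Since also $[Q(2,3),Q(2,3)]\subseteq A$ (the quotient is abelian), this gives $[Q(2,3),Q(2,3)]=A$ and $Q(2,3)^{\mathrm{ab}}\cong\mathbb{Z}$, generated by the image of $t$.

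Next I would invoke the rank-one case of the Bieri--Strebel theorem (Bieri--Strebel, \emph{Valuations and finitely presented metabelian groups}, Proc.\ London Math.\ Soc.\ 1980): a finitely generated metabelian group $G$ with $G^{\mathrm{ab}}\cong\mathbb{Z}$, with $t\in G$ mapping to a generator and $M:=[G,G]$ regarded as a $\mathbb{Z}[t^{\pm1}]$-module, is finitely presented if and only if $M$ is finitely generated either as a $\mathbb{Z}[t]$-module or as a $\mathbb{Z}[t^{-1}]$-module. It then remains to show that $M=\mathbb{Z}[\tfrac16]$ is finitely generated over neither $\mathbb{Z}[t]$ (where $t$ acts by $\times\tfrac32$) nor $\mathbb{Z}[t^{-1}]$ (where $t^{-1}$ acts by $\times\tfrac23$).

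For the $\mathbb{Z}[t]$ side I would use the $3$-adic valuation $v_3$ on $\mathbb{Q}^\times$: for any $v\in M$ and any $k\ge 0$ one has $v_3(t^{k}v)=v_3\bigl((\tfrac32)^{k}v\bigr)=k+v_3(v)\ge v_3(v)$, and $v_3$ of a $\mathbb{Z}$-linear combination is at least the minimum of the $v_3$'s of the summands. Hence every element of a finitely generated submodule $\sum_{i=1}^{r}\mathbb{Z}[t]v_i$ has $v_3\ge c:=\min_i v_3(v_i)$. But $3^{-N}\in M$ for every $N$, so choosing $N$ with $-N<c$ produces an element of $M$ lying in no such submodule; thus $M$ is not finitely generated over $\mathbb{Z}[t]$. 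The $\mathbb{Z}[t^{-1}]$ case is symmetric, using the $2$-adic valuation $v_2$, the fact that $t^{-1}$ acts by $\times\tfrac23$, and that $2^{-N}\in M$ for all $N$. By Bieri--Strebel, $Q(2,3)$ is not finitely presented.

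The only step requiring real care is the correct statement and citation of the Bieri--Strebel criterion together with the reduction to the rank-one version (which is precisely where $Q(2,3)^{\mathrm{ab}}\cong\mathbb{Z}$ is used); everything after that is an elementary valuation computation. Alternatively one could give a self-contained argument by writing $Q(2,3)$ as the direct limit of the finitely presented groups
\[
P_n=\bigl\langle a,t \,\bigm|\, [t^{i}at^{-i},t^{j}at^{-j}]\ (|i|,|j|\le n),\ t^{j+1}a^{2}t^{-j-1}=t^{j}a^{3}t^{-j}\ (|j|\le n)\bigr\rangle
\]
and observing that a finitely presented colimit of such a system must be a retract of some $P_n$; however, establishing that the canonical surjections $P_n\to Q(2,3)$ do not split requires analyzing the $P_n$, so the Bieri--Strebel route is the shorter and more standard one for this (textbook) example.
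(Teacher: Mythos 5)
Your proof is correct and takes essentially the same approach as the paper: both reduce the claim to a Bieri--Strebel criterion for finite presentability of metabelian groups applied to the extension $0\to\mathbb{Z}[\tfrac16]\to Q(2,3)\to\mathbb{Z}\to 0$. The paper cites Theorem~C of Bieri--Strebel's 1978 Comment.\ Math.\ Helv.\ paper (the obstruction there being that neither $\tfrac32$ nor $\tfrac23$ is an algebraic integer), while you cite the 1980 PLMS tameness criterion in its rank-one form and verify directly via $2$- and $3$-adic valuations that $\mathbb{Z}[\tfrac16]$ is not finitely generated over $\mathbb{Z}[t]$ or over $\mathbb{Z}[t^{-1}]$; this is the same obstruction, unwound by hand.
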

\begin{proof}
Neither $\tfrac{2}{3}$ nor $\tfrac{3}{2}$ is an algebraic integer. Taking the $N$ in \cite[Theorem~C]{Bieri_Strebel} as $\mathbb{Z}[\frac{1}{6}]$, this conclusion follows from the (i)$\Leftrightarrow$(iv) of that theorem. (See also \cite[Lemma~11]{Groves}.)
\end{proof}

\begin{lemma}\label{lem_Q(2,3)_hom}
There exists a monomorphism $f \colon  Q(2,3) \rightarrow Q(2,3)$ such that $f(x,y) = (-5x,y)$ and $[Q(2,3): \mathrm{im} f] =5$. Furthermore, $\bigcap_{k=1}^{\infty} \mathrm{im} f^{k} = 0 \rtimes \mathbb{Z}$.
\end{lemma}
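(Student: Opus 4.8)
The plan is to define $f$ by the stated formula and to verify all four assertions by direct computation inside $Q(2,3)=\mathbb{Z}[\tfrac16]\rtimes_{3/2}\mathbb{Z}$, whose multiplication is $(x_1,y_1)(x_2,y_2)=\big(x_1+(\tfrac32)^{y_1}x_2,\ y_1+y_2\big)$. First I would check that $f(x,y):=(-5x,y)$ is a homomorphism: applying $f$ to the product above gives $\big(-5x_1-5(\tfrac32)^{y_1}x_2,\ y_1+y_2\big)$, which is exactly $f(x_1,y_1)f(x_2,y_2)$, because multiplication by $-5$ and multiplication by $(\tfrac32)^{y_1}$ commute on the abelian group $\mathbb{Z}[\tfrac16]$. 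Injectivity is immediate: $\mathbb{Z}[\tfrac16]$ is torsion free, so $x\mapsto -5x$ is injective, and $f$ preserves the second coordinate.

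Next I would identify $\mathrm{im} f=\{(z,y):z\in 5\mathbb{Z}[\tfrac16],\ y\in\mathbb{Z}\}$, a subgroup containing $0\rtimes\mathbb{Z}$. Using the short exact sequence $1\to\mathbb{Z}[\tfrac16]\to Q(2,3)\xrightarrow{\mathrm{pr}}\mathbb{Z}\to 1$ with $\mathrm{pr}(x,y)=y$, and noting that $\mathrm{pr}$ restricts to a surjection on $\mathrm{im} f$ with kernel $5\mathbb{Z}[\tfrac16]$, I obtain $[Q(2,3):\mathrm{im} f]=[\mathbb{Z}[\tfrac16]:5\mathbb{Z}[\tfrac16]]$. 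The arithmetic point here is that $5$ is coprime to $6$, so every element of $\{2^a3^b\}$ is a unit modulo $5$; hence reduction modulo $5$ induces an isomorphism $\mathbb{Z}/5\xrightarrow{\ \cong\ }\mathbb{Z}[\tfrac16]/5\mathbb{Z}[\tfrac16]$, and the index is $5$.

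Finally, iterating gives $f^k(x,y)=((-5)^k x,y)$, so $\mathrm{im} f^k=\{(z,y):z\in 5^k\mathbb{Z}[\tfrac16]\}$ and therefore $\bigcap_{k\ge1}\mathrm{im} f^k=\big(\bigcap_{k\ge1}5^k\mathbb{Z}[\tfrac16]\big)\rtimes\mathbb{Z}$. Writing $z=m/6^n$ with $m\in\mathbb{Z}$, the relation $z\in 5^k\mathbb{Z}[\tfrac16]$ forces $5^k\mid m$ (again since $\gcd(5,6)=1$), so an element divisible by every power of $5$ must be $0$; thus $\bigcap_{k\ge1}5^k\mathbb{Z}[\tfrac16]=0$ and $\bigcap_{k\ge1}\mathrm{im} f^k=0\rtimes\mathbb{Z}$. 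The entire argument is routine; the only thing worth isolating is the observation that $5$ is invertible modulo $6$, which simultaneously drives the index computation and the vanishing of the infinite intersection — and which is precisely why the prime $5$ appears in Proposition \ref{prop_BS_space}.
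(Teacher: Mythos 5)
Your proof is correct and follows essentially the same route as the paper's, which also reduces everything to the identifications $\mathrm{im} f^k = 5^k\mathbb{Z}[\tfrac16]\rtimes\mathbb{Z}$ and the arithmetic fact that $5$ is coprime to $6$. You spell out the routine verifications (the semidirect-product multiplication, the reduction-mod-$5$ isomorphism, the intersection argument via a fixed representation $z=m/6^n$) that the paper compresses into "by a straightforward calculation" and "also because $5$ is prime to $6$"; the only cosmetic difference is that the paper proves $[\mathbb{Z}[\tfrac16]:5\mathbb{Z}[\tfrac16]]\le 5$ via the explicit telescoping identity $\tfrac{1}{6^n}-\tfrac{1}{6^{n-1}}=-\tfrac{5}{6^n}$, whereas you invoke invertibility of $6$ modulo $5$, which amounts to the same thing.
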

\begin{proof}
By a straightforward calculation, we see $f$ is a well-defined monomorphism and $\mathrm{im} f = 5 \mathbb{Z}[\frac{1}{6}] \rtimes \mathbb{Z}$.

To prove $[Q(2,3): \mathrm{im} f] =5$, it suffices to show $[\mathbb{Z}[\frac{1}{6}]: 5 \mathbb{Z}[\frac{1}{6}]] =5$. By
\[
\tfrac{1}{6^{n}} - \tfrac{1}{6^{n-1}} = -\tfrac{5}{6^{n}},
\]
we infer $\tfrac{m}{6^{n}} + 5 \mathbb{Z}[\frac{1}{6}] = m + 5 \mathbb{Z}[\frac{1}{6}]$ and hence $[\mathbb{Z}[\frac{1}{6}]: 5 \mathbb{Z}[\frac{1}{6}]] \leq 5$. Since $5$ is prime to $6$, we see integers $i$, for $0 \leq i \leq 4$, are in distinct cosets of $\mathbb{Z}[\frac{1}{6}] / 5 \mathbb{Z}[\frac{1}{6}]$. Thus $[\mathbb{Z}[\frac{1}{6}]: 5 \mathbb{Z}[\frac{1}{6}]] =5$.

The fact $\bigcap_{k=1}^{\infty} \mathrm{im} f^{k} = 0 \rtimes \mathbb{Z}$ is also because $5$ is prime to $6$.
\end{proof}

\begin{lemma}\label{lem_coset_index}
Suppose $\psi \colon G_{1} \rightarrow G_{2}$ is an epimorphism of groups, $H_{i} \le G_{i}$, $\psi (H_{1}) = H_{2}$ and $[G_{2} : H_{2}] < \infty$. Then $[G_{1} : H_{1}] = [G_{2} : H_{2}]$ if and only if $\psi^{-1} (H_{2}) = H_{1}$.
\end{lemma}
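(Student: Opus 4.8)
The plan is to reduce everything to the elementary chain of subgroups $H_{1} \le \psi^{-1}(H_{2}) \le G_{1}$ together with the multiplicativity of the index. First I would record the easy inclusion: since $\psi(H_{1}) = H_{2}$, every element of $H_{1}$ maps into $H_{2}$, so $H_{1} \subseteq \psi^{-1}(H_{2})$; thus $\psi^{-1}(H_{2})$ is a subgroup of $G_{1}$ sitting between $H_{1}$ and $G_{1}$.

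Next I would show $[G_{1} : \psi^{-1}(H_{2})] = [G_{2} : H_{2}]$. Because $\psi$ is surjective, the assignment $g\,\psi^{-1}(H_{2}) \mapsto \psi(g)\,H_{2}$ is a well-defined map from the left coset space $G_{1}/\psi^{-1}(H_{2})$ to $G_{2}/H_{2}$: if $g^{-1}g' \in \psi^{-1}(H_{2})$ then $\psi(g^{-1}g') \in H_{2}$, so $\psi(g)H_{2} = \psi(g')H_{2}$. The same computation read backwards gives injectivity, and surjectivity of $\psi$ gives surjectivity of this map. Hence it is a bijection and the two indices agree; in particular $[G_{1} : \psi^{-1}(H_{2})]$ is finite.

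Then I would invoke the tower law for $H_{1} \le \psi^{-1}(H_{2}) \le G_{1}$, which is valid as an identity of cardinals regardless of finiteness:
\[
[G_{1} : H_{1}] = [G_{1} : \psi^{-1}(H_{2})] \cdot [\psi^{-1}(H_{2}) : H_{1}] = [G_{2} : H_{2}] \cdot [\psi^{-1}(H_{2}) : H_{1}].
\]
Since $[G_{2} : H_{2}]$ is a finite positive integer, it follows that $[G_{1} : H_{1}] = [G_{2} : H_{2}]$ if and only if $[\psi^{-1}(H_{2}) : H_{1}] = 1$, i.e.\ if and only if $\psi^{-1}(H_{2}) = H_{1}$. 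This proves both implications at once.

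This lemma is elementary and there is no real obstacle; the only points that deserve a word of care are the well-definedness and bijectivity of the induced map on coset spaces (handled above) and the remark that the tower law continues to hold with possibly infinite indices, so that the hypothesis $[G_{2} : H_{2}] < \infty$ is exactly what licenses cancelling that factor.
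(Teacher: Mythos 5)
Your proof is correct and follows essentially the same route as the paper: both identify the bijection $G_{1}/\psi^{-1}(H_{2}) \to G_{2}/H_{2}$ induced by surjectivity of $\psi$ and then conclude via the index formula for the chain $H_{1} \le \psi^{-1}(H_{2}) \le G_{1}$. You simply spell out the tower-law step that the paper dismisses as easy.
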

\begin{proof}
Let $H_{3} = \psi^{-1} (H_{2})$, then $H_{1} \le H_{3}$. Since $\psi$ is surjective, it induces a bijection $G_{1} / H_{3} \rightarrow G_{2} / H_{2}$. So
\[
[G_{1} : H_{1}] \ge [G_{1} : H_{3}] = [G_{2} : H_{2}].
\]
It's easy to see the conclusion holds.
\end{proof}

\begin{lemma}\label{lem_BS_diagram}
The following diagram commutes.
\[
\xymatrix{
  BS(2,3) \ar[d]_{\varphi} \ar[r]^{\hat{h}_{\#}} & BS(2,3) \ar[d]^{\varphi} \\
  Q(2,3) \ar[r]^{f} & Q(2,3)   }
\]
Furthermore, for all integers $k>0$, $\varphi (\mathrm{im} \hat{h}_{\#}^{k} )= \mathrm{im} f^{k}$, and $\ker \varphi \le \bigcap_{k=1}^{\infty} \mathrm{im} \hat{h}_{\#}^{k}$.
\end{lemma}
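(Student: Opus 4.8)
The plan is to verify commutativity of the square on the two generators $a$ and $t$ of $BS(2,3)$, then extract the two supplementary claims by elementary group theory using Lemmas \ref{lem_BS_quotient}, \ref{lem_Q(2,3)_hom} and \ref{lem_coset_index}. First I would compute both composites on $a$: by Proposition \ref{prop_BS_space} we have $\hat{h}_{\#}(a) = a^{-5}$, so $\varphi \hat{h}_{\#}(a) = \varphi(a^{-5}) = (1,0)^{-5} = (-5,0)$; on the other hand $f\varphi(a) = f(1,0) = (-5,0)$ by Lemma \ref{lem_Q(2,3)_hom}. Next, on $t$: $\hat{h}_{\#}(t) = t$ by Proposition \ref{prop_BS_space}, so $\varphi\hat{h}_{\#}(t) = \varphi(t) = (0,1)$, while $f\varphi(t) = f(0,1) = (0,1)$ since $f$ fixes the second coordinate. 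Since $a$ and $t$ generate $BS(2,3)$ and all maps in sight are homomorphisms, the square commutes.

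For the second assertion, iterating the commuting square gives $\varphi \hat{h}_{\#}^{k} = f^{k} \varphi$ for every $k>0$. Since $\varphi$ is surjective (Lemma \ref{lem_BS_quotient}), it follows that $\varphi(\mathrm{im}\,\hat{h}_{\#}^{k}) = \varphi\hat{h}_{\#}^{k}(BS(2,3)) = f^{k}\varphi(BS(2,3)) = f^{k}(Q(2,3)) = \mathrm{im}\,f^{k}$.

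For the last assertion, I would argue by induction on $k$ that $\ker\varphi \le \mathrm{im}\,\hat{h}_{\#}^{k}$. The base case $k=0$ is trivial. Assume $\ker\varphi \le \mathrm{im}\,\hat{h}_{\#}^{k}$. The key point, which I expect to be the main (though still routine) obstacle, is to show $\hat{h}_{\#}^{-1}(\ker\varphi) = \ker\varphi$, i.e.\ that $\varphi\hat{h}_{\#}^{-1}$ and $\varphi$ have the same kernel; equivalently, that $\hat h_\#$ descends through $\varphi$ to the automorphism-onto-image $f$ with $f^{-1}(\{1\}) = \{1\}$, which holds because $f$ is a monomorphism (Lemma \ref{lem_Q(2,3)_hom}). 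Concretely: if $x \in \ker\varphi$, then $f\varphi(x) = 1$, but $f\varphi = \varphi\hat{h}_{\#}$, so $\varphi(\hat{h}_{\#}(x)) = 1$, hence $\hat{h}_{\#}(x) \in \ker\varphi$; conversely if $\hat h_\#(x)\in\ker\varphi$ then $\varphi \hat h_\#(x)=1$, i.e.\ $f\varphi(x)=1$, and injectivity of $f$ gives $\varphi(x)=1$. Thus $\hat{h}_{\#}(\ker\varphi) \subseteq \ker\varphi$, and more importantly $\ker\varphi = \hat{h}_{\#}^{-1}(\ker\varphi)$. Now, given $y \in \ker\varphi$, by the inductive hypothesis $y = \hat{h}_{\#}^{k}(z)$ for some $z$; applying $\varphi$ and using $\varphi\hat h_\#^k = f^k\varphi$ together with $\varphi(y)=1$ and injectivity of $f^k$ yields $\varphi(z) = 1$, so $z \in \ker\varphi \le \mathrm{im}\,\hat{h}_{\#}$ by the hypothesis for $k=1$; writing $z = \hat{h}_{\#}(w)$ gives $y = \hat{h}_{\#}^{k+1}(w) \in \mathrm{im}\,\hat{h}_{\#}^{k+1}$. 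This completes the induction, and intersecting over all $k$ gives $\ker\varphi \le \bigcap_{k=1}^{\infty}\mathrm{im}\,\hat{h}_{\#}^{k}$, as desired.
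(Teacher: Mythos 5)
Your verification of the commutative square by checking on the generators $a$ and $t$ is correct, and your derivation of $\varphi(\mathrm{im}\,\hat h_\#^k) = \mathrm{im}\,f^k$ from the square together with surjectivity of $\varphi$ matches the paper. However, the induction for $\ker\varphi \le \bigcap_{k} \mathrm{im}\,\hat h_\#^k$ is circular at its base. Your inductive step explicitly invokes ``$z \in \ker\varphi \le \mathrm{im}\,\hat h_\#$ by the hypothesis for $k=1$,'' but that $k=1$ containment is never established; instantiating your argument at $k=0$ asks you to prove exactly this containment and ends up appealing to it. The ``key point'' you identify --- that $\hat h_\#^{-1}(\ker\varphi) = \ker\varphi$, correctly deduced from the square and injectivity of $f$ --- only shows that $\hat h_\#$ carries $\ker\varphi$ into itself and that nothing outside $\ker\varphi$ is mapped into it; it does not show that $\hat h_\#$ hits every element of $\ker\varphi$, which is what $\ker\varphi \le \mathrm{im}\,\hat h_\#$ actually requires.

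The paper closes this gap with a counting argument rather than a descent-of-maps argument: Proposition \ref{prop_BS_space} gives $[BS(2,3): \mathrm{im}\,\hat h_\#] = 5$ and Lemma \ref{lem_Q(2,3)_hom} gives $[Q(2,3): \mathrm{im}\,f] = 5$, so Lemma \ref{lem_coset_index} (applied with $\varphi(\mathrm{im}\,\hat h_\#) = \mathrm{im}\,f$, which you have already established) yields $\varphi^{-1}(\mathrm{im}\,f) = \mathrm{im}\,\hat h_\#$, and hence $\ker\varphi = \varphi^{-1}(1) \le \varphi^{-1}(\mathrm{im}\,f) = \mathrm{im}\,\hat h_\#$. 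With this base case in hand, your inductive step --- which is essentially identical to the paper's --- does go through, so the fix is just to supply this one missing piece.
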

\begin{proof}
The commutativity is trivial. By Lemma \ref{lem_BS_quotient}, $\varphi$ is surjective, $\varphi \mathrm{im} \hat{h}_{\#}^{k} = \mathrm{im} f^{k}$. By Proposition \ref{prop_BS_space} and Lemma \ref{lem_Q(2,3)_hom},
\[
[BS(2,3): \mathrm{im} \hat{h}_{\#}] = [Q(2,3): \mathrm{im} f] =5.
\]
By Lemma \ref{lem_coset_index}, we have $\varphi^{-1} (\mathrm{im} f) = \mathrm{im} \hat{h}_{\#}$. Thus $\ker \varphi \le \mathrm{im} \hat{h}_{\#}$.

Suppose $x \in \ker \varphi$. Then there is a $y \in  BS(2,3)$ such that $x = \hat{h}_{\#} (y)$. By the above diagram, $f \varphi (y) = \varphi \hat{h}_{\#} (y) = \varphi (x) =0$. By Lemma \ref{lem_Q(2,3)_hom}, $f$ is injective, we see $\varphi (y) = 0$, i.e. $y \in \ker \varphi \le \mathrm{im} \hat{h}_{\#}$. Thus $x \in \mathrm{im} \hat{h}_{\#}^{2}$. By induction, we infer $\ker \varphi \le \bigcap_{k=1}^{\infty} \mathrm{im} \hat{h}_{\#}^{k}$.
\end{proof}

\begin{proposition}\label{prop_BS_intersection}
We have $\bigcap_{k=1}^{\infty} \mathrm{im} \hat{h}_{\#}^{k} = \varphi^{-1} (0 \rtimes \mathbb{Z}) = \ker \varphi \rtimes \langle t \rangle$, and it is not a normal subgroup of $BS(2,3)$.
\end{proposition}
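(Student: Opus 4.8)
The plan is to compute $G := \bigcap_{k=1}^{\infty} \mathrm{im}\,\hat{h}_{\#}^{k}$ by transporting the problem to the metabelian quotient $Q(2,3)$ through the epimorphism $\varphi$ of Lemma \ref{lem_BS_quotient}, using the commutative square and the identities of Lemma \ref{lem_BS_diagram}, and then to read off non-normality directly from the semidirect-product structure of $Q(2,3)$. The key intermediate goal is the identity $G = \varphi^{-1}(0\rtimes\mathbb{Z})$.

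First I would establish $G = \varphi^{-1}(0\rtimes\mathbb{Z})$ by a double inclusion. For ``$\subseteq$'': if $x\in G$, then since $\varphi(\mathrm{im}\,\hat{h}_{\#}^{k}) = \mathrm{im}\,f^{k}$ for every $k$ (Lemma \ref{lem_BS_diagram}) we get $\varphi(x)\in\bigcap_{k}\mathrm{im}\,f^{k} = 0\rtimes\mathbb{Z}$ by Lemma \ref{lem_Q(2,3)_hom}, hence $x\in\varphi^{-1}(0\rtimes\mathbb{Z})$. For ``$\supseteq$'': Lemma \ref{lem_BS_diagram} already gives $\ker\varphi\le G$, and Proposition \ref{prop_BS_space} gives $\hat{h}_{\#}(t)=t$, so $t=\hat{h}_{\#}^{k}(t)\in\mathrm{im}\,\hat{h}_{\#}^{k}$ for all $k$, i.e. $\langle t\rangle\le G$; since $G$ is a subgroup it contains $\ker\varphi\cdot\langle t\rangle$. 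It then remains to check $\varphi^{-1}(0\rtimes\mathbb{Z}) = \ker\varphi\cdot\langle t\rangle$: every element of $0\rtimes\mathbb{Z}$ has the form $(0,n) = \varphi(t^{n})$, so $\varphi(g)\in 0\rtimes\mathbb{Z}$ forces $g t^{-n}\in\ker\varphi$ for the appropriate $n$, giving $g\in\ker\varphi\cdot\langle t\rangle$; the reverse inclusion is immediate. Finally $\langle t\rangle\cap\ker\varphi = 1$ because $\varphi|_{\langle t\rangle}$ maps $\langle t\rangle$ isomorphically onto $0\rtimes\mathbb{Z}$, and $\ker\varphi$ is normal, so the product is an internal semidirect product $\ker\varphi\rtimes\langle t\rangle$. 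This yields the asserted identity $G = \varphi^{-1}(0\rtimes\mathbb{Z}) = \ker\varphi\rtimes\langle t\rangle$.

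For non-normality I would conjugate the element $t\in G$ by $a$ (which lies outside $G$, since $\varphi(a)=(1,0)\notin 0\rtimes\mathbb{Z}$). Using the multiplication rule $(x_{1},y_{1})(x_{2},y_{2}) = \bigl(x_{1}+(\tfrac{3}{2})^{y_{1}}x_{2},\,y_{1}+y_{2}\bigr)$ in $Q(2,3)$ together with $\varphi(a)=(1,0)$ and $\varphi(t)=(0,1)$, a one-line computation gives $\varphi(a t a^{-1}) = (1,0)(0,1)(-1,0) = (-\tfrac{1}{2},1)$, which is not in $0\rtimes\mathbb{Z}$. Hence $a t a^{-1}\notin\varphi^{-1}(0\rtimes\mathbb{Z}) = G$ while $t\in G$, so $aGa^{-1}\ne G$ and $G$ is not normal in $BS(2,3)$. (Equivalently, since $\varphi$ is onto, $\varphi(G)=0\rtimes\mathbb{Z}$; if $G$ were normal in $BS(2,3)$ then $0\rtimes\mathbb{Z}$ would be normal in $Q(2,3)$, contradicting the same computation.)

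I do not expect a genuine obstacle: the whole argument is a chase around the commutative square of Lemma \ref{lem_BS_diagram} combined with the explicit description of $\bigcap_{k}\mathrm{im}\,f^{k}$ from Lemma \ref{lem_Q(2,3)_hom}. The only points demanding care are the bookkeeping that identifies $\varphi^{-1}(0\rtimes\mathbb{Z})$ with the internal semidirect product $\ker\varphi\rtimes\langle t\rangle$, and the single conjugation computation in $Q(2,3)$ carried out with the correct exponent and sign conventions for the action by $\tfrac{3}{2}$.
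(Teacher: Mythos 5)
Your proof is correct and follows essentially the same route as the paper's: transport the intersection through $\varphi$ via Lemma \ref{lem_BS_diagram} and Lemma \ref{lem_Q(2,3)_hom}, show the intersection equals $\varphi^{-1}(0\rtimes\mathbb{Z})$ using $\ker\varphi\le G$ and $t\in G$, and then identify the internal semidirect product. The only addition is your explicit conjugation computation $\varphi(ata^{-1})=(-\tfrac{1}{2},1)\notin 0\rtimes\mathbb{Z}$, which nicely verifies the non-normality of $0\rtimes\mathbb{Z}$ in $Q(2,3)$ that the paper merely asserts.
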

\begin{proof}
By Lemmas \ref{lem_Q(2,3)_hom} and \ref{lem_BS_diagram}, we have
\[
\varphi \left( \bigcap_{k=1}^{\infty} \mathrm{im} \hat{h}_{\#}^{k} \right) \le \bigcap_{k=1}^{\infty} \varphi (\mathrm{im} \hat{h}_{\#}^{k} )= \bigcap_{k=1}^{\infty} \mathrm{im} f^{k} = 0 \rtimes \mathbb{Z}.
\]
Clearly, $\langle t \rangle \le \bigcap_{k=1}^{\infty} \mathrm{im} \hat{h}_{\#}^{k}$ and $\varphi (\langle t \rangle) = 0 \rtimes \mathbb{Z}$. Thus $\varphi (\bigcap_{k=1}^{\infty} \mathrm{im} \hat{h}_{\#}^{k}) = 0 \rtimes \mathbb{Z}$.

By Lemma \ref{lem_BS_diagram}, $\ker \varphi \le \bigcap_{k=1}^{\infty} \mathrm{im} \hat{h}_{\#}^{k}$, we infer $\bigcap_{k=1}^{\infty} \mathrm{im} \hat{h}_{\#}^{k} = \varphi^{-1} (0 \rtimes \mathbb{Z})$. Since $\ker \varphi$ is normal in $BS(2,3)$, $\varphi|_{\langle t \rangle}$ is injective, and $\varphi \langle t \rangle = 0 \rtimes \mathbb{Z}$, we see $\varphi^{-1} (0 \rtimes \mathbb{Z}) = \ker \varphi \rtimes \langle t \rangle$. Since $0 \rtimes \mathbb{Z}$ is not normal in $Q(2,3)$, nor is $\bigcap_{k=1}^{\infty} \mathrm{im} \hat{h}_{\#}^{k}$ in $BS(2,3)$.
\end{proof}

\begin{proposition}\label{prop_infinite_rank}
The group $\bigcap_{k=1}^{\infty} \mathrm{im} \hat{h}_{\#}^{k}$ is a free group with infinite rank.
\end{proposition}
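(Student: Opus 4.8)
The plan is to realize $G := \bigcap_{k=1}^{\infty}\mathrm{im}\,\hat h_{\#}^{k}$ as the fundamental group of a graph (which forces it to be free) and then to rule out finite generation by transferring the non–finite presentability of $Q(2,3)$ back up to $BS(2,3)$. Throughout I will use Proposition \ref{prop_BS_intersection}, which identifies $G = \varphi^{-1}(0\rtimes\mathbb{Z}) = \ker\varphi\rtimes\langle t\rangle$, and Proposition \ref{prop_Q(2,3)_present}, which says $Q(2,3)$ is not finitely presented.

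\emph{Step 1: $G$ is free.} View $BS(2,3) = \langle a,t\mid ta^{2}t^{-1}=a^{3}\rangle$ as the HNN extension of $\langle a\rangle\cong\mathbb{Z}$ along $\langle a^{2}\rangle\cong\langle a^{3}\rangle$ with stable letter $t$, and let $T$ be its Bass--Serre tree. The vertex stabilizers of the $BS(2,3)$-action on $T$ are the conjugates $g\langle a\rangle g^{-1}$, and every edge stabilizer is contained in a vertex stabilizer. I claim $G$ acts freely on $T$; since $BS(2,3)$ is torsion free the action has no inversions, so it is enough to check $G\cap g\langle a\rangle g^{-1}=1$ for every $g$. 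An element of $g\langle a\rangle g^{-1}$ has the form $ga^{n}g^{-1}$, and by Proposition \ref{prop_BS_intersection} it lies in $G$ iff $\varphi(g)\,(n,0)\,\varphi(g)^{-1}\in 0\rtimes\mathbb{Z}$. Writing $\varphi(g)=(x,k)$, a one-line computation in $Q(2,3)$ (using $(x,m)(y,l)=(x+(\tfrac32)^{m}y,\,m+l)$, as in Lemma \ref{lem_BS_quotient}) gives
$\varphi(g)(n,0)\varphi(g)^{-1}=\big((\tfrac32)^{k}n,\,0\big)$,
which belongs to $0\rtimes\mathbb{Z}$ only when $n=0$, i.e. only when $ga^{n}g^{-1}=1$. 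Hence $G$ acts freely on $T$, so $G\cong\pi_{1}(G\backslash T)$ is a free group (Serre's theory of groups acting on trees).

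\emph{Step 2: $G$ has infinite rank.} A free group has finite rank precisely when it is finitely generated, so it suffices to show $G$ is not finitely generated. By Proposition \ref{prop_BS_intersection}, $\ker\varphi\trianglelefteq G$ with $G/\ker\varphi$ infinite cyclic (generated by the image of $t$). Suppose $G$ were finitely generated; since $G=\langle t\rangle\ker\varphi$ we may choose a generating set of the form $\{t,y_{1},\dots,y_{r}\}$ with $y_{i}\in\ker\varphi$. Then $G/\langle\!\langle y_{1},\dots,y_{r}\rangle\!\rangle_{G}$ is cyclic and surjects onto $G/\ker\varphi\cong\mathbb{Z}$, hence is itself $\mathbb{Z}$, which forces $\langle\!\langle y_{1},\dots,y_{r}\rangle\!\rangle_{G}=\ker\varphi$. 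Because $G\le BS(2,3)$, the normal closure of $\{y_{1},\dots,y_{r}\}$ in $BS(2,3)$ also equals $\ker\varphi$. But then $Q(2,3)=BS(2,3)/\ker\varphi$ would be finitely presented — a finite presentation of $BS(2,3)$ together with the relators $y_{1},\dots,y_{r}$ — contradicting Proposition \ref{prop_Q(2,3)_present}. Therefore $G$ is not finitely generated, and being free it has infinite rank.

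I expect Step 1 to be the conceptual heart: the computation $\varphi(g)(n,0)\varphi(g)^{-1}=((\tfrac32)^{k}n,0)$ is exactly what makes the $G$-action on the Bass--Serre tree free. Step 2 is a short transfer argument once Proposition \ref{prop_BS_intersection} is in hand; the only point requiring a moment's care is the passage from ``$\ker\varphi$ is normally generated by finitely many elements in $G$'' to ``\dots in $BS(2,3)$'', which is automatic since $G$ is a subgroup of $BS(2,3)$.
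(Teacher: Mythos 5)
Your proof is correct. Step 2 (infinite rank via non--finite presentability of $Q(2,3)$) is essentially the same transfer argument the paper uses, just organized slightly differently: you phrase it through normal closures in $G$ and then pass to $BS(2,3)$, while the paper phrases it via the Reidemeister--Schreier generating set $\{t^{k}g_{i}t^{-k}\}$ for $\ker\varphi$; both hinge on the same two facts that $\ker\varphi\trianglelefteq BS(2,3)$ and that $BS(2,3)$ is finitely presented.

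Step 1 is where you genuinely diverge. The paper deduces freeness of $\varphi^{-1}(0\rtimes\mathbb{Z})$ by quoting Farrell--Wu \cite[Proposition~2.1]{Farrell_Wu} (with $(x,y)=(0,1)$), whereas you prove it from scratch via the Bass--Serre tree of the HNN decomposition of $BS(2,3)$, reducing everything to the clean computation $\varphi(g)(n,0)\varphi(g)^{-1}=\bigl((\tfrac32)^{k}n,0\bigr)$ when $\varphi(g)=(x,k)$, which shows $G$ meets every conjugate of $\langle a\rangle$ trivially and hence acts freely. This is a self-contained and arguably more illuminating route; it is very likely the same argument that sits under the hood of the Farrell--Wu citation, so you have effectively unwound that black box. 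One small cosmetic point: you justify ``no inversions'' by torsion-freeness of $BS(2,3)$, which is not actually a valid implication in general (an infinite-order element can still invert an edge). The correct justification, which is what you need and what is true here, is that the Bass--Serre tree of an HNN extension is acted on without inversions by construction, because edges carry a canonical orientation coming from the stable letter. With that justification substituted, the argument is airtight.
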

\begin{proof}
Taking the $(x,y)$ in \cite[Proposition~2.1]{Farrell_Wu} as $(0,1)$, we have $\varphi^{-1} (0 \rtimes \mathbb{Z})$ is free. By Proposition \ref{prop_BS_intersection}, $\bigcap_{k=1}^{\infty} \mathrm{im} \hat{h}_{\#}^{k}$ is free.

It remains to show $\bigcap_{k=1}^{\infty} \mathrm{im} \hat{h}_{\#}^{k}$ is not finitely generated. We argue this by contradiction. If not, by Proposition \ref{prop_BS_intersection}, we would have that $\bigcap_{k=1}^{\infty} \mathrm{im} \hat{h}_{\#}^{k}$ can be generated by $g_{1} t^{r_{1}}, \dots, g_{n} t^{r_{n}}$, where each $g_{i} \in \ker \varphi$. Thus it would be generated by $g_{1}, \dots, g_{n}, t$. We further infer $\ker \varphi$ would be generated by
\[
\{ t^{k} g_{i} t^{-k} \mid 1 \leq i \leq n, k \in \mathbb{Z} \}.
\]
It's easy to see $\ker \varphi$ would be the normal closure of $\{ g_{1}, \dots, g_{n} \}$ in $BS(2,3)$. Note that $\varphi$ is surjective by Lemma \ref{lem_BS_quotient}. Since $BS(2,3)$ is finitely presented, we would have so is $Q(2,3)$, which is a contradiction to Proposition \ref{prop_Q(2,3)_present}.
\end{proof}

\begin{proof}[Proof of Theorem \ref{thm_BS_manifold}]
Since the $\hat{h} \colon  X \rightarrow X'$ in Proposition \ref{prop_BS_space} is a homeomorphism, it is a simple homotopy equivalence by \cite[Theorem~1]{Chapman}. This theorem follows from Theorems \ref{thm_cw_manifold} and \ref{thm_cw_4-manifold} together with Propositions \ref{prop_BS_space}, \ref{prop_BS_intersection} and \ref{prop_infinite_rank}.
\end{proof}

\begin{remark}\label{rmk_BS_proof}
We now give more details of Remark \ref{rmk_BS_general}. By \cite[Corollary~5.4]{Howie} and \cite[Theorem~13]{Whitehead}, $K(BS(q_{1}, q_{2}), 1)$ can be represented by the underlying space $X$ of a $2$-dimensional finite simplicial complex. By a purely group theoretic argument, it can be shown that there is a monomorphism $\theta \colon  BS(q_{1}, q_{2}) \rightarrow BS(q_{1}, q_{2})$ such that $\theta (a) = a^{s}$ or $\theta (a) = a^{-s}$ as one wishes, $\theta (t) = t$, and $[BS(q_{1}, q_{2}) : \mathrm{im} \theta] = s$. We have a base points preserving homotopy equivalence $\hat{h} \colon  X \rightarrow X'$ such that $\hat{h}_{\#} = \theta$, where $X'$ is the cover of $X$ with $\pi_{1} (X') = \mathrm{im} \theta$. By \cite{Farrell_Wu}, $BS(q_{1}, q_{2})$ satisfies the Farrell-Jones conjecture for $K$--theory. Thus the Whitehead group $\mathrm{Wh} (BS(q_{1}, q_{2}))$ is trivial (see e.g.~\cite[Remark~2.2]{Luck}). We infer $\hat{h}$ is a simple homotopy equivalence. The remaining argument duplicates that of Theorem \ref{thm_BS_manifold}.
\end{remark}

\section{Further Questions}\label{sec_question}
We firstly propose questions in order to seek positive solutions, probably under some extra assumptions, to Question \ref{que_nonabelian}.

To obtain a result similar to Theorem \ref{thm_Poincare}, one has to show the homotopy fiber of the $p$ in \eqref{que_nonabelian_1} is finitely dominated. Therefore, it's desire to extend Theorem \ref{thm_homotopy_finite} to the case of general fundamental groups.

Suppose $X$ is a CW complex, $q \colon  X' \rightarrow X$ a finite nontrivial covering, and $h \colon  X \rightarrow X'$ is a homotopy equivalence preserving base points. For brevity, we abuse the notation $h_{\#}$ to denote the composition
\[
(qh)_{\#} \colon  \pi_{1} (X) \rightarrow \pi_{1} (X') \hookrightarrow \pi_{1} (X).
\]
\begin{question}\label{que_homotopy_finite}
Let $X$, $X'$ and $h$ be as the above. Let $G: = \bigcap_{k=1}^{\infty} \mathrm{im} h_{\#}^{k}$ and $\overline{X}$ be the cover of $X$ with $\pi_{1} (\overline{X}) = G$. Assume further $G$ is a finitely presented normal subgroup of $\pi_{1} (X)$. If $X$ is homotopy equivalent to a CW complex of finite type (resp.~is finitely dominated), then is so $\overline{X}$?
\end{question}

By Theorem \ref{thm_BS_manifold}, the answer to Question \ref{que_homotopy_finite} will be no if $G$ is not assumed finitely presented. As a special case, when $\pi_{1} (X) / G \cong \mathbb{Z}$, Question \ref{que_homotopy_finite} becomes the Question 7.1 in \cite{Qin_Su_Wang}.

We also need to find a suitable aspherical manifold $B$ in \eqref{que_nonabelian_1}. It's necessary to point out, to ensure the homotopy fiber of $p$ is homotopy equivalent to the cover $\overline{M}$ of $M$ with $\pi_{1} (\overline{M}) = \ker p_{\#}$, the manifold $B$ has to be aspherical. If the square \eqref{que_nonabelian_1} was a homotopy pullback with $\bigcap_{k=1}^{\infty} \mathrm{im} \psi_{\#}^{k} =1$, we would have $\ker p_{\#} = G$ and $\pi_{1} (B) = \pi_{1} (M) / G$.
\begin{question}\label{que_Poincare}
Let $X$ and $G$ be as in Question \ref{que_homotopy_finite}. Let $Q: = \pi_{1} (X) / G$. Is the Eilenberg-MacLane space $K(Q,1)$ a finitely dominated Poincar\'{e} duality space?
\end{question}

Assuming $X$ is a finitely dominated Poincar\'{e} space, Questions \ref{que_homotopy_finite} and \ref{que_Poincare} are closely related. In this situation, by \cite{Gottlieb} (see also \cite[Theorem~G]{Klein_Qin_Su}), if the answer to Question \ref{que_homotopy_finite} is yes, and if $K(Q,1)$ is finitely dominated, then both $K(Q,1)$ and $\overline{X}$ are automatically Poincar\'{e} spaces.

Question \ref{que_Poincare} is also directly related to a few questions in the literature. Note that $h_{\#} \colon  \pi_{1} (X) \rightarrow \pi_{1} (X)$ induces a monomorphism $[h_{\#}] \colon  Q \rightarrow Q$ such that $[Q: \mathrm{im} [h_{\#}]] < \infty$ and $\bigcap_{k=1}^{\infty} \mathrm{im} [h_{\#}]^{k} =1$. Thus $Q$ is both a strongly scale-invariant group in \cite{Nekrashevych_Pete} and a dis-cohopfian group in \cite{Cornulier}.

\begin{definition}[\cite{Nekrashevych_Pete}]
Suppose $\Gamma$ is a group. If there is a monomorphism $\phi \colon  \Gamma \rightarrow \Gamma$ such that $[\Gamma: \mathrm{im} \phi] < \infty$ and $\bigcap_{k=1}^{\infty} \mathrm{im} \phi^{k}$ is finite, then $\Gamma$ is \textit{strongly scale-invariant}.
\end{definition}

\begin{definition}[\cite{Cornulier}]
Suppose $\Gamma$ is a group. If there is a monomorphism $\phi \colon  \Gamma \rightarrow \Gamma$ such that $\bigcap_{k=1}^{\infty} \mathrm{im} \phi^{k} = 1$, then $\Gamma$ is \textit{dis-cohopfian}.
\end{definition}

Motivated by Benjamini, Nekrashevych-Pete asked the following (\cite[Question~1.1]{Nekrashevych_Pete}):
\begin{question}[Benjamini-Nekrashevych-Pete]\label{que_scale_invariant}
If $\Gamma$ is a finitely generated strongly scale-invariant group, then does $\Gamma$ have polynomial growth, or equivalently, is $\Gamma$ virtually nilpotent?
\end{question}

To the best of our knowledge, Question \ref{que_scale_invariant} remains open in general. If $\Gamma$ is further assumed virtually polycyclic, it has been confirmed by Der\'{e} in \cite[Theorem~3.10]{Dere22}. Note that the above $Q$ is also finitely generated and hence satisfies the condition in Question \ref{que_scale_invariant}. Therefore, any progress on Question \ref{que_scale_invariant} will be definitely helpful for our Question \ref{que_Poincare}. Actually, it's known that $K(\pi, 1)$ is a finitely dominated Poincar\'{e} spaces if $\pi$ is torsion free and virtually nilpotent (cf. \cite[p.~596]{Johnson_Wall}).

We aim to enhance $K(Q,1)$ to make it a closed manifold.
\begin{question}\label{que_manifold}
Let $Q$ and $[h_{\#}]$ be as above. Can $K(Q,1)$ be represented by a closed $\mathrm{CAT}$ manifold $B$? And is there a base points preserving $\mathrm{CAT}$ covering map $\psi \colon  B \rightarrow B$ such that $\psi_{\#} = [h_{\#}]$?
\end{question}

Our Question \ref{que_manifold} is related to two famous conjectures (see e.g. Conjectures 1.1 and 1.2 in \cite{Luck}).
\begin{conjecture}\label{cjt_Poincare}
Every finitely dominated aspherical Poincar\'{e} spaces is homotopy equivalent to a closed $\mathrm{TOP}$ manifold.
\end{conjecture}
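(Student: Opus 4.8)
The plan is to attack Conjecture \ref{cjt_Poincare} --- the aspherical case of the realization problem for Poincar\'e spaces --- through Ranicki's algebraic surgery theory, reducing it in the top-dimensional range to the Farrell--Jones conjecture for the fundamental group, and disposing of the low-dimensional cases separately. Write $X$ for the given finitely dominated aspherical Poincar\'e space, let $n$ be its formal dimension and $\pi := \pi_{1}(X)$. Since $X$ is aspherical and satisfies Poincar\'e duality, $\pi$ is a torsion-free $\mathrm{PD}_{n}$ group.

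First I would dispense with small $n$. For $n \le 2$ one invokes the classification already used in Section \ref{sec_Poincare}: a finitely dominated aspherical Poincar\'e space of formal dimension $0$, $1$ or $2$ is a point, $S^{1}$, or (by \cite{Eckmann_Linnell}, see also \cite{Eckmann_Muller}) a closed aspherical surface. The case $n=3$ is the notorious $\mathrm{PD}_{3}$-group problem --- whether every $\mathrm{PD}_{3}$ group is the fundamental group of a closed aspherical $3$-manifold --- which is itself a deep open conjecture, so I would treat it as external input rather than attempt it here. The case $n=4$ would rest on Freedman's topological surgery, hence on $\pi$ being good in the sense of \cite{Freedman_Teichner}, and is likewise open in general.

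The substantive part is $n \ge 5$. Here the obstruction to realizing $X$ by a closed $\mathrm{TOP}$ $n$-manifold is Ranicki's total surgery obstruction $s(X) \in \mathbb{S}_{n}(X)$, which vanishes if and only if $X$ is homotopy equivalent to a closed $\mathrm{TOP}$ manifold. The group $\mathbb{S}_{n}(X)$ sits in the algebraic surgery exact sequence
\[
\cdots \longrightarrow H_{n}\bigl(X; \mathbb{L}_{\bullet}\langle 1 \rangle\bigr) \xrightarrow{A} L_{n}(\mathbb{Z}[\pi]) \longrightarrow \mathbb{S}_{n}(X) \longrightarrow H_{n-1}\bigl(X; \mathbb{L}_{\bullet}\langle 1 \rangle\bigr) \longrightarrow \cdots,
\]
where $A$ is the assembly map and $X = B\pi$. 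The steps would be: (i) apply the $K$-theoretic Farrell--Jones conjecture for the torsion-free group $\pi$ to conclude $\widetilde{K}_{0}(\mathbb{Z}[\pi]) = 0$ and $\mathrm{Wh}(\pi) = 0$; the first makes the Wall finiteness obstruction of $X$ vanish, so $X$ is homotopy equivalent to a finite Poincar\'e complex, while together they identify the various decorated structure groups in the relevant degrees; (ii) apply the $L$-theoretic Farrell--Jones conjecture to get that the assembly map $H_{*}(B\pi; \mathbb{L}_{\bullet}) \xrightarrow{\cong} L_{*}(\mathbb{Z}[\pi])$ is an isomorphism, forcing $\mathbb{S}_{n}(B\pi) = 0$; hence $s(X) = 0$ and $X$ is homotopy equivalent to a closed $\mathrm{TOP}$ $n$-manifold.

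The main obstacle is unavoidable: the argument is only as strong as the Farrell--Jones conjecture, which remains open in general. Thus the realistic theorem is the conditional one --- Conjecture \ref{cjt_Poincare} holds in formal dimension $\ge 5$ for every finitely dominated aspherical Poincar\'e space whose fundamental group satisfies the Farrell--Jones conjecture in $K$- and $L$-theory (hyperbolic groups, $\mathrm{CAT}(0)$ groups, virtually polycyclic groups, lattices in virtually connected Lie groups, mapping class groups, and so on; cf.\ \cite{Luck}). Removing that hypothesis, settling the $\mathrm{PD}_{3}$-group problem in dimension $3$, and handling non-good fundamental groups in dimension $4$ are each well-known open problems in their own right.
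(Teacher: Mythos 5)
The statement you were asked to prove is labeled a \emph{Conjecture} in the paper, and the paper offers no proof of it: immediately after stating Conjectures~\ref{cjt_Poincare} and~\ref{cjt_Borel}, the authors remark that they ``are neither proved nor disproved, but are confirmed for many types of fundamental groups.'' There is therefore no proof in the paper to compare yours against. Your write-up correctly recognizes this and, rather than manufacturing a proof, sketches the standard conditional reduction: for $n\ge 5$, the $K$-theoretic Farrell--Jones conjecture for the torsion-free $\mathrm{PD}_{n}$ group $\pi=\pi_1(X)$ gives $\widetilde K_{0}(\mathbb{Z}[\pi])=0$ and $\mathrm{Wh}(\pi)=0$, so $X$ is homotopy equivalent to a finite Poincar\'e complex and decorations on the surgery and structure groups collapse; the $L$-theoretic Farrell--Jones isomorphism then forces $\mathbb{S}_{n}(B\pi)=0$ in the algebraic surgery exact sequence, so Ranicki's total surgery obstruction $s(X)$ vanishes and $X$ is homotopy equivalent to a closed $\mathrm{TOP}$ $n$-manifold. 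Your handling of $n\le 2$ (classical), $n=3$ (the open $\mathrm{PD}_{3}$-group problem), and $n=4$ (needs $\pi$ good in the sense of \cite{Freedman_Teichner}) is also accurate. This is exactly what the paper alludes to by ``confirmed for many types of fundamental groups'' (cf.\ \cite{Luck}). The only ``gap'' is the one you explicitly flag --- the dependence on Farrell--Jones --- and that is unavoidable, since the conjecture as stated is genuinely open.
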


\begin{conjecture}[Borel Conjecture]\label{cjt_Borel}
Suppose $B_{1}$ and $B_{2}$ are closed aspherical $\mathrm{TOP}$ manifolds. Then every homotopy equivalence $h \colon  B_{1} \rightarrow B_{2}$ is homotopic to a homeomorphism.
\end{conjecture}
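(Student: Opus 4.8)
The statement to be addressed is the \emph{Borel Conjecture}, so what follows is necessarily an outline of the standard strategy rather than a complete argument: the conjecture is one of the central open problems of high-dimensional manifold topology. The plan is to reduce topological rigidity of closed aspherical manifolds to the triviality of a structure set, and then to the Farrell--Jones isomorphism conjecture in algebraic $K$- and $L$-theory. First I would recall that for a closed aspherical $\mathrm{TOP}$ manifold $B$ of dimension $n\ge 5$ with $\pi:=\pi_{1}(B)$, being rigid (i.e.\ every homotopy equivalence $B'\to B$ is homotopic to a homeomorphism, for every aspherical $B'$) is equivalent to the topological structure set $\mathcal{S}^{\mathrm{TOP}}(B)$ being a single point; this uses naturality of the surgery exact sequence
\[
\cdots \to L_{n+1}(\mathbb{Z}\pi) \to \mathcal{S}^{\mathrm{TOP}}(B) \to [B,\mathrm{G}/\mathrm{TOP}] \xrightarrow{\ \sigma\ } L_{n}(\mathbb{Z}\pi)
\]
together with a pairing-off argument, plus a separate treatment of low dimensions (Freedman--Quinn in dimension $4$ for good groups, and geometrization together with classical rigidity in dimension $\le 3$).

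The second step is to reinterpret the surgery sequence as the long exact sequence of an assembly map. By Ranicki's identification the obstruction term is the $L$-theory assembly map $H_{n}(B;\mathbb{L})\to L_{n}(\mathbb{Z}\pi)$, and since $B\simeq K(\pi,1)$ this becomes $H_{n}(B\pi;\mathbb{L})\to L_{n}(\mathbb{Z}\pi)$; the $h$- versus $s$-distinction and the finiteness obstruction are controlled by $\mathrm{Wh}(\pi)$, $\widetilde{K}_{0}(\mathbb{Z}\pi)$ and the negative $K$-groups. Consequently $\mathcal{S}^{\mathrm{TOP}}(B)=\ast$ for all aspherical $B$ with fundamental group $\pi$ follows once one knows: (i) the $L$-theoretic assembly map for $\pi$ (with suitable decorations) is an isomorphism, and (ii) the $K$-theoretic assembly map for $\pi$ is an isomorphism in degrees $\le 1$. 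Statements (i) and (ii) are precisely the Farrell--Jones conjecture for $\pi$, relative to the family of finite subgroups for $K$ below degree $1$ and the family of virtually cyclic subgroups in general.

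The genuinely hard step --- and the obstacle that keeps this open --- is proving the Farrell--Jones conjecture for an arbitrary finitely presented group. In all cases where it is known (word-hyperbolic groups, $\mathrm{CAT}(0)$ groups, lattices in virtually connected Lie groups, mapping class groups, $\mathrm{GL}_{n}(\mathbb{Z})$, and so on) one runs the Bartels--L\"uck--Reich controlled-topology machine: build a flow space on which $\pi$ acts, verify transfer reducibility by producing long, thin, equivariant covers over the relevant family of subgroups, and then split the assembly map by a descent/controlled-algebra argument. I would try to carry this out for a general $\pi$ by manufacturing a suitable contractible coarse model with enough geometric structure to admit such covers; but the construction has no analogue for groups lacking any nonpositive-curvature feature, and this is exactly where the proof stalls.

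In the context of the present paper only the aspherical base $B$ appearing in \eqref{que_nonabelian_1} matters, so in practice it would suffice to establish the conjecture for the class of fundamental groups $Q=\pi_{1}(M)/G$ that actually arise there; for many natural such $Q$ (e.g.\ virtually polycyclic, or more generally those already covered by Farrell--Jones) the argument above goes through unconditionally, which is the realistic deliverable absent a new idea for the general case.
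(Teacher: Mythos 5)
The paper does not prove this statement; it is labeled a \emph{conjecture} and the surrounding text explicitly says that Conjectures~\ref{cjt_Poincare} and~\ref{cjt_Borel} ``are neither proved nor disproved, but are confirmed for many types of fundamental groups.'' You correctly recognize this, and your outline of the standard reduction --- topological rigidity $\Leftrightarrow$ triviality of $\mathcal{S}^{\mathrm{TOP}}(B)$, then identifying the surgery obstruction with the $L$-theory assembly map, then reducing to the Farrell--Jones conjecture in $K$- and $L$-theory, with low-dimensional cases handled separately --- is the accepted strategy and matches what the paper implicitly invokes when it says the conjectures hold for the class of groups relevant to its applications. Your honest acknowledgment that the argument stalls at establishing Farrell--Jones for an arbitrary finitely presented $\pi$, and your final remark that for the groups $Q=\pi_{1}(M)/G$ actually arising in the paper (e.g.\ virtually polycyclic) the argument goes through unconditionally, is exactly the stance the authors take in Section~\ref{sec_question}. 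So there is no gap to flag beyond the one you already flagged: this is an open conjecture, your write-up is a correct summary of the state of the art rather than a proof, and that is the appropriate thing to produce for this statement.
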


Conjectures \ref{cjt_Poincare} and \ref{cjt_Borel} are neither proved nor disproved, but are confirmed for many types of fundamental groups. If these two conjectures are true, and if the answer to Question \ref{que_Poincare} is yes, then so is the answer to Question \ref{que_manifold} in the case of $\mathrm{TOP}$. Furthermore, our $Q$ is rather special. Very likely, these conjectures are true for $Q$.

If, probably under some mild extra assumptions, the answers to our Questions \ref{que_homotopy_finite}, \ref{que_Poincare} and \ref{que_manifold} are all yes, then one will obtain a positive solution to Question \ref{que_nonabelian} in a weak sense. In other words, there will be the desired \eqref{que_nonabelian_1} except that $p$ is merely a continuous map whose homotopy fiber is a finitely dominated Poincar\'{e} space. By the argument in Section \ref{sec_fibering}, imposing additional conditions, the $p$ can be further promoted to fibering in various senses.

To conclude this paper, we raise questions concerning the diversity of closed self-covering manifolds.

It's interesting to modify the example in Theorem \ref{thm_BS_manifold} into a closed aspherical manifold.
\begin{question}\label{que_aspherical}
Are there a closed aspherical $\mathrm{CAT}$ manifold (resp.~finitely dominated aspherical Poincar\'{e} space) $M$ and a base points preserving $\mathrm{CAT}$ isomorphism (resp.~homotopy equivalence) $h \colon  M \rightarrow M'$ such that $\bigcap_{k=1}^{\infty} \mathrm{im} h_{\#}^{k}$ is not finitely presented? Or even not finitely generated? Here $M'$ is a nontrivial cover of $M$.
\end{question}

Since Conjectures \ref{cjt_Poincare} and \ref{cjt_Borel} have been confirmed quite widely, a positive solution to Question \ref{que_aspherical} in the case of Poincar\'{e} spaces would probably produce examples in the case of $\mathrm{TOP}$ manifolds.

It's known every finitely presented group can be realized as the fundamental group of a closed $\mathrm{DIFF}$ manifold. Our final question concerns the realization of a finitely presented group, which is isomorphic to a proper finite-index subgroup of itself, as the fundamental group of a closed self-covering manifold.
\begin{question}\label{que_group_manifold}
Suppose $\Gamma$ is a finitely presented group. Suppose $\Gamma'$ is a subgroup of $\Gamma$ such that $\Gamma' \cong \Gamma$ and $1 < [\Gamma: \Gamma'] < \infty$. Is there a closed $\mathrm{DIFF}$ manifold $M$ such that $\pi_{1} (M) = \Gamma$ and $M$ is diffeomorphic to $M'$? Here $M'$ is the cover of $M$ with $\pi_{1} (M') = \Gamma'$.
\end{question}

If $\Gamma$ is abelian, the answer to Question \ref{que_group_manifold} is trivially yes. In this typical situation, $\Gamma = \mathbb{Z}^{n} \times \Gamma_{0}$ and $\Gamma' = H \times \Gamma_{0}$, where $H \le \mathbb{Z}^{n}$ and $\Gamma_{0}$ is a finite group. We can construct $M$ as $\mathbb{T}^{n} \times M_{0}$, where $M_{0}$ is a closed $\mathrm{DIFF}$ manifold with $\pi_{1} (M_{0}) = \Gamma_{0}$.

By Theorem \ref{thm_cw_manifold}, Question \ref{que_group_manifold} is equivalent to the following one whose positive answer is apparently, but not actually, weaker.
\begin{question}\label{que_group_space}
Let $\Gamma$ and $\Gamma'$ be as in Question \ref{que_group_manifold}. Is there a finitely dominated CW complex $X$ such that $\pi_{1} (X) = \Gamma$ and $X$ is homotopy equivalent to $X'$? Here $X'$ is the cover of $X$ with $\pi_{1} (X') = \Gamma'$.
\end{question}

Particularly, if $K(\Gamma, 1)$ is finitely dominated, then the answers to Questions \ref{que_group_manifold} and \ref{que_group_space} are positive.



\begin{thebibliography}{1234}

\bibitem{Arkowitz} M. Arkowitz, Introduction to homotopy theory, Universitext, Springer, New York, 2011.

\bibitem{Atiyah_Hirzebruch} M. Atiyah and F. Hirzebruch, Vector bundles and homogeneous spaces, Proc. Sympos. Pure Math., Vol. III, pp. 7-38, American Mathematical Society, Providence, RI, 1961.

\bibitem{Atiyah_Macdonald} M. Atiyah and I. Macdonald, Introduction to commutative algebra, Addison-Wesley Publishing Co., 1969.

\bibitem{BDT} T. Bedenikovic, A. Delgado and M. Timm, A classification of 2-complexes with nontrivial self-covers, Topology Appl. 153 (2006), no. 12, 2073-2091.

\bibitem{Bieri_Strebel} R. Bieri and R. Strebel, Almost finitely presented soluble groups, Comment. Math. Helv. 53 (1978), no. 2, 258-278.

\bibitem{Browder66} W. Browder, Open and closed disk bundles, Ann. of Math. (2) 83 (1966), 218-230.

\bibitem{Burghelea_Lashof} D. Burghelea and R. Lashof, The homotopy type of the space of diffeomorphisms, II., Trans. Amer. Math. Soc. 196 (1974), 37-50.

\bibitem{BLR1975} D. Burghelea, R. Lashof, and M. Rothenberg, Groups of automorphisms of manifolds, Lecture Notes in Math., Vol. 473, Springer-Verlag, Berlin-New York, 1975.

\bibitem{Cappell} S. Cappell, A splitting theorem for manifolds, Invent. Math. 33 (1976), no. 2, 69-170.

\bibitem{Cartan_Eilenberg} H.~Cartan and S.~Eilenberg, Homological algebra, Princeton University Press, Princeton, NJ, 1956.

\bibitem{Cerf61} J. Cerf, Topologie de certains espaces de plongements, Bull. Soc. Math. France 89 (1961), 227-380.

\bibitem{Chapman} T. Chapman, Topological invariance of Whitehead torsion, Amer. J. Math. 96 (1974), 488-497.

\bibitem{Cohen} M. Cohen, A course in simple-homotopy theory, Grad. Texts in Math., Vol. 10, Springer-Verlag, New York-Berlin, 1973.

\bibitem{Coram_Duvall1} D. Coram and P. Duvall, Approximate fibrations, Rocky Mountain J. Math. 7 (1977), no. 2, 275-288.

\bibitem{Coram_Duvall2} D. Coram and P. Duvall, Approximate fibrations and a movability condition for maps, Pacific J. Math. 72 (1977), no. 1, 41-56.

\bibitem{Cornulier} Y. Cornulier, Gradings on Lie algebras, systolic growth, and cohopfian properties of nilpotent groups, Bull. Soc. Math. France 144 (2016), no. 4, 693-744.

\bibitem{Dekimpe} K. Dekimpe, What an infra-nilmanifold endomorphism really should be $\dots$, Topol. Methods Nonlinear Anal. 40 (2012), no. 1, 111-136.

\bibitem{Dere17} J. Der\'{e}, Gradings on Lie algebras with applications to infra-nilmanifolds, Groups Geom. Dyn. 11 (2017), no. 1, 105-120.

\bibitem{Dere22} J. Der\'{e}, Strongly scale-invariant virtually polycyclic groups, Groups Geom. Dyn. 16 (2022), no. 3, 985-1004.

\bibitem{Ebert_Randal} J. Ebert and O. Randal-Williams, Generalised Miller-Morita-Mumford classes for block bundles and topological bundles, Algebr. Geom. Topol. 14 (2014), no. 2, 1181-1204.

\bibitem{Eckmann_Linnell} B.~Eckmann and P.~Linnell, Poincar\'{e} duality groups of dimension two. II, Comment. Math. Helv. 58 (1983), no. 1, 111-114.

\bibitem{Eckmann_Muller} B.~Eckmann and H.~M\"{u}ller, Poincar\'{e} duality groups of dimension two, Comment. Math. Helv. 55 (1980), no. 4, 510-520.

\bibitem{Farrell_Gogolev} F.~T.~Farrell and A.~Gogolev, Examples of expanding endomorphisms on fake tori, J. Topol. 7 (2014), no. 3, 805-816.

\bibitem{Farrell_Hsiang} F.~T.~Farrell and W.~C.~Hsiang, The topological-Euclidean space form problem, Invent. Math. 45 (1978), no. 2, 181-192.

\bibitem{Farrell_Jones78} F.~T.~Farrell and L.~Jones, Examples of expanding endomorphisms on exotic tori, Invent. Math. 45 (1978), no. 2, 175~179.

\bibitem{FLS2018} F.~T.~Farrell, W. L\"{u}ck, and W. Steimle, Approximately fibering a manifold over an aspherical one, Math. Ann. 370 (2018), no. 1-2, 669-726.

\bibitem{Farrell_Wu} F.~T.~Farrell and X. Wu, Isomorphism conjecture for Baumslag-Solitar groups, Proc. Amer. Math. Soc. 143 (2015), no. 8, 3401-3406.

\bibitem{Ferry} S. Ferry, Approximate fibrations with nonfinite fibers, Proc. Amer. Math. Soc. 64 (1977), no. 2, 335-345.

\bibitem{Freedman_Quinn} M. Freedman and F. Quinn, Topology of 4-manifolds, Princeton Mathematical Series, 39, Princeton University Press, Princeton, NJ, 1990.

\bibitem{Freedman_Teichner} M. Freedman and P. Teichner, $4$-manifold topology I: subexponential groups, Invent. Math. 122 (1995), no. 3, 509-529.

\bibitem{Friberg} B. Friberg, A topological proof of a theorem of Kneser, Proc. Amer. Math. Soc. 39 (1973), 421-426.

\bibitem{Gersten} S. Gersten, A product formula for Wall's obstruction, Amer. J. Math. 88 (1966), 337-346.

\bibitem{Goncalves_Spreafico} D. Gon\c{c}alves and M. Spreafico, The fundamental group of the space of maps from a surface into the projective plane, Math. Scand. 104 (2009), no. 2, 161-181.

\bibitem{Gottlieb} H. Gottlieb, Poincar\'{e} duality and fibrations, Proc. Amer. Math. Soc. 76 (1979), no. 1, 148-150.

\bibitem{Gromov} M. Gromov, Partial differential relations, Springer-Verlag, Berlin, 1986.

\bibitem{Groves} J. Groves, Soluble groups in which every finitely generated subgroup is finitely presented, J. Austral. Math. Soc. Ser. A 26 (1978), no. 1, 115-125.

\bibitem{Hansen} V. Hansen, The homotopy groups of a space of maps between oriented closed surfaces, Bull. London Math. Soc. 15 (1983), no. 4, 360-364.

\bibitem{Hartshorne} R. Hartshorne, Agebraic geometry, Grad. Texts in Math., No. 52, Springer-Verlag, New York-Heidelberg, 1977.

\bibitem{Hatcher} A. Hatcher, Concordance spaces, higher simple-homotopy theory, and applications, Algebraic and geometric topology (Proc. Sympos. Pure Math., Stanford Univ., Stanford, Calif., 1976), Part 1, pp. 3-21, American Mathematical Society, Providence, RI, 1978.

\bibitem{Hatcher83} A. Hatcher, A proof of the Smale conjecture, $\mathrm{Diff} (S^{3}) \simeq \mathrm{O}(4)$, Ann. of Math. (2) 117 (1983), no. 3, 553-607.

\bibitem{Hillman} J. Hillman, Four-manifolds, geometries and knots, Geom. Topol. Monogr., 5, Geometry \& Topology Publications, Coventry, 2002.

\bibitem{Hirsch} M. Hirsch, Smooth regular neighborhoods, Ann. of Math. (2) 76 (1962), 524-530.

\bibitem{Howie} J. Howie, On locally indicable groups, Math. Z. 180 (1982), no. 4, 445-461.

\bibitem{HTW90} C. Hughes, L. Taylor and E. Williams, Bundle theories for topological manifolds, Trans. Amer. Math. Soc. 319 (1990), no. 1, 1-65.

\bibitem{Johnson_Wall} F. Johnson and C.~T.~C. Wall, On groups satisfying Poincar\'{e} duality, Ann. of Math. (2) 96 (1972), 592-598.

\bibitem{Kirby_Siebenmann} R. Kirby and L. Siebenmann, Foundational essays on topological manifolds, smoothings, and triangulations, Ann. of Math. Stud., No. 88, Princeton University Press, Princeton, NJ; University of Tokyo Press, Tokyo, 1977.

\bibitem{Klein_Qin_Su} J. Klein, L. Qin and Y. Su, On the various notions of Poincar\'{e} duality pair, Trans. Amer. Math. Soc. 375 (2022), no. 6, 4251-4283.

\bibitem{Kneser} H. Kneser, Die Deformationss\"{a}tze der einfach zusammenh\"{a}ngenden Fl\"{a}chen, Math. Z. 25 (1926), no. 1, 362-372.

\bibitem{Loh} C.~L\"{o}h, Geometric group theory, An introduction, Universitext, Springer, Cham, 2017.

\bibitem{Luck} W. L\"{u}ck, $K$- and $L$-theory of group rings, Proceedings of the International Congress of Mathematicians, Hyderabad, India, 2010.

\bibitem{Luft_Sjerve} E. Luft and D. Sjerve, $3$-manifolds with subgroups $Z \oplus Z \oplus Z$ in their fundamental groups, Pacific J. Math. 114 (1984), no. 1, 191-205.

\bibitem{Maclagan_Sturmfels} D. Maclagan and B. Sturmfels, Introduction to tropical geometry, Grad. Stud. Math., 161, American Mathematical Society, Providence, RI, 2015.

\bibitem{Matsumura1} H. Matsumura, Commutative algebra, Second edition, Mathematics Lecture Note Series, 56. Benjamin/Cummings Publishing Co., Inc., Reading, Mass., 1980.

\bibitem{Matsumura2} H. Matsumura, Commutative ring theory, Second edition, Cambridge Studies in Advanced Mathematics, 8. Cambridge University Press, Cambridge, 1989.

\bibitem{MH} J. Milnor and D. Husemoller, Symmetric bilinear forms, Ergebnisse der Mathematik und ihrer Grenzgebiete, Band 73. Springer-Verlag, New York-Heidelberg, 1973.

\bibitem{Milnor_Stasheff} J. Milnor and J. Stasheff, Characteristic Classes, Princeton University Press, 1974.

\bibitem{Morgan} J. Morgan, A product formula for surgery obstructions, Mem. Amer. Math. Soc. 14 (1978), no. 201.

\bibitem{Mumford} D. Mumford, The red book of varieties and schemes, Second, expanded edition, Lecture Notes in Math., 1358, Springer-Verlag, Berlin, 1999.

\bibitem{Nekrashevych_Pete} V. Nekrashevych and G. Pete, Scale-invariant groups, Groups Geom. Dyn. 5 (2011), no. 1, 139-167.

\bibitem{Nishimura} J. Nishimura, Note on integral closures of a Noetherian integral domain, J. Math. Kyoto Univ. 16 (1976), no. 1, 117-122.

\bibitem{Novikov} S. Novikov, Topological invariance of rational classes of Pontrjagin, Dokl. Akad. Nauk SSSR 163 (1965), 298-300.

\bibitem{Olum} P. Olum, Mappings of manifolds and the notion of degree, Ann. of Math. (2) 58 (1953), 458-480.

\bibitem{Perron} B. Perron, Pseudo-isotopies et isotopies en dimension quatre dans la cat\'{e}gorie topologique, Topology 25 (1986), no. 4, 381-397.

\bibitem{Qin_Su_Wang} L. Qin, Y. Su and B. Wang, Self-Covering, finiteness, and fibering over a circle, Trans. Amer. Math. Soc. 377 (2024), no. 3, 1883-1914, arXiv: 2112.11750.

\bibitem{Quinn79} F. Quinn, Ends of maps, I., Ann. of Math. (2) 110 (1979), no. 2, 275-331.

\bibitem{Quinn86} F. Quinn, Isotopy of 4-manifolds, J. Differential Geom. 24 (1986), no. 3, 343-372.

\bibitem{Ranicki} A. Ranicki, Algebraic and geometric surgery, Oxford Mathematical Monographs, Oxford Science Publications, The Clarendon Press, Oxford University Press, Oxford, 2002.

\bibitem{Rosenlicht} M. Rosenlicht, Some rationality questions on algebraic groups, Ann. Mat. Pura Appl. (4) 43 (1957), 25-50.

\bibitem{Rourke_Sanderson68I} C. Rourke and B. Sanderson, Block bundles: I, Ann. of Math. (2) 87 (1968), 1-28.

\bibitem{Rourke_Sanderson68II} C. Rourke and B. Sanderson, Block bundles: II. Transversality, Ann. of Math. (2) 87 (1968), 256-278.

\bibitem{Rourke_Sanderson68III} C. Rourke and B. Sanderson, Block bundles: III. Homotopy theory, Ann. of Math. (2) 87 (1968), 431-483.

\bibitem{Rourke_Sanderson71} C. Rourke and B. Sanderson, $\Delta$-sets II: Block bundles and block fibrations, Quart. J. Math. Oxford Ser. (2) 22 (1971), 465-485.

\bibitem{Selick} P. Selick, Introduction to homotopy theory, Fields Inst. Monogr., 9, American Mathematical Society, Providence, RI, 1997.

\bibitem{Thomas} C. Thomas, Elliptic structures on $3$-manifolds, London Math. Soc. Lecture Note Ser., 104, Cambridge University Press, Cambridge, 1986.

\bibitem{VanLimbeek} W. Van Limbeek, Towers of regular self-covers and linear endomorphisms of tori, Geom. Topol. 22 (2018), no. 4, 2427-2464.

\bibitem{Wall65} C.~T.~C. Wall, Finiteness conditions for CW-complexes, Ann. of Math. (2) 81 (1965), 56-69.

\bibitem{Wall67} C.~T.~C. Wall, Poincar\'{e} Complexes: I, Annals of Mathematics, Vol. 86, No. 2 (1967), 213-245.

\bibitem{Wang_Wu} S. Wang and Y. Wu, Covering invariants and co-Hopficity of 3-manifold groups, Proc. London Math. Soc. (3) 68 (1994), no. 1, 203-224.

\bibitem{G.Whitehead} G. Whitehead, Elements of homotopy theory, Graduate Texts in Mathematics 61, Springer-Verlag, 1978.

\bibitem{Whitehead} J.~H.~C. Whitehead, Combinatorial homotopy. I, Bull. Amer. Math. Soc. 55 (1949), 213-245.

\bibitem{Yamanoshita} T. Yamanoshita, On the space of self-homotopy equivalences of the projective plane, J. Math. Soc. Japan 45 (1993), no. 3, 489-494.

\bibitem{Zeeman} E.~C. Zeeman, Seminar on combinatorial topology, Institut Des Hautes Etudes Scientifiques, 1963.

\end{thebibliography}
\end{document}